\tikzset{negated/.style={
        decoration={markings,
            mark= at position 0.5 with {
                \node[transform shape] (tempnode) {\small\slash};
            }
        },
        postaction={decorate}
    }
}
\newcommand{\arxivlink}[1]{\href{https://arxiv.org/abs/#1}{\texttt{arXiv:#1}}}
\numberwithin{equation}{subsection}
\newtheorem{theorem}{Theorem}[subsection]
\newcommand{\MakeTheoremAndCounter}[2]{\newaliascnt{#1}{theorem}
\newtheorem{#1}[#1]{#2}
\aliascntresetthe{#1}
\expandafter\providecommand\csname#1autorefname\endcsname{#2}}
\newcommand\nobracketcite[2]{\expandafter\cite[#1]{#2}}
\theoremstyle{definition}\MakeTheoremAndCounter{remark}{Remark}
\theoremstyle{definition}\MakeTheoremAndCounter{definition}{Definition}
\def\restatedname{no theorem name}
\theoremstyle{plain}
\newtheorem*{restatethm}{\restatedname}
\newenvironment{restate}[1]{\expandafter\def\expandafter\restatedname{\autoref{#1}}\begin{restatethm}}{\end{restatethm}}
\newcommand{\autorefre}[1]{\hyperref[re:#1]{\autoref*{#1}}}
\newenvironment{multistmt}{~\\\vspace{-4ex}
\begin{enumerate}[label={\upshape(\arabic*)}, noitemsep]}{\end{enumerate}}
\newenvironment{propertylist}{
\begin{enumerate}[label={\upshape(\Roman*)}, noitemsep, nolistsep]}{\end{enumerate}}
\DeclareMathOperator{\diam}{diam}
\DeclareMathOperator{\supp}{supp}
\DeclareMathOperator{\intby}{~d}
\DeclareMathOperator*{\Int}{int}
\DeclareMathOperator{\concat}{{}^{\smallfrown}}
\newcommand{\titleofthepaper}{Haar null and Haar meager sets: a survey and new results}
\begin{document}
\title{\titleofthepaper}

\author{M\'arton Elekes}
\address{Alfr\'ed R\'enyi Institute of Mathematics, Hungarian Academy of Sciences,
PO Box 127, 1364 Budapest, Hungary and E\"otv\"os Lor\'and
University, Institute of Mathematics, P\'azm\'any P\'eter s. 1/c,
1117 Budapest, Hungary}
\email{elekes.marton@renyi.mta.hu}
\urladdr{http://www.renyi.hu/$\sim$emarci}

\author{Don\'at Nagy}
\address{E\"otv\"os Lor\'and University, Institute of Mathematics, P\'azm\'any P\'eter
s. 1/c, 1117 Budapest, Hungary}
\email{nagdon@bolyai.elte.hu}

\thanks{The first author was supported by the National Research, Development and Innovation Office -- NKFIH, grants no.~113047, 104178 and 124749. The second author was supported by the National Research, Development and Innovation Office -- NKFIH, grants no.~104178 and 124749.\\
\includegraphics[height=1cm]{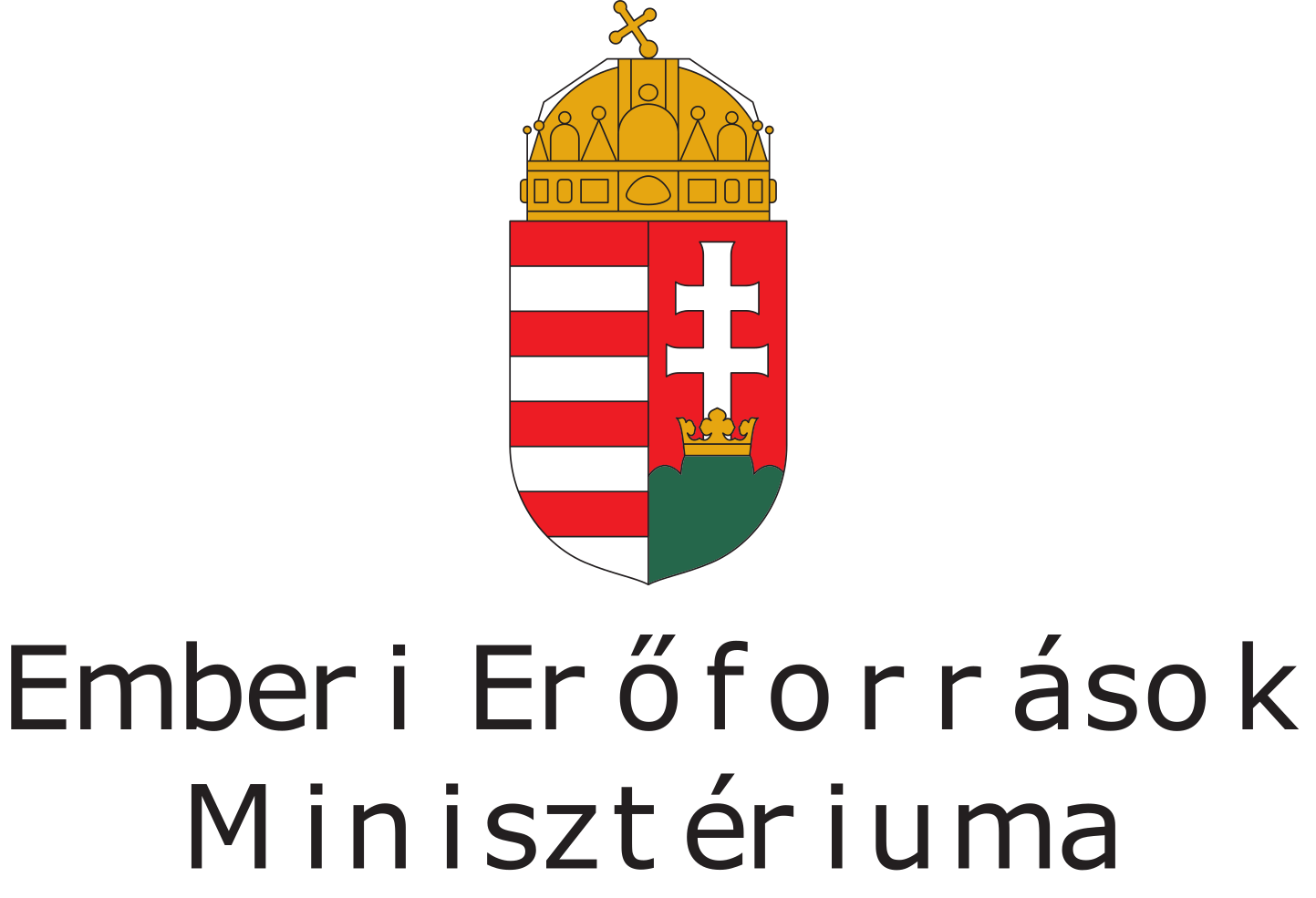} \raisebox{0.5cm}{\parbox[c]{11cm}{Supported through the New National Excellence Program of the Ministry of Human Capacities.}}}
\subjclass[2010]{Primary 03E15; Secondary 28C10, 54E52, 22F99}
\begin{abstract}
We survey results about Haar null subsets of (not necessarily locally compact) Polish groups. The aim of this paper is to collect the fundamental properties of the various possible definitions of Haar null sets, and also to review the techniques that may enable the reader to prove results in this area. We also present several recently introduced ideas, including the notion of Haar meager sets, which are closely analogous to Haar null sets. We prove some results in a more general setting than that of the papers where they were originally proved and prove some results for Haar meager sets which were already known for Haar null sets.
\end{abstract}

\maketitle

\tableofcontents

\section{Introduction and history}

Many results in various branches of mathematics state that certain properties hold for \emph{almost every} element of a space. In the continuous, large structures which are frequently studied in analysis it is common to encounter a property that is true for most points, but false on a negligibly small part of the structure. These situations mean that there are facts which can be grasped only by defining a suitable notion of smallness and stating that the exceptional elements form a small set.

In the Euclidean space $\mathbb{R}^n$ there is a generally accepted, natural notion of smallness: a set is considered to be small if it has Lebesgue measure zero. The Haar measure (which was introduced by Alfr\' ed Haar in 1933) generalizes the Lebesgue measure for arbitrary locally compact topological groups (see \autoref{ssec:loccptcase} for a brief introduction). Although \enquote{the} Haar measure is not completely unique (except in compact groups, where there exists a natural choice), the system of sets of Haar measure zero is well-defined. This yields a very useful notion of smallness in locally compact groups, but says nothing of non-locally-compact groups, where there is no Haar measure.

In the paper \cite{Chr} (which was published in 1972) Christensen introduced the notion of \emph{Haar null sets}, which is equivalent to having Haar measure zero in locally compact groups and defined in every abelian Polish group. (A topological group is Polish if it is separable and completely metrizable, for the definition of Haar nullness see \autoref{def:haarnull}.) Twenty years later Hunt, Sauer and Yorke independently introduced this notion under the name of \emph{shy sets} in the paper \cite{HSY}. Since then lots of papers were published which either study some property of Haar null sets or use this notion of smallness to state facts which are true for almost every element of some structure. It was relatively easy to generalize this notion to non-abelian groups, on the other hand, the assumption that the topology is Polish is still usually assumed, because it turned out to be convenient and useful.

There is another widely used notion of smallness, the notion of \emph{meager sets} (also known as sets of the first category). Meager sets can be defined in any topological space; a set is said to be meager if it is the countable union of nowhere dense sets (i.e.\ sets which are not dense in any open set). A topological space is called a Baire space if the nonempty open sets are non-meager; this basically means that one can consider the meager sets small in these spaces. The Baire category theorem states that all completely metrizable spaces and all locally compact Hausdorff spaces are Baire spaces (see \cite[Theorem 8.4]{Ke} for the proofs).

In locally compact groups the system of meager sets and the system of sets of Haar measure zero share many properties. For example the classical Erd\H os-Sierpi\' nski duality theorem states that it is consistent that there is a bijection $f: \mathbb{R} \to \mathbb{R}$ such that for every set $A\subseteq \mathbb{R}$, $f(A)$ is meager if and only if $A$ has Lebesgue measure zero and $f(A)$ has Lebesgue measure zero if and only if $A$ is meager. Despite this, there are sets that are small in one sense and far from being small in the other sense, for example every locally compact group can be written as the union of a meager set and a set of Haar measure zero.

In 2013, Darji defined the notion of \emph{Haar meager sets} in the paper \cite{Da} to provide a better analog of Haar null sets in the non-locally-compact case. (For the definition see \autoref{def:haarmeager}.) Darji only considered abelian Polish groups, but \cite{Do} generalized this notion to non-abelian Polish groups. Haar meager sets coincide with meager sets in locally compact Polish groups, and Haar meagerness is a strictly stronger notion than meagerness in non-locally-compact abelian groups (see \autoref{thm:haarmeagerismeagerlc} and \autoref{thm:haarmeagerismeagernlc}). The analogy between Haar nullness and Haar meagerness means that most of the results for Haar null sets are also true for Haar meager sets and it is often possible to prove them using similar methods.

First in \autoref{sec:notatterm}, we introduce the notions, definitions and conventions which are not related to our area, but used repeatedly in this paper. Then \autoref{sec:basicprop} defines the core notions and investigates their most important properties.

After these, \autoref{sec:alternatives} considers the modified variants of the definitions. This section starts with a large collection of equivalent definitions for our core notions, then lists and briefly describes most of the versions which appear in the literature and are not (yet proved to be) equivalent to the \enquote{plain} versions.

The next section, \autoref{sec:analogs}, considers the feasibility of generalizing four well-known results (Fubini's theorem, the Steinhaus theorem, the countable chain condition, writing the whole space as the union of a null and a meager set) for non-locally-compact Polish groups.

Then in \autoref{sec:techniques} we discuss some proof techniques for questions from this area. Some of these are essentially useful lemmas, the others are just ideas and ways of thinking which can be helpful in certain cases.

Finally, in \autoref{sec:outlook} we give a highly incomplete list of applications of Haar null sets in various fields of mathematics and in \autoref{sec:questions} we collect the open questions that are mentioned in this survey.

\section{Notation and terminology}\label{sec:notatterm}

This section is the collection of the miscellaneous notations, definitions and conventions that are used repeatedly in this paper.

The symbols $\mathbb{N}$ and $\omega$ both refer to the set of nonnegative integers. We write $\mathbb{N}$ if we consider this set as a topological space (with the discrete topology) and $\omega$ if we use it only as a cardinal, ordinal or index set. (For example we write the Polish space of the countably infinite sequences of natural numbers as $\mathbb{N}^\omega$.) We consider the nonnegative integers as von Neumann ordinals, i.e.\ we identify the nonnegative integer $n$ with the set $\{0, 1, 2, \ldots, n-1\}$.

$\mathcal{P}(S)$ denotes the power set of a set $S$. For a set $S \subseteq X\times Y$, $x\in X$ and $y\in Y$, $S_x$ is the $x$-section $S_x=\{y : (x, y)\in S\}$ and $S^y$ is the $y$-section $S^y=\{x: (x, y)\in S\}$.

If $S$ is a subset of a topological space, $\Int(S)$ is the interior of $S$ and $\overline{S}$ is the closure of $S$. We consider $\mathbb{N}$, $\mathbb{Z}$ and all finite sets to be topological spaces with the discrete topology. (Note that this convention allows us to simply write the Cantor set as $2^{\omega}=\{0, 1\}^\omega$.) If $X$ is a topological space, then
\begin{description}[labelwidth=1.4cm, leftmargin=1.6cm,labelsep=0cm,noitemsep,nolistsep]
\item[$\mathcal{B}(X)$] denotes its Borel subsets ($\mathcal{B}(X)$ is the $\sigma$-algebra generated by the open sets, see \cite[Chapter II]{Ke}),
\item[$\mathcal{M}(X)$] denotes its meager subsets (a set is \emph{meager} if it is the union of countably many nowhere dense sets and a set is \emph{nowhere dense} if the interior of its closure is empty, see \cite[$\S$8.A]{Ke}).
\end{description}
If the space $X$ is \emph{Polish} (that is, separable and completely metrizable), then 
\begin{description}[labelwidth=1.4cm, leftmargin=1.6cm,labelsep=0cm,noitemsep,nolistsep]
\item[$\mathbf{\Sigma}^1_1(X)$] denotes its analytic subsets (a set is \emph{analytic} if it is the continuous image of a Borel set, see \cite[Chapter III]{Ke}),
\item[$\mathbf{\Pi}^1_1(X)$] denotes its coanalytic subsets (a set is \emph{coanalytic} if its complement is analytic, see \cite[Chapter IV]{Ke}).
\end{description}
If the topological space $X$ is clear from the context, we simply write $\mathcal{B}$, $\mathcal{M}$, $\mathbf{\Sigma}^1_1$ and $\mathbf{\Pi}^1_1$.

In a metric space $(X, d)$, $\diam(S)=\sup \{d(x, y) : x, y\in S\}$ denotes the diameter of the subset $S$. If $x\in X$ and $r>0$, then $B(x, r)=\{x'\in X : d(x,x')<r\}$ and $\overline{B}(x, r)=\{x'\in X : d(x,x')\le r\}$ denotes respectively the open and the closed ball with center $x$ and radius $r$ in $X$.

If $\mu$ is an outer measure on a set $X$, we say that $A\subseteq X$ is \emph{$\mu$-measurable} if $\mu(B)=\mu(B\cap A)+\mu(B\setminus A)$ for every $B\subseteq X$. Unless otherwise stated, we identify an outer measure $\mu$ with its restriction to the $\mu$-measurable sets and \enquote{measure} means an outer measure or the corresponding complete measure. A measure $\mu$ is said to be \emph{Borel} if all Borel sets are $\mu$-measurable. The support of the measure $\mu$ is denoted by $\supp\mu$.

Almost all of our results will be about topological groups. A set $G$ is called a \emph{topological group} if is equipped with both a group structure and a Hausdorff topology and these structures are \emph{compatible}, that is, the multiplication map $G\times G \to G$, $(g,h)\mapsto gh$ and the inversion map $G\to G$, $g \mapsto g^{-1}$ are continuous functions. We use the convention that whenever we require a group to have some topological property (for example a \enquote{compact group}, a \enquote{Polish group}, \ldots), then it means that the group must be a topological group and have that property (as a topological space). The identity element of a group $G$ will be denoted by $1_G$.

Most of the results in this paper are about certain subsets of Polish groups. Unless otherwise noted, $(G, \cdot)$ denotes an arbitrary Polish group. We denote the group operation by multiplication even when we assume that the (abstract) group under consideration is abelian, but we write the group operation of well-known concrete abelian groups like $(\mathbb{R}, +)$ or $(\mathbb{Z}^\omega, +)$ as addition.

Some techniques only work in Polish groups that \emph{admit a two-sided invariant metric}. (A metric $d$ on $G$ is called \emph{two-sided invariant} (or simply \emph{invariant}) if $d(g_1 hg_2 , g_1 kg_2 ) = d(h, k)$ for any $g_1, g_2 , h, k \in G$.) Groups with this property are also called TSI groups. This class of groups properly contains all Polish, abelian groups, since each metric group $G$ admits a left-invariant metric which, obviously, is invariant when $G$ is abelian. Any invariant metric on a Polish group is automatically complete. For proofs of these facts and more results about TSI groups see for example \cite[\S8.]{HR}.

Some basic results can be generalized for non-separable groups, but we will only deal with the separable case. On the other hand, many papers about this topic only consider abelian groups or some class of vector spaces. When the proof of a positive result can be generalized for arbitrary Polish group, we will usually do so, but we will usually provide counterexamples only in the special case where their construction is the simplest. If we assume that $G$ is locally compact, our notions will coincide with simpler notions (see \autoref{ssec:loccptcase}) and the majority of the results in this paper become significantly easier to prove, so the interesting case is when $G$ is not locally compact.

\section{Basic properties}\label{sec:basicprop}

\subsection{Core definitions}\label{ssec:core}

This subsection defines Haar null sets and Haar meager sets. Both of these notions have several slightly different formalizations in the literature, these are discussed in \autoref{sec:alternatives}.

Haar null sets were first introduced by Christensen in \cite{Chr} in 1972 as a generalization of the null sets of the Haar measure. (The Haar measure itself cannot be generalized for groups that are non-locally-compact, see \autoref{thm:nohaarmeas}.) Twenty years later in \cite{HSY} Hunt, Sauer and Yorke independently introduced Haar null sets under the name \enquote{shy sets}.

\begin{definition}\label{def:haarnull}
A set $A \subseteq G$ is said to be \emph{Haar null} if there are a Borel set $B\supseteq A$ and a Borel probability measure $\mu$ on $G$ such that $\mu(gBh)=0$ for every $g, h\in G$. A measure $\mu$ satisfying this is called a \emph{witness measure} for $A$. The system of Haar null subsets of $G$ is denoted by $\mathcal{HN}=\mathcal{HN}(G)$.\end{definition}

\begin{remark}\label{alternativenames}
Using the terminology introduced in \cite{HSY}, a set $A \subseteq G$ is called \emph{shy} if it is Haar null, and \emph{prevalent} if $G\setminus A$ is Haar null.
\end{remark}

Some authors (including Christensen) write \enquote{universally measurable set} instead of \enquote{Borel set} when they define Haar null sets. Although most results can be proved for both notions in the same way, these two notions are not equivalent. When a paper uses both notions, sets satisfying this alternative definition are called \enquote{generalized Haar null sets}.

\begin{definition}
If $X$ is a Polish space, a set $A\subseteq X$ is called \emph{universally measurable} if it is $\mu$-measurable for any $\sigma$-finite Borel measure $\mu$ on $X$.
\end{definition}

\begin{definition}\label{def:genhaarnull}
A set $A\subseteq G$ is said to be a \emph{generalized Haar null} if there are a universally measurable set $B\supseteq A$ and a Borel probability measure $\mu$ on $G$ such that $\mu(gBh)=0$ for every $g, h\in G$. A measure $\mu$ satisfying this is called a \emph{witness measure} for $A$. The system of generalized Haar null subsets of $G$ is denoted by $\mathcal{GHN}=\mathcal{GHN}(G)$\end{definition}

\begin{remark}\label{remark:HNisGHN}
As every Borel set is universally measurable, every Haar null set is generalized Haar null.
\end{remark}

Haar meager sets were first introduced by Darji in \cite{Da} in 2013 as a topological counterpart to the Haar null sets. (Meagerness remains meaningful in non-locally-compact groups, but Haar meager sets are a better analogue for Haar null sets.) This original definition only considered the case of abelian groups, but it was straightforwardly generalized in \cite{Do} to work in arbitrary Polish groups.

\begin{definition}\label{def:haarmeager}
A set $A \subseteq G$ is said to be \emph{Haar meager} if there are a Borel set $B \supseteq A$, a (nonempty) compact metric space $K$ and a continuous function $f : K \to G$ such that $f^{-1}(gBh)$ is meager in $K$ for every $g, h \in G$. A function $f$ satisfying this is called a \emph{witness function} for $A$. The system of Haar meager subsets of $G$ is denoted by $\mathcal{HM}=\mathcal{HM}(G)$.
\end{definition}

We note that according to \autoref{thm:haarnullwitnessfunc}, it is also possible to define the notion of Haar null sets in a similar fashion, using witness functions instead of witness measures.

\subsection{Notions of smallness}\label{ssec:smallness}

Both \enquote{Haar null} and \enquote{Haar meager} are notions of smallness (i.e.\ we usually think of Haar null and Haar meager sets as small or negligible). This point of view is justified by the fact that both the system of Haar null sets and the system of Haar meager sets are $\sigma$-ideals.

\begin{definition}\label{def:sigmaideal}
A system $\mathcal{I}$ (of subsets of some set) is called a \emph{$\sigma$-ideal} if
\begin{propertylist}
\item $\emptyset \in \mathcal{I}$, 
\item $A \in \mathcal{I}, B \subseteq A \Rightarrow B \in \mathcal{I}$ and
\item if $A_n \in \mathcal{I}$ for all $n \in \omega$, then $\bigcup_n A_n \in \mathcal{I}$.
\end{propertylist}
\end{definition}

To prove that these systems are indeed $\sigma$-ideals we will need some technical lemmas.

\begin{lemma}\label{lem:posmeascpt}
If $\mu$ is a Borel probability measure on $G$ and $U$ is a neighborhood of $1_G$, then there are a compact set $C\subseteq G$ and $t \in G$ with $\mu(C)>0$ and $C \subseteq tU$. 
\end{lemma}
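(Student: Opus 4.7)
The plan is to exploit two standard properties of $G$: it is separable, and every Borel probability measure on a Polish space is tight (inner regular by compact sets). Without loss of generality I would first replace $U$ by an open neighborhood of $1_G$ contained in it.

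First, I would use separability to choose a countable dense set $\{g_n : n \in \omega\} \subseteq G$ and show that $G = \bigcup_n g_n U$. Indeed, for any $x \in G$ the set $xU^{-1}$ is open (since inversion is a homeomorphism and $U$ is open) and contains $x \cdot 1_G = x$, hence contains some $g_n$; writing $g_n = xu^{-1}$ gives $x \in g_n U$. Once $G$ is covered, $\sigma$-subadditivity of $\mu$ together with $\mu(G) = 1$ forces $\mu(g_n U) > 0$ for at least one index $n$.

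Next, I would invoke the fact that every Borel probability measure on a Polish space is tight, so that Borel sets are inner regular with respect to compact subsets (this is Ulam's theorem, see for instance \cite[Theorem 17.11]{Ke}). Applied to the Borel set $g_n U$ of positive measure, this yields a compact set $C \subseteq g_n U$ with $\mu(C) > 0$. Taking $t := g_n$ finishes the proof, since $C$ is compact, has positive measure, and satisfies $C \subseteq tU$.

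There is really no serious obstacle here; the only thing to be careful about is that the naive approach of directly taking a compact set of positive measure and then intersecting with a single translate $gU$ would not automatically produce a compact subset of $gU$, which is why the order of the argument matters: one first cuts down to a Borel subset of a translate of $U$ with positive measure, and only then applies inner regularity to obtain compactness.
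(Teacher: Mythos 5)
Your proof is correct, and it takes a somewhat different route from the paper's. The paper first applies tightness only to the whole space to get a compact set $\tilde{C}$ with $\mu(\tilde{C})\ge\frac12$, then fixes an open $V$ with $1_G\in V\subseteq\overline{V}\subseteq U$, covers $\tilde{C}$ by finitely many translates $tV$ ($t\in\tilde{C}$) using compactness, picks one with $\mu(tV\cap\tilde{C})>0$, and takes $C=\overline{tV}\cap\tilde{C}$ — the closure trick with $\overline{V}\subseteq U$ is what keeps $C$ compact and inside $tU$. You instead use separability to cover $G$ by countably many translates $g_nU$ (the same Lindel\"of-type argument the paper uses elsewhere, e.g.\ in \autoref{lem:ctblsmallopenlarge}), pick one of positive measure, and then invoke the full strength of \cite[Theorem 17.11]{Ke} — inner regularity of \emph{every} Borel set by compact sets — applied to the open set $g_nU$ itself. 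This is a bit more direct and avoids the auxiliary neighborhood $V$ and the finite-cover step, at the cost of leaning on the stronger form of tightness; the paper's argument only needs "some compact set has large measure" plus a compactness covering. Your closing remark correctly identifies why the order of steps matters; one could add that your translating element $t=g_n$ comes from a fixed countable dense set, whereas in the paper $t$ lies in the compact set $\tilde{C}$, but neither statement requires more than some $t\in G$.
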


\begin{proof}
Applying \cite[Theorem 17.11]{Ke}, there exists a compact set $\tilde{C} \subseteq G$ with $\mu(\tilde{C})\ge \frac12$. Fix an open set $V$ with $1_G \in V \subset \overline{V} \subset U$. The collection of open sets $\{tV : t\in \tilde{C}\}$ covers $\tilde{C}$ and $\tilde{C}$ is compact, so $\tilde{C} = \bigcup_{t \in F} (tV \cap \tilde{C})$ for some finite set $F\subseteq \tilde{C}$. It is clear that $\mu(tV \cap \tilde{C})$ must be positive for at least one $t\in F$. Choosing $C= \overline{tV} \cap \tilde{C}$ clearly satisfies our requirements.
\end{proof}

\begin{corollary}\label{cptsupwitness}
If $\mu$ is a Borel probability measure on $G$, $V \subseteq G$ is an open set, $B\subseteq G$ is universally measurable and satisfies $\mu(gBh)=0$ for every $g, h\in G$, then there exists a Borel probability measure $\mu'$ that satisfies $\mu'(gBh)=0$ for every $g, h\in G$ and has a compact support that is contained in $V$.
\end{corollary}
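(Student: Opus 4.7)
My plan is to extract a compact set of positive $\mu$-measure from the interior of $V$ (up to translation), restrict $\mu$ to it, renormalize, and then left-translate to land inside $V$ itself.

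First, I would handle the trivial case by assuming $V\neq\emptyset$ and picking any $v\in V$. Then $v^{-1}V$ is an open neighborhood of $1_G$, so \autoref{lem:posmeascpt} applied with this neighborhood yields a compact set $C\subseteq G$ and an element $t\in G$ with $\mu(C)>0$ and $C\subseteq t(v^{-1}V)$. Setting $s:=vt^{-1}$, the compact set $sC$ is then contained in $V$.

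Next, I would construct $\mu'$ in two steps. Define the auxiliary Borel probability measure $\mu_0$ by
\[\mu_0(A)=\frac{\mu(A\cap C)}{\mu(C)}\qquad \text{for Borel } A\subseteq G,\]
and then let $\mu'$ be its push-forward under the left translation $x\mapsto sx$; equivalently $\mu'(A)=\mu_0(s^{-1}A)$ for Borel $A$. Since left translation is a homeomorphism of $G$, the measure $\mu'$ is a Borel probability measure, and $\supp\mu'=s\cdot\supp\mu_0\subseteq sC\subseteq V$. Being a closed subset of the compact set $sC$, the support $\supp\mu'$ is itself compact and contained in $V$, as required.

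Finally, I would verify the translation-invariance of the nullity property: for arbitrary $g,h\in G$,
\[\mu'(gBh)=\mu_0(s^{-1}gBh)=\frac{\mu\bigl((s^{-1}g)Bh\cap C\bigr)}{\mu(C)}\le\frac{\mu\bigl((s^{-1}g)Bh\bigr)}{\mu(C)}=0,\]
where the last equality uses the hypothesis that $\mu(g'Bh)=0$ for \emph{every} $g',h\in G$. (Here $gBh$ is universally measurable because $B$ is, and universal measurability is preserved under the homeomorphisms $x\mapsto gxh$, so $\mu$ evaluates on it unambiguously.) I do not foresee any serious obstacle; the only point worth care is keeping track of left versus right translations so that the left-invariance used to move $C$ into $V$ does not interfere with the two-sided translates $gBh$ in the nullity condition, which is handled automatically because the hypothesis quantifies over all $g,h$.
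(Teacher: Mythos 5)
Your proof is correct and follows essentially the same route as the paper: apply \autoref{lem:posmeascpt} to the neighborhood $v^{-1}V$ of $1_G$, restrict $\mu$ to the resulting compact set $C$ of positive measure, renormalize, and translate so the support lands in $V$, with the two-sided nullity of the translates $gBh$ surviving because the hypothesis quantifies over all $g,h$. In fact your explicit bookkeeping of the translation $s=vt^{-1}$ is slightly more careful than the paper's one-line formula, which leaves exactly this adjustment to the reader.
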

\begin{proof}
For an arbitrary $v\in V$, the set $U = v^{-1} V$ is a neighborhood of $1_G$. Applying the previous lemma, it is easy to check that $\mu'(X) := \frac{\mu(tX \cap C)}{\mu(C)}$ satisfies our requirements.
\end{proof}

\begin{lemma}\label{correctionlemma}
Let $d$ be a metric on $G$ that is compatible with the topology of $G$. If $L \subseteq G$ is compact and $\varepsilon>0$ is arbitrary, then there exists a neighborhood $U$ of $1_G$ such that $d(x\cdot u, x) < \varepsilon $ for every $x \in L$ and $u\in U$.
\end{lemma}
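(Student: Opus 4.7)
The plan is to use the uniform continuity argument that is standard for continuous maps on compact sets, exploiting the continuity of multiplication together with the observation that at $u = 1_G$ we have $x \cdot 1_G = x$, so the quantity $d(x \cdot u, x)$ vanishes. Since we cannot assume that the metric $d$ is invariant (this is the whole reason the lemma is nontrivial), we must actually use compactness of $L$ to upgrade the pointwise smallness to a uniform one.

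First I would fix an arbitrary $x \in L$. By continuity of the multiplication map $G \times G \to G$ at the point $(x, 1_G)$, and using that $x \cdot 1_G = x$, there are an open neighborhood $V_x \subseteq G$ of $x$ and an open neighborhood $U_x$ of $1_G$ such that $d(y \cdot u, x) < \varepsilon / 2$ for every $y \in V_x$ and $u \in U_x$. By shrinking $V_x$ if necessary, we may also assume $V_x \subseteq B(x, \varepsilon / 2)$, so that $d(y, x) < \varepsilon / 2$ for all $y \in V_x$. Then for any $y \in V_x$ and $u \in U_x$ the triangle inequality gives
\[
d(y \cdot u, y) \le d(y \cdot u, x) + d(x, y) < \varepsilon / 2 + \varepsilon / 2 = \varepsilon.
\]

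Next I would apply compactness: the open sets $\{V_x : x \in L\}$ cover $L$, so there is a finite subcover $V_{x_1}, \ldots, V_{x_n}$. Setting $U := \bigcap_{i=1}^n U_{x_i}$ yields an open neighborhood of $1_G$, and any $x \in L$ lies in some $V_{x_i}$, so for every $u \in U \subseteq U_{x_i}$ we get $d(x \cdot u, x) < \varepsilon$, as required.

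The only real subtlety is remembering that the estimate we actually obtain from the joint continuity of multiplication is $d(y \cdot u, x) < \varepsilon / 2$ rather than $d(y \cdot u, y) < \varepsilon$, and that we need to absorb the error $d(y, x)$ via the triangle inequality — which forces us to shrink $V_x$ to control that term as well. No further ingredients beyond continuity of multiplication and compactness of $L$ are needed.
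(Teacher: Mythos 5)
Your proof is correct and follows essentially the same route as the paper: per-point neighborhoods $V_x \times U_x$ coming from joint continuity at $(x,1_G)$, a finite subcover of the compact set $L$, and taking $U=\bigcap U_{x_i}$. The only (cosmetic) difference is that the paper applies continuity directly to the real-valued map $(x,u)\mapsto d(x\cdot u,x)$, which vanishes at $(x,1_G)$, so your $\varepsilon/2$ triangle-inequality step is absorbed automatically; both versions are fine.
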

\begin{proof}
(Reproduced from \cite[Lemma 2]{Do}.) By the continuity of the function $(x, u)\mapsto d(x \cdot u, x)$, for every $x \in L$ there are neighborhoods $V_x$ of $x$ and $U_x$ of $1_G$ such that the image of $ V_x \times U_x$ is a subset of $[0, \varepsilon)$. Let $F\subseteq L$ be a finite set such that $L \subseteq \bigcup_{x\in F} V_x$. It is easy to check that $U=\bigcap_{x\in F} U_x$ satisfies our conditions.
\end{proof}

\begin{samepage}
\begin{theorem}[Christensen, Mycielski]\label{thm:haarnullsigmaideal}
\begin{multistmt}
\item The system $\mathcal{HN}$ of Haar null sets is a $\sigma$-ideal.
\item The system $\mathcal{GHN}$ of generalized Haar null sets is a $\sigma$-ideal.
\end{multistmt}
\end{theorem}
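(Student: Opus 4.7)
The plan is to verify the three axioms in \autoref{def:sigmaideal} for both $\mathcal{HN}$ and $\mathcal{GHN}$ at the same time; since the Borel and universally measurable classes are both closed under two-sided translations, countable unions, and preimages under continuous maps between Polish spaces, the two arguments are identical. Properties (I) and (II) are immediate: $\emptyset$ is Borel and any Borel probability measure witnesses its Haar nullness, and a subset of a (generalized) Haar null set is covered by the same cover with the same witness.

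The real content is property (III). Given sets $A_n$ with covers $B_n$ and witness measures $\mu_n$, I would set $B = \bigcup_n B_n$ and build a single Borel probability witness $\mu$ for $B$. By countable subadditivity of $\mu$, it suffices to arrange $\mu(g B_n h) = 0$ for every $n$ and every $g, h \in G$. The construction takes $\mu$ to be the pushforward, under an infinite-multiplication map $\pi$, of the product measure $\bigotimes_n \mu_n$ on a product of suitably small compact sets.

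To set this up, use \autoref{cptsupwitness} to replace each $\mu_n$ by a witness measure supported on a compact set $K_n$ contained in a prescribed neighborhood of $1_G$. Fix a compatible complete metric $d$ on $G$, and choose the $K_n$ recursively as follows: once $K_1, \ldots, K_{n-1}$ have been fixed, apply \autoref{correctionlemma} to each of the compact sets $K_{m+1} K_{m+2} \cdots K_{n-1}$ (for $m < n$) with $\varepsilon = 2^{-n}$ to obtain neighborhoods of $1_G$, intersect them to a single neighborhood $U_n$, and pick the next witness with support $K_n \subseteq U_n$. This guarantees that for every $m \ge 0$ and every $(x_k) \in \prod_k K_k$ the partial tail products $x_{m+1} x_{m+2} \cdots x_N$ are $d$-Cauchy as $N \to \infty$ and therefore converge. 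The limit map $\pi \colon \prod_n K_n \to G$, $(x_k) \mapsto \lim_N x_1 \cdots x_N$, is then continuous, and I would set $\mu = \pi_{*}\bigl(\bigotimes_n \mu_n\bigr)$.

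For the verification, fix $n$ and observe that $\pi((x_k))$ factors as $(x_1 \cdots x_{n-1}) \cdot x_n \cdot (x_{n+1} x_{n+2} \cdots)$. Hence, for fixed $g, h \in G$ and fixed values of the coordinates $(x_k)_{k \ne n}$, the section of $\pi^{-1}(g B_n h)$ in the $n$-th coordinate is a two-sided translate of $B_n$, which is (universally) measurable and has $\mu_n$-measure zero by hypothesis; Fubini then yields $\mu(g B_n h) = 0$. The main obstacle is the inductive choice of the $K_n$: each step must simultaneously control the tail of every previously started partial product, and this is exactly what the combined use of \autoref{cptsupwitness} and \autoref{correctionlemma} secures.
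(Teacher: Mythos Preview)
Your proposal is correct and follows essentially the same route as the paper's proof: replace each witness by one with small compact support via \autoref{cptsupwitness}, recursively shrink the supports using \autoref{correctionlemma} so that infinite products converge, push forward the product measure along the multiplication map, and conclude with Fubini. The only difference is cosmetic: you apply \autoref{correctionlemma} to every partial product $K_{m+1}\cdots K_{n-1}$, whereas the paper applies it only to the full initial segment $C_0\cdots C_{n-1}$; the extra care is harmless but unnecessary, since convergence of the tail $x_{n+1}x_{n+2}\cdots$ already follows from convergence of the full product by continuity of left multiplication.
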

\end{samepage}

\begin{proof} It is trivial that both $\mathcal{HN}$ and $\mathcal{GHN}$ satisfy (I) and (II) in \autoref{def:sigmaideal}. The proof of (III) that is reproduced here is from the appendix of \cite{CK}, where a corrected version of the proof in \cite{Myc} is given. Proving this fact is easier in abelian Polish groups (see \cite[Theorem 1]{Chr}) and when the group is metrizable with a complete left invariant metric (this would allow the proof of \cite[Theorem 3]{Myc} to work without modifications). The appendix of \cite{CK} mentions the other approaches and discusses the differences between them.

The proof of (III) for Haar null and for generalized Haar null sets is very similar. The following proof will be for Haar null sets, but if \enquote{Borel set} is replaced with \enquote{universally measurable set} and \enquote{Haar null} is replaced with \enquote{generalized Haar null}, it becomes the proof for generalized Haar null sets.

Let $A_n$ be Haar null for all $n\in \omega$. By definition there are Borel sets $B_n \subseteq G$ and Borel probability measures $\mu_n$ on $G$ such that $A_n \subseteq B_n$ and $\mu_n(gB_n h)=0$ for every $g, h \in G$. Let $d$ be a complete metric on $G$ that is compatible with the topology of $G$ (as $G$ is Polish, it is completely metrizable).

We construct for all $n \in \omega$ a compact set $C_n \subseteq G$ and a Borel probability measure $\tilde\mu_n$ such that the support of $\tilde\mu_n$ is $C_n$, $\tilde\mu_n(gB_n h)=0$ for every $g, h\in G$ (i.e. $\tilde\mu_n$ is a witness measure) and the \enquote{size} of the sets $C_n$ decreases \enquote{quickly}.

The construction will be recursive. For the initial step use \autoref{cptsupwitness} to find a Borel probability measure $\tilde\mu_0$ that satisfies $\tilde\mu_0(gB_0 h)=0$ for every $g, h\in G$ and that has compact support $C_0 \subseteq G$. Assume that $\tilde{\mu}_{n'}$ and $C_{n'}$ are already defined for all $n'<n$. By \autoref{correctionlemma} there exists a neighborhood $U_n$ of $1_G$ such that if $u\in U_n$, then $d(k\cdot u, k)<2^{-n}$ for every $k$ in the compact set $C_0C_1C_2\cdots C_{n-1}$. Applying \autoref{cptsupwitness} again we can find a Borel probability measure $\tilde{\mu}_n$ that satisfies $\tilde\mu_n(gB_n h)=0$ for every $g, h\in G$ and that has a compact support $C_n \subseteq U_n$.

If $c_n \in C_n$ for all $n\in \omega$, then it is clear that the sequence $(c_0c_1c_2\cdots c_n)_{n\in\omega}$ is a Cauchy sequence. As $(G, d)$ is complete, this Cauchy sequence is convergent; we write its limit as the infinite product $c_0c_1c_2\cdots$. The map $\varphi : \prod_{n\in\omega} C_n \to G$, $\varphi((c_0, c_1, c_2, \ldots)) = c_0c_1c_2\cdots$ is the uniform limit of continuous functions, hence it is continuous. 

Let $\mu^\Pi$ be the product of the measures $\tilde{\mu}_n$ on the product space $C^\Pi := \prod_{n\in\omega} C_n$. Let $\mu = \varphi_*(\mu^\Pi)$ be the push-forward of $\mu^\Pi$ along $\varphi$ onto $G$, i.e. 
\[\mu(X) = \mu^\Pi(\varphi^{-1}(X))= \mu^\Pi\left(\left\{(c_0, c_1, c_2, \ldots) \in C^\Pi : c_0c_1c_2\cdots \in X\right\}\right).\]

We claim that $\mu$ witnesses that $A=\bigcup_{n\in\omega} A_n$ is Haar null. Note that $A$ is contained in the Borel set $B = \bigcup_{n\in\omega} B_n$, so it is enough to show that $\mu(gBh)=0$ for every $g, h\in G$. As $\mu$ is $\sigma$-additive, it is enough to show that $\mu(g B_n h)=0$ for every $g, h\in G$ and $n\in\omega$. 

Fix $g, h \in G$ and $n \in \omega$. Notice that if $c_j\in C_j$ for every $j\neq n$, $j\in\omega$, then
\begin{align*}
&\tilde{\mu}_n\left(\{c_n \in C_n: c_0c_1c_2\cdots c_n\cdots \in g B_n h\}\right)=\\
&\quad=\tilde{\mu}_n\left(\left(c_0c_1\cdots c_{n-1}\right)^{-1}\cdot g B_n h\cdot \left(c_{n+1}c_{n+2}\cdots\right)^{-1}\right) = 0
\end{align*}
because $\tilde\mu_n(g'B_n h')=0$ for all $g', h'\in G$. Applying Fubini's theorem in the product space $\left(\prod_{j\neq n} C_j\right)\times C_n$ to the product measure  $\left(\prod_{j\neq n} \tilde{\mu}_j\right)\times \tilde{\mu}_n$ yields that 
\[0=\mu^\Pi\left(\left\{(c_0, c_1, \ldots, c_n, \ldots) \in C^\Pi : c_0c_1\cdots c_n\cdots \in gB_n h \right\}\right).\]
By the definition of $\mu$ this means that $\mu(gB_n h)=0$.
\end{proof}

The analogous statement for Haar meager sets was proved as \cite[Theorem 2.9]{Da} in the abelian case, and as \cite[Theorem 3]{Do} in the general case:

\begin{theorem}[Darji, Dole\v zal-Rmoutil-Vejnar-Vlas\' ak]\label{thm:haarmeagersigmaideal}
The system $\mathcal{HM}$ of Haar meager sets is a $\sigma$-ideal.
\end{theorem}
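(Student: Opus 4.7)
Properties (I) and (II) of \autoref{def:sigmaideal} are immediate for $\mathcal{HM}$: the empty set is Haar meager with any continuous function from a nonempty compact metric space as witness, and any witness for $A'\supseteq A$ also witnesses $A$. So the plan focuses on property (III). The overall strategy is to mimic the structure of the proof of \autoref{thm:haarnullsigmaideal}, replacing the witness measures $\tilde\mu_n$ by witness functions $f_n:K_n\to G$, the product measure by the product compactum $K=\prod_{n\in\omega}K_n$, and Fubini's theorem by the Kuratowski--Ulam theorem.

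Given Haar meager sets $A_n\subseteq B_n$ with $B_n$ Borel and witnesses $f_n:K_n\to G$, I would first fix a complete metric $d$ compatible with the Polish topology of $G$ and recursively modify the $f_n$ so that their images lie in shrinking neighborhoods of $1_G$: at step $n$, apply \autoref{correctionlemma} to the compact set $f_0(K_0)\cdots f_{n-1}(K_{n-1})$ to obtain a neighborhood $U_n$ of $1_G$ with $d(xu,x)<2^{-n}$ for every $x$ in that set and $u\in U_n$, then invoke a shrinkage lemma (discussed below) to replace $f_n$ by a witness with image in $U_n$. With this arrangement, for every $k=(k_n)\in K$ the partial products $P_N(k)=f_0(k_0)\cdots f_N(k_N)$ satisfy $d(P_{N+1}(k),P_N(k))<2^{-(N+1)}$, so they form a uniformly Cauchy sequence and $F(k):=\lim_N P_N(k)$ defines a continuous function $F:K\to G$, with $K$ compact metric by Tychonoff's theorem.

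To show that $F$ witnesses that $B=\bigcup_n B_n$ is Haar meager, it suffices to verify that each $F^{-1}(gB_nh)$ is meager, because $F^{-1}(gBh)=\bigcup_n F^{-1}(gB_nh)$. Write $K=K^{(n)}\times K_n$ with $K^{(n)}=\prod_{j\ne n}K_j$; both factors are second countable Baire spaces. For a fixed $\bar k\in K^{(n)}$, set $a=f_0(\bar k_0)\cdots f_{n-1}(\bar k_{n-1})$ and $b_N=f_{n+1}(\bar k_{n+1})\cdots f_N(\bar k_N)$. For any chosen $k_n^0\in K_n$, the convergence of $af_n(k_n^0)b_N=P_N(\bar k,k_n^0)$ to $F(\bar k,k_n^0)$ forces $b_N$ to converge to some $b\in G$, and then $F(\bar k,k_n)=af_n(k_n)b$ for every $k_n\in K_n$ by continuity of multiplication. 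The $\bar k$-section of $F^{-1}(gB_nh)$ is therefore $f_n^{-1}(a^{-1}gB_nhb^{-1})$, which is meager in $K_n$ by the witness property of $f_n$. Since $F^{-1}(gB_nh)$ is Borel and hence has the Baire property, the Kuratowski--Ulam theorem yields that it is meager in $K$.

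The main obstacle is the shrinkage lemma used in the recursion: given a witness $f:K\to G$ for a Borel set $B$ and any open neighborhood $V$ of $1_G$, produce a witness $\tilde f:\tilde K\to G$ for the same $B$ with $\tilde f(\tilde K)\subseteq V$. I would fix an auxiliary open $V'$ with $\overline{V'}\subseteq V$, use compactness of $f(K)$ to pick $g_1\in G$ such that $f^{-1}(g_1V')$ is nonempty, then choose a non-empty open $O'\subseteq K$ with $\overline{O'}\subseteq f^{-1}(g_1V')$ and set $\tilde K=\overline{O'}$, $\tilde f=g_1^{-1}f|_{\tilde K}$. The delicate verification is that the restriction is still a witness, which requires that meager sets of $K$ intersect $\tilde K$ in meager subsets of $\tilde K$; this relies crucially on $\tilde K$ being the closure of the open set $O'$, since any non-empty open set in $\tilde K$ then meets $O'$ in a non-empty open subset of $K$, forcing nowhere dense sets of $K$ to cut out nowhere dense subsets of $\tilde K$.
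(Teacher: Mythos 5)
Your proposal is correct and follows essentially the same route as the paper's proof of \autoref{thm:haarmeagersigmaideal}: shrink each witness so that its image lies in a prescribed small neighborhood of $1_G$ (your shrinkage lemma is precisely the paper's step $\tilde{K}_n=\overline{f_n^{-1}(x_nU_n)}$, $\tilde{f}_n=x_n^{-1}f_n$, justified by the fact that an open set is comeager in its closure), pass to the product compactum, take the uniform limit of the partial products, and finish by computing the sections and applying the Kuratowski--Ulam theorem. The only difference is organizational: you choose the neighborhoods $U_n$ recursively from the images of the already modified witnesses (mirroring the Haar null proof), while the paper fixes them in a single pass; in either arrangement the partial products are uniformly Cauchy, so the argument goes through.
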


\begin{proof}
Again, the proof of (I) and (II) in \autoref{def:sigmaideal} is obvious. We reproduce the proof of (III) from \cite[Theorem 3]{Do}. This proof will be very similar to the proof of \autoref{thm:haarnullsigmaideal}, but restricting the witnesses to a smaller \enquote{part} of $G$ is simpler in this case (we do not need an analogue of \autoref{cptsupwitness}).

Let $A_n$ be Haar meager for all $n\in \omega$. By definition there are Borel sets $B_n \subseteq G$, compact metric spaces $K_n \neq \emptyset$ and continuous functions $f_n : K_n \to G$ such that $f_n^{-1}(gB_nh)$ is meager in $K_n$ for every $g, h\in G$. Let $d$ be a complete metric on $G$ that is compatible with the topology of $G$.

We construct for all $n \in \omega$ a compact metric space $\tilde{K}_n$ and a continuous function $\tilde{f}_n : \tilde{K}_n \to G$ satisfying that $\tilde{f}_n^{-1}(gB_n h)$ is meager in $\tilde{K}_n$ for every $g, h\in G$ (i.e. $\tilde{f}_n$ is a witness function) and the \enquote{size} of the images $\tilde{f}_n(\tilde{K}_n)\subseteq G$ decreases \enquote{quickly}.

Unlike the Haar null case, we do not have to apply recursion in this construction. By \autoref{correctionlemma} there exists a neighborhood $U_n$ of $1_G$ such that if $u\in U_n$, then $d(k\cdot u, k) <2^{-n}$ for every $k$ in the compact set $f_0(K_0)f_1(K_1)\cdots f_{n-1}(K_{n-1})$. Let $x_n \in f_n(K_n)$ be an arbitrary element and $\tilde{K}_n =\overline{f_n^{-1}(x_n U_n)}$. The set $\tilde{K}_n$ is compact (because it is a closed subset of a compact set) and nonempty. Let $\tilde{f}_n : \tilde{K}_n \to G$, $\tilde{f}_n(k) = x_n^{-1} f_n(k)$, this is clearly continuous. 

\begin{claim}\label{meagerwitness}
For every $n\in\omega$ and $g, h \in G$, $\tilde{f}_n^{-1}(gB_nh)$ is meager in $\tilde{K}_n$.
\end{claim}

\begin{proof}
Fix $n\in\omega$ and $g, h \in G$. The set $\tilde{f}_n^{-1}(U_n)$ is open in $K_n$ and because $f$ is a witness function, the set $\tilde{f}_n^{-1}(g B_n h) = f_n^{-1}(x_n g B_n h)$ is meager in $K_n$. This means that $\tilde{f}_n^{-1}(U_n)\cap \tilde{f}_n^{-1}(gB_n h)$ is meager in $\tilde{f}_n^{-1}(U_n)$. Since each open subset of $K_n$ is comeager in its closure and the closure of $\tilde{f}_n^{-1}(U_n)=f_n^{-1}(x_n U_n)$ is $\overline{f_n^{-1}(x_n U_n)}=\tilde{K}_n$, simple formal calculations yield that $\tilde{f}_n^{-1}(g B_n h) \cap \tilde{K}_n$ is meager in $\tilde{K}_n$.
\end{proof}

Let $K$ be the compact set $\prod_{n\in \omega} \tilde{K}_n$ and for $n\in\omega$ let $\psi_n$ be the continuous function $\psi_n : K \to G$, 
\[\psi_n(k) = \tilde{f}_0(k_0)\cdot \tilde{f}_1(k_1)\cdot \ldots \cdot \tilde{f}_{n-1}(k_{n-1}).\]
By the choice of $U_n$ we obtain $d(\psi_{n-1}(k), \psi_{n}(k)) \le 2^{-n}$ for every $k\in K$. Using the completeness of $d$ this means that the sequence of functions $(\psi_n)_{n\in\omega}$ is uniformly convergent. Let $f: K \to G$ be the limit of this sequence. $f$ is continuous, because it is the uniform limit of continuous functions.

We claim that $f$ witnesses that $A=\bigcup_{n\in \omega} A_n$ is Haar meager. Note that $A$ is contained in the Borel set $B = \bigcup_{n\in\omega} B_n$, so it is enough to show that $f^{-1}(gBh)$ is meager in $K$ for every $g, h\in G$. As meager subsets of $K$ form a $\sigma$-ideal, it is enough to show that $f^{-1}(g B_n h)$ is meager in $K$ for every $g, h\in G$ and $n\in\omega$. 

Fix $g, h\in G$ and $n\in \omega$. Notice that if $k_j \in \tilde{K}_j$ for every $j\neq n$, $j\in \omega$, then \autoref{meagerwitness} means that
\begin{align*}
&\{k_n \in \tilde{K}_n: f(k_0, k_1, \ldots, k_n, \ldots) \in g B_n h\}=\\
&\quad=\{k_n \in \tilde{K}_n : \tilde{f}_0(k_0)\cdot \tilde{f}_1(k_1)\cdot \ldots \cdot \tilde{f}_n(k_n)\cdot \ldots \in gB_n h\}\\
&\quad=\tilde{f}_n^{-1}\left(\left(\tilde{f}_0(k_0)\cdot \ldots \tilde{f}_{n-1}(k_{n-1})\right)^{-1}\cdot g B_n h\cdot \left(\tilde{f}_{n+1}(k_{n+1})\cdot\tilde{f}_{n+2}(k_{n+2})\cdot\ldots\right)^{-1}\right)
\end{align*}
is meager in $\tilde{K}_n$. Applying the Kuratowski-Ulam theorem (see e.g.\ \cite[Theorem 8.41]{Ke}) in the product space $\left(\prod_{j\neq n} \tilde{K}_j\right)\times \tilde{K}_n$, the Borel set $f^{-1}(gB_nh)$ is meager.
\end{proof}

As the group $G$ acts on itself via multiplication, it is useful if this action does not convert \enquote{small} sets into \enquote{large} ones. This means that a \enquote{nice} notion of smallness must be a translation invariant system.

\begin{definition} A system $\mathcal{I}\subseteq \mathcal{P}(G)$ is called \emph{translation invariant} if $A\in \mathcal{I} \Leftrightarrow gAh\in \mathcal{I}$ for every $A \subseteq G$ and $g, h\in G$.
\end{definition}

\begin{proposition}
The $\sigma$-ideals $\mathcal{HN}$, $\mathcal{GHN}$ and $\mathcal{HM}$ are all translation invariant.
\end{proposition}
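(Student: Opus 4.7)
The plan is that this proposition follows almost immediately from the symmetry built into the definitions. The key observation is that if we quantify over \emph{all} pairs $(g, h) \in G \times G$ when checking smallness of a set $B$, then translating $B$ on the left or right by fixed group elements $g_0, h_0$ merely reparametrizes this quantification. I would treat all three cases in parallel, fixing arbitrary $A \subseteq G$ and $g_0, h_0 \in G$, and showing that $A$ is small if and only if $g_0 A h_0$ is small (one direction then gives the other by taking $g_0^{-1}$ and $h_0^{-1}$).

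First I would record the preliminary fact that for every $g_0, h_0 \in G$, the map $x \mapsto g_0 x h_0$ is a homeomorphism of $G$ (since multiplication and inversion are continuous). Consequently, left and right translation preserve both Borel sets and universally measurable sets; the latter because universal measurability is defined in terms of $\sigma$-finite Borel measures, and pushing forward along a homeomorphism gives a bijection on $\sigma$-finite Borel measures.

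For $\mathcal{HN}$, suppose $A$ is Haar null with Borel superset $B \supseteq A$ and witness measure $\mu$. Then $g_0 B h_0$ is Borel and contains $g_0 A h_0$, and for every $g, h \in G$ we have
\[
\mu\bigl(g(g_0 B h_0)h\bigr) = \mu\bigl((gg_0) B (h_0 h)\bigr) = 0,
\]
so $\mu$ is also a witness measure for $g_0 A h_0$. The argument for $\mathcal{GHN}$ is identical, except one invokes the preliminary fact to ensure that $g_0 B h_0$ remains universally measurable. For $\mathcal{HM}$, suppose $A$ is Haar meager with Borel superset $B$, compact metric space $K$ and witness function $f : K \to G$. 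Then $g_0 B h_0 \supseteq g_0 A h_0$ is Borel, and for every $g, h \in G$,
\[
f^{-1}\bigl(g(g_0 B h_0)h\bigr) = f^{-1}\bigl((gg_0) B (h_0 h)\bigr)
\]
is meager in $K$ by hypothesis, so the same $f$ witnesses that $g_0 A h_0$ is Haar meager.

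There is essentially no obstacle; the only point to be careful about is in the $\mathcal{GHN}$ case, where one must justify that universal measurability is preserved under the homeomorphism $x \mapsto g_0 x h_0$, but this is immediate from the definition once one notes that pullback along a homeomorphism is a bijection on $\sigma$-finite Borel measures.
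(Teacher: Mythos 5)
Your proposal is correct and is exactly the argument the paper has in mind: its proof consists of the single remark that the statement is clear from the definitions, since the quantification over all pairs $(g,h)$ absorbs any fixed translation, the same witness measure or function works for the translated set, and translation (being a homeomorphism) preserves the Borel and universally measurable hulls. You simply spell out these details, including the symmetry argument reducing the equivalence to one implication.
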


\begin{proof} This is clear from \autoref{def:haarnull}, \autoref{def:genhaarnull} and \autoref{def:haarmeager}.
\end{proof}

If a nontrivial notion of smallness has these \enquote{nice} properties, then the following lemma states that countable sets are small and nonempty open sets are not small. Applying this simple fact for our $\sigma$-ideals is often useful in simple cases.

\begin{lemma}\label{lem:ctblsmallopenlarge}
Let $\mathcal{I}$ be a translation invariant $\sigma$-ideal that contains a nonempty set but does not contain all subsets of $G$. If $A \subseteq G$ is countable, then $A \in \mathcal{I}$, and if $U\subseteq G$ is nonempty open, then $U\notin \mathcal{I}$.
\end{lemma}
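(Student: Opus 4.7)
The plan is to prove the two claims separately, using translation invariance for both and the $\sigma$-ideal property plus separability of $G$ for the second.

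For the first claim, I would start from the hypothesis that $\mathcal{I}$ contains a nonempty set $A_0$. By (II) in the definition of $\sigma$-ideal, $\mathcal{I}$ contains some singleton $\{g_0\}$ (any singleton subset of $A_0$). By translation invariance, for every $h \in G$ the singleton $\{h\} = (hg_0^{-1}) \cdot \{g_0\} \cdot 1_G$ lies in $\mathcal{I}$. A countable set is the union of countably many singletons, so by (III) it belongs to $\mathcal{I}$.

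For the second claim, suppose for contradiction that some nonempty open $U \subseteq G$ lies in $\mathcal{I}$. Pick any $u \in U$; then $V := Uu^{-1}$ is an open neighborhood of $1_G$ that belongs to $\mathcal{I}$ by translation invariance. Since $G$ is Polish, it is separable, so let $\{g_n : n \in \omega\}$ be a countable dense subset. The key observation is that $G = \bigcup_{n \in \omega} g_n V$: for any $g \in G$, the set $g V^{-1}$ is a nonempty open neighborhood of $g$, so by density it contains some $g_n$, which rearranges to $g \in g_n V$. Each $g_n V$ lies in $\mathcal{I}$ by translation invariance, so by (III) the whole group $G$ lies in $\mathcal{I}$, and then (II) forces every subset of $G$ into $\mathcal{I}$, contradicting the hypothesis.

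There is no substantial obstacle here; the only point that requires some care is the covering argument $G = \bigcup_n g_n V$, which uses both the group structure (to reduce from $U$ to a neighborhood of $1_G$) and separability (to get a countable cover). All other steps are immediate applications of the defining properties of a translation invariant $\sigma$-ideal.
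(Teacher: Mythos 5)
Your proof is correct and follows essentially the same route as the paper: for the first claim, pass from a nonempty set in $\mathcal{I}$ to singletons via translation invariance and take a countable union; for the second, cover $G$ by countably many translates of a set in $\mathcal{I}$ and derive a contradiction. The only cosmetic difference is in how the countable cover is produced --- the paper extracts it from $G=\bigcup_{g\in G} gU$ using that $G$ is Lindel\"of, while you use a countable dense set together with the translate $V=Uu^{-1}$; both are immediate consequences of separability.
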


\begin{proof}
If $x\in G$, then any nonempty set in $\mathcal{I}$ has a translate that contains $\{x\}$ as a subset, hence $\{x\}\in \mathcal{I}$. Using that $\mathcal{I}$ is closed under countable unions, this yields that if $A \subseteq G$ is countable, then $A \in \mathcal{I}$. To prove the other claim, suppose for a contradiction that $U\subseteq G$ is a nonempty open set that is in $\mathcal{I}$. It is clear that $G=\bigcup_{g\in G} gU$, and as $G$ is Lindel\"of, $G=\bigcup_{n\in\omega} g_n U$ for some countable subset $\{g_n : n\in \omega\}\subseteq G$. But here $g_n U\in \mathcal{I}$ (because $\mathcal{I}$ is translation invariant) and thus $G \in \mathcal{I}$ (because $\mathcal{I}$ is closed under countable unions), and this means that $\mathcal{I}$ contains all subsets of $G$, and this is a contradiction.
\end{proof}

\begin{remark}
Let $\mathcal{I}$ be one of the $\sigma$-ideals $\mathcal{HN}$, $\mathcal{GHN}$ and $\mathcal{HM}$. If $G$ is countable, then $\mathcal{I}=\{\emptyset\}$, otherwise $\mathcal{I}$ contains a nonempty set and does not contain all subsets of $G$.
\end{remark}

\subsection{Connections to Haar measure and meagerness}\label{ssec:loccptcase}

This subsection discusses the connection between sets with Haar measure zero and Haar null sets and the connection between meager sets and Haar meager sets. In the simple case when $G$ is locally compact we will find that equivalence holds for both pairs, justifying the names \enquote{Haar null} and \enquote{Haar meager}. When $G$ is non-locally-compact, we will see that the first connection is broken by the fact that there is no Haar measure on the group. For the other pair we will see that Haar meager sets are always meager, but there are Polish groups where the converse is not true.

First we recall some well-known facts about Haar measures. For proofs and more detailed discussion see for example \cite[\S15]{HR}.

\begin{definition}
If $(X, \Sigma)$ is a measurable space with $\mathcal{B}(X) \subseteq \Sigma$, then a measure $\mu : \Sigma \to [0,\infty]$ is called \emph{regular} if $\mu(U)= \sup \{ \mu(K) : K\subseteq U, K\text{ is compact}\}$ for every $U$ open set and $\mu(A)=\inf \{ \mu(U) : A \subseteq U, U\text{ is open}\}$ for every set $A$ in the domain of $\mu$. 
\end{definition}

\begin{definition}\label{def:haarmeas}
If $G$ is a topological group (not necessarily Polish), a measure $\lambda : \mathcal{B}(G)\to [0,\infty]$ is called a \emph{left Haar measure} if it satisfies the following properties:
\begin{propertylist}
\item $\lambda(F)<\infty$ if $F$ is compact,
\item $\lambda(U)>0$ if $U$ is a nonempty open set,
\item $\lambda(gB)=\lambda(B)$ for all $B\in \mathcal{B(G)}$ and $g\in G$ (\emph{left invariance}),
\item $\lambda$ is regular.
\end{propertylist}
If left invariance is replaced by the property $\lambda(Bg)=\lambda(B)$ for all $B\in \mathcal{B}(G)$ and $g\in G$ (\emph{right invariance}), the measure is called a \emph{right Haar measure}.
\end{definition}

\begin{theorem}[existence of the Haar measure]\label{thm:haarmeasexists}
If $G$ is a locally compact group (not necessarily Polish), then there exists a left (right) Haar measure on $G$ and if $\lambda_1$, $\lambda_2$ are two left (right) Haar measures, then $\lambda_1=c \cdot \lambda_2$ for a positive real constant $c$.
\end{theorem}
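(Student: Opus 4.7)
The plan is to construct a left Haar measure via Haar's classical covering-number argument and to derive uniqueness from a Fubini-type computation combined with the Riesz representation theorem; the right-invariant case is symmetric (or, equivalently, follows from the left case applied to the opposite group).

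For existence, fix a compact set $K_0 \subseteq G$ with nonempty interior (which exists because $G$ is locally compact). For every compact $K$ and every open neighborhood $V$ of $1_G$, define the covering number $(K:V)$ as the least $n$ such that $K$ can be covered by $n$ left translates of $V$; this is finite by compactness of $K$. Set
\[
\lambda_V(K) = \frac{(K:V)}{(K_0:V)}.
\]
Elementary estimates show that $0 \le \lambda_V(K) \le (K:K_0)$, and that $\lambda_V$ is left-invariant, monotone, subadditive, and \emph{finitely additive} on pairs of disjoint compact sets $K_1, K_2$ once $V$ is small enough; this uses the separation fact that in a locally compact Hausdorff group, disjoint compacta $K_1, K_2$ admit a neighborhood $W$ of $1_G$ with $K_1 W \cap K_2 W = \emptyset$, so that for $V \subseteq W$ no single translate of $V$ can meet both sets. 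Viewing each $\lambda_V$ as a point of the compact product $\prod_{K}[0, (K:K_0)]$ and invoking Tychonoff, take a cluster point $\lambda$ of the net $(\lambda_V)_V$ as $V$ shrinks to $\{1_G\}$. Then $\lambda$ is a left-invariant content on compact sets with $\lambda(K_0) = 1$. Extend $\lambda$ to open sets by $\lambda(U) = \sup\{\lambda(K) : K \subseteq U,\ K \text{ compact}\}$ and then to arbitrary sets by outer regularity; a Carath\'eodory verification produces a regular Borel measure satisfying all four clauses of \autoref{def:haarmeas}.

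For uniqueness, let $\lambda_1, \lambda_2$ be two left Haar measures and fix a nonnegative $\varphi \in C_c(G)$ with $\int \varphi \, d\lambda_i > 0$ for $i = 1, 2$ (any bump function supported near $1_G$ works, since Haar measure is strictly positive on nonempty open sets). For arbitrary $f \in C_c(G)$, apply Fubini to the double integral
\[
\iint f(x)\, \varphi(x^{-1}y) \, d\lambda_1(x) \, d\lambda_2(y),
\]
which is legitimate because the integrand has compact support and both measures are finite on compacta. Using the left-invariance of each $\lambda_i$ via the change of variable $x \mapsto yx$ in the inner integral, one expresses this double integral in two ways. A symmetric manipulation with the roles of $\lambda_1$ and $\lambda_2$ interchanged shows that the ratio $(\int f \, d\lambda_1) / (\int f \, d\lambda_2)$ is independent of $f$ (among $f \in C_c(G)$ with nonzero integral). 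Calling this common ratio $c$, we obtain $\int f \, d\lambda_1 = c \int f \, d\lambda_2$ for every $f \in C_c(G)$, and the Riesz representation theorem combined with the regularity of both $\lambda_i$ upgrades this to $\lambda_1 = c \cdot \lambda_2$ as Borel measures.

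The main obstacle is the existence step, and specifically the verification that the cluster-point content $\lambda$ is genuinely additive (not merely subadditive) on disjoint compacta; this hinges on the separation fact recalled above together with the observation that finite additivity of a net of bounded functionals survives passage to a cluster point, both of which demand careful bookkeeping. Extending the content to a countably additive regular Borel measure via Carath\'eodory is standard but technical. By contrast, the uniqueness argument is essentially a formal calculation once Fubini and Riesz are available.
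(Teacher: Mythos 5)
The paper does not prove this statement at all: it is quoted as classical background, with a pointer to \cite[\S 15]{HR}, so there is no proof of record to compare against. Judged on its own terms, your existence half is the standard Haar covering-number construction (the functionals $\lambda_V(K)=(K:V)/(K_0:V)$, approximate additivity for small $V$ via the separation of disjoint compacta, a cluster point in $\prod_K[0,(K:K_0)]$ by Tychonoff, then the content-to-Radon-measure extension), and as a sketch it is correct; this is essentially the argument in the cited reference.

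The uniqueness half, however, has a genuine gap. Evaluating $\iint f(x)\varphi(x^{-1}y)\,d\lambda_1(x)\,d\lambda_2(y)$ by integrating in $y$ first does give $\bigl(\int f\,d\lambda_1\bigr)\bigl(\int\varphi\,d\lambda_2\bigr)$, but in the other order the substitution $x\mapsto yx$ produces $\int\varphi(x^{-1})\Bigl(\int f(yx)\,d\lambda_2(y)\Bigr)d\lambda_1(x)$, and the inner integral is the $\lambda_2$-integral of a \emph{right} translate of $f$. A left Haar measure gives you no control over right translates at this point: the natural way to handle $\int f(yx)\,d\lambda_2(y)$ is via the modular function, i.e.\ via the fact that $B\mapsto\lambda_2(Bx)$ is proportional to $\lambda_2$ --- but that proportionality is exactly an instance of the uniqueness you are trying to prove, so the ``symmetric manipulation'' as described does not close. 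The classical repair is Weil's normalization trick: for $f,g\in C_c^+(G)$ apply Fubini to $h(x,y)=f(x)\,g(yx)\big/\!\int g(tx)\,d\lambda_2(t)$, whose denominator makes the $x$-dependence cancel after the substitutions, yielding that $\int f\,d\lambda_i\big/\!\int g\,d\lambda_i$ is the same constant for $i=1,2$; only then does Riesz representation plus regularity give $\lambda_1=c\lambda_2$. Without this (or some equivalent device, such as first developing the modular function independently), the uniqueness step as written fails.
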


If $G$ is compact, (I) means that the left and right Haar measures are finite measures, and this fact can be used to prove the following result:

\begin{theorem} If $G$ is a compact group, then all left Haar measures are right Haar measures and vice versa.
\end{theorem}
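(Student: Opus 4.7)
The plan is to use the standard modular function argument, exploiting the fact that compactness forces the total measure to be finite. Let $\lambda$ be a left Haar measure on a compact group $G$. I would first fix $g \in G$ and consider the set function $\lambda_g : \mathcal{B}(G)\to[0,\infty]$ defined by $\lambda_g(B) = \lambda(Bg)$. The map $B\mapsto Bg$ is a Borel automorphism of $G$ (with continuous inverse $B\mapsto Bg^{-1}$), so $\lambda_g$ is a well-defined Borel measure. I would then check that $\lambda_g$ satisfies each of (I)–(IV) in \autoref{def:haarmeas}: finiteness on compacta and positivity on nonempty opens transfer because $Fg$ and $Ug$ are compact/nonempty open whenever $F$ and $U$ are; left invariance follows from $\lambda_g(hB)=\lambda(hBg)=\lambda(Bg)=\lambda_g(B)$; and regularity follows from regularity of $\lambda$ together with the fact that right translation by $g$ is a homeomorphism.

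Once this is established, the uniqueness clause in \autoref{thm:haarmeasexists} gives a positive real $\Delta(g)$ with
\[\lambda(Bg)=\Delta(g)\,\lambda(B)\qquad\text{for every }B\in\mathcal{B}(G).\]
The main obstacle, which is actually quite mild in the compact case, is pinning down the value of $\Delta(g)$. Here compactness does the work: by (I) in \autoref{def:haarmeas}, $\lambda(G)<\infty$, and by (II), $\lambda(G)>0$ (as $G$ itself is a nonempty open set). Plugging $B=G$ into the displayed equation gives $\lambda(G)=\lambda(Gg)=\Delta(g)\lambda(G)$, so dividing by the finite positive number $\lambda(G)$ forces $\Delta(g)=1$. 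Hence $\lambda(Bg)=\lambda(B)$ for every $g\in G$ and every $B\in\mathcal{B}(G)$, which is precisely the right invariance condition. The remaining properties (I), (II), (IV) are side-agnostic and already hold for $\lambda$, so $\lambda$ is a right Haar measure.

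The reverse implication, that every right Haar measure on $G$ is also a left Haar measure, is completely symmetric: the analogous argument using $B\mapsto g^{-1}B$ defines a modular function for the right action, and the same finiteness argument applied to $\lambda(G)=\lambda(g^{-1}G)$ trivializes it. Thus the two classes of measures coincide, proving the theorem.
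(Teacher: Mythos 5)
Your proof is correct, and it follows exactly the route the paper indicates (the paper states the theorem without a detailed proof, noting only that finiteness of the measure on the compact group is the key): you build the translated measure $\lambda_g(B)=\lambda(Bg)$, verify it is again a left Haar measure, invoke the uniqueness clause of \autoref{thm:haarmeasexists} to get the modular constant $\Delta(g)$, and then evaluate at $B=G$, where $0<\lambda(G)<\infty$ by compactness forces $\Delta(g)=1$. The symmetric argument for the converse is likewise fine, so nothing is missing.
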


This result is also trivially true in abelian locally compact groups, but not true in all locally compact groups. However, the following result remains true:
\begin{theorem} \label{thm:samezeroset}
If $G$ is a locally compact group, then the left Haar measures and the right Haar measures are absolutely continuous relatively to each other, that is, for every Borel set $B\subseteq G$, either every left Haar measure and every right Haar measure assigns measure zero to $B$ or no left Haar measure and no right Haar measure assigns measure zero to $B$.
\end{theorem}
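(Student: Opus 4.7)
The four-fold equivalence in the statement reduces to a single comparison: by the uniqueness clause of Theorem~\ref{thm:haarmeasexists}, any two left Haar measures (and any two right Haar measures) differ only by a positive multiplicative constant and hence share the same null sets. It therefore suffices to fix one left Haar measure $\lambda$ and one right Haar measure $\rho$ and prove that $\lambda(B)=0$ if and only if $\rho(B)=0$ for every Borel set $B\subseteq G$.

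The key bridge between left and right invariance is the \emph{modular function} $\Delta\colon G\to(0,\infty)$. For a fixed $g\in G$, the Borel measure $B\mapsto \lambda(Bg)$ satisfies all four axioms of \autoref{def:haarmeas} for a left Haar measure: left invariance follows from $\lambda(hBg)=\lambda(Bg)$, while the other three axioms are inherited because right translation by $g$ is a homeomorphism. Hence by Theorem~\ref{thm:haarmeasexists} this measure equals $\Delta(g)\lambda$ for a unique $\Delta(g)\in(0,\infty)$, and associativity of the group operation forces the identity $\Delta(gh)=\Delta(g)\Delta(h)$. A separate standard argument, using the regularity of $\lambda$ and a compact set of positive finite $\lambda$-measure, shows that $\Delta$ is continuous; I would cite this from \cite[\S15]{HR} rather than reprove it.

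With the modular function in hand, I would define $\rho(B):=\int_B \Delta(g)^{-1}\,d\lambda(g)$ and verify that $\rho$ is a right Haar measure. Regularity, finiteness on compacts, and strict positivity on nonempty open sets follow from the corresponding properties of $\lambda$ together with the continuity and strict positivity of $\Delta^{-1}$. Right invariance is a short change-of-variable calculation: substituting $g=g'h$ inside the integral defining $\rho(Bh)$ and using both $\lambda(\cdot\,h)=\Delta(h)\lambda$ and the homomorphism identity $\Delta(g'h)=\Delta(g')\Delta(h)$ collapses the $\Delta(h)$ and $\Delta(h)^{-1}$ factors and yields $\rho(Bh)=\rho(B)$. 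Because $\Delta^{-1}$ is strictly positive, $\lambda$ and this particular $\rho$ have exactly the same null sets, and since every right Haar measure is a positive scalar multiple of $\rho$ by uniqueness, the desired equivalence follows.

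The only genuinely delicate point is the existence and continuity of $\Delta$; everything else is either routine axiom-checking or a direct change-of-variable computation. An alternative route would be to consider the inversion push-forward $\tilde\lambda(B):=\lambda(B^{-1})$, which is itself a right Haar measure (the inversion map being a homeomorphism turning left invariance into right invariance), but relating $\lambda$ to $\tilde\lambda$ still requires the modular function to conclude $\lambda(B)=0\Leftrightarrow\lambda(B^{-1})=0$, so nothing essential is saved.
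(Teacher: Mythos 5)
Your argument is correct, but note that the paper does not actually prove \autoref{thm:samezeroset}: it is listed among the "well-known facts about Haar measures" recalled at the start of \autoref{ssec:loccptcase}, with the reader referred to \cite[\S15]{HR} for proofs. So there is no in-paper argument to compare against; what you have written is the standard modular-function proof, essentially the one found in the cited reference. Your reduction via the uniqueness clause of \autoref{thm:haarmeasexists} to a single pair $(\lambda,\rho)$ is right, the definition $\Delta(g)$ via $B\mapsto\lambda(Bg)$ and the homomorphism identity are right, and the change-of-variable computation showing that $d\rho=\Delta^{-1}\,d\lambda$ is right-invariant is correct; since $\Delta^{-1}>0$ everywhere, $\lambda$ and $\rho$ share null sets, and uniqueness transfers this to all left and right Haar measures. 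The only step you compress more than is ideal is the verification that $\rho$ is \emph{regular} in the sense of the paper's \autoref{def:haarmeas}: in a general (not necessarily $\sigma$-compact) locally compact group this requires a short argument using local boundedness of $\Delta^{-1}$ and the regularity of $\lambda$, and you need it in order to invoke the uniqueness clause for right Haar measures. Since you are already citing \cite[\S15]{HR} for the existence and continuity of $\Delta$, it would be consistent to cite the regularity verification from the same place, or to spell it out; either way the proof stands.
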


This allows us to define the following notion:
\begin{definition}
Suppose that $G$ is a locally compact group and fix an arbitrary left (or right) Haar measure $\lambda$. We say that a set $N\subseteq G$ has \emph{Haar measure zero} if $N\subseteq B$ for some Borel set $B$ with $\lambda(B)=0$. The collection of these sets is denoted by $\mathcal{N}=\mathcal{N}(G)$.
\end{definition}

\autoref{def:haarmeas} defines the Haar measures only on the Borel sets. If $\lambda$ is an arbitrary left (or right) Haar measure, we can complete it using the standard techniques. The domain of the completion will be $\sigma(\mathcal{B}(G)\cup \mathcal{N})$ (the $\sigma$-algebra generated by $\mathcal{N}$ and the Borel sets). For every set $A$ in this $\sigma$-algebra, let
\[\lambda(A) = \inf\left\{\sum_{j\in\omega} \lambda(B_j) : B_j\in \mathcal{B}(G), A \subseteq \bigcup_{j\in\omega}B_j\right\}.\]
This completion will be a complete measure that agrees with the original $\lambda$ on Borel sets and satisfies properties (I) -- (IV) from \autoref{def:haarmeas} (or right invariance instead of left invariance if $\lambda$ was a right Haar measure). 
We will identify a left (or right) Haar measure with its completion and we will also call this extension (slightly imprecisely) a left (or right) Haar measure.

The following theorem justifies the choice of the name of \enquote{Haar null} sets. The foundational paper \cite{Chr} shows this in the abelien case; \cite[Theorem 1]{Myc} gives a complete proof (using a slightly different definition of Haar measures).

\begin{theorem}[Christensen, Mycielski]\label{thm:loccptequivnotions}
If $G$ is a locally compact Polish group, then system of sets with Haar measure zero is the same as the system of Haar null sets and is the same as the system of generalized Haar null sets, that is, $\mathcal{N}(G)=\mathcal{HN}(G)=\mathcal{GHN}(G)$.
\end{theorem}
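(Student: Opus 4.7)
The plan is to establish the chain $\mathcal{N}(G) \subseteq \mathcal{HN}(G) \subseteq \mathcal{GHN}(G) \subseteq \mathcal{N}(G)$; the middle inclusion is \autoref{remark:HNisGHN}, so only the two extreme inclusions require work. Throughout, fix a left Haar measure $\lambda$ and a right Haar measure $\rho$; since $G$ is locally compact and Polish, $\lambda$ is $\sigma$-finite and $G$ is $\sigma$-compact, say $G = \bigcup_{n\in\omega} L_n$ with each $L_n$ compact.

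For $\mathcal{N}\subseteq\mathcal{HN}$, given $N$ with Borel hull $B\supseteq N$, $\lambda(B)=0$, I pick a compact set $K$ with $0<\lambda(K)<\infty$ (the closure of a precompact open neighborhood of $1_G$ works, by \autoref{def:haarmeas}(I)--(II)) and define the Borel probability measure $\mu(X) := \lambda(X\cap K)/\lambda(K)$. Left invariance gives $\lambda(gBh)=\lambda(Bh)$, and applying \autoref{thm:samezeroset} twice (via $\rho$) yields $\lambda(B)=0 \Rightarrow \rho(B)=0 \Rightarrow \rho(Bh)=0 \Rightarrow \lambda(Bh)=0$. Hence $\mu(gBh) \le \lambda(gBh)/\lambda(K) = 0$, so $\mu$ witnesses $N\in\mathcal{HN}$.

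For $\mathcal{GHN}\subseteq\mathcal{N}$, let $A$ be generalized Haar null with universally measurable hull $B\supseteq A$ and witness measure $\mu$. Since $B$ is $\lambda$-measurable, the regularity of $\lambda$ reduces matters to showing $\lambda(K)=0$ for every compact $K\subseteq B$: such a $K$ is Borel and inherits $\mu(gKh)\le\mu(gBh)=0$, and once every compact subset has measure zero, outer regularity produces a Borel superset of $B$ of Haar measure zero. Fix such a compact $K$. For each $n$ the function $(x,y)\mapsto \mathbf{1}_K(xy)$ is Borel on $L_n\times G$ (multiplication is continuous, $K$ is Borel), and Tonelli applied to the finite product $\lambda|_{L_n}\otimes\mu$ gives
\[0 = \int_{L_n} \mu(x^{-1}K)\,d\lambda(x) = \int_G \lambda(L_n \cap Ky^{-1})\,d\mu(y),\]
using $\mu(x^{-1}K)=0$ for every $x$ on the left. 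So $\lambda(L_n \cap Ky^{-1})=0$ for $\mu$-almost every $y$; intersecting the countably many co-null sets produces a single $y_0$ with $\lambda(Ky_0^{-1}) \le \sum_n \lambda(L_n\cap Ky_0^{-1}) = 0$, and one last use of \autoref{thm:samezeroset} via the right invariance of $\rho$ delivers $\lambda(K)=0$.

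I expect the main obstacle to be the $\mathcal{GHN}\subseteq\mathcal{N}$ direction, where the witness set $B$ is only universally measurable rather than Borel; I handle this by reducing to compact Borel subsets via inner regularity and then running the Fubini argument that converts the translation-invariance of $\mu$-null statements about $K$ into an actual vanishing of $\lambda(Ky_0^{-1})$ for some specific $y_0$. A secondary nuisance is the asymmetry between left and right invariance in the possibly non-unimodular case, which \autoref{thm:samezeroset} is precisely designed to dissolve.
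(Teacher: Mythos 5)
Your proof is correct, and its overall skeleton (the chain $\mathcal{N}\subseteq\mathcal{HN}\subseteq\mathcal{GHN}\subseteq\mathcal{N}$, with a normalized Haar measure on a compact set plus \autoref{thm:samezeroset} for the first inclusion and a Fubini--Tonelli argument on the multiplication map for the last) matches the paper's. The interesting divergence is in how $\mathcal{GHN}\subseteq\mathcal{N}$ handles the merely universally measurable hull $B$: the paper keeps $B$ itself, justifies that $m^{-1}(B)$ is universally measurable in $G\times G$ (so Fubini applies to $\mu\times\lambda$), uses only the \emph{right} translates $\mu(By^{-1})=0$, and exploits left invariance to make the other iterated integral the constant $\lambda(B)$, giving $\lambda(B)=0$ in one stroke -- incidentally proving the stronger fact that every generalized \emph{right} Haar null set has Haar measure zero. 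You instead sidestep the measurability of $m^{-1}(B)$ entirely by reducing, via tightness of $\sigma$-finite Borel measures on Polish spaces (a fact the paper also invokes without proof, just before \autoref{cor:SteinhausHaarGen}; note it is not literally contained in the paper's definition of regularity, which gives inner approximation by compact sets only for open sets), to a compact, hence Borel, $K\subseteq B$, run Tonelli on the finite pieces $L_n\times G$ using only the \emph{left} translates $\mu(x^{-1}K)=0$, extract a single $y_0$ with $\lambda(Ky_0^{-1})=0$, and convert this to $\lambda(K)=0$ with one more appeal to \autoref{thm:samezeroset}. The trade-off: your route avoids the lemma about preimages of universally measurable sets and keeps all Fubini integrands Borel, at the cost of the tightness reduction, the $\sigma$-compactness bookkeeping, and an extra invariance step; it yields the left-handed analogue of the paper's by-product (generalized \emph{left} Haar null sets have Haar measure zero) rather than the right-handed one, which is immaterial for the theorem itself.
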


\begin{proof} 

$\mathcal{N}(G)\subseteq \mathcal{HN}(G)$:\\ 
Let $\lambda$ be a left Haar measure and $\lambda'$ be a right Haar measure. If $N\in \mathcal{N}(G)$ is arbitrary, then by definition there is a Borel set $B$ satisfying $N\subseteq B$ and $\lambda(B)=0$. The left invariance of $G$ means that $\lambda(gB)=0$ for every $g\in G$. Applying \autoref{thm:samezeroset} this means that $\lambda'(gB)=0$ for every $g\in G$, and applying the right invariance of $\lambda'$ we get that $\lambda'(gBh)=0$ for every $g, h\in G$.  Using the regularity of $\lambda'$, it is easy to see that there is a compact set $K$ with $0<\lambda'(K)<\infty$. The measure $\mu(X) = \frac{\lambda'(K \cap X)}{\lambda'(K)}$ is clearly a Borel probability measure. $\mu \ll \lambda'$ means that $\mu(gBh)=0$ for every $g, h\in G$, so $B$ and $\mu$ satisfy the requirements of \autoref{def:haarnull}.

$\mathcal{HN}(G)\subseteq\mathcal{GHN}(G)$:\\
This is obviously true in all Polish groups, see \autoref{remark:HNisGHN}.

$\mathcal{GHN}(G)\subseteq \mathcal{N}(G)$:\\
Suppose that $A\in \mathcal{GHN}(G)$. By definition there exists a universally measurable $B \subseteq G$ and a Borel probability measure $\mu$ such that $\mu(gBh)=0$ for every $g, h\in G$. Notice that we will only use that $\mu(Bh)=0$ for every $h\in G$, so we will also prove that (using the terminology of \autoref{ssec:leftrighthaarnull}) all generalized right Haar null sets have Haar measure zero. Let $\lambda$ be a left Haar measure on $G$. Let $m$ be the multiplication map $m : G\times G \to G$, $(x, y)\mapsto x\cdot y$.

Notice that the set $m^{-1}(B)=\{(x, y) \in G\times G : x\cdot y \in B\}$ is universally measurable in $G\times G$, because it is the preimage of a universally measurable set under the continuous map $m$. (This follows from that the preimage of a Borel set under $m$ is Borel, and for every $\sigma$-finite measure $\nu$ on $G\times G$, the preimage of a set of $m_*(\nu)$-measure zero under $m$ must be of $\nu$-measure zero. Here $m_*(\nu)$ is the push-forward measure: $m_*(\nu)(X) = \nu(\{(x,y) : x\cdot y \in X\})$.)

Applying Fubini's theorem in the product space $G\times G$ to the product measure $\mu\times \lambda$ (which is a $\sigma$-finite Borel measure) we get that
\[(\mu\times \lambda)(m^{-1}(B))= \int_G \lambda(\{y: x \cdot y \in B\})\intby\mu(x)=\int_G\mu(\{x : x \cdot y \in B\}\intby\lambda(y),\]
hence
\[\int_G \lambda(x^{-1} B)\intby\mu(x) = \int_G\mu(By^{-1}) \intby\lambda(y).\]
As $\mu$ is a witness measure, the right hand side is the integral of the constant 0 function. On the left hand side $\lambda(x^{-1}B)=\lambda(B)$, as $\lambda$ is left invariant (note that $B$ is $\lambda$-measurable, because $B$ is universally measurable and $\lambda$ is $\sigma$-finite). Thus $0=\int_G \lambda(B)\intby\mu(x)=\lambda(B)$. As $A\subseteq B$, this means that $\lambda(A)=0$, $A\in \mathcal{N}(G)$.
\end{proof}

We reproduce the proof of the classical theorem which shows that (left and right) Haar measures do not exist on topological groups that are not locally compact. We will apply the following generalized version of the Steinhaus theorem. (The original version by Steinhaus considered the Lebesgue measure on the group $(\mathbb{R}, +)$, Weil generalized this for the case of Haar measures \cite[page 50]{W}.)

\begin{theorem}[Steinhaus, Weil]\label{thm:SteinhausHaar}
If $G$ is a topological group, $\lambda$ is a left Haar measure on $G$ and $C \subseteq G$ is compact with $\lambda(C)>0$, then $1_G \in \Int(C\cdot C^{-1})$.
\end{theorem}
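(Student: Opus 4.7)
The plan is to run the classical Steinhaus argument, namely: squeeze $C$ inside an open set of measure less than $2\lambda(C)$, translate $C$ by a small neighborhood of $1_G$ so that all translates still lie in this open set, and conclude by a pigeonhole-style measure inequality that every such translate must intersect $C$.

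First I would apply the regularity of $\lambda$ (property (IV) of \autoref{def:haarmeas}) to pick an open set $U \supseteq C$ with $\lambda(U) < 2\lambda(C)$; this is legitimate because $\lambda(C) < \infty$ by property (I). Next, I would produce a neighborhood $V$ of $1_G$ such that $v C \subseteq U$ for every $v \in V$. This is the one place where compactness of $C$ is genuinely used: for each $c \in C$ the continuity of multiplication at $(1_G, c)$ supplies open neighborhoods $V_c \ni 1_G$ and $W_c \ni c$ with $V_c \cdot W_c \subseteq U$; cover $C$ by finitely many $W_{c_1}, \dots, W_{c_n}$ and set $V := \bigcap_{i=1}^n V_{c_i}$. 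A short verification (essentially in the spirit of \autoref{correctionlemma}) shows that $v\in V$ and $c\in C$ imply $vc \in U$.

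Now fix any $v \in V$ and suppose towards contradiction that $vC \cap C = \emptyset$. Then $vC$ and $C$ are disjoint Borel subsets of $U$, so by the left invariance of $\lambda$ (property (III)),
\[
\lambda(U) \ge \lambda(vC) + \lambda(C) = 2\lambda(C),
\]
contradicting the choice of $U$. Therefore $vC \cap C \ne \emptyset$, i.e.\ there exist $c_1, c_2 \in C$ with $vc_1 = c_2$, which gives $v = c_2 c_1^{-1} \in C\cdot C^{-1}$. Hence $V \subseteq C\cdot C^{-1}$, and since $V$ is an open neighborhood of $1_G$, we obtain $1_G \in \Int(C\cdot C^{-1})$.

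The only mildly delicate point is the existence of the neighborhood $V$ with $VC \subseteq U$; everything else is a routine application of outer regularity, finiteness of $\lambda$ on compact sets, and left invariance. Notably, no assumption beyond left invariance is used, and the proof does not require $G$ to be Polish or even metrizable.
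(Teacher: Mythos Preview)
Your proof is correct and follows essentially the same approach as the paper: outer regularity to get $U\supseteq C$ with $\lambda(U)<2\lambda(C)$, a compactness argument to produce a neighborhood $V$ of $1_G$ with $VC\subseteq U$, and then the measure inequality $\lambda(vC)+\lambda(C)=2\lambda(C)>\lambda(U)$ forcing $vC\cap C\neq\emptyset$. The only cosmetic difference is that you phrase the last step as a contradiction rather than a direct inequality.
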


\begin{proof}
As $\lambda$ is a Haar measure and $C$ is compact, $\lambda(C)<\infty$. Using the regularity of $\lambda$, there is an open set $U \supseteq C$ that satisfies $\lambda(U)< 2\lambda(C)$.

\begin{claim} There exists an open neighborhood $V$ of $1_G$ such that $V\cdot C \subseteq U$. 
\end{claim}

\begin{proof}
For every $c\in C$ the multiplication map $m : G\times G\to G$ is continuous at $(1_G,c)$, so $c \in V_c \cdot W_c \subseteq U$ for some open neighborhood $V_c$ of $1_G$ and some open neighborhood $W_c$ of $c$. As $C$ is compact and $\bigcup_{c\in C} W_c \supseteq C$, there is a finite set $F$ with $\bigcup_{c\in F} W_c \supseteq C$. Then $V = \bigcap_{c\in F} V_c$ satisfies $V\cdot W_c \subseteq U$ for every $c\in F$, so $V\cdot C \subseteq U$.
\end{proof}

Now it is enough to prove that $V \subseteq C\cdot C^{-1}$. Choose an arbitrary $v\in V$. Then $v\cdot C$ and $C$ are subsets of $U$ and $\lambda(v\cdot C)=\lambda(C)>\frac{\lambda(U)}{2}$ (we used the left invariance of $\lambda$). This means that $v\cdot C\cap C\neq \emptyset$, so there exists $c_1, c_2\in C$ with $v c_1=c_2$, but this means that $v=c_2 c_1^{-1}\in C\cdot C^{-1}$.
\end{proof}

We note that in Polish groups it is possible to find a compact subset with positive Haar measure in every set with positive Haar measure. Hence the following version of the previous theorem is also true:
\begin{corollary}[Steinhaus, Weil]\label{cor:SteinhausHaarGen}
If $G$ is a locally compact Polish group, $\lambda$ is a left Haar measure on $G$ and $A \subseteq G$ is $\lambda$-measurable with $\lambda(A)>0$, then $1_G \in \Int(A\cdot A^{-1})$.
\end{corollary}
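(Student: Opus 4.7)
The plan is to reduce the statement to the previous theorem (\autoref{thm:SteinhausHaar}) by finding a compact subset $C \subseteq A$ of positive Haar measure; then $C \cdot C^{-1} \subseteq A \cdot A^{-1}$ will immediately yield $1_G \in \Int(C \cdot C^{-1}) \subseteq \Int(A \cdot A^{-1})$.

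First I would argue that $\lambda$ is $\sigma$-finite on $G$. Since $G$ is a locally compact Polish space, it is $\sigma$-compact: pick a countable basis and a compact neighborhood of each basic open set; alternatively, $G$ is Lindel\"of and has a cover by compact neighborhoods, so countably many cover $G$. Each compact set has finite $\lambda$-measure by property (I) of \autoref{def:haarmeas}, so $\lambda$ is $\sigma$-finite.

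Next I would pass from $A$ (which could have infinite measure) to a $\lambda$-measurable subset of finite positive measure. Write $G = \bigcup_{n\in\omega} G_n$ with $\lambda(G_n) < \infty$; then $A = \bigcup_n (A\cap G_n)$, and by countable additivity at least one $A_n := A \cap G_n$ has $0 < \lambda(A_n) < \infty$. Now I invoke inner regularity of the completed Haar measure on $\lambda$-measurable sets of finite measure, which holds because $\lambda$ is regular on $\mathcal{B}(G)$ and the completion inherits inner regularity on sets of finite measure: for every $\varepsilon>0$ there is a compact $C \subseteq A_n$ with $\lambda(A_n) - \lambda(C) < \varepsilon$. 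Choosing $\varepsilon = \lambda(A_n)/2$ produces a compact $C \subseteq A_n \subseteq A$ with $\lambda(C) > 0$.

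Finally, \autoref{thm:SteinhausHaar} applied to $C$ gives $1_G \in \Int(C \cdot C^{-1})$, and since $C \subseteq A$ implies $C\cdot C^{-1} \subseteq A \cdot A^{-1}$, we conclude $1_G \in \Int(A \cdot A^{-1})$. The only delicate point I expect is justifying that inner regularity of $\lambda$ on Borel sets extends to the completion; this is standard (approximate the measurable set from inside by a Borel set of the same measure, and then approximate that Borel set from inside by a compact set), so there should be no genuine obstacle.
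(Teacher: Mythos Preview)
Your proof is correct and follows exactly the approach the paper takes: the paper simply notes that in Polish groups every set of positive Haar measure contains a compact subset of positive Haar measure, and deduces the corollary from \autoref{thm:SteinhausHaar}. You have supplied the details (via $\sigma$-compactness, $\sigma$-finiteness, and inner regularity) that the paper leaves implicit.
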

Other, more general variants of the Steinhaus theorem are examined in \autoref{ssec:steinhaus}.

Now we are ready to prove that Haar measures only exist in the locally compact case:

\begin{theorem}\label{thm:nohaarmeas}
If $G$ is a topological group and $\lambda$ is a left Haar measure on $G$, then $G$ is locally compact.
\end{theorem}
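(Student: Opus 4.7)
The plan is to extract a neighborhood of $1_G$ with compact closure from the Steinhaus-type theorem \autoref{thm:SteinhausHaar}, and then use translation invariance of the topology to upgrade local compactness at $1_G$ to local compactness of $G$.

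First I would locate a compact set of positive Haar measure. Since $G$ is a nonempty open subset of itself, property (II) of \autoref{def:haarmeas} gives $\lambda(G) > 0$, and by the regularity of $\lambda$ (property (IV)), we can find a compact set $C \subseteq G$ with $\lambda(C) > 0$. (If $\lambda(G) = \infty$, start instead with any nonempty open set of finite measure, which exists because every point has such a neighborhood from the supremum being an actual sup of compact-set measures; more simply, use any nonempty open $U$ with $\lambda(U) > 0$ and apply inner regularity to $U$.)

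Next I would apply \autoref{thm:SteinhausHaar} to this $C$: since $C$ is compact and $\lambda(C) > 0$, we get $1_G \in \Int(C \cdot C^{-1})$. Call this open neighborhood of the identity $V$, so $V \subseteq C \cdot C^{-1}$. The set $C \cdot C^{-1}$ is the image of the compact set $C \times C$ under the continuous map $(x, y) \mapsto x y^{-1}$, so it is compact, and since $G$ is Hausdorff it is closed. Therefore $\overline{V} \subseteq C \cdot C^{-1}$ is a closed subset of a compact set, hence compact. So $1_G$ admits a neighborhood with compact closure.

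Finally, for an arbitrary $g \in G$, the left translation $h \mapsto gh$ is a homeomorphism sending $\overline{V}$ to $g\overline{V} = \overline{gV}$ (again a compact set), so $gV$ is an open neighborhood of $g$ with compact closure. Thus every point of $G$ has a neighborhood with compact closure, which is exactly local compactness. The only step with any content is invoking the Steinhaus-type result \autoref{thm:SteinhausHaar}; everything else is routine, so I do not expect a real obstacle here.
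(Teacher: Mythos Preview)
Your proof is correct and follows essentially the same approach as the paper: use regularity to find a compact $C$ with $\lambda(C)>0$, apply \autoref{thm:SteinhausHaar} to get that $C\cdot C^{-1}$ is a compact neighborhood of $1_G$, then translate. The paper is slightly more direct in that it simply notes $g\cdot C\cdot C^{-1}$ is itself a compact neighborhood of $g$, without passing through $\overline{V}$, but this is a cosmetic difference.
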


\begin{proof}
$\lambda(G)>0$ as $G$ is open. Using the regularity of $G$, there exists a compact set $C$ with $\lambda(C)>0$. The set $C\cdot C^{-1}$ is compact (it is the image of the compact set $C\times C$ under the continuous map $(x,y)\mapsto xy^{-1}$). Applying \autoref{thm:SteinhausHaar} yields that $C\cdot C^{-1}$ is a neighborhood of $1_G$, but then for every $g\in G$ the set $g\cdot C\cdot C^{-1}$ is a compact neighborhood of $g$, and this shows that $G$ is locally compact.
\end{proof}

The connection between meager sets and Haar meager sets is simpler. The following results are from \cite{Da} (the first paper about Haar meager sets, which only considers abelian Polish groups) and \cite{Do} (where the concept of Haar meager sets is extended to all Polish groups).
\begin{theorem}[Darji, Dole\v zal-Rmoutil-Vejnar-Vlas\' ak]\label{thm:haarmeagerismeager}
Every Haar meager set is meager, $\mathcal{HM}(G) \subseteq \mathcal{M}(G)$.
\end{theorem}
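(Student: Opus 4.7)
The plan is to reduce to showing that the Borel overset $B$ in the definition of Haar meagerness is itself meager, and to obtain this by a category-Fubini argument. Specifically, let $A \subseteq G$ be Haar meager with witness $f : K \to G$ and Borel overset $B \supseteq A$; it suffices to show $B \in \mathcal{M}(G)$. To this end, define $\phi : G \times K \to G$ by $\phi(g, k) = g \cdot f(k)$. Since $f$ is continuous and multiplication in $G$ is continuous, $\phi$ is continuous, hence Borel, and therefore $C := \phi^{-1}(B)$ is a Borel (in particular Baire-property) subset of $G \times K$.

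The key calculation is on the sections of $C$. For each $g \in G$ the vertical section is
\[
C_g = \{k \in K : g \cdot f(k) \in B\} = f^{-1}(g^{-1} B),
\]
which is meager in $K$ by the Haar meagerness of $A$ (applied with $h = 1_G$ and $g^{-1}$ in place of $g$). Since $G$ is Polish and $K$ is compact metric, both are second-countable Baire spaces, so the Kuratowski–Ulam theorem (\cite[Theorem 8.41]{Ke}) is available: as \emph{every} vertical section of the Baire-property set $C$ is meager, $C$ itself is meager in $G \times K$. Applying Kuratowski–Ulam in the opposite direction, for a comeager set of $k \in K$ the horizontal section
\[
C^k = \{g \in G : g \cdot f(k) \in B\} = B \cdot f(k)^{-1}
\]
is meager in $G$. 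Since $K$ is nonempty and Baire, such a $k$ exists; and because right translation by $f(k)$ is a homeomorphism of $G$, the meagerness of $B f(k)^{-1}$ transfers to $B$, whence $A \subseteq B$ is meager as well.

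There is no real conceptual obstacle; the main point is simply recognizing that the right object to feed to Kuratowski–Ulam is the Borel set $\phi^{-1}(B)$ in the product $G \times K$, which converts the translation quantifier in the witness condition into a section quantifier. Minor technical checks (that $\phi$ is Borel, that $G \times K$ satisfies the hypotheses of Kuratowski–Ulam, and that meagerness is translation invariant) are all routine.
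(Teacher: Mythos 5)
Your proof is correct and follows essentially the same route as the paper: form a Borel set in $G\times K$ whose $g$-sections are $f$-preimages of translates of $B$, apply the Kuratowski--Ulam theorem in both directions, and transfer meagerness of a section back to $B$ via a translation homeomorphism. The only cosmetic difference is that you use $\{(g,k): g\cdot f(k)\in B\}$ where the paper uses $\{(g,k): f(k)\in gB\}$, which changes nothing of substance.
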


\begin{proof}
Let $A$ be a Haar meager subset of $G$. By definition there exists a Borel set $B\supseteq A$, a (nonempty) compact metric space $K$ and a continuous function $f : K \to G$ such that $f^{-1}(gBh)$ is meager in $K$ for every $g,h\in G$.       

Consider the set
\[S=\{(g, k) :f(k)\in gB\} \subseteq G\times K,\]
which is Borel because it is the preimage of $B$ under the continuous map $(g, k)\mapsto g^{-1} \cdot f(k)$. For every $g\in G$, the $g$-section of this set is $S_g= \{k \in K : f(k)\in gB\}=f^{-1}(gB)$, and this is a meager set in $K$. Hence, by the Kuratowski-Ulam theorem, $S$ is meager in $G\times K$. Using the Kuratowski-Ulam theorem again, for comeager many $k\in K$, the section $S^k=\{g\in G: f(k)\in gB\}=f(k)\cdot B^{-1}$ is meager in $G$. Since $K$ is compact, there is at least one such $k$. Then the inverse of the homeomorphism $b\mapsto f(k)\cdot b^{-1}$ maps the meager set $S^k$ to $B$, and this shows that $B$ is meager.
\end{proof}

\begin{theorem}[Darji, Dole\v zal-Rmoutil-Vejnar-Vlas\' ak]\label{thm:haarmeagerismeagerlc}
In a locally compact Polish group $G$ meagerness is equivalent to Haar meagerness, that is, $\mathcal{HM}(G) = \mathcal{M}(G)$.
\end{theorem}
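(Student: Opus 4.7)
The forward inclusion $\mathcal{HM}(G) \subseteq \mathcal{M}(G)$ is exactly \autoref{thm:haarmeagerismeager}, so the plan is to prove the reverse inclusion $\mathcal{M}(G) \subseteq \mathcal{HM}(G)$ in the locally compact case. Let $A \in \mathcal{M}(G)$. First I would replace $A$ by a meager Borel superset: since every nowhere dense set is contained in its (closed, nowhere dense) closure, $A$ is contained in a meager $F_\sigma$ set $B \supseteq A$, which will serve as the Borel cover required by \autoref{def:haarmeager}.

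Using local compactness of $G$, I would then pick a nonempty open set $U \subseteq G$ with compact closure and set $K := \overline{U}$. Since $G$ is Polish, $K$ is a (nonempty) compact metrizable space, and the inclusion $f : K \hookrightarrow G$ is continuous. The goal is then to show that $f$ is a witness function for $A$, i.e.\ that $f^{-1}(gBh) = K \cap gBh$ is meager in $K$ for every $g,h \in G$.

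The key step, and the main (small) obstacle, is to verify that intersecting a meager subset of $G$ with $K$ yields a meager subset of $K$; this is where it is essential that $K$ has nonempty interior (in fact $K = \overline{U}$), since in general a meager subset of $G$ can have very complicated trace on a closed subspace. I would argue as follows. By translation invariance of meagerness, $gBh$ is meager in $G$, so write $gBh = \bigcup_n N_n$ with each $N_n$ nowhere dense in $G$. For each $n$, suppose $O$ is an open subset of $K$ with $O \subseteq \overline{N_n \cap K}^K \subseteq \overline{N_n}^G \cap K$. Then $O \cap U$ is open in $G$ and contained in $\overline{N_n}^G$; since $N_n$ is nowhere dense in $G$, this forces $O \cap U = \emptyset$, so $O \subseteq K \setminus U$. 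But $U$ is dense in $K = \overline{U}$, so $O = \emptyset$. Hence $N_n \cap K$ is nowhere dense in $K$, and therefore $gBh \cap K = \bigcup_n (N_n \cap K)$ is meager in $K$.

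This shows that $f : K \to G$ is a witness function for $B \supseteq A$, so $A \in \mathcal{HM}(G)$, which together with \autoref{thm:haarmeagerismeager} yields $\mathcal{HM}(G) = \mathcal{M}(G)$.
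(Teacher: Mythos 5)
Your proposal is correct and follows essentially the same route as the paper: take a nonempty open $U$ with compact closure, use the identity map on $\overline{U}$ as the witness function, and cover the meager set by a meager Borel ($F_\sigma$) set. The only difference is that you spell out the (correct) density argument showing $gBh\cap\overline{U}$ is meager in $\overline{U}$, a step the paper treats as immediate.
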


\begin{proof}
We only need to prove the inclusion $\mathcal{M}(G) \subseteq \mathcal{HM}(G)$. As $G$ is locally compact, there is a nonempty open set $U \subseteq G$ such that $\overline{U}$ is compact. Let $f : \overline{U}\to G$ be the identity map restricted to $\overline{U}$. If $M$ is meager in $G$, then there exists a meager Borel set $B\supseteq M$. The set $gBh$ is meager in $G$ for every $g, h\in G$ (as $x\mapsto gxh$ is a homeomorphism), so $f^{-1}(gBh)=gBh\cap \overline{U}$ is meager in $\overline{U}$ for every $g, h\in G$.
\end{proof}

\begin{theorem}[Darji, Dole\v zal-Rmoutil-Vejnar-Vlas\' ak]\label{thm:haarmeagerismeagernlc}
In a non-locally-compact Polish group $G$ that admits a two-sided invariant metric meagerness is a strictly stronger notion than Haar meagerness, that is, $\mathcal{HM}(G) \subsetneqq \mathcal{M}(G)$.
\end{theorem}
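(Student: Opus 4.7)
The plan is to exhibit a closed nowhere dense (hence meager) set $F\subseteq G$ that is not Haar meager. The strategy is to achieve a strong covering property: for every continuous function $f:K\to G$ from a compact metric space $K$, the image $f(K)$ should be contained in some two-sided translate $gFh$. When this holds, $f^{-1}(gFh)=K$ is not meager, so no such $f$ can witness Haar meagerness.

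The motivating concrete case is the abelian example $G=\mathbb{Z}^\omega$ with
\[F=\{x\in\mathbb{Z}^\omega : x_n\neq 0\text{ for all }n\in\omega\}.\]
Its complement $\bigcup_n\{x : x_n=0\}$ is a union of basic clopen sets, so $F$ is closed; since every basic open set in $\mathbb{Z}^\omega$ constrains only finitely many coordinates, $F$ has empty interior and is therefore nowhere dense (hence meager). For any continuous $f:K\to\mathbb{Z}^\omega$ with $K$ nonempty compact, each coordinate projection of $f(K)$ is a compact, hence finite, subset of the discrete $\mathbb{Z}$; picking any $g_n\in\mathbb{Z}\setminus\pi_n(f(K))$ yields $g\in\mathbb{Z}^\omega$ with $f(K)\subseteq F+g$, so $f^{-1}(F+g)=K$ is not meager. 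This shows $F\in\mathcal{M}(\mathbb{Z}^\omega)\setminus\mathcal{HM}(\mathbb{Z}^\omega)$.

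For a general TSI non-locally-compact Polish group $G$, I would generalize this construction. Fix a complete two-sided invariant metric $d$ on $G$. Non-local-compactness implies that closures of small balls around $1_G$ are not totally bounded, so within every neighborhood of $1_G$ there are infinitely many $\varepsilon$-separated points for suitable $\varepsilon>0$; this plays the role of the infinitude of $\mathbb{Z}$ in each coordinate. Combining this with a countable dense set $\{d_n\}\subseteq G$, define $F$ as the complement of a diagonally-chosen open dense set of the form $\bigcup_n d_n V_n$, where the neighborhoods $V_n$ of $1_G$ are shrunk fast enough that every compact subset $C\subseteq G$ admits a two-sided shift avoiding all of the $d_nV_n$.

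The main obstacle is balancing two competing requirements on the $V_n$: they must be \enquote{fat} enough to make $\bigcup_n d_n V_n$ dense (so $F$ is nowhere dense), yet \enquote{thin} enough that their union, even after enlargement by an arbitrary compact set, does not exhaust $G$ (so every compact $C$ admits a shift into $F$). In $\mathbb{Z}^\omega$ the coordinate structure decouples these two conditions trivially; in general TSI Polish groups they are delicately linked, and the construction of~\cite{Do} requires a diagonal argument exploiting both the separability of $G$ and the non-total-boundedness of its small balls. The TSI hypothesis enters to ensure that two-sided translation preserves the metric estimates underlying the construction.
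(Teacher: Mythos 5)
Your overall strategy is exactly the one the paper uses: produce a closed nowhere dense set that contains a (two-sided) translate of every compact subset of $G$, and conclude via the argument of \autoref{lem:translateall} that it cannot be Haar meager. Your treatment of the concrete case $G=\mathbb{Z}^\omega$ is complete and correct. However, the theorem is stated for an arbitrary non-locally-compact TSI Polish group, and for that generality your proposal does not actually contain a proof. You describe the shape of a construction ($F$ as the complement of $\bigcup_n d_nV_n$ for suitably shrinking neighborhoods $V_n$ of $1_G$), you correctly identify the crux --- the $V_n$ must simultaneously be large enough that $F$ is nowhere dense and small enough that every compact set can be translated to avoid all of the $d_nV_n$ at once --- and then you stop, deferring to \enquote{the construction of \cite{Do}}. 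Naming the obstacle is not the same as overcoming it: this balancing act is precisely the hard content of the theorem. It is not clear that your particular ansatz (a single translate avoiding a countable union of translated neighborhoods of $1_G$) can be pushed through as stated, and no estimates are given.

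For comparison, the paper does not attempt a direct construction of a dense open complement. It invokes Solecki's theorem (\autoref{thm:solpuretopol}): there is a closed $F\subseteq G$ and a continuous $\varphi:F\to 2^\omega$ such that every compact $C\subseteq G$ has a left translate inside \emph{every} fiber $\varphi^{-1}(\{x\})$. Nowhere density is then obtained for free by a counting argument --- the continuum many pairwise disjoint closed fibers cannot all contain a basic open set, so some fiber is closed with empty interior --- rather than by engineering density of the complement. If you want a self-contained proof at the stated level of generality, you either need to prove a statement of Solecki's type (the paper explicitly declines to reproduce that \enquote{relatively long} purely topological argument) or carry out your diagonal construction in full, with the metric estimates that the TSI hypothesis is supposed to supply made explicit. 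As written, the general case is a plan, not a proof.
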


\begin{proof}
We know that $\mathcal{HM}(G) \subseteq \mathcal{M}(G)$. To construct a meager but not Haar meager set, we will use a theorem from \cite{Sol}. As the proof of this purely topological theorem is relatively long, we do not reproduce it here.

\begin{theorem}[Solecki]\label{thm:solpuretopol}
Assume that $G$ is a non-locally-compact Polish group that admits a two-sided invariant metric. Then there exists a closed set $F\subseteq G$ and a continuous function $\varphi : F\to 2^\omega$ such that for any $x\in 2^\omega$ and any compact set $C \subseteq G$ there is a $g\in G$ with $gC\subseteq \varphi^{-1}(\{x\})$.
\end{theorem}

Using this we construct a closed nowhere dense set $M$ that is not Haar meager. The system $\{f^{-1}(\{x\}) : x \in 2^\omega\}$ contains continuum many pairwise disjoint closed sets. If we fix a countable basis in $G$, only countably many of these sets contain an open set from that basis. If for $x_0\in2^\omega$ the set $M:=f^{-1}(\{x_0\})$ does not contain a basic open set, then it is nowhere dense (as it is closed with empty interior). On the other hand, it is clear that $M$ is not Haar meager, as for every compact metric space $K$ and continuous function $f:K\to G$ there exists a $g\in G$ such that $gf(K) \subseteq M$, thus $f^{-1}(g^{-1} M)=K$.
\end{proof}

\section{Alternative definitions}\label{sec:alternatives}

In this section first we discuss various alternative definitions which are equivalent to the \enquote{normal} definitions, but may be easier to prove or easier to use in some situations. After this, we will briefly describe some other versions which appeared in papers about this topic.

\subsection{Equivalent versions}\label{ssec:equivvar}

In this subsection we mention some alternative definitions which are equivalent to \autoref{def:haarnull}, \autoref{def:genhaarnull} or \autoref{def:haarmeager}. Most of these equivalences are trivial, but even these trivial equivalences can be frequently used as lemmas. First we list some versions of the definition of Haar null sets.

\begin{theorem}\label{thm:haarnulleqv}
For a set $A\subseteq G$ the following are equivalent:
\begin{multistmt}
\item there exists a Borel set $B\supseteq A$ and a Borel probability measure $\mu$ on $G$ such that $\mu(gBh)=0$ for every $g, h\in G$ (i.e. $A$ is Haar null),
\item there exists a Borel Haar null set $B\supseteq A$,
\item there exists a Borel generalized Haar null set $B\supseteq A$,
\item there exists an analytic set $B\supseteq A$ and a Borel probability measure $\mu$ on $G$ such that $\mu(gBh)=0$ for every $g, h\in G$,
\item there exists an analytic generalized Haar null set $B \supseteq A$.
\end{multistmt}
\end{theorem}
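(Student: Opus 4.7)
I establish the equivalences through a cycle of implications, most of which are essentially formal consequences of the inclusions ``Borel $\subseteq$ analytic $\subseteq$ universally measurable''. For $(1) \Leftrightarrow (2)$, note that in \autoref{def:haarnull} the Borel cover $B$ is itself a Borel Haar null set witnessed by the same $\mu$ (since $gBh \subseteq gBh$), while $(2) \Rightarrow (1)$ is immediate. For $(2) \Rightarrow (3)$, every Borel set is universally measurable, so a Borel Haar null set is also Borel generalized Haar null (cf.\ \autoref{remark:HNisGHN}). For $(3) \Rightarrow (5)$, every Borel set is analytic. For $(5) \Leftrightarrow (4)$, an analytic set is already universally measurable, hence can serve as its own universally measurable cover in \autoref{def:genhaarnull}; so an analytic generalized Haar null set $B$ directly yields an analytic $B$ and a Borel probability witness $\mu$ with $\mu(gBh)=0$ for all $g,h$, and conversely.

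The only substantive implication is $(4) \Rightarrow (1)$: given an analytic $B \supseteq A$ together with a Borel probability measure $\mu$ such that $\mu(gBh) = 0$ for every $g, h \in G$, one must produce a Borel cover $B' \supseteq A$ with a Borel probability witness $\mu'$. The natural first move is to use universal measurability: since $B$ is analytic and hence $\mu$-measurable with $\mu(B)=0$, there is a Borel envelope $B^* \supseteq B$ with $\mu(B^* \setminus B)=0$, and in particular $\mu(B^*)=0$ with $A \subseteq B^*$.

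The main obstacle is that this naive envelope only controls the ``untranslated'' measure $\mu(B^*)$, whereas we need $\mu'(gB^*h) = 0$ simultaneously for \emph{every} pair $(g, h) \in G \times G$ (an uncountable family). To surmount this, I would analyse the auxiliary analytic set $\widetilde{B} = \{(g, x, h) \in G^3 : gxh \in B\}$, obtained as the preimage of $B$ under the continuous multiplication map; its $(g,h)$-section is $g^{-1}Bh^{-1}$, which has $\mu$-measure zero by hypothesis. Applying Fubini for universally measurable sets with a product probability measure of the form $\nu \otimes \mu \otimes \nu$ on $G^3$ shows that $\widetilde B$ itself has product measure zero, so it admits a Borel envelope $\widetilde B^*$ whose sections simultaneously cover translates of $B$. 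Combining this section analysis with the compact-support reduction of \autoref{cptsupwitness}, which lets us localise the witness to a compact neighbourhood of $1_G$ and hence bound the effect of translations, one can extract a single Borel cover $B^* \supseteq A$ and a Borel probability measure $\mu'$ satisfying $\mu'(gB^*h)=0$ for every $g,h \in G$. This is the delicate step; once the measure-theoretic bookkeeping is carried out carefully, the rest of the theorem follows from the trivial implications of the first paragraph.
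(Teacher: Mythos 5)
The routine implications in your first paragraph are fine and match the paper's treatment. The problem is the one implication that actually carries the theorem, $(4)\Rightarrow(1)$ (equivalently $(5)\Rightarrow(1)$): your proposed argument does not go through, and the sketch never actually closes the gap you correctly identify. Passing to the set $\widetilde{B}=\{(g,x,h): gxh\in B\}$ and applying Fubini to $\nu\otimes\mu\otimes\nu$ does show $(\nu\otimes\mu\otimes\nu)(\widetilde B)=0$, and $\widetilde B$ then has a product-null Borel envelope $\widetilde B^{*}$. But Fubini applied to $\widetilde B^{*}$ only yields that $\mu(\widetilde B^{*}_{(g,h)})=0$ for $\nu\otimes\nu$-\emph{almost every} pair $(g,h)$, not for all pairs, and the conclusion you need is a statement about an uncountable family of translates with no exceptional set allowed. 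Worse, even where the sections of $\widetilde B^{*}$ are $\mu$-null, they are merely Borel supersets of $g^{-1}Bh^{-1}$ that vary with $(g,h)$; there is no mechanism for extracting from them a \emph{single} Borel set $B^{*}\supseteq B$ whose every two-sided translate is $\mu'$-null. The compact-support reduction of \autoref{cptsupwitness} does not repair this: it shrinks the support of the witness measure but does nothing to produce a Borel hull that is null in all translates simultaneously.

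The paper's proof of $(5)\Rightarrow(1)$ uses an essentially different and genuinely nontrivial tool: it shows, via the Kond\^o--Tugu\'e theorem (\autoref{thm:kondotugue}), that the family $\Phi$ of analytic sets $X$ with $\mu(gXh)=0$ for all $g,h\in G$ is coanalytic on analytic, and then invokes the dual form of the First Reflection Theorem to reflect $A\in\Phi$ to a Borel set $B\supseteq A$ with $B\in\Phi$. That a reflection-type argument is really needed here is underlined by \autoref{thm:haarnullcoanal}: for \emph{coanalytic} sets the analogous Borel-hull statement fails, so the existence of the hull is a special feature of analytic sets and cannot follow from measure-theoretic bookkeeping alone. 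To complete your proof you would need to replace the Fubini step by this reflection argument (or an equivalent uniformization/reflection device).
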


\begin{proof}
First note that Lusin's theorem (see \cite[29.7]{Ke}) states that all analytic sets are universally measurable, hence $gBh$ is $\mu$-measurable in condition (4).

$(1) \Leftrightarrow (2) \Rightarrow (3)$ is trivial from the definitions. $(3) \Rightarrow (1)$ follows from the fact that if (3) is true, then there exists a Borel probability measure $\mu$ such that $\mu(gB'h)=0$ for some (universally measurable) $B'\supseteq B$ and every $g, h\in G$, but this means that $\mu(gBh)=0$ for every $g, h\in G$.

The implication $(1)\Rightarrow (4)$ follows from the fact that all Borel sets are analytic. The implication $(4) \Rightarrow (5)$ is trivial again, considering that all analytic sets are universally measurable.

Finally we prove $(5)\Rightarrow (1)$ to conclude the proof of the theorem. This proof is reproduced from \cite{Sol}. Without loss of generality we may assume that the set $A$ itself is analytic generalized Haar null. We have to prove that there exists a Borel set $B\supseteq A$ and a Borel probability measure $\mu$ on $G$ such that $\mu(gBh)=0$ for every $g, h\in G$.

By definition there exists a Borel probability measure $\mu$ such that $\mu(g\tilde{B}h)=0$ for some (universally measurable) $\tilde{B}\supseteq A$ and every $g, h\in G$, but this means that $\mu(gAh)=0$ for every $g, h\in G$.

\begin{claim} The family of sets
\[\Phi = \{X \subseteq G : X\text{{\upshape\ is analytic and $\mu(gXh)=0$ for every $g,h \in G$}}\}\]
is \emph{coanalytic on analytic}, that is, for every Polish space $Y$ and $P \in \mathbf{\Sigma}^1_1(Y\times G)$, the set $\{y \in Y : P_y\in \Phi \}$ is $\mathbf{\Pi}^1_1$.
\end{claim}

\begin{proof}
Let $Y$ be Polish space and $P \in \mathbf{\Sigma}^1_1(Y\times G)$ and let 
\[\tilde{P} =\{(g, h, y, \gamma) \in G\times G\times Y\times G : \gamma \in gP_yh\}.\]
Then $\tilde{P}$ is analytic, as it is the preimage of $P$ under $(g, h, y,\gamma)\mapsto (y, g^{-1}\gamma h^{-1})$. We will use the following result, which can be found as \cite[Theorem 29.26]{Ke}:
\begin{theorem}[Kond\^o-Tugu\'e]\label{thm:kondotugue}
Let $X, Y$ be standard Borel spaces and $A\subseteq X \times Y$ an analytic set. Then the set
\[\{(\nu, x, r) \in P(Y)\times X \times \mathbb{R} : \nu(A_x)>r\}\text{ is analytic.}\]
\end{theorem}
(Standard Borel spaces are introduced in \cite[Section 12]{Ke}, we only use the fact that every Polish space is a standard Borel space.)

Using this result yields that 
\[\{(\nu, g, h, y, r) \in P(G)\times G\times G\times Y\times\mathbb{R}: \nu(\tilde{P}_{(g, h, y)})> r\}\]
is analytic, therefore its section at $(\nu=\mu, r=0)$, which is the set
\[\{(g, h, y) \in G\times G\times Y: \mu(\tilde{P}_{(g, h, y)})> 0\}\]
is also analytic. Projecting this on $Y$ yields that 
\[\{y \in Y : \mu(\tilde{P}_{(g, h, y)})> 0\text{ for some $g, h \in G$}\}\]
is analytic, but then
\[\{y\in Y : \mu(\tilde{P}_{(g, h, y)})=0 \text{ for all $g, h\in G$}\}=\{y\in Y : P_y\in \Phi\}\]
is coanalytic.
\end{proof}
Now, since $A\in \Phi$, by the dual form of the First Reflection Theorem (see \cite[Theorem 35.10]{Ke} and the remarks following it) there exists a Borel set $B$ with $B\supseteq A$ and $B\in \Phi$, and this $B$ (together with $\mu$) satisfies our requirements.

We note that in \autoref{ssec:gnchn} we will use a modified variant of this method to prove the stronger result \autoref{thm:dodosborelhull}. 
\end{proof}

In \autoref{def:haarnull} and \autoref{def:genhaarnull} the witness measure is required to be a Borel probability measure, but some alternative conditions yield equivalent definitions. A set $A\subseteq G$ is Haar null (or generalized Haar null) if and only if there is a Borel (or universally measurable) set $B\supseteq A$ that satisfies the equivalent conditions listed in the following theorem.

\begin{theorem}\label{thm:witnessmeasequiv}
For a universally measurable set $B\subseteq G$ the following are equivalent: \begin{multistmt}
\item there exists a Borel probability measure $\mu$ on $G$ such that $\mu(gBh)=0$ for every $g, h\in G$,
\item there exists a Borel probability measure $\mu$ on $G$ such that $\mu$ has compact support and $\mu(gBh)=0$ for every $g, h\in G$,
\item there exists a Borel measure $\mu$ on $G$ such that $0<\mu(X)<\infty$ for some $\mu$-measurable set $X\subseteq G$ and $\mu(gBh)=0$ for every $g, h\in G$,
\item there exists a Borel measure $\mu$ on $G$ such that $0<\mu(C)<\infty$ for some compact set $C\subseteq G$ and $\mu(gBh)=0$ for every $g, h\in G$ (the paper \cite{HSY} calls a Borel set $B$ \emph{shy} if it has this property).
\end{multistmt}
\end{theorem}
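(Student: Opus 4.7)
The plan is to establish the cyclic chain of implications $(1) \Rightarrow (2) \Rightarrow (4) \Rightarrow (3) \Rightarrow (1)$, where most steps are essentially free and the only one with actual content has already been packaged as \autoref{cptsupwitness}.

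For $(1) \Rightarrow (2)$, I would apply \autoref{cptsupwitness} with the open set $V = G$; this immediately produces a Borel probability witness measure whose support is compact. For $(2) \Rightarrow (4)$, the compact support $C$ of $\mu$ itself satisfies $\mu(C) = 1 \in (0,\infty)$. For $(4) \Rightarrow (3)$, a compact set is Borel and hence $\mu$-measurable, so the same set $C$ serves as the required $X$.

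The only implication carrying a mild twist is $(3) \Rightarrow (1)$, which I would handle by renormalization. Given $\mu$ as in (3) and a $\mu$-measurable set $X$ with $0 < \mu(X) < \infty$, define
\[
\mu'(A) \;=\; \frac{\mu(A \cap X)}{\mu(X)}
\]
for Borel sets $A$; since $\mu$ is a Borel measure and $X$ is $\mu$-measurable, this defines a well-defined Borel probability measure on $G$. For any $g, h \in G$ the translate $gBh$ is universally measurable (because $B$ is, and $x \mapsto g^{-1}xh^{-1}$ is a homeomorphism), hence $\mu'$-measurable since $\mu'$ is finite and therefore $\sigma$-finite. Monotonicity then yields $\mu'(gBh) \le \mu(gBh)/\mu(X) = 0$, as required.

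I do not expect a genuine obstacle. The only subtlety worth noting is that the measure $\mu$ in $(3)$ need not be $\sigma$-finite on all of $G$, which is why it is essential to first normalize to $\mu'$ (which is automatically $\sigma$-finite) before invoking universal measurability of $B$ to conclude that $\mu'(gBh)$ is well-defined and vanishes.
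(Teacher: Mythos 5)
Your proposal is correct and matches the paper's argument: the paper also closes the cycle $(1)\Rightarrow(2)$ via \autoref{cptsupwitness}, treats $(2)\Rightarrow(4)\Rightarrow(3)$ as trivial, and proves $(3)\Rightarrow(1)$ by the same renormalization $\tilde{\mu}(Y)=\mu(Y\cap X)/\mu(X)$ together with absolute continuity. Your extra remark on $\sigma$-finiteness is a harmless elaboration of the same idea.
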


\begin{proof}
The implications $(2)\Rightarrow (4)\Rightarrow (3)$ are trivial. $(3)\Rightarrow (1)$ is true, because if $\mu$ and $X$ satisfies the requirements of $(3)$, then $\tilde{\mu}(Y)=\frac{\mu(Y\cap X)}{\mu(X)}$ is a Borel probability measure and $\tilde{\mu} \ll \mu$ means that $\tilde{\mu}(gBh)=0$ for every $g, h\in G$. The equivalence $(1)\Rightarrow (2)$ follows from \autoref{cptsupwitness}.
\end{proof}

Generalizing \cite[Fact 4]{HSY} we may extend this equivalence further:
\begin{theorem}[Hunt-Sauer-Yorke]
For a universally measurable set $B\subseteq G$ the following are equivalent: \begin{multistmt}
\item there exists a Borel probability measure $\mu$ on $G$ such that $\mu(gBh)=0$ for every $g, h\in G$,
\setcounter{enumi}{4}
\item there exists a Borel probability measure $\mu$ on $G$ and a generalized Haar null set $N\subseteq G$ such that $\mu(gBh)=0$ for every $g, h \in G\setminus N$.
\end{multistmt}
\end{theorem}

\begin{proof}
The direction $(1)\Rightarrow(5)$ is trivial. To prove the other direction, assume that $(5)$ holds. We may assume without loss of generality that $N$ is universally measurable. Fix a Borel probability measure $\nu$ on $G$ such that $\nu(gNh)=0$ for all $g, h\in G$ and let $\overline{\nu}$ denote the Borel probability measure $\overline{\nu}(X) = \nu(\{x^{-1} : x\in X\})$.

Consider the measure $\tilde{\mu}$ defined by the convolution
\[\tilde{\mu}(X)=\overline{\nu}*\mu*\overline{\nu}(X) = (\overline{\nu}\times\mu\times\overline{\nu})(\{(p, q, r) \in G^3: pqr \in X\}),\]
which is clearly a Borel probability measure on $G$. To prove that $\tilde{\mu}$ satisfies condition $(1)$ we need to check that if we fix arbitrary $g, h \in G$, then
\[\tilde{\mu}(gBh) = (\overline{\nu}\times\mu\times\overline{\nu})(\{(p, q, r) \in G^3: pqr \in gBh\}) =0.\]
We may apply Fubini's theorem to see that
\begin{align*}
\tilde{\mu}(gBh) =& \int_G\int_G\int_G \chi_{p^{-1}gBhr^{-1}}(q) \intby\overline{\nu}(p)\intby\mu(q)\intby\overline{\nu}(r)=\\
=& \int_G\int_G \mu(p^{-1}gBhr^{-1}) \intby\overline{\nu}(p)\intby\overline{\nu}(r)=\\
=& \int_G\int_G \mu(pgBhr) \intby\nu(p)\intby\nu(r).
\end{align*}
We know that $\nu(h^{-1}N)= \nu(Ng^{-1})= 0$ and so the set $(G\setminus h^{-1}N) \times (G\setminus Ng^{-1})$ has full $(\nu\times\nu)$-measure in $G\times G$. This implies that
\[ \tilde{\mu}(gBh)= \int_{G\setminus h^{-1}N} \int_{G\setminus Ng^{-1}}\mu(pgBhr) \intby\nu(p)\intby\nu(r).\]
But here $p\in G\setminus Ng^{-1}$ and $r\in G\setminus h^{-1}N$ means that $pg, hr\in G\setminus N$ and therefore $\mu(pgBhr)=0$, concluding our proof.

We remark that the proof only used the fact that $N$ is a generalized left-and-right Haar null set. (This is a weaker notion than generalized Haar null sets, we discuss it in \autoref{ssec:leftrighthaarnull}.)
\end{proof}

The following result characterizes Haar null sets with witness functions, analogously to Haar meager sets:
\begin{theorem}[Banakh-G\l\k{a}b-Jab\l o\'nska-Swaczyna]\label{thm:haarnullwitnessfunc}
For a Borel set $B\subseteq G$ the following are equivalent:
\begin{multistmt}
\item $G$ is Haar null,
\item there exists an injective continuous map $f: 2^\omega \to G$ such that $f^{-1}(gBh) \in \mathcal{N}(2^\omega)$ for all $g, h\in G$,
\item there exists a continuous map $f: 2^\omega \to G$ such that $f^{-1}(gBh) \in \mathcal{N}(2^\omega)$ for all $g, h\in G$.
\end{multistmt}
\end{theorem}
Here $\mathcal{N}(2^\omega)$ is the $\sigma$-ideal of sets of Haar measure zero on the Cantor cube $2^\omega$.

The relatively long proof of this result can be found as \cite[Theorem 4.3]{BGJS}. The paper \cite{BGJS} only considers the case of abelian Polish groups, but the proof of this result remains valid in the case when $G$ is not necessarily abelian. For related results stated in this survey, see also \autoref{thm:witnessdomcantor} and \autoref{thm:genhaarnullwitnessfunc}.

The following result gives an equivalent characterization of Haar null sets which allows proving that a Borel set is Haar null by constructing measures that assign small, but not necessarily zero measures to the translates of that set. In \cite[Theorem 1.1]{Mat} Matou\v skov\' a proves this theorem for separable Banach spaces, but her proof can be generalized to work in arbitrary Polish groups.

\begin{theorem}[Matou\v skov\' a]\label{thm:matresult}
A Borel set $B$ is Haar null if and only if for every $\varepsilon>0$ and neighborhood $U$ of $1_G$, there exists a Borel probability measure $\mu$ on $G$ such that the support of $\mu$ is contained in $U$ and $\mu(gBh)<\varepsilon$ for every $g, h \in G$.
\end{theorem}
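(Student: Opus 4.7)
The forward direction is essentially immediate from \autoref{cptsupwitness}: given any witness measure for a Haar null $B$, that corollary produces, for any nonempty open set $V$, a Borel probability measure $\mu'$ with compact support contained in $V$ and $\mu'(gBh)=0<\varepsilon$ for every $g,h\in G$. So the real content is the converse implication.

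For the converse, the plan is to follow the blueprint of the proof of \autoref{thm:haarnullsigmaideal}, using the approximate witnesses furnished by the hypothesis instead of exact ones. Fix a complete compatible metric $d$ on $G$. Recursively for each $n\in\omega$, construct a compact set $C_n\subseteq G$ and a Borel probability measure $\tilde\mu_n$ with $\supp\tilde\mu_n\subseteq C_n$ such that $\tilde\mu_n(gBh)<2^{-n}$ for every $g,h\in G$, and such that $C_n$ is contained in a neighborhood $V_n$ of $1_G$ chosen (via \autoref{correctionlemma} applied to the compact set $C_0C_1\cdots C_{n-1}$ and to $2^{-n}$) small enough to ensure that the partial products $c_0c_1\cdots c_n$ form a Cauchy sequence uniformly in the choices $c_j\in C_j$. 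At the inductive step, invoke the hypothesis with parameters $\varepsilon_n:=2^{-n-1}$ and $V_n$ to get a Borel probability measure $\mu_n$ with support in $V_n$ satisfying $\mu_n(gBh)<2^{-n-1}$. Since this $\mu_n$ need not have compact support, use tightness on Polish spaces (\cite[Theorem 17.11]{Ke}) to choose a compact set $K_n\subseteq V_n$ with $\mu_n(K_n)>\tfrac{1}{2}$, and then renormalize by setting $C_n:=K_n$ and $\tilde\mu_n(X):=\mu_n(X\cap K_n)/\mu_n(K_n)$. This yields $\tilde\mu_n(gBh)\le 2\mu_n(gBh)<2^{-n}$, as required.

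Once the sequences $(C_n)$ and $(\tilde\mu_n)$ are in hand, the remainder is a near-verbatim copy of the second half of the proof of \autoref{thm:haarnullsigmaideal}: the map $\varphi\colon\prod_{n\in\omega}C_n\to G$, $\varphi(c_0,c_1,\ldots)=c_0c_1c_2\cdots$ is the uniform limit of continuous functions and hence continuous, and $\mu:=\varphi_{*}\bigl(\prod_n\tilde\mu_n\bigr)$ is a Borel probability measure on $G$. For fixed $g,h\in G$ and any fixed index $n$, an application of Fubini in the decomposition $\bigl(\prod_{j\neq n}C_j\bigr)\times C_n$ expresses $\mu(gBh)$ as an integral (over the other coordinates) of $\tilde\mu_n$-measures of translates of $B$; each such measure is strictly less than $2^{-n}$, so $\mu(gBh)<2^{-n}$. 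Since $n$ was arbitrary, $\mu(gBh)=0$, so $\mu$ witnesses that $B$ is Haar null. The main technical point is the compact-support step: the hypothesis only guarantees $\supp\mu_n\subseteq V_n$, whereas the infinite-product construction requires compact factors. The tightness trick handles this at the cost of at most a factor of $2$ in the translate-measure estimate, and this factor is absorbed by choosing $\varepsilon_n=2^{-n-1}$ in the hypothesis.
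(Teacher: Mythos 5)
Your argument is correct, but for the substantive direction it takes a genuinely different route from the paper. The paper (following Matou\v skov\'a) proves that the $\varepsilon$-condition implies Haar nullness by working inside the Polish space $P(G)$ of probability measures with the weak topology: it fixes approximate witnesses $\mu_n$ with $\supp\mu_n\subseteq U_n$, $\bigcap_n U_n=\{1_G\}$, passes to a subsequence so that consecutive convolutions move by less than $2^{-n}$ in a complete compatible metric on $P(G)$, defines the infinite convolution $\mu=\mu_0*\mu_1*\cdots$ as a limit there, and then estimates $\mu(gBh)\le\frac{1}{n+1}$ by Fubini after factoring $\mu=\alpha_n*\mu_n*\beta_n$. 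You instead recycle the product-measure machinery of \autoref{thm:haarnullsigmaideal}: you compactify each approximate witness by restricting to a compact $K_n$ of measure $>\tfrac12$ (note that tightness alone gives a compact set of large measure, not one inside $V_n$; you also need $\supp\mu_n\subseteq V_n$ and should intersect with the support, a one-line fix), use \autoref{correctionlemma} to make the partial products uniformly Cauchy in $G$, and push the product measure forward along the infinite-product map, absorbing the renormalization factor $2$ by taking $\varepsilon_n=2^{-n-1}$. The two constructions are close cousins (an infinite convolution of compactly supported measures is exactly such a pushforward), but your version buys uniformity with the $\sigma$-ideal proof and only needs convergence of products in $G$ itself, at the cost of the explicit compact-support reduction; the paper's version avoids that reduction by invoking completeness of $P(G)$ and continuity of convolution, which is extra machinery but handles arbitrary supports directly. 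Your easy direction via \autoref{cptsupwitness} is the same in substance as the paper's contrapositive argument via \autoref{lem:posmeascpt}, and your final Fubini step (yielding $\mu(gBh)\le 2^{-n}$ for every $n$, hence $=0$) matches the paper's section-by-section estimate.
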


\begin{proof}
Let $P(G)$ be the set of Borel probability measures on $G$. As $G$ is Polish, \cite[Theorem 17.23]{Ke} states that $P(G)$ (endowed with the weak topology) is also a Polish space. In particular this means that it is possible to fix a compatible metric $d$ such that $(P(G), d)$ is a complete metric space. If $\mu,\nu\in P(G)$, let $(\mu*\nu)(X)=(\mu\times \nu)\left(\{(x, y):xy\in X\}\right)$ be their convolution. It is straightforward to see that $*$ is associative (but not commutative in general, as we did not assume that $G$ is commutative). The map $ * : P(G)\times P(G)\to P(G)$ is continuous, for a proof of this see e.g.\ \cite[Proposition 2.3]{HM}. Let $\delta(X)=1$ if $1_G\in X$, and $\delta(X)=0$ if $1_G\notin X$, then it is clear that $\delta\in P(G)$ is the identity element for $*$. 

First we prove the \enquote{only if} part. Let $(U_n)_{n\in\omega}$ be open sets with $\bigcap_n U_n=\{1_G\}$. For every $n\in\omega$ fix a Borel probability measure $\mu_n$ such that
\begin{propertylist}
\item $\supp\mu_n\subseteq U_n$ and 
\item $\mu_n(gBh)<\frac{1}{n+1}$ for every $g, h\in G$.
\end{propertylist}

It is easy to see from property (I) that the sequence $(\mu_n)_{n\in\omega}$ (weakly) converges to $\delta$, and this and the continuity of $*$ means that for any $\nu\in P(G)$ the sequence $d(\nu, \nu*\mu_n)$ converges to zero. This allows us to replace $(\mu_n)_{n\in\omega}$ with a subsequence which also satisfies that $d(\nu, \nu*\mu_n)<2^{-n}$ for every measure $\nu$ from the finite set \[\{\mu_{j_0}*\mu_{j_1}*\ldots*\mu_{j_{r}} : r< n \text{ and }0\le j_0<j_1<\ldots<j_{r}<n\}.\]
(Notice that property (II) clearly remains true for any subsequence.) Using this assumption and the completeness of $(P(G), d)$ we can define (for every $n\in\omega$) the \enquote{infinite convolution} $\mu_{n}*\mu_{n+1} * \ldots $ as the limit of the Cauchy sequence $(\mu_{n}*\mu_{n+1}*\mu_{n+2}*\ldots *\mu_{n+j})_{j\in\omega}$. We will show that the choice $\mu=\mu_0*\mu_1*\ldots $ witnesses that $B$ is Haar null.

We have to prove that $\mu(gBh)=0$ for every $g, h\in G$. To show this fix arbitrary $g, h\in G$ and $n\in\omega$; we will show that $\mu(gBh)\le\frac{1}{n+1}$. Let $\alpha_n=\mu_0*\mu_1*\ldots*\mu_{n-1}$ and $\beta_n=\mu_{n+1}*\mu_{n+2}*\ldots$ and notice the continuity of $*$ yields that
\begin{align*}
&\mu=\lim_{j\to\infty} (\alpha_n * \mu_n * (\mu_{n+1}*\mu_{n+2}*\ldots*\mu_{n+j}))= \\
&\quad=\alpha_n*\mu_n * \lim_{j\to\infty} (\mu_{n+1}*\mu_{n+2}*\ldots*\mu_{n+j})=\alpha_n*\mu_n*\beta_n.
\end{align*}
This means that
\begin{align*}
&\mu(gBh)=(\alpha_n*\mu_n*\beta_n)(gBh)=(\alpha_n\times\mu_n\times\beta_n)(\{(x,y,z)\in G^3: xyz\in gBh\})=\\
&\quad=((\alpha_n\times\beta_n)\times\mu_n)(\{((x,z),y)\in G^2\times G : y\in x^{-1}gBhz^{-1}\}).
\end{align*}
Notice that for every $x, z\in G$ property (II) yields that $\mu_n(x^{-1}gBhz^{-1})<\frac{1}{n+1}$. Applying Fubini's theorem in the product space $G^2\times G$ to the product measure $(\alpha_n\times\beta_n)\times\mu_n$ yields that $\mu(gBh)\le \frac{1}{n+1}$.

To prove the \enquote{if} part of the theorem, suppose that there exists a $\varepsilon>0$ and a neighborhood $U$ of $1_G$ such that for every Borel probability measure $\mu$ on $G$ if $\supp\mu\subseteq U$, then $\mu(gBh)\ge\varepsilon$ for some $g,h\in G$. Let $\mu$ be an arbitrary Borel probability measure. Applying \autoref{lem:posmeascpt} yields that there are a compact set $C\subseteq G$ and $c\in G$ with $\mu(C)>0$ and $C\subseteq cU$. Define $\mu'(X)=\frac{\mu(cX\cap C)}{\mu(C)}$, then $\mu'$ is a Borel probability measure with $\supp\mu'\subseteq U$, hence $\mu'(gBh)\ge\varepsilon$ for some $g,h\in G$. This means that $\mu(cgBh)\neq 0$, so $\mu$ is not a witness measure for $B$, and because $\mu$ was arbitrary, $B$ is not Haar null.

\end{proof}

For Haar meagerness the following analogue of \autoref{thm:haarnulleqv} holds:

\begin{theorem}\label{thm:haarmeagereqv}
For a set $A\subseteq G$ the following are equivalent:
\begin{multistmt}
\item there exists a Borel set $B\supseteq A$, a (nonempty) compact metric space $K$ and a continuous function $f : K\to G$ such that $f^{-1}(gBh)$ is meager in $K$ for every $g, h \in G$ (i.e. $A$ is Haar meager),
\item there exists a Borel Haar meager set $B\supseteq A$,
\item there exists an analytic set $B\supseteq A$, a (nonempty) compact metric space $K$ and a continuous function $f : K\to G$ such that $f^{-1}(gBh)$ is meager in $K$ for every $g, h \in G$.
\end{multistmt}
\end{theorem}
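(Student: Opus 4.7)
The implications $(1)\Leftrightarrow(2)$ and $(1)\Rightarrow(3)$ are immediate from the definitions, exactly as in the Haar null case of \autoref{thm:haarnulleqv}: for $(1)\Rightarrow(2)$ the Borel hull $B$ supplied by $(1)$ is itself a Borel Haar meager set containing $A$; for $(2)\Rightarrow(1)$ every Borel Haar meager $B\supseteq A$ is its own Borel hull; and $(1)\Rightarrow(3)$ is trivial since every Borel set is analytic. The real content lies in $(3)\Rightarrow(1)$, which I will handle by mirroring the proof of $(5)\Rightarrow(1)$ in \autoref{thm:haarnulleqv}, replacing the measure-theoretic ingredients with their Baire category analogues.

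Without loss of generality assume that $A$ itself is analytic, and fix a nonempty compact metric space $K$ and a continuous $f:K\to G$ such that $f^{-1}(gAh)\in\mathcal{M}(K)$ for every $g,h\in G$. Keeping $K$ and $f$ fixed, consider
\[\Phi=\{X\subseteq G : X\in\mathbf{\Sigma}^1_1(G)\text{ and } f^{-1}(gXh)\in\mathcal{M}(K)\text{ for every }g,h\in G\}.\]
By assumption $A\in\Phi$. The plan is to verify that $\Phi$ is coanalytic on analytic in the sense of the First Reflection Theorem and then to invoke \cite[Theorem 35.10]{Ke} (in its dual form) to produce a Borel set $B\supseteq A$ with $B\in\Phi$; since the same $f$ then witnesses that $B$ is Haar meager, this gives $(1)$.

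To verify the $\mathbf{\Pi}^1_1$-on-$\mathbf{\Sigma}^1_1$ condition, let $Y$ be a Polish space and $P\in\mathbf{\Sigma}^1_1(Y\times G)$, and set
\[\tilde P=\{(y,g,h,k)\in Y\times G\times G\times K : f(k)\in gP_yh\}.\]
This is analytic, being the preimage of $P$ under the continuous map $(y,g,h,k)\mapsto(y,g^{-1}f(k)h^{-1})$. In place of \autoref{thm:kondotugue} (Kondô-Tugué) used in the Haar null proof, I will use the category analogue that the category quantifier preserves $\mathbf{\Sigma}^1_1$: for an analytic set in a product whose second factor is Polish, the set of parameters producing a non-meager section is analytic (see \cite[Theorem 29.22]{Ke}). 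Applied to $\tilde P$ viewed as a subset of $(Y\times G\times G)\times K$, this yields that $\{(y,g,h):\tilde P_{(y,g,h)}\text{ is non-meager in }K\}$ is analytic; projecting onto $Y$ gives an analytic set whose complement is
\[\{y\in Y : f^{-1}(gP_yh)\in\mathcal{M}(K)\text{ for every }g,h\in G\}=\{y\in Y : P_y\in\Phi\},\]
which is therefore coanalytic, as required.

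The main obstacle is locating and applying the correct category analogue of Kondô-Tugué; once this is in place, the First Reflection Theorem concludes the argument in a completely parallel way to the Haar null case, and one obtains the required Borel Haar meager hull $B$ of $A$.
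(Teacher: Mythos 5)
Your proposal is correct and takes essentially the same route as the paper: after the trivial implications, the paper's proof of $(3)\Rightarrow(1)$ also fixes $K$ and $f$, forms the same family $\Phi$, shows it is coanalytic on analytic via the analytic set $\tilde P$ and Novikov's theorem on the analyticity of the category quantifier (which is precisely \cite[Theorem 29.22]{Ke}, the ingredient you were looking for), and then applies the dual form of the First Reflection Theorem to obtain the Borel hull $B\in\Phi$. No gap remains.
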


\begin{proof}
$(1) \Leftrightarrow (2)$ is trivial from \autoref{def:haarmeager}, $(1) \Rightarrow (3)$ follows from the fact that all Borel sets are analytic. Finally, the implication $(3) \Rightarrow (1)$ can be found as \cite[Proposition 8]{Do}, and the proof is a straightforward analogue of the proof of $(5)\Rightarrow (1)$ in \autoref{thm:haarnulleqv}.

Without loss of generality we may assume that the set $A$ itself is analytic and satisfies that $f^{-1}(gAh)$ is meager in $K$ for every $g, h \in G$ for some (nonempty) compact metric space $K$ and continuous function $f : K \to G$. We will prove that (for this $K$ and $f$) there exists a Borel set $B\supseteq A$ such that $f^{-1}(gBh)$ is meager in $K$ for every $g, h\in G$.

\begin{claim} The family of sets
\[\Phi = \{X \subseteq G : X\text{{\upshape\ is analytic and $f^{-1}(gXh)$ is meager in $K$ for every $g,h \in G$}}\}\]
is \emph{coanalytic on analytic}, that is, for every Polish space $Y$ and $P \in \mathbf{\Sigma}^1_1(Y\times G)$, the set $\{y \in Y : P_y\in \Phi \}$ is $\mathbf{\Pi}^1_1$.
\end{claim}

\begin{proof}
Let $Y$ be a Polish space and $P \in \mathbf{\Sigma}^1_1(Y\times G)$ and let 
\[\tilde{P} =\{(g, h, y, k) \in G\times G\times Y\times K : f(k) \in gP_yh\}.\]
Then $\tilde{P}$ is analytic, as it is the preimage of $P$ under $(g, h, y, k)\mapsto (y, g^{-1}f(k)h^{-1})$. Novikov's theorem (see e.g.\ \cite[Theorem 29.22]{Ke}) states that if $U$ and $V$ are Polish spaces and $A\subseteq U\times V$ is analytic, then $\{u\in U : A_u \text{ is not meager in }V\}$ is analytic. This yields that $\{(g, h, y) : \tilde{P}_{(g, h, y)}\text{ is meager in }K\}$ is coanalytic, but then
\[\{y\in Y : \tilde{P}_{(g, h, y)}\text{ is meager in $K$ for every $g, h\in G$}\}=\{y\in Y : P_y\in \Phi\}\]
is also coanalytic.
\end{proof}
Now, since $A\in \Phi$, by the dual form of the First Reflection Theorem (see \cite[Theorem 35.10 and the remarks following it]{Ke}) there exists a Borel set $B$ with $B\supseteq A$ and $B\in \Phi$, and this $B$ satisfies our requirements.
\end{proof}

To prove our next result we will need a technical lemma. This is a modified version of the well-known result that for every (nonempty) compact metric space $K$, there exists a continuous surjective map $\varphi: 2^\omega \to K$.

\begin{lemma}\label{lem:meagerpullback}
If $(K, d)$ is a (nonempty) compact metric space, then there exists a continuous function $\varphi : 2^\omega \to K$ such that if $M$ is meager in $K$, then $\varphi^{-1}(M)$ is meager in $2^\omega$. 
\end{lemma}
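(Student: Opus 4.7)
The plan is to construct a Cantor-style scheme of nonempty closed subsets of $K$ with each piece having nonempty interior in $K$, and then take $\varphi$ as the continuous map naturally associated with the scheme. Concretely, I will build a system $(F_s)_{s\in 2^{<\omega}}$ satisfying (i) $F_\emptyset = K$, (ii) $F_{s^\frown 0}\cup F_{s^\frown 1}=F_s$, (iii) $F_{s^\frown i}\subseteq F_s$ for $i\in\{0,1\}$, (iv) $\Int_K(F_s)\neq\emptyset$, and (v) $\diam(F_{x\restriction n})\to 0$ as $n\to\infty$ for every $x\in 2^\omega$. From (iii), (v) and the compactness of $K$, the set $\bigcap_n F_{x\restriction n}$ is a singleton for every $x\in 2^\omega$, so I can define $\varphi(x)$ to be its unique element; property (v) makes $\varphi$ continuous, and a standard back-and-forth argument using (ii) shows $\varphi([s])=F_s$ for every $s\in 2^{<\omega}$, where $[s]=\{x\in 2^\omega:s\subset x\}$.

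Granted the scheme, the meager-preservation is almost immediate. If $N\subseteq K$ is closed and nowhere dense, then $\varphi^{-1}(N)$ is closed in $2^\omega$; and for every $s\in 2^{<\omega}$, the set $F_s=\varphi([s])$ has nonempty interior by (iv), so $F_s\not\subseteq N$ (since $N$ has empty interior), whence $[s]\not\subseteq\varphi^{-1}(N)$. Thus $\varphi^{-1}(N)$ has empty interior in $2^\omega$, i.e., is nowhere dense. Writing an arbitrary meager $M\subseteq K$ as $M\subseteq\bigcup_n N_n$ with each $N_n$ closed nowhere dense, we obtain $\varphi^{-1}(M)\subseteq\bigcup_n\varphi^{-1}(N_n)$, a countable union of nowhere dense sets, hence meager in $2^\omega$.

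The main obstacle is the construction itself, i.e., the simultaneous enforcement of (iv) and (v). The generic splitting step is: given $F_s$ with at least two points in $\Int_K(F_s)$, pick two disjoint small closed balls $\overline{B_0},\overline{B_1}\subseteq\Int_K(F_s)$ and set $F_{s^\frown i}=F_s\setminus B_{1-i}$; these are closed, cover $F_s$, and have interiors containing $B_0$ respectively $B_1$. The degenerate case in which $F_s$ is a singleton isolated point of $K$ is handled by a freeze ($F_{s^\frown 0}=F_{s^\frown 1}=F_s$); to keep this case manageable one maintains the invariant that each $F_s$ is regular closed, i.e., $F_s=\overline{\Int_K(F_s)}$, so that either $\Int_K(F_s)$ has at least two points or $F_s$ is a single isolated point of $K$. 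The diameter condition (v) is the delicate part, because a single ``remove a ball'' step need not shrink the residual piece, so one interleaves splits with finite-cover steps in which $F_s$ is covered by finitely many small-diameter regular closed subsets with nonempty interior in $K$; this finite branching is encoded into the binary tree by dummy duplications, so that every infinite branch eventually descends into arbitrarily small pieces, yielding (v).
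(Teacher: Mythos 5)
Your overall strategy is essentially the paper's: build a tree of closed subsets of $K$, each with nonempty interior in $K$, whose diameters tend to $0$ along every branch (the paper's proof is exactly your ``finite-cover step with dummy duplications'': it covers each piece by finitely many balls around a finite net, pads the index set to a power of two, and keeps track only of these ``leaf'' levels), and then use the fact that the image of every basic clopen set contains a nonempty open subset of $K$ to conclude that preimages of closed nowhere dense sets are nowhere dense. Your reduction of the lemma to the scheme (i)--(v), the continuity of $\varphi$, the identity $\varphi([s])=F_s$, and the final meagerness argument are all correct, and the cover step itself is indeed possible for a regular closed piece $F$ (cover $F$ by the sets $\overline{\Int_K(F)\cap B(x_i,\varepsilon)}$ for a fine finite net $(x_i)$ in $F$; these are closures of open sets, small, contained in $F$, and they cover $F$ precisely because $F=\overline{\Int_K(F)}$).

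The one step that would fail as written is the splitting step together with the claim that it maintains the regular-closedness invariant. In a general compact metric space, $F_s\setminus B$ need not be regular closed even when the corresponding closed ball lies in $\Int_K(F_s)$: take $K$ to consist of a point $a$, a sequence $p_n\to q$ with $d(a,p_n)<r$ for all $n$ and $d(a,q)=r$, and a closed disk far away; with $F_s=K$, removing the open ball $B(a,r)$ deletes $a$ and all the $p_n$ but keeps $q$, so $q$ is isolated in the remaining piece while not isolated in $K$. That piece is not regular closed, and the defect is not cosmetic: any later piece containing $q$ of sufficiently small diameter must equal $\{q\}$, which has empty interior in $K$, so below this node you cannot simultaneously keep (ii), (iv) and (v) --- the subsequent cover steps (whose covering property relies on regular closedness) break down. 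Fortunately the splits are dispensable: the lemma asks for neither injectivity nor surjectivity of $\varphi$, so the cover steps alone (which, done via closures of open subsets of $\Int_K(F_s)$, do preserve the invariant, with the isolated-singleton case frozen as you suggest) already produce the scheme. If you want genuine branching anyway, replace $F_s\setminus B_{1-i}$ by $\overline{\Int_K(F_s)\setminus\overline{B}_{1-i}}$: this is the closure of an open set, hence regular closed, the two such sets still cover $F_s$, and each contains $B_i$ in its interior.
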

\begin{proof}
We will use a modified version of the usual construction. Note that if $\diam(K)<1$, then this function $\varphi$ will be surjective (we will not need this fact).

Let $2^{<\omega}=\bigcup_{n\in\omega} 2^n$ be the set of finite 0-1 sequences. The length of a sequence $s$ is denoted by $\lvert s \rvert$ and the sequence of length zero is denoted by $\emptyset$. If $s$ and $t$ are sequences (where $t$ may be infinite), $s\preceq t$ means that $s$ is an initial segment of $t$ (i.e.\ the first $\lvert s \rvert$ elements of $t$ form the sequence $s$) and $s\prec t$ means that $s\preceq t$ and $s\neq t$. For sequences $s, t$ where $\lvert s \rvert$ is finite, $s\concat t$ denotes the concatenation of $s$ and $t$. For a finite sequence $s$, let $[s] = \{x\in 2^\omega :s\preceq x\}$ be the set of infinite sequences starting with $s$. Note that $[s]$ is clopen in $2^\omega$ for every $s\in 2^{<\omega}$.

We will choose a set of finite 0-1 sequences $S\subseteq 2^{<\omega}$, and for every $s\in S$ we will choose a point $k_s\in K$. The construction of $S$ will be recursive: we recursively define for every $n\in \omega$ a set $S_n$ and let $S=\dot\bigcup_{n\in\omega} S_n$. Our choices will satisfy the following properties:
\begin{propertylist}
\item for every $n\in \omega$ and $x\in 2^\omega$ there exists a unique $s=s(x,n)\in S_n$ with $s\preceq x$, and $s(x,n)\prec s(x, n')$ if $n< n'$.
\item for every $n\in \omega$ and $x\in 2^\omega$ the set $C_{x, n}=\bigcap_{0\le j<n} \overline{B}\left(k_{s(x,j)}, \frac{1}{{j}+1}\right)$ is nonempty.
\end{propertylist}

First we let $S_0=\{\emptyset\}$ and choose an arbitrary $k_\emptyset\in K$, then these trivially satisfy (I) and (II). 

Suppose that we already defined $S_0, S_1, \ldots, S_{n-1}$ and let $s\in S_{n-1}$ be arbitrary. Notice that for $x,x'\in[s]$, $C_{x,n-1}=C_{x',n-1}$ and denote this common set with $C_s$. The set $C_s$ is (nonempty) compact and it is covered by the open sets $\{B\left(c, \frac{1}{{n}}\right): c\in C_s\}$, hence we can select a collection $(c^{(s)}_j)_{j\in I_s}$ where $I_s$ is a finite index set such that $\bigcup_{j\in I_s} B\left(c^{(s)}_j, \frac{1}{{n}}\right) \supseteq C_s$. We may increase the cardinality of this collection by repeating one element several times if necessary, and thus we can assume that the index set $I_s$ is of the form $2^{\ell_s}$ for some integer $\ell_s\ge 1$ (i.e.\ it consists of the 0-1 sequences with length $\ell_s$). Now we can define $S_n=\{s\concat t : s\in S_{n-1}, t\in 2^{\ell_s}\}$. If $s'\in S_n$, then there is a unique $s\in S_{n-1}$ such that $s\preceq s'$, if $t$ satisfies that $s'= s\concat t$ (i.e. $t$ is the final segment of $s'$), then let $k_{s'}= c^{(s)}_t$.

It is straightforward to check that these choices satisfy (I). Property (II) is satisfied because if $x\in 2^\omega$, and $s$, $s'$ and $t$ are the sequences that satisfy $s=s(x,n-1)$ and $s\concat t = s' = s(x,n)$, then $k_{s'}=c^{(s)}_t\in C_{x,n}$, because it is contained in both $C_{x,n-1}=C_s$ and $\overline{B}\left(k_{s'}, \frac{1}{{n}}\right)$, and this shows that $C_{x, n}$ is not empty.

We use this construction to define the function $\varphi$: let $\{\varphi(x)\}=\bigcap_{n\in\omega}C_{x, n}$ (as the system of nonempty compact sets $(C_{x,n})_{n\in\omega}$ is descending and $\diam(C_{x, n})\le \diam\left(B\left(k_{s(x,n-1)}, \frac{1}{{n}}\right)\right)\le 2 \cdot \frac{1}{{n}}\rightarrow 0$, the intersection of this system is indeed a singleton). This function is continuous, because if $\varepsilon>0$ and $\varphi(x) = k\in K$, then $2\cdot \frac{1}{{n_0}+1}<\varepsilon$ for some $n_0$, and then for every $x'$ in the clopen set  $[s(x, n_0)]$ the set $\overline{B}\left(k_{s(x,n_0)}, \frac{1}{{n_0}+1}\right)=\overline{B}\left(k_{s(x',n_0)}, \frac{1}{{n_0}+1}\right)$ has diameter $<\varepsilon$ and contains both $\varphi(x)$ and $\varphi(x')$.

Now we prove that if $U\subseteq 2^\omega$ is open, then $\varphi(U)$ contains an open set $V$. It is clear from (I) that $\{[s] : s\in S\}$ is a base of the topology of $2^\omega$. This means that there exist a $n\in \omega\setminus \{0\}$ and $s\in S_{n-1}$ satisfying $[s]\subseteq U$. Let $V=\bigcap_{0\le j< n} B\left(k_{t_j}, \frac{1}{{j}+1}\right)$ where $t_j=s(x, j)$ for an arbitrary $x\in [s]$ (this is well-defined as $t_j\in S_j$ is the only element of $S_j$ with $s(x, j)\preceq s(x, n-1)=s$). It is clear that $V$ is open and $V\subseteq C_{x, n-1}=C_s$ (where $x\in [s]$ is arbitrary, this is again well-defined), to show that $V\subseteq \varphi(U)$ let $v\in V$ be arbitrary. Define $u_0\in 2^{\ell_s}$ such that $v\in B\left(k_{s\concat u_0}, \frac{1}{{n}}\right)$ (this is possible as $\{B\left(k_{s\concat u_0}, \frac{1}{{n}}\right) : u_0\in 2^{\ell_s}\}$ was a cover of $C_{x, n-1}=C_s$). Repeat this to define $u_1\in 2^{\ell_{s\concat u_0}}$ such that $v\in B\left(k_{s\concat u_0\concat u_1}, \frac{1}{{n}+1}\right)$, then $u_2\in 2^{\ell_{s\concat u_0\concat u_1}}$ such that $v\in B\left(k_{s\concat u_0\concat u_1\concat u_2}, \frac{1}{n+2}\right)$ etc.\ and let $y$ be the infinite sequence $y=s\concat u_0\concat u_1\concat u_2\concat \ldots$. It is easy to see that $\varphi(y)=v$, as $d(\varphi(y), v)\le 2\cdot \frac{1}{{n}+1}$ for every $n\in\omega$.

Finally we show that if $M\in \mathcal{M}(K)$ is arbitrary, then $\varphi^{-1}(M)\in\mathcal{M}(2^\omega)$. As $M$ is meager $M\subseteq \bigcup_{n\in\omega} F_n$ for a system $(F_n)_{n\in\omega}$ of nowhere dense closed sets. If $\varphi^{-1}(M)$ is not meager, then $\varphi^{-1}(F_n)$ contains an open set for some $n \in\omega$. But then $F_n$ contains an open set, which is a contradiction.
\end{proof}

We use this lemma to show that in \autoref{def:haarmeager} we can also restrict the choice of the compact metric space $K$. In the following theorem the equivalence $(1)\Leftrightarrow (2)$ is \cite[Proposition 3]{DV} and the equivalence $(1)\Leftrightarrow (3)$ is \cite[Theorem 2.11]{Da}.

\begin{theorem}[Dole{\v z}al-Vlas{\'a}k / Darji]\label{thm:witnessdomcantor}
For a Borel set $B\subseteq G$ the following are equivalent:
\begin{multistmt}
\item there exists a (nonempty) compact metric space $K$ and a continuous function $f : K\to G$ such that $f^{-1}(gBh)$ is meager in $K$ for every $g, h \in G$ (i.e. $B$ is Haar meager),
\item there exists a continuous function $f : 2^\omega \to G$ such that $f^{-1}(gBh)$ is meager in $2^\omega$ for every $g, h \in G$,
\item there exists a (nonempty) compact set $C \subseteq G$, a continuous function $f : C \to G$ such that $f^{-1}(gBh)$ is meager in $C$ for every $g, h \in G$,
\end{multistmt}
\end{theorem}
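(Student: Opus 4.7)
The implications $(2)\Rightarrow(1)$ and $(3)\Rightarrow(1)$ are immediate, since $2^\omega$ and every nonempty compact subset $C\subseteq G$, equipped with the restriction of a compatible metric on $G$, are nonempty compact metric spaces. For $(1)\Rightarrow(2)$ the plan is to apply \autoref{lem:meagerpullback} to the compact metric domain of a given witness: if $f:K\to G$ is the witness from $(1)$ and $\varphi:2^\omega\to K$ is the continuous map furnished by the lemma, so that $\varphi^{-1}$ carries meager sets in $K$ to meager sets in $2^\omega$, then $f\circ\varphi:2^\omega\to G$ is continuous and, for every $g,h\in G$,
\[(f\circ\varphi)^{-1}(gBh)=\varphi^{-1}\bigl(f^{-1}(gBh)\bigr)\]
is meager in $2^\omega$, since $f^{-1}(gBh)$ is meager in $K$.

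For $(2)\Rightarrow(3)$ I would split on the cardinality of $G$. If $G$ is countable, the representation $G=\bigcup_{g\in G}\{g\}$ as a countable union of closed sets combined with the Baire category theorem and the homogeneity of $G$ under left translation forces every point of $G$ to be isolated; thus $G$ is discrete, the only meager subset of $G$ is $\emptyset$, and by \autoref{thm:haarmeagerismeager} the Haar meager set $B$ must be empty. In this case $(3)$ holds trivially with $C=\{1_G\}$ and $f$ the constant map $1_G\mapsto 1_G$. If $G$ is uncountable, the perfect set theorem applied to the Polish space $G$ yields a closed subset $E\subseteq G$ homeomorphic to $2^\omega$; fix a homeomorphism $\theta:2^\omega\to E$ and a witness $\psi:2^\omega\to G$ from $(2)$, and set $C:=E$ and $f:=\psi\circ\theta^{-1}:C\to G$. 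Then $f$ is continuous and
\[f^{-1}(gBh)=\theta\bigl(\psi^{-1}(gBh)\bigr)\]
is meager in $C$ for every $g,h\in G$, since $\psi^{-1}(gBh)$ is meager in $2^\omega$ and $\theta$ is a homeomorphism.

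The main obstacle I anticipate is $(2)\Rightarrow(3)$. The direct attempt of taking $C=\psi(2^\omega)$ with $f$ the inclusion does not work in general: a continuous surjection $\psi:2^\omega\to C$ onto a compact metric space can collapse a nonempty open subset of $2^\omega$ onto a closed nowhere dense piece of $C$, so preimages of meager subsets of $C$ need not be meager in $2^\omega$, and $gBh\cap C$ being meager in $C$ cannot be deduced directly from the meagerness of $\psi^{-1}(gBh)$. Replacing $\psi(2^\omega)$ by a freshly embedded closed copy of $2^\omega$ inside $G$ bypasses this issue because homeomorphisms transport meager sets to meager sets in both directions.
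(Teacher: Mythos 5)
Your proposal is correct and follows essentially the same route as the paper: $(1)\Rightarrow(2)$ via \autoref{lem:meagerpullback}, $(2)\Rightarrow(3)$ by splitting on the cardinality of $G$ and embedding a compact copy of $2^\omega$ in the uncountable case, and the remaining implications being trivial. Your extra justification that a countable Polish group is discrete (so the only Haar meager set is $\emptyset$) merely fills in a step the paper leaves implicit, and your closing remark about why $C=\psi(2^\omega)$ fails matches the reason the paper routes through a fresh embedded Cantor set.
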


\begin{proof}
$(1)\Rightarrow (2)$:\\
This implication is an easy consequence of \autoref{lem:meagerpullback}. If $K$ and $f$ satisfies the requirements of (1) and $\varphi$ is the function granted by \autoref{lem:meagerpullback}, then $\tilde{f} = f\circ \varphi : 2^\omega\to G$ will satisfy the requirements of (2), because it is continuous and for every $g, h\in G$ the set $f^{-1}(gBh)$ is meager in $K$, hence $\varphi^{-1}(f^{-1}(gBh))=\tilde{f}^{-1}(gBh)$ is meager in $2^\omega$.

$(2)\Rightarrow (3)$:\\
If $G$ is countable, the only Haar meager subset of $G$ is the empty set. In this case, any nonempty $C\subseteq G$ and continuous function $f: C\to G$ is sufficient. If $G$ is not countable, then it is well known that there is a (compact) set $C\subseteq G$ that is homeomorphic to $2^\omega$. Composing the witness function $f: 2^\omega\to G$ granted by (2) with this homeomorphism yields a function that satisfies our requirements (together with $C$).

$(3)\Rightarrow(1)$:\\
This implication is trivial.
\end{proof}

\subsection{Coanalytic hulls}
In \autoref{thm:haarnulleqv} and \autoref{thm:haarmeagereqv} we proved that the Borel hull in the definition of Haar null sets and Haar meager sets can be replaced by an analytic hull. The following theorems show that it cannot be replaced by a coanalytic hull. As the proofs of these theorems are relatively long, we do not reproduce them here. Note that these theorems were proved in the abelian case, but they can be generalized to the case when $G$ is TSI.

In the case of Haar null sets, the paper \cite{EVGdhull} proves the following result:
\begin{theorem}[Elekes-Vidny\' anszky]\label{thm:haarnullcoanal}
If $G$ is a non-locally-compact abelian Polish group, then  there exists a coanalytic set $A\subseteq G$ that is not Haar null, but there is a Borel probability measure $\mu$ on $G$ such that $\mu(gAh)=0$ for every $g,h\in G$.
\end{theorem}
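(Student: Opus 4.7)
The plan is to exploit Solecki's theorem (\autoref{thm:solpuretopol}), which, since every abelian Polish group admits a two-sided invariant metric, supplies a closed set $F\subseteq G$ and a continuous $\varphi:F\to 2^\omega$ whose every fiber $F_x:=\varphi^{-1}(\{x\})$ contains a translate of every compact subset of $G$. Because any Borel probability measure $\mu'$ has compact support $K$, an inclusion $gK\subseteq F_x$ forces $\mu'(g^{-1}F_x)\ge 1$; thus each $F_x$ is a closed, non-Haar-null subset of $G$. Since Haar null sets form an ideal, any set containing at least one such $F_x$ is automatically not Haar null.

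Next I would fix a Borel probability measure $\mu$ on $G$ with compact support. Using that $G$ is abelian, the required condition $\mu(gAh)=0$ for all $g,h\in G$ simplifies to $\mu(A+t)=0$ for all $t\in G$. For each fixed $t$ the family $\{F_x+t:x\in 2^\omega\}$ is pairwise disjoint, so $N_t:=\{x\in 2^\omega:\mu(F_x+t)>0\}$ is countable. I would then define
\[
S:=\{x\in 2^\omega:\mu(F_x+t)=0\text{ for every }t\in G\}, \qquad A:=\varphi^{-1}(S)=\bigcup_{x\in S}F_x.
\]
Since $(x,t)\mapsto\mu(F_x+t)$ is Borel (it is the $\mu$-mass of the continuously varying closed set $F_x+t$), the set $S$ is $\mathbf{\Pi}^1_1$, and hence $A$ is coanalytic as the $\varphi$-preimage of $S$ inside the closed set $F$. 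By the definition of $S$ and $\sigma$-additivity, $\mu(A+t)=\sum_{x\in S}\mu(F_x+t)=0$ for every $t\in G$, which provides the desired witness measure. Finally, as soon as $S\neq\emptyset$, any $x_0\in S$ yields $A\supseteq F_{x_0}$, so $A$ is not Haar null.

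The main obstacle is therefore showing that $S\neq\emptyset$. A naive attempt to bound $\bigcup_{t\in G}N_t$ away from $2^\omega$ fails because $G$ is uncountable and each $N_t$ is merely countable. The argument must instead choose $\mu$ and the Solecki-type function $\varphi$ \emph{jointly}, so as to control how the fibers $F_x$ meet translates of $\supp\mu$: a natural strategy is to arrange the construction so that the exceptional set $\{x\in 2^\omega:\exists t\in G,\,\mu(F_x+t)>0\}$ is meager (or $\sigma$-compact) in $2^\omega$, and then to extract a point $x_0$ in its complement by the Baire category theorem in $2^\omega$. Executing this refinement of Solecki's construction is the technical heart of the proof in \cite{EVGdhull}; once in place, the coanalyticity of $A$, the identity $\mu(A+t)=0$ for every $t\in G$, and the failure of $A$ to be Haar null all follow from the considerations above.
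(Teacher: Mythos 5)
The survey states this theorem without proof (deferring to \cite{EVGdhull}), so I can only judge your proposal on its own terms — and it has a fatal structural flaw rather than just the acknowledged gap. The set $S$ you define is provably empty for \emph{every} Borel probability measure $\mu$: by \autoref{lem:posmeascpt} there is a compact $C\subseteq G$ with $\mu(C)>0$, and by the defining property of Solecki's function every fiber $F_x=\varphi^{-1}(\{x\})$ contains a translate $g+C$, so $\mu(F_x-g)\ge\mu(C)>0$ and $x\notin S$. Hence your $A=\varphi^{-1}(S)$ is always $\emptyset$, and the "main obstacle" you identify ($S\neq\emptyset$) is not a technical difficulty to be overcome by refining the construction — it is impossible. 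The conceptual reason is worth internalizing: the set $A$ of the theorem can never contain an analytic (in particular, closed) non-Haar-null subset. If $N\subseteq A$ is analytic and $\mu(gAh)=0$ for all $g,h$, then $\mu(gNh)=0$ for all $g,h$, and the implication $(4)\Rightarrow(1)$ of \autoref{thm:haarnulleqv} (the reflection argument) forces $N$ to be Haar null. So securing non-Haar-nullness by arranging $A\supseteq F_{x_0}$ for a single fat closed fiber is directly incompatible with the existence of the witness measure.

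Consequently the non-Haar-nullness of $A$ must be witnessed only at the level of its Borel (equivalently analytic) hulls: every such hull must fail to admit \emph{any} witness measure, even though $A$ itself admits one. This forces $A$ to be genuinely non-Borel and is why the proof in \cite{EVGdhull} goes through a $\mathbf{\Pi}^1_1$-rank and a boundedness argument — roughly, $A$ is an increasing $\omega_1$-union of Borel pieces with all translates $\mu$-null, while any analytic set containing $A$ must, by boundedness of the rank on its (analytic) complement, contain a "tail" that is arranged to contain a translate of every compact set, so that \autoref{lem:translateall} applies to the hull rather than to $A$. A secondary error in your write-up: even if $S$ were nonempty it would have to be uncountable (a countable $S$ makes $A$ an $F_\sigma$ set with a witness measure, hence Haar null by \autoref{thm:haarnulleqv}, contradicting $A\supseteq F_{x_0}$), and then the step $\mu(A+t)=\sum_{x\in S}\mu(F_x+t)$ is not $\sigma$-additivity: an uncountable disjoint union of $\mu$-null sets can have full measure.
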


In particular, this set $A$ is generalized Haar null, but not Haar null:

\begin{corollary}
If $G$ is non-locally-compact and abelian, then $\mathcal{GHN}(G) \supsetneqq \mathcal{HN}(G)$.
\end{corollary}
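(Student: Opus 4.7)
The plan is to read this off directly from \autoref{thm:haarnullcoanal}; there is essentially no additional work, only an observation about regularity classes. First, by \autoref{remark:HNisGHN} we already have $\mathcal{HN}(G)\subseteq \mathcal{GHN}(G)$, so the content of the corollary is the strictness of this inclusion.

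To obtain strictness, I would invoke \autoref{thm:haarnullcoanal} (whose hypothesis is exactly that $G$ is a non-locally-compact abelian Polish group) to produce a coanalytic set $A\subseteq G$ which is not Haar null, together with a Borel probability measure $\mu$ satisfying $\mu(gAh)=0$ for all $g,h\in G$. To conclude that this very set $A$ is generalized Haar null, I need a universally measurable hull for it; the natural choice is $A$ itself. The point is that coanalytic sets are universally measurable: their complements are analytic, and by Lusin's theorem (already cited in the proof of \autoref{thm:haarnulleqv}) every analytic set is universally measurable, and the universally measurable sets form a $\sigma$-algebra. Thus $A$ itself serves as the universally measurable set $B\supseteq A$ in \autoref{def:genhaarnull}, and $\mu$ serves as the witness measure, so $A\in \mathcal{GHN}(G)$.

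Combining, $A\in \mathcal{GHN}(G)\setminus \mathcal{HN}(G)$, which gives the strict inclusion. There is no real obstacle here since all the difficulty is concentrated in \autoref{thm:haarnullcoanal}; the only subtle point worth stating explicitly is the universal measurability of coanalytic sets, which is what allows the coanalytic example to serve, unchanged, as a universally measurable witness.
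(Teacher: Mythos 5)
Your proposal is correct and matches the paper's (implicit) argument: the paper simply remarks that the coanalytic set $A$ from \autoref{thm:haarnullcoanal} is generalized Haar null but not Haar null, and the only step to supply is exactly the one you state, namely that coanalytic sets are universally measurable so $A$ together with $\mu$ satisfies \autoref{def:genhaarnull}. Nothing further is needed.
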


This (and its generalization to TSI groups) is the best known result for \cite[Question 5.4]{EVGdhull}, which asks the following:
\begin{question}[Elekes-Vidny\' anszky]\label{que:ghngthn} Is $\mathcal{GHN}(G) \supsetneqq \mathcal{HN}(G)$ in all non-locally-compact Polish groups?
\end{question}

The Haar meager case is answered by \cite[Theorem 13]{DV}:
\begin{theorem}[Dole\v zal-Vlas\' ak]\label{thm:haarmeagercoanal}
If $G$ is a non-locally-compact abelian Polish group, then there exists a coanalytic set $A\subseteq G$ that is not Haar meager, but there is a (nonempty) compact metric space $K$ and a continuous function $f: K \to G$ such that $f^{-1}(gAh)$ is meager in $K$ for every $g,h\in G$.
\end{theorem}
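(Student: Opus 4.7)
The plan is to mirror, in the Baire-category setting, the construction used by Elekes and Vidny\'anszky in \autoref{thm:haarnullcoanal}, replacing the measure-theoretic ingredients by their topological counterparts: Kuratowski-Ulam in place of Fubini, Novikov's theorem in place of the Kond\^o-Tugu\'e theorem, and the reflection scheme from the proof of $(3)\Rightarrow(1)$ of \autoref{thm:haarmeagereqv} in place of the one from $(5)\Rightarrow(1)$ of \autoref{thm:haarnulleqv}. By \autoref{thm:haarmeagereqv}, to show that $A$ is not Haar meager it suffices to rule out analytic supersets of $A$ admitting a continuous witness function; this allows us to phrase the whole argument inside the analytic hierarchy.

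First I would fix a ``universal'' witness candidate. Since every abelian Polish group is TSI, Solecki's theorem (\autoref{thm:solpuretopol}) yields a closed $F\subseteq G$ and a continuous $\varphi : F \to 2^\omega$ whose fibres absorb every compact subset of $G$ up to translation. Pick a generic $x_0\in 2^\omega$, a compact metric space $K$, and a continuous $f : K \to G$ whose image is a nowhere dense subset of the absorbing fibre $\varphi^{-1}(\{x_0\})$; this is the same ingredient that powered the separation in \autoref{thm:haarmeagerismeagernlc}. The map $f$ is the candidate witness for $A$.

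Next I would define $A$ via a $\Pi^1_1$ predicate built from a universal $\mathbf{\Sigma}^1_1$ parametrisation $(U_e)_{e\in\omega^\omega}$ of analytic subsets of $G$. Schematically, $A$ is to consist of those $a\in G$ such that for every code $e$ of an analytic set that admits a witness function (a $\mathbf{\Pi}^1_1$-on-$\mathbf{\Sigma}^1_1$ condition on $e$ by Novikov's theorem, exactly as in the proof of \autoref{thm:haarmeagereqv}), the point $a$ is demonstrably not in $U_e$ via a boundedly ranked computation on the tree of an associated coanalytic set. A standard $\mathbf{\Pi}^1_1$-boundedness/reflection argument then forces $A$ not to be contained in any such $U_e$, i.e.\ $A$ is not Haar meager.

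The main obstacle is arranging condition (a), that $f^{-1}(gA+h)$ be meager in $K$ for every $g,h\in G$, while $A$ is simultaneously large enough for condition (b). Since $A$ will generally have continuum-many points, one cannot directly decompose $f^{-1}(A)$ as a countable union of nowhere dense sets. The resolution is to constrain every point of $A$ to lie inside the absorbing fibre $\varphi^{-1}(\{x_0\})$: because $f(K)$ is nowhere dense in that fibre and translations in $G$ interact with the fibres of $\varphi$ in a controlled way, a Kuratowski-Ulam argument parallel to the Fubini computation at the end of \autoref{thm:haarnullsigmaideal} gives meagreness of $f^{-1}(gA+h)$ for each $g,h\in G$. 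Carrying out this interaction between the reflection step and the fibre-confinement step simultaneously, so that $A$ remains genuinely coanalytic, is the technical heart of the proof and is where the abelian (or TSI) hypothesis is used in earnest.
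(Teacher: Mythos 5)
Your sketch names the right ingredients (Solecki's absorbing fibres from \autoref{thm:solpuretopol}, Novikov's theorem, a $\mathbf{\Pi}^1_1$-rank with boundedness, Kuratowski--Ulam), and for context note that this survey does not reproduce a proof at all but cites \cite[Theorem 13]{DV}, whose argument is indeed a rank/overspill construction in the spirit of \autoref{thm:haarnullcoanal}. However, as written your proposal has a genuine gap: the set $A$ is never actually constructed, and the schematic definition you give is neither coanalytic nor capable of carrying the boundedness argument. The side condition ``$e$ codes an analytic set admitting a witness function'' is not $\mathbf{\Pi}^1_1$-on-$\mathbf{\Sigma}^1_1$ as a condition on $e$: Novikov's theorem makes ``$f^{-1}(gU_eh)$ is meager for all $g,h$'' coanalytic only for a \emph{fixed} continuous $f$, and the required existential quantifier over witnesses $(K,f)$ pushes the condition to $\mathbf{\Sigma}^1_2$, so the universal statement ``for every such $e$, $a\notin U_e$'' is $\mathbf{\Pi}^1_2$, not $\mathbf{\Pi}^1_1$; moreover, read literally it defines the complement of the union of all analytic sets admitting witnesses, which is empty (every singleton admits one). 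The phrase ``boundedly ranked computation'' is exactly where a construction would have to go, and none is supplied: nothing in the sketch explains why an analytic superset of $A$ that admits a witness function would produce a \emph{bounded} rank. In the known constructions this comes from building, Borel-uniformly, an $\omega_1$-indexed family of pieces of $A$ --- small pieces over wellfounded codes, and entire absorbing fibres over illfounded codes --- so that any analytic hull with a witness must, by overspill, contain a full fibre and is then non-Haar-meager by \autoref{lem:translateall}.

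The one structural commitment you do make, confining $A$ and the range of $f$ to a single fibre $\varphi^{-1}(\{x_0\})$, works against both halves of the statement. If $A$ exhausted the fibre, then since the fibre absorbs a translate of the compact set $f(K)$ we would have $f^{-1}(gA)=K$ for some $g$, so $f$ could not witness; if $A$ is a proper subset of the fibre, then to show $A$ is not Haar meager you must show that every Borel subset of the fibre containing $A$ fails to be Haar meager (intersecting any hull with the closed fibre), which is exactly the original difficulty, now with no room for the rank-indexed ``large'' sets that drive the overspill (compact subsets of the fibre are Haar meager by \autoref{thm:tsicpthaarmeager}, so largeness must be earned). Finally, ``$f(K)$ is nowhere dense in the fibre'' gives no control over $f^{-1}(gA+h)$: the defining property of $\varphi$ is precisely that translations move any compact set, in particular $f(K)$, into any prescribed fibre, and nothing you impose prevents $A$ from being comeager in $f(K)$ or in one of its translates (compare \autoref{exa:shmproofproblem}, which shows how badly a set can sit inside the range of a witness). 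So the last paragraph of your proposal, which you yourself flag as the technical heart, is the part that is missing, and the fibre-confinement idea as stated does not lead to it.
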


\subsection{Naive versions}
It is possible to eliminate the Borel/universally measurable hull from our definitions completely. Unfortunately, the resulting \enquote{naive} notions will not share the nice properties of a notion of smallness. The results in this subsection will show some of these problems. Most of these counterexamples are provided only in special groups or as a corollary of the Continuum Hypothesis, as even these \enquote{weak} results are enough to show that these notions are not very useful.

\begin{definition}
A set $A\subseteq G$ is called \emph{naively Haar null} if there is a Borel probability measure $\mu$ on $G$ such that $\mu(gAh)=0$ for every $g, h \in G$.
\end{definition}

\begin{definition}
A set $A\subseteq G$ is called \emph{naively Haar meager} if there is a (nonempty) compact metric space $K$ and a continuous function $f : K\to G$ such that $f^{-1}(gAh)$ is meager in $K$ for every $g, h\in G$.
\end{definition}

The following two examples show that in certain groups the whole group is the union of countably many sets which are both naively Haar null and naively Haar meager, and hence neither the system of naively Haar null sets, nor the system of naively Haar meager sets is a $\sigma$-ideal.

\begin{example}[Dougherty \cite{Doug}]\label{exa:twonaives}
Let $G$ be an uncountable Polish group. Assuming the Continuum Hypothesis, there exists a subset $W$ in the product group $G\times G$ such that both $W$ and $(G\times G)\setminus W$ are naively Haar null and naively Haar meager.
\end{example}

\begin{proof}
Let $<_W$ be a well-ordering of $G$ in order type $\omega_1$, and let $W=\{(g, h)\in G\times G: g<_W h\}$ be this relation considered as a subset of $G\times G$.

Let $\mu_1$ be an non-atomic measure on $G$ and $\mu_2$ be a measure on $G$ that is concentrated on a single point. It is clear that $(\mu_1\times \mu_2)(gWh)=0$ for every $g, h\in G$ (as $gWh\cap \supp (\mu_1\times\mu_2)$ is countable). Similarly $(\mu_2\times \mu_1)(g((G\times G)\setminus W)h)=0$ for every $g, h\in G$, but these mean that $W$ and $(G\times G)\setminus W$ are both naively Haar null.

Let $K\subseteq G$ be a nonempty perfect compact set (it is well known that such set exists) and let $f_1, f_2 : K\to G$ be the continuous functions $f_1(k)=(k, 1_G)$, $f_2(k)=(1_G, k)$ (here $1_G$ could be replaced by any fixed element of $G$). Then $f_1^{-1}(gWh)$ is countable (hence meager) for every $g, h\in G$, so $W$ is naively Haar meager, similarly $f_2$ shows that $(G\times G)\setminus W$ is also naively Haar meager.
\end{proof}

\begin{example}
The Polish group $(\mathbb{R}^2,+)$ is the union of countably many sets that are both naively Haar null and naively Haar meager.
\end{example}

\begin{proof}
The paper \cite{Dav} constructs a decomposition of the plane with the following properties:
\begin{theorem}[Davies]
Suppose that $(\theta_i)_{i\in\omega}$ is a countably infinite system of directions, such that $\theta_i$ and $\theta_j$ are not parallel if $i\neq j$. Then the plane can be decomposed as $\mathbb{R}^2=\dot{\bigcup}_{i\in \omega} S_i$ such that each line in the direction $\theta_i$ intersects the set $S_i$ in at most one point.
\end{theorem}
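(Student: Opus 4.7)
The plan is to prove Davies's theorem by a transfinite recursion assuming the Continuum Hypothesis (consistent with the ambient assumption of the preceding example on $\mathbb{R}^2$). Using CH, enumerate $\mathbb{R}^2 = \{p_\alpha : \alpha < \omega_1\}$ and recursively define a function $i : \omega_1 \to \omega$; the required decomposition will then be $S_i := \{p_\alpha : i(\alpha) = i\}$.

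The invariant to maintain throughout the recursion is the following: at every stage $\alpha < \omega_1$ and for every $i \in \omega$, the partial set $S_i^{(\alpha)} := \{p_\beta : \beta < \alpha \text{ and } i(\beta) = i\}$ meets each line in direction $\theta_i$ in at most one point. At stage $\alpha$, write $\ell_{\alpha, i}$ for the line through $p_\alpha$ in direction $\theta_i$ and define the \emph{forbidden index set}
\[F_\alpha := \{i \in \omega : \ell_{\alpha, i} \cap S_i^{(\alpha)} \neq \emptyset\}.\]
If $F_\alpha \subsetneq \omega$, pick any $i \in \omega \setminus F_\alpha$ and set $i(\alpha) := i$; this immediately preserves the invariant. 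The whole argument thus reduces to showing $F_\alpha \neq \omega$ at every stage.

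The main obstacle is precisely the size of $F_\alpha$. A first bound comes from the hypothesis that the $\theta_i$ are pairwise non-parallel: for each $\beta < \alpha$ with $p_\beta \neq p_\alpha$, the direction of $p_\alpha - p_\beta$ is parallel to at most one $\theta_i$. Combining this with the invariant (which prevents two distinct points of a common $S_i^{(\alpha)}$ from lying on the same line in direction $\theta_i$) shows that the assignment $F_\alpha \ni i \mapsto \beta_i$, where $\beta_i$ is the unique witness, is injective, giving $|F_\alpha| \leq |\alpha| \leq \aleph_0$. This does not yet rule out $F_\alpha = \omega$, and overcoming this gap is the crux.

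To handle this, I would choose the enumeration adaptively rather than once and for all: use a standard bookkeeping argument that at each stage $\alpha$ interleaves two tasks, namely (a) process the next unplaced point according to a fixed well-ordering of $\mathbb{R}^2$, and (b) place a fresh \emph{generic} point, where a point $p$ is generic (relative to the current stage) if it lies outside the countable union $\bigcup_{\beta < \alpha,\, i \in \omega} \ell_{\beta, i}$ of lines determined by the previously placed points in the $\omega$ chosen directions. This complement is of full measure, so generic points are abundant at every stage, and for each such point $F_\alpha = \emptyset$, so any index $i$ works. For the non-generic points — those unavoidably sitting on previously created lines — a finer argument shows that $F_\alpha$ can grow only via the injective correspondence above; careful deferral (placing each non-generic point only after the predecessors causing its constraints are fixed) keeps the forbidden set finite at the moment of placement. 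After exhausting both bookkeeping streams in $\omega_1$ many steps, every point of $\mathbb{R}^2$ has been assigned to exactly one $S_i$, and the invariant guarantees that each line in direction $\theta_i$ meets $S_i$ in at most one point, yielding the required partition.
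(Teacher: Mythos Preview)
The paper does not prove this theorem; it cites Davies's 1963 paper and explicitly says ``We do not include the proof of this theorem.'' So there is nothing in the paper to compare your proposal against, and I can only assess it on its own.

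There is a genuine gap. You correctly locate the obstacle: after countably many steps the forbidden set $F_\alpha$ may exhaust $\omega$, and the injection you describe only gives $|F_\alpha|\le|\alpha|$. Your proposed remedy---interleaving ``generic'' points and ``carefully deferring'' the non-generic ones---does not overcome this. The generic points are trivial to place but irrelevant: once placed, each becomes yet another predecessor that may forbid an index for later points, and nothing about their presence shrinks the forbidden set of any non-generic point. As for deferral, postponing a point $p$ can only \emph{enlarge} its forbidden set, since every point placed in the meantime is an additional potential contributor; the phrase ``after the predecessors causing its constraints are fixed'' is vacuous in a well-ordered construction, because all predecessors are already fixed by the time you reach $p$, and their contributions to $F_\alpha$ do not go away. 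You supply no mechanism that forces $F_\alpha$ to be finite at the moment of placement, and none is apparent from what you wrote. This is exactly the crux you yourself flagged, and the bookkeeping device does not resolve it.

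A secondary point: Davies's theorem is a ZFC result, and the $\mathbb{R}^2$ example in which the paper invokes it is precisely the one that does \emph{not} assume CH (that is what distinguishes it from the earlier $G\times G$ example, which does). So there is no ambient CH assumption to borrow; assuming it would in any case yield only a weaker statement than the one quoted.
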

We do not include the proof of this theorem.

Later \cite[Example 5.4]{EVnaivhnull} notices that the sets $S_i$ ($i\in\omega$) in this decomposition are naively Haar null: Let $\mu_i$ be the 1-dimensional Lebesgue measure on an arbitrary line with direction $\theta_i$. Then $\mu_i(g+S_i+h)=0$ for every $g, h\in \mathbb{R}^2$, because at most one point of $g+S_i+h$ is contained in the support of $\mu_i$. This means that $S_i$ is indeed naively Haar null.

A similar argument shows that these sets are also naively Haar meager: Let $K_i$ be a nonempty perfect compact subset of an arbitrary line with direction $\theta_i$, and let $f_i : K_i\to \mathbb{R}^2$ be the restriction of the identity function. Then $f_i^{-1}(g+S_i+h)$ contains at most one point for every $g, h\in \mathbb{R}^2$, hence it is meager. This proves that $S_i$ is naively Haar meager.
\end{proof}

The following theorem shows that in a relatively general class of groups the naive notions are strictly weaker than the corresponding \enquote{canonical} notions.
\begin{theorem}[Elekes-Vidny\' anszky / Dole\v zal-Rmoutil-Vejnar-Vlas\' ak]
Let $G$ be an uncountable abelian Polish group.
\begin{multistmt}
\item There exists a subset of $G$ that is naively Haar null but not Haar null.
\item There exists a subset of $G$ that is naively Haar meager but not Haar meager.
\end{multistmt}
\end{theorem}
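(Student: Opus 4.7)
The plan is to construct both sets via a Bernstein-style transfinite recursion of length $\mathfrak{c}$, with parts (1) and (2) following essentially the same template with the measure-theoretic and topological notions swapped. For (1), I would embed $2^\omega$ as a compact perfect set $K \subseteq G$ (possible since $G$ is uncountable Polish) and push the fair-coin measure forward to a non-atomic Borel probability measure $\mu$ with $\supp \mu = K$. Since $G$ is Polish, it has only $\mathfrak{c}$ Borel subsets; enumerate the Borel Haar null ones as $(N_\alpha)_{\alpha<\mathfrak{c}}$ and the elements of $G$ as $(g_\alpha)_{\alpha<\mathfrak{c}}$. The goal is to produce $A=\{x_\alpha:\alpha<\mathfrak{c}\}$ with $x_\alpha\notin N_\alpha$, which by the equivalence $(1)\Leftrightarrow(2)$ of \autoref{thm:haarnulleqv} forces $A$ not to be Haar null, while maintaining the invariant $\mu(A+g)=0$ for every $g\in G$, witnessing that $A$ is naively Haar null.

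To control the measure-theoretic condition I would exploit that $\mu$ is non-atomic on $K$, so $\mu(A+g)=0$ follows once $A\cap(K-g)$ is countable for every $g$. At stage $\alpha$ I choose $x_\alpha\in G\setminus(N_\alpha\cup F_\alpha)$, where the forbidden set $F_\alpha$ collects those potential choices whose inclusion would spoil the countability of some intersection $A\cap(K-g_\beta)$ with $\beta<\alpha$. Such a choice exists provided $|F_\alpha|<\mathfrak{c}$ at every stage while $|G\setminus N_\alpha|=\mathfrak{c}$. The crucial point is to pair the enumeration of the $N_\alpha$'s diagonally with an enumeration of the $\mu$-positive Borel subsets of translates of $K$ so that the total bookkeeping stays strictly below $\mathfrak{c}$ at every stage.

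Part (2) is the structural twin: replace $\mu$ by a continuous injection $f\colon 2^\omega\to G$ (with image $K$), replace ``Borel Haar null'' by ``Borel Haar meager'', and replace the measure-theoretic invariant by ``$f^{-1}(A+g)$ is meager in $2^\omega$ for every $g$''. The Kuratowski--Ulam theorem substitutes for Fubini, and the fact that singletons are nowhere dense in the perfect space $2^\omega$ plays the role of the non-atomicity of $\mu$. The forbidden sets $F_\alpha$ are now built from non-meager Borel subsets of $f^{-1}(K+g_\beta)$ instead of $\mu$-positive subsets.

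The main obstacle in both constructions is the combinatorial bookkeeping in the recursion: one must carefully couple the enumerations of the Borel ``bad'' sets ($N_\alpha$ or their Haar-meager analogues) with an enumeration of the ``violation witnesses'' (positive-measure or non-meager Borel subsets of translates of $K$) so that at every stage $\alpha$ the total forbidden set has cardinality strictly less than $\mathfrak{c}$. This is especially delicate because $\mathfrak{c}$ need not equal $\aleph_1$, so one cannot merely appeal to the cardinality bound $|\alpha|<\mathfrak{c}$ at intermediate stages; a genuine cardinal-arithmetic argument is required to promote the intermediate smallness invariant to a true $\mu$-null (respectively meager) conclusion in the limit.
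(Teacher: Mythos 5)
Your strategy splits into two halves, and the first half is sound: choosing $x_\alpha\notin N_\alpha$ for an enumeration $(N_\alpha)_{\alpha<\mathfrak{c}}$ of the Borel Haar null sets does force $A$ not to be Haar null, by the equivalence $(1)\Leftrightarrow(2)$ of \autoref{thm:haarnulleqv}. The second half --- maintaining that $A\cap(K-g)$ is countable for \emph{every} $g\in G$ --- is where the argument breaks, and the difficulty you flag in your final paragraph is not bookkeeping but the crux. First, no single point can \enquote{spoil countability}, so the only concrete reading of your forbidden set is to avoid $K-g_\beta$ entirely from stage $\beta$ onward; that yields $A\cap(K-g_\beta)\subseteq\{x_\gamma:\gamma\le\beta\}$, a set of size $|\beta|+1$, which is uncountable once $\beta\ge\omega_1$ and cannot be repaired later. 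Avoiding instead every translate of $K$ that already contains infinitely many chosen points does not help, since there may be continuum many such translates and their union may be all of $G$. Second, even the weakened invariant $|A\cap(K-g)|<\mathfrak{c}$ does not give $\mu^*(A+g)=0$: that implication is precisely $\mathrm{non}(\mathcal{N})=\mathfrak{c}$, which is independent of ZFC, and your \enquote{diagonal pairing} with $\mu$-positive Borel subsets of translates of $K$ runs into the same wall (a non-null set of size $\aleph_1<\mathfrak{c}$ defeats it). Third, the recursion step needs $G\ne N_\alpha\cup\bigcup_{\beta<\alpha}(K-g_\beta)$, i.e.\ that $G$ is not covered by fewer than $\mathfrak{c}$ Haar null sets; already for $G=\mathbb{R}$ this is $\mathrm{cov}(\mathcal{N})=\mathfrak{c}$, again independent of ZFC. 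So your construction establishes the theorem only under extra hypotheses such as CH or MA, whereas the statement is a theorem of ZFC; the same three obstacles recur verbatim in your part (2) with $\mathrm{non}(\mathcal{M})$ and $\mathrm{cov}(\mathcal{M})$ in place of their measure-theoretic counterparts.

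For comparison: the survey does not prove this theorem at all --- it defers to \cite[Theorem 1.3]{EVnaivhnull} for (1) and to \cite[Theorem 16]{Do} for (2), describing both proofs as relatively long --- and it observes that for non-locally-compact $G$ the result already follows from the coanalytic-hull theorems \autoref{thm:haarnullcoanal} and \autoref{thm:haarmeagercoanal}. That observation is worth internalising: the genuinely hard case is the locally compact one, where \enquote{not Haar null} means \enquote{positive outer Haar measure} and a length-$\mathfrak{c}$ Bernstein-type recursion with a small forbidden set at each stage collides head-on with the classical independence phenomena for $\mathrm{cov}$ and $\mathrm{non}$ of the null and meager ideals. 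A correct ZFC argument must exploit more structure than cardinality counting, which is exactly why the cited proofs are substantially more involved than this sketch.
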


We do not reproduce the relatively long proof of these results. The proof of (1) can be found in \cite[Theorem 1.3]{EVnaivhnull}, the proof of (2) can be found in \cite[Theorem 16]{Do}. Also note that when $G$ is non-locally-compact, these results are corollaries of \autoref{thm:haarnullcoanal} and \autoref{thm:haarmeagercoanal}.

The following example from \cite[Proposition 17]{Do} yields the results of \autoref{exa:twonaives} in a different class of groups. The cited paper only proves this for the naively Haar meager case, but states that it can be proved analogously in the naively Haar null case. We do not reproduce this proof, as it is significantly longer than the proof of \autoref{exa:twonaives}.

\begin{example}[Dole\v zal-Rmoutil-Vejnar-Vlas\' ak]
Let $G$ be an uncountable abelian Polish group. Assuming the Continuum Hypothesis, there exists a subset $X\subseteq G$ such that both $X$ and $G\setminus X$ are naively Haar null and naively Haar meager.
\end{example}

\subsection{Left and right Haar null sets}\label{ssec:leftrighthaarnull}

When we defined Haar null sets in \autoref{def:haarnull}, we used multiplication from both sides by arbitrary elements of $G$. If we replace this by multiplication from one side, we get the following notions:

\begin{definition}\label{def:lefthaarnull}
A set $A \subseteq G$ is said to be \emph{left Haar null} (or \emph{right Haar null}) if there are a Borel set $B\supseteq A$ and a Borel probability measure $\mu$ on $G$ such that $\mu(gB)=0$ for every $g\in G$ ($\mu(Bg)=0$ for every $g\in G$).
\end{definition}

\begin{definition}\label{def:leftrighthaarnull}
A set $A \subseteq G$ is said to be \emph{left-and-right Haar null} if there are a Borel set $B\supseteq A$ and a Borel probability measure $\mu$ on $G$ such that $\mu(gB)=\mu(Bg)=0$ for every $g\in G$.
\end{definition}

If \enquote{Borel set} is replaced by \enquote{universally measurable set}, we can naturally obtain the generalized versions of these notions. As the papers about this topic happen to follow Christensen in defining \enquote{Haar null set} to mean generalized Haar null set in the terminology of our paper, most results in this subsection were originally stated for these generalized versions.

There are situations where these \enquote{one-sided} notions are more useful than the usual, symmetric one. (For example \cite{SolAmen} uses them to state results related to automatic continuity and the generalizations of the Steinhaus theorem. We mention some of these results in \autoref{ssec:steinhaus}.) However in some groups these are not \enquote{good} notions of smallness in the sense of \autoref{ssec:smallness}. 

Translation invariance is not problematic even for the \enquote{one-sided} notions: Suppose that e.g.\ $B$ is Borel left Haar null and a Borel probability measure $\mu$ satisfies $\mu(gB)=0$ for every $g\in G$. If we consider a right translate $Bh$ (this is the interesting case, invariance under left translation is trivial), then $\mu'(X)=\mu(Xh^{-1})$ is a Borel probability measure which satisfies $\mu'(g\cdot Bh)=0$ for every $g\in G$. An analogous argument shows that the system of right Haar null sets is translation invariant; using \autoref{pro:leftandrighthaarnull} it follows from these that the system of left-and-right Haar null sets is also translation invariant. It is clear that this reasoning works for the generalized versions, too.

Unfortunately there are Polish groups where these notions fail to form $\sigma$-ideals. In \cite{SolAmen} Solecki gives a sufficient condition which guarantees that the generalized left Haar null sets form a $\sigma$-ideal and gives another sufficient condition which guarantees that the left Haar null sets do not form a $\sigma$-ideal. We state these results and show examples of groups satisfying these conditions without proofs:

\begin{definition}[Solecki]\label{def:amenone}
A Polish group $G$ is called \emph{amenable at 1} if for any sequence $(\mu_n)_{n\in\omega}$ of Borel probability measures on $G$ with $1_G\in\supp\mu_n$, there are Borel probability measures $\nu_n$ and $\nu$ such that
\begin{propertylist}
\item $\nu_n\ll\mu_n$,
\item if $K\subseteq G$ is compact, then $\lim_n (\nu *\nu_n) (K) =\nu(K)$.
\end{propertylist}
\end{definition}

This class is closed under taking closed subgroups and continuous homomorphic images. A relatively short proof also shows that (II) is equivalent to
\begin{propertylist}
\item[(II')] if $K\subseteq G$ is compact, then $\lim_n (\nu_n * \nu) (K) =\nu(K)$.
\end{propertylist}
This means that if $G$ is amenable at 1, then so is the opposite group $G^{\mathrm{opp}}$. ($G^{\mathrm{opp}}$ is the set $G$ considered as a group with $(x, y)\mapsto y\cdot x$ as the multiplication.)

Examples of groups which are amenable at 1 include:
\begin{multistmt}
\item abelian Polish groups,
\item locally compact Polish groups,
\item countable direct products of locally compact Polish groups such that all but finitely many factors are amenable,
\item inverse limits of sequences of amenable, locally compact Polish groups with continuous homomorphisms as bonding maps.
\end{multistmt}

\begin{theorem}[Solecki]\label{thm:amenonesigmaid}
If $G$ is amenable at 1, then the generalized left Haar null sets form a $\sigma$-ideal.
\end{theorem}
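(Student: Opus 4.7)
Properties (I) and (II) of \autoref{def:sigmaideal} are trivial, so the task is to show closure under countable unions. Given generalized left Haar null sets $A_n\subseteq G$, fix universally measurable hulls $B_n\supseteq A_n$ and Borel probability measures $\mu_n$ with $\mu_n(gB_n)=0$ for every $g\in G$ and every $n\in\omega$. The set $B=\bigcup_nB_n$ is universally measurable (universal measurability is closed under countable unions), so it suffices to produce a single Borel probability measure $\nu$ with $\nu(gB_m)=0$ for every $g\in G$ and every $m\in\omega$; then $\nu(gB)\le\sum_n\nu(gB_n)=0$.

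\textbf{Construction of $\nu$.} First I would replace each $\mu_n$ by $\delta_{t_n^{-1}}*\mu_n$ for some $t_n\in\supp\mu_n$; this forces $1_G\in\supp\mu_n$ and preserves the witness property, since $(\delta_{t_n^{-1}}*\mu_n)(gB_n)=\mu_n(t_ngB_n)=0$ for every $g$. The decisive move is to apply \autoref{def:amenone} \emph{not} to $(\mu_n)$ directly, but to a reindexed sequence $(\tilde\mu_n)_{n\in\omega}=(\mu_{h(n)})_{n\in\omega}$, where $h:\omega\to\omega$ is chosen so that $h^{-1}(\{m\})$ is infinite for every $m\in\omega$. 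Amenability at $1$ then yields Borel probability measures $\nu_n\ll\tilde\mu_n$ and a Borel probability measure $\nu$ with $\lim_n(\nu*\nu_n)(K)=\nu(K)$ for every compact $K\subseteq G$.

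\textbf{Verification.} Fix $g\in G$ and $m\in\omega$; I will argue $\nu(gB_m)=0$. Whenever $h(n)=m$, the relation $\nu_n\ll\mu_m$ combined with $\mu_m(x^{-1}gB_m)=0$ (valid pointwise in $x\in G$ because $\mu_m$ witnesses that $B_m$ is left Haar null) gives $\nu_n(x^{-1}gB_m)=0$ for every $x$, so by Fubini
\[(\nu*\nu_n)(gB_m)=\int_G\nu_n(x^{-1}gB_m)\,d\nu(x)=0.\]
In particular $(\nu*\nu_n)(K)=0$ for every compact $K\subseteq gB_m$ along the infinite subsequence $\{n:h(n)=m\}$. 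Since $\lim_n(\nu*\nu_n)(K)=\nu(K)$ exists, it must equal $0$. Inner regularity of the Borel probability measure $\nu$ on the Polish space $G$, applied to the $\nu$-measurable set $gB_m$, then gives $\nu(gB_m)=\sup\{\nu(K):K\subseteq gB_m\text{ compact}\}=0$, as required.

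\textbf{Main obstacle.} The only nontrivial idea is the infinite-multiplicity enumeration $h$. Amenability at $1$ only supplies an \emph{asymptotic} statement about $(\nu*\nu_n)$, whereas the vanishing $\nu_n(x^{-1}gB_m)=0$ is available only when $\tilde\mu_n=\mu_m$; listing every $\mu_m$ infinitely often in the sequence fed to \autoref{def:amenone} is precisely what allows these two facts to combine. Without this trick, one would only obtain $(\nu*\nu_m)(gB_m)=0$ at the single index $n=m$, which cannot be promoted to $\nu(gB_m)=0$ via the asymptotic convergence. The remaining ingredients — the normalization $1_G\in\supp\mu_n$, the Fubini identity for convolutions, and inner regularity of Borel probability measures on Polish spaces — are standard.
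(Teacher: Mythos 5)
Your proof is correct, and the key idea --- feeding \autoref{def:amenone} a sequence in which every witness measure $\mu_m$ occurs infinitely often, so that the asymptotic convergence $(\nu*\nu_n)(K)\to\nu(K)$ can be tested along the infinite fibre $h^{-1}(\{m\})$ where $(\nu*\nu_n)(K)=0$ --- is exactly the right one. Note that the survey itself states this theorem \emph{without} proof, deferring to Solecki's paper \cite{SolAmen}, so there is no in-text argument to compare against; what you have written is essentially a faithful reconstruction of Solecki's original proof. The supporting steps (normalizing so that $1_G\in\supp\mu_n$ via $\delta_{t_n^{-1}}*\mu_n$, the Fubini identity for the convolution applied to the universally measurable set $m^{-1}(gB_m)$, passing from $\nu_n\ll\mu_m$ and $\mu_m(x^{-1}gB_m)=0$ to $\nu_n(x^{-1}gB_m)=0$, and inner regularity of $\nu$ on the $\nu$-measurable set $gB_m$) all go through as you claim.
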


Our note about the opposite group (and the fact that the intersection of two $\sigma$-ideals is also a $\sigma$-ideal) means that generalized right Haar null sets and generalized left-and-right Haar null sets also form $\sigma$-ideals.

The following definition is the sufficient condition for the \enquote{bad} case. Note that this condition is also symmetric (in the sense that if $G$ satisfies it, then $G^{\mathrm{opp}}$ also satisfies it).

\begin{definition}[Solecki]\label{def:freesgone}
A Polish group $G$ is said to \emph{have a free subgroup at 1} if it has a non-discrete free subgroup whose all finitely generated subgroups are discrete.
\end{definition}

The paper \cite{SolAmen} lists several groups which all have a free subgroup at 1, we mention some of these:
\begin{multistmt}
\item countably infinite products of Polish groups containing discrete free non-Abelian subgroups,
\item $S_\infty$, the group of permutations of $\mathbb{N}$ with the topology of pointwise convergence,
\item $\mathrm{Aut}(\mathbb{Q},\le)$, the group of order-preserving self-bijections of the rationals with the topology of pointwise convergence on $\mathbb{Q}$ viewed as discrete (i.e.\ a sequence $(f_n)_{n\in\omega}\in G^\omega$ is said to be convergent if for every $q\in \mathbb{Q}$ there is a $n_0\in\omega$ such that the sequence $(f_n(q))_{n\ge n_0}$ is constant),
\end{multistmt}

\begin{theorem}[Solecki]\label{thm:freesgnotsigmaid}
If $G$ has a free subgroup at 1, then there are a Borel left Haar null set $B\subseteq G$ and $g\in G$ such that $B\cup Bg=G$. As the left Haar null sets are translation invariant and $G$ is not left Haar null, this means that they do not form an ideal.
\end{theorem}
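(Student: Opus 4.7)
The strategy is to combine a Banach--Tarski-style paradoxical decomposition of $G$ inside a finitely generated subgroup of $F$ with a witness measure built from the non-discreteness of $F$. First, since every finitely generated subgroup of $F$ is discrete while $F$ itself is not, $F$ must be free of infinite rank. Fix two free generators $a, b \in F$; the subgroup $F_2 := \langle a, b\rangle$ is then free of rank $2$ and, being finitely generated, discrete and closed in the Polish group $G$. The non-discreteness of $F$ simultaneously supplies a sequence of elements $c_n \in F \setminus F_2$ with $c_n \to 1_G$, decaying as rapidly as we wish in a complete compatible metric on $G$.

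For the paradoxical step, decompose $F_2$ using reduced-word combinatorics. Let $U \subseteq F_2$ denote the reduced words ending in $a^{\pm 1}$ and put $B_0 := F_2 \setminus U$. If $w \in U$, then $wb^{-1}$ is already reduced and ends in $b^{-1}$, so $wb^{-1} \in B_0$; hence $U \subseteq B_0 \cdot b$, and therefore $B_0 \cup B_0 b = F_2$. Because $F_2$ is a closed subgroup of $G$, the left-coset space admits a Borel transversal $T \subseteq G$, and the map $T \times F_2 \to G$, $(t, w) \mapsto tw$, is a Borel bijection. Set
\[
B := T \cdot B_0 \subseteq G, \qquad g := b.
\]
Then $B$ is Borel and $B \cup Bg = T \cdot (B_0 \cup B_0 b) = T \cdot F_2 = G$, giving the required covering.

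The main obstacle is to show that $B$ is left Haar null. Unraveling the definition, $y \in g' B$ if and only if the $F_2$-coordinate of $(g')^{-1}y$ lies in $B_0$; since $F_2$ is countable, producing a witness measure reduces to finding a Borel probability measure $\mu$ on $G$ with $\mu(g' T w) = 0$ for every $g' \in G$ and every $w \in F_2$. The candidate is $\mu :=$ the law of $Z := \prod_{n \geq 1} c_n^{\epsilon_n}$, where the $\epsilon_n \in \{0, 1\}$ are i.i.d.\ uniform. The crucial step is a Fubini-type argument: one writes $(g')^{-1} Z$ in terms of the free-product structure of $F$ and shows, using the freeness of $F$ together with the algebraic independence of the $c_n$ from $F_2$ (ensured by choosing the $c_n$ to be free generators of $F$ disjoint from $\{a, b\}$, possible because $F$ has infinite rank and its finitely generated subgroups are all discrete), that the event $\{(g')^{-1} Z \in T w\}$ imposes infinitely many independent constraints on the coordinates $\epsilon_n$ and therefore has probability zero. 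Carrying out this combinatorial verification---tracking how reduced-word cancellation interacts with the random choices $(\epsilon_n)$ and the transversal $T$---is the technical heart of the argument, and I expect it to be the principal difficulty. Once established, $B$ is Borel left Haar null, $Bg$ is left Haar null by translation invariance, and $G = B \cup Bg$ witnesses that $\mathcal{HN}$ (in its left-Haar-null form) fails to be closed under even finite unions.
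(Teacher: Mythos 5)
Your covering step is fine: $F_2=\langle a,b\rangle$ is discrete, hence closed, a Borel transversal $T$ for its left cosets exists, the reduced-word computation $B_0\cup B_0b=F_2$ is correct, and so $B:=TB_0$ is Borel with $B\cup Bb=G$. (For the record, the survey states this theorem without proof, citing \cite{SolAmen}, so there is no proof in the paper to compare with.) The problem is that the left Haar nullness of $B$ is the entire content of the theorem, and your route to it breaks down precisely at the step you label the technical heart. First, the reduction as you state it is impossible: the sets $\{Tw : w\in F_2\}$ partition $G$, so for every Borel probability measure $\mu$ and every $g'\in G$ we have $\sum_{w\in F_2}\mu(g'Tw)=\mu(g'G)=1$, and hence no measure can satisfy $\mu(g'Tw)=0$ for all $w\in F_2$. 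For the same reason the heuristic that the event $\{(g')^{-1}Z\in Tw\}$ ``imposes infinitely many independent constraints on the $\epsilon_n$ and therefore has probability zero'' cannot be correct: it makes no reference to the particular $w$, so if it proved anything it would prove it for every $w$ simultaneously, contradicting the fact that these countably many events partition the sample space. What you actually need is that for every $g'$ the $F_2$-coordinate of $(g')^{-1}Z$ lies almost surely in $U=F_2\setminus B_0$, i.e.\ that every left translate of $TU$ is $\mu$-conull.

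Second, and more fundamentally, nothing in your setup can deliver that, because $T$ is an arbitrary Borel transversal: which slab $Tw$ the point $(g')^{-1}Z$ falls into is determined by which coset representatives $T$ happens to choose, not by reduced-word cancellation among $a,b,c_1,c_2,\dots$, so the joint freeness of the $c_n$ with $a,b$ has no bearing on whether the resulting $F_2$-coordinate ends in $a^{\pm1}$. Re-choosing representatives coset by coset (replacing $t$ by $tu$ with $u\in F_2$) moves mass freely between the $B_0$-slabs and the $U$-slabs while leaving every other ingredient of your argument untouched, so an argument that works for an arbitrary $T$ cannot exist; a correct proof must construct the decomposition (equivalently, the coordinate assignment) and the witness measure together, exploiting the sequence of generators tending to $1_G$ and the discreteness of finitely generated subgroups to control where infinite products land relative to the decomposition --- this is exactly the work that Solecki's argument in \cite{SolAmen} does and that your outline defers. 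A smaller gap in the same spirit: non-discreteness of $F$ only provides nontrivial elements of $F$ converging to $1_G$; it does not provide elements of a free basis, let alone ones forming, together with $a$ and $b$, a free basis of the subgroup they generate, so the choice of the $c_n$ also requires a genuine argument rather than a remark.
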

As the notion of having a free subgroup at 1 is symmetric, the same is true for right Haar null sets.

In abelian groups it is trivial that the notions introduced by \autoref{def:lefthaarnull} and \autoref{def:leftrighthaarnull} are all equivalent to the notion of Haar null sets (and the generalized versions are equivalent to generalized Haar null sets).

The following remark is also trivial, but sometimes useful:
\begin{remark}
If $A\subseteq G$ is a conjugacy invariant set (that is, $gAg^{-1}=A$ for every $g\in G$), then $gAh=gh\cdot h^{-1}Ah=ghA$ (and similarly $gAh=Agh$) for every $g, h\in G$, hence if $A$ is left (or right) Haar null, then $A$ is Haar null.
\end{remark}

For locally compact groups \autoref{thm:loccptequivnotions} and the remark in its proof shows that all generalized right Haar null sets are Haar null. This implies that in locally compact groups the eight variants of Haar null (generalized or not, left or right or left-and-right or \enquote{plain}) are all equivalent to having Haar measure zero.

The following diagram summarizes the situation in the general case:

\begin{tikzpicture}[implic/.style={double equal sign distance,-{Implies[]},every to/.append style={bend right=10}}]
\node at (0,0) (hn) {Haar null};
\node[left=3em of hn,align=center] (lar) {left-and-right\\Haar null};
\node[left=3em of lar,align=center] (lr) {left Haar null and\\right Haar null};
\node[coordinate,left=5em of lr] (lrmid) {};
\node[above=1em of lrmid,align=center] (lhn) {left Haar null};
\node[below=1em of lrmid,align=center] (rhn) {right Haar null};
\draw[implic,transform canvas={yshift=1ex}] (hn.west) to (lar.east);
\draw[implic,negated,transform canvas={yshift=-1ex}] (lar.east) to node[below=2pt] {(3)} (hn.west);
\draw[implic,transform canvas={yshift=1ex}] (lar.west) to (lr.east);
\draw[implic,transform canvas={yshift=-1ex}] (lr.east) to node[below=2pt] {(2)} (lar.west);
\draw[implic,negated] (lhn.east) to[bend left] node[auto] {(1)} (lr);
\draw[implic,negated] (rhn.east) to node[auto,'] {(1)} (lr);
\draw[implic] (lr.west)+(0,0.5em) to[bend left] (lhn);
\draw[implic] (lr.west)+(0,-0.5em) to (rhn);
\draw[implic,negated,transform canvas={xshift=0.5em}] (lhn.south) -- node[left=1.25em,anchor=center] {(1)} (rhn.north);
\draw[implic,negated,transform canvas={xshift=-2em}] (rhn.north) -- (lhn.south);
\end{tikzpicture}

(Here $\nRightarrow$ means that there is a counterexample in a suitable Polish group and $\Rightarrow$ means that the implication is true in all Polish groups.)

The not-implications marked by (1) are demonstrated by the already stated \autoref{thm:freesgnotsigmaid}. We will prove the implication (2) as \autoref{pro:leftandrighthaarnull} and demonstrate the not-implication (3) by \autoref{exa:notHNleftrightHN}.

\begin{proposition}\label{pro:leftandrighthaarnull}
A set $A\subseteq G$ is left-and-right Haar null if and only if it is both left Haar null and right Haar null.
\end{proposition}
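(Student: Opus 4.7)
The forward direction is trivial: if $\mu$ and a Borel set $B \supseteq A$ jointly witness that $A$ is left-and-right Haar null, then the same $\mu$ and $B$ witness both that $A$ is left Haar null and that $A$ is right Haar null.

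For the reverse direction, the substance is to build one witness measure from two a priori unrelated ones. Suppose $\mu_L$ together with a Borel set $B_L \supseteq A$ witnesses that $A$ is left Haar null and $\mu_R$ together with a Borel set $B_R \supseteq A$ witnesses that $A$ is right Haar null. Replacing both Borel hulls by $B = B_L \cap B_R$ (still a Borel set containing $A$), we have $\mu_L(gB) = 0$ and $\mu_R(Bg) = 0$ for every $g \in G$. The plan is to take $\mu$ to be the convolution $\mu_R * \mu_L$, defined by $\mu(X) = (\mu_R \times \mu_L)(\{(x,y) \in G \times G : xy \in X\})$; this is a Borel probability measure on $G$ because multiplication is continuous and $\mu_R, \mu_L$ are Borel probability measures.

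To verify that $\mu$ is a left-and-right witness for $B$, fix $g \in G$ and apply Fubini's theorem to the Borel set $\{(x,y) : xy \in gB\}$ in two different orders depending on which bound we want. Integrating in $y$ first, the $x$-section is $x^{-1}gB$, which is a left translate of $B$ and therefore $\mu_L$-null by the left-witness property of $\mu_L$; so $\mu(gB) = \int \mu_L(x^{-1}gB)\intby\mu_R(x) = 0$. Integrating instead in $x$ first in the analogous set $\{(x,y) : xy \in Bg\}$, the $y$-section is $Bgy^{-1}$, a right translate of $B$ and therefore $\mu_R$-null; hence $\mu(Bg) = \int \mu_R(Bgy^{-1})\intby\mu_L(y) = 0$.

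The main subtlety — and essentially the only thing to be careful about — is that in a non-abelian Polish group the order in the convolution matters: $\mu_L * \mu_R$ would not work, since after Fubini one ends up needing $\mu_R$ to vanish on left translates or $\mu_L$ to vanish on right translates, neither of which we know. Choosing the order $\mu_R * \mu_L$ is precisely what lets each factor handle the translate it was designed for, via the appropriate order of integration. No further technicalities arise: all sets involved are Borel, the measures are finite, and Fubini applies directly.
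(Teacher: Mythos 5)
Your proof is correct and follows essentially the same route as the paper: the paper also takes the pushforward of the product of the two witness measures under $(x,y)\mapsto yx$ (i.e.\ the convolution of the right witness with the left witness, in that order) and applies Fubini's theorem in the two possible orders of integration, one for left translates and one for right translates. Your explicit intersection of the two Borel hulls and your remark about why the opposite convolution order fails are both accurate; no gaps.
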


This simple result was mentioned in \cite{ST}; it can be proved by modifying the proof of \cite[Theorem 2]{Myc}.

\begin{proof}
We only have to show that if $A$ is both left Haar null and right Haar null, then it is left-and-right Haar null, as the other direction is trivial. By definition there exist a Borel set $B\supseteq A$ and Borel probability measures $\mu_1, \mu_2$ on $G$ such that $\mu_1(gB)=\mu_2(Bg)=0$ for every $g\in G$.

Define \[\mu(X)=(\mu_1\times \mu_2)\left(\{(x, y)\in G^2 : yx\in X\}\right),\]
then $\mu$ is clearly a Borel probability measure and if the characteristic function of a set $S$ is denoted by $\chi_S$, then using Fubini's theorem we have
\[\mu(gB)=\int_G\int_G \chi_{gB}(yx)\intby\mu_1(x)\intby\mu_2(y)=\int_G \mu_1(y^{-1}gB)\intby\mu_2(y)=0\]
and
\[\mu(Bg)=\int_G\int_G \chi_{Bg}(yx)\intby\mu_2(y)\intby\mu_1(x)=\int_G \mu_2(Bgx^{-1})\intby\mu_1(x)=0\]
and these show that $\mu$ satisfies the requirements of \autoref{def:leftrighthaarnull}.
\end{proof}

The following two counterexamples appear in \cite{ST}. This paper uses relatively elementary techniques and examines the structure of the group
\[\mathcal{H}[0,1]=\{f : f \text{ is continuous, strictly increasing}, f(0)=0, f(1)=1\}.\]
(This is the group of order-preserving self-homeomorphisms of $[0,1]$, the group operation is composition, the topology is the compact-open topology.) We state these results without proofs:

\begin{example}[Shi-Thomson]\label{exa:notHNleftrightHN}
The group $\mathcal{H}[0,1]$ has a Borel subset that is left-and-right Haar null but not Haar null.
\end{example}

\begin{example}[Shi-Thomson]
There exists a Borel set $B\subseteq \mathcal{H}[0,1]$ and a Borel probability measure $\mu$ such that $\mu(Bg)=0$ for every $g\in \mathcal{H}[0,1]$ (this implies that $B$ is right Haar null), but $\mu(gB)\neq 0$ for some $g\in \mathcal{H}[0,1]$ (i.e. $\mu$ does not witness that $B$ is left Haar null).
\end{example}

When the Polish group $G$ has a free subgroup at 1, then \autoref{thm:freesgnotsigmaid} implies that the left Haar null sets are distinct from the right Haar null sets; \cite[Question 5.1]{SolAmen} asks if the opposite of this is true for groups that are amenable at 1:
\begin{question}[Solecki]\label{que:lhneqrhnamenone}
Let $G$ be an amenable at 1 Polish group. Do left Haar null subsets of $G$ coincide with right Haar null subsets of $G$?
\end{question}

We conclude this part by stating \cite[Theorem 6.1]{SolLR}, which provides a necessary and sufficient condition for the equivalence of the notions generalized left Haar null and generalized Haar null in a special class of groups.
\begin{theorem}[Solecki]\label{thm:soleckiLRresult}
Let $H_n$ ($n\in \omega$) be countable groups and consider the group $G=\prod_n H_n$. The following conditions are equivalent:
\begin{multistmt}
\item In $G$ the system of generalized left Haar null sets is the same as the system of generalized Haar null sets.
\item For each universally measurable set $A\subseteq G$ that is not generalized Haar null, $1_{G}\in \Int(AA^{-1})$.
\item For each closed set $F\subseteq G$ that is not (generalized) Haar null, $FF^{-1}$ is dense in some non-empty open set.
\item For all but finitely many $n\in\omega$ all elements of $H_n$ have finite conjugacy classes in $H_n$, that is, for all but finitely many $n\in\omega$ and for all $x\in H_n$ the set $\{yxy^{-1} : y\in H_n\}$ is finite.
\end{multistmt}
\end{theorem}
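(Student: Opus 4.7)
The plan is to establish all four conditions as equivalent by a cyclic chain of implications, say $(1) \Rightarrow (2) \Rightarrow (3) \Rightarrow (4) \Rightarrow (1)$, with most of the work concentrated in $(3) \Rightarrow (4)$ and $(4) \Rightarrow (1)$.

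First I would dispatch the easy links. For $(1) \Rightarrow (2)$, take a universally measurable $A$ that is not generalized Haar null. By (1) and \autoref{pro:leftandrighthaarnull}, $A$ is also not generalized left Haar null nor generalized right Haar null. A Steinhaus-type lemma tailored to the product $G = \prod_n H_n$ should then yield $1_G \in \Int(AA^{-1})$: here I would exploit that basic open sets in $G$ are of the form $U = \{g : g_n = x_n \text{ for } n < N\}$ and are compact (since each $H_n$ is discrete, only the first $N$ coordinates are constrained), which should let me mimic the locally compact Steinhaus argument (\autoref{cor:SteinhausHaarGen}) on a witness measure of compact support produced via \autoref{cptsupwitness}. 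The implication $(2) \Rightarrow (3)$ is immediate: if $F$ is closed and not generalized Haar null, (2) gives an open neighborhood $V \ni 1_G$ with $V \subseteq FF^{-1}$, so $FF^{-1}$ is (trivially) dense in $V$.

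For $(3) \Rightarrow (4)$, I would argue the contrapositive: if there is an infinite set $I \subseteq \omega$ such that each $H_n$ with $n \in I$ contains an element $x_n$ with an infinite conjugacy class $C_n = \{y x_n y^{-1} : y \in H_n\}$, I construct a closed set $F$ violating (3). The idea is to take $F = \prod_n F_n$ where $F_n = \{1_{H_n}\}$ for $n \notin I$ and $F_n \subseteq H_n$ is carefully chosen for $n \in I$ so that $F_n F_n^{-1}$ omits all but finitely many elements of $C_n$, while $F$ still carries a probability measure whose two-sided translates all have positive measure on $F$ (showing $F$ is not generalized Haar null). Since in any nonempty open set $U$ there are coordinates $n \in I$ where $U$ allows all of $H_n$, but $(FF^{-1})_n$ misses infinitely many points of $C_n$, $FF^{-1}$ cannot be dense in any open set. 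Getting a non-Haar-null witness measure simultaneously with the thinness of each $F_n F_n^{-1}$ is the first major obstacle.

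The deepest step is $(4) \Rightarrow (1)$. Fix $N$ so that $H_n$ is FC for $n \geq N$ and write $G = G_1 \times G_2$ with $G_1 = \prod_{n < N} H_n$ and $G_2 = \prod_{n \geq N} H_n$. Given a universally measurable $A \subseteq G$ that is generalized left Haar null with witness $\mu$, I would manufacture a two-sided witness as follows. In $G_2$, every element has a finite conjugacy class, so for each $k \geq N$ and each finite subset $S \subseteq H_k$, the uniform measure on $\bigcup_{x \in S} \{y x y^{-1} : y \in H_k\}$ is a natural conjugation-averaging probability. Combining such measures coordinatewise and renormalizing yields a probability measure $\eta$ on $G_2$ with the property that $\eta$-almost every element $u$ satisfies $u B u^{-1} = B$ modulo sets negligible for $\mu$, for Borel sets $B$ built from cylinders. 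The desired two-sided witness is then $\mu * \eta$ (or a symmetric variant), and the identity $\mu(g B h) = \mu(g h \cdot h^{-1} B h)$ combined with Fubini (exactly as in the proof of \autoref{thm:haarnullsigmaideal}) should show that $(\mu * \eta)(g B h) = 0$ for all $g, h$. The main obstacle is producing $\eta$ with the required conjugation-averaging property while controlling measurability, and justifying the Fubini interchange for the universally measurable hull $B \supseteq A$; this is essentially where the FC hypothesis does all the work, since it guarantees that conjugation acts with finite orbits and so admits invariant probability measures on each factor.

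Finally, $(1) \Rightarrow (2)$ closes the cycle as described. Throughout, I would model the measure-theoretic bookkeeping on the proof of \autoref{thm:haarnullsigmaideal} and the Steinhaus arguments on \autoref{thm:SteinhausHaar}, keeping careful track of universal measurability whenever a set is built from non-Borel operations.
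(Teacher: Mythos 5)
First, a point of reference: the paper does not prove this theorem at all --- it is stated as a citation of \cite[Theorem 6.1]{SolLR}, so there is no in-paper argument to compare against. Judged on its own, your proposal has a concretely false step at its foundation. In $(1)\Rightarrow(2)$ you claim that the basic open sets $\{g : g_n = x_n \text{ for } n<N\}$ of $G=\prod_n H_n$ are compact; such a set is homeomorphic to $\prod_{n\ge N}H_n$, which is compact only when all but finitely many $H_n$ are finite, i.e.\ only when $G$ is already locally compact. In the interesting case --- say $G=\mathbb{Z}^\omega$, where (4) holds trivially and hence (2) must hold --- the group is not locally compact, no witness measure has an open set of positive measure, and the plan of ``mimicking the locally compact Steinhaus argument'' collapses. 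Indeed, the Steinhaus property for non-Haar-null universally measurable subsets of $\mathbb{Z}^\omega$ is itself one of the main theorems of \cite{SolLR} and requires a genuinely new argument. Note also that your sketch of $(1)\Rightarrow(2)$ never actually uses hypothesis (1): all of the content is in the unproved ``Steinhaus-type lemma'', which is essentially the implication $(4)\Rightarrow(2)$ and is false without (4) (that is exactly what $(3)\Rightarrow(4)$, read contrapositively, produces a counterexample to).

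The step $(4)\Rightarrow(1)$ also does not work as described. Writing out the convolution gives $(\mu*\eta)(gBh)=\int \mu(gBhy^{-1})\,d\eta(y)$, and the hypothesis that $\mu$ is a \emph{left} witness gives no control over these right-translated sets; to use it one must convert the right translation into a left one by conjugation, i.e.\ $gBh = \bigcup_{x}\{x : x(h x^{-1} \cdots)\}$-type manipulations. The property you ascribe to $\eta$ --- that $\eta$-almost every $u$ satisfies $uBu^{-1}=B$ modulo $\mu$-null sets --- is not something the FC hypothesis supplies for an arbitrary universally measurable hull $B$; FC gives finiteness of conjugacy classes of \emph{elements}, whence the conjugation orbit of a \emph{compact} set is precompact, and exploiting this leads to an uncountable union of left translates $\bigcup_{c\in L}c^{-1}gB$ over a compact (not countable) set $L$ of conjugators, which cannot be dismissed by $\sigma$-additivity. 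This is precisely where Solecki's proof does its real work. Finally, in $(3)\Rightarrow(4)$ the choice $F_n=\{1_{H_n}\}$ for $n\notin I$ defeats the standard non-smallness criterion (\autoref{lem:translateall} requires $F$ to absorb a two-sided translate of \emph{every} compact set, which fails already in those coordinates), and the simultaneous demands on $F_n$ for $n\in I$ --- large enough to defeat every witness measure, yet with $F_nF_n^{-1}$ avoiding prescribed elements --- are the entire difficulty, which the sketch acknowledges but does not resolve.
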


\subsection{Openly Haar null sets}\label{ssec:openlyhn}

\begin{definition}
A set $A\subseteq G$ is said to be \emph{openly Haar null} if there is a Borel probability measure $\mu$ on $G$ such that for every $\varepsilon>0$ there is an open set $U\supseteq A$ such that $\mu(gUh)<\varepsilon$ for every $g, h\in G$. If $\mu$ has these properties, we say that $\mu$ witnesses that $A$ is openly Haar null.
\end{definition}

The notion of openly Haar null sets was introduced in \cite{SolND}, and more thoroughly examined in \cite{CK}.

The following simple proposition shows that this is a stronger property than Haar nullness:
\begin{proposition}\label{pro:openlyHNinGdHN}
Every openly Haar null set is contained in a $G_\delta$ Haar null set.
\end{proposition}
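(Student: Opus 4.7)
The plan is to extract a decreasing sequence of open supersets from the definition and intersect them, using that the \emph{same} Borel probability measure $\mu$ that witnesses open Haar nullness will witness Haar nullness of the resulting $G_\delta$ hull.

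Concretely, let $A\subseteq G$ be openly Haar null, witnessed by some Borel probability measure $\mu$. For each $n\in\omega$, apply the definition with $\varepsilon = 2^{-n}$ (or just $\tfrac{1}{n+1}$) to obtain an open set $U_n \supseteq A$ such that
\[\mu(gU_n h) < 2^{-n} \quad \text{for every } g, h \in G.\]
Set $H = \bigcap_{n\in\omega} U_n$. Then $H$ is $G_\delta$ and $H \supseteq A$.

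The verification that $H$ is Haar null (with witness measure $\mu$) uses only that left and right translation by fixed elements are bijections of $G$, so they commute with intersections: for any $g, h \in G$,
\[gHh = g\Bigl(\bigcap_{n\in\omega} U_n\Bigr) h = \bigcap_{n\in\omega} gU_n h,\]
and hence
\[\mu(gHh) \le \inf_{n\in\omega} \mu(gU_n h) \le \inf_{n\in\omega} 2^{-n} = 0.\]
Since $H$ is a Borel ($G_\delta$) superset of $A$ and $\mu$ is a Borel probability measure with $\mu(gHh)=0$ for all $g,h\in G$, the set $H$ is Haar null in the sense of \autoref{def:haarnull}.

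There is no real obstacle here: the proof is a direct unpacking of the definition, and the only thing to note is that openly Haar nullness is strictly stronger than Haar nullness precisely because the \emph{same} witness measure has to handle every tolerance $\varepsilon$, which is exactly what allows passing to the countable intersection without changing the measure.
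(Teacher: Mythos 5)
Your proof is correct and is essentially the same as the paper's: both take the open sets $U_n\supseteq A$ with $\mu(gU_nh)$ small, intersect them into a $G_\delta$ set, and observe that the same witness measure $\mu$ gives $\mu(gBh)\le\mu(gU_nh)\to 0$. No issues.
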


\begin{proof}
For every $n\in\omega$ there is an open set $U_n \supseteq A$ such that $\mu(gU_nh)<\frac1{n+1}$ for every $g, h\in G$. Then $B=\bigcap_{n\in\omega} U_n$ is a $G_\delta$ set that is Haar null because it satisfies $\mu(gBh)\le \mu(gU_nh)<\frac1{n+1}$ for every $g, h\in G$, hence $\mu(gBh)=0$ for every $g, h\in G$. 
\end{proof}

\begin{remark}
In non-locally-compact abelian Polish groups \enquote{openly Haar null} is a \emph{strictly} stronger property than \enquote{Haar null}, because \cite[Theorem 1.3]{EVGdhull} states that in these groups there is a Haar null set that is not contained in a $G_\delta$ Haar null set.
\end{remark}

To prove that the system of openly Haar null sets forms a $\sigma$-ideal, we will need the following technical lemma:

\begin{lemma} \label{lem:cptsupwitnessopen}
If $\mu$ witnesses that $A\subseteq G$ is openly Haar null and $V$ is a neighborhood of $1_G$, then there exists a measure $\mu'$ which also witnesses that $A$ is openly Haar null and has a compact support that is contained in $V$. 
\end{lemma}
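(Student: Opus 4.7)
The plan is to mimic the proof of \autoref{cptsupwitness} essentially verbatim, using the fact that the only extra ingredient in openly Haar nullness (as compared to Haar nullness) is a quantitative control $\mu(gUh)<\varepsilon$ that survives normalization.

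First, I would reduce to working in a neighborhood of $1_G$. Pick any $v\in V$ and set $W=v^{-1}V$, which is a neighborhood of $1_G$. By \autoref{lem:posmeascpt} there exist a compact set $C\subseteq G$ and an element $t\in G$ with $\mu(C)>0$ and $C\subseteq tW$. Define
\[
\mu'(X)\;:=\;\frac{\mu(sX\cap C)}{\mu(C)}, \qquad \text{where } s=tv^{-1}.
\]
Then $\mu'$ is a Borel probability measure, and since $\mu'(X)>0$ forces $sX\cap C\neq\emptyset$, the support satisfies $\supp\mu'\subseteq s^{-1}C\subseteq s^{-1}tW=vW=V$, and it is compact because it is a closed subset of the compact set $s^{-1}C$.

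Next, I would verify that $\mu'$ witnesses that $A$ is openly Haar null. Fix $\varepsilon>0$. Applying the hypothesis on $\mu$ to the positive number $\varepsilon\cdot\mu(C)$, we obtain an open set $U\supseteq A$ such that $\mu(gUh)<\varepsilon\cdot\mu(C)$ for every $g,h\in G$. Then for every $g,h\in G$,
\[
\mu'(gUh)\;=\;\frac{\mu(sgUh\cap C)}{\mu(C)}\;\le\;\frac{\mu(sgUh)}{\mu(C)}\;<\;\frac{\varepsilon\cdot\mu(C)}{\mu(C)}\;=\;\varepsilon,
\]
which is exactly what is required.

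There is no real obstacle here: the argument of \autoref{cptsupwitness} transports to the openly Haar null setting because the restriction-and-normalization step only multiplies measures of translates by the factor $1/\mu(C)$, and this rescaling is absorbed by adjusting the value of $\varepsilon$ passed into the hypothesis on $\mu$. The only point needing a little care is the choice of the translation $s=tv^{-1}$, which ensures $\supp\mu'\subseteq V$ rather than merely in the larger neighborhood $v^{-1}V$ of $1_G$.
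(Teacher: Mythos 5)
Your proof is correct and follows essentially the same route as the paper: apply \autoref{lem:posmeascpt}, restrict and normalize $\mu$ to a compact set of positive measure inside a translate of $V$, and absorb the normalizing factor $\mu(C)$ by invoking the openly-Haar-null hypothesis with $\varepsilon\cdot\mu(C)$ in place of $\varepsilon$. The only (harmless) difference is your preliminary translation by $v\in V$, which is unnecessary here since $V$ is already a neighborhood of $1_G$, so the paper applies \autoref{lem:posmeascpt} to $V$ directly.
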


\begin{proof}
\autoref{lem:posmeascpt} states that there are a compact set $C\subseteq G$ and $c \in G$ such that $\mu(C)>0$ and $C \subseteq cV$. Let $\mu'(X) := \frac{\mu(cX \cap C)}{\mu(C)}$, this is clearly a Borel probability measure and has a compact support that is contained in $V$. Fix an arbitrary $\varepsilon>0$. We will find an open set $U\supseteq A$ such that $\mu'(gUh)<\varepsilon$ for every $g, h\in G$. Notice that \[\mu'(gUh)<\varepsilon\quad \Leftrightarrow\quad \mu(cgUh\cap C)<\mu(C)\cdot\varepsilon \quad\Leftarrow\quad \mu(cgUh)<\mu(C)\cdot\varepsilon.\]
There exists an open set $U\supseteq A$ with $\mu(gUh)<\mu(C)\cdot\varepsilon$ for every $g, h\in G$, and this satisfies $\mu(cgUh)<\mu(C)\cdot \varepsilon$ for every $g, h\in G$, hence $\mu'$ has the required properties.
\end{proof}

\begin{theorem}[Cohen-Kallman]
The system of openly Haar null sets is a translation invariant $\sigma$-ideal.
\end{theorem}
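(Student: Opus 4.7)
The plan is to mirror the proof of \autoref{thm:haarnullsigmaideal}, using \autoref{lem:cptsupwitnessopen} in place of \autoref{cptsupwitness} and exploiting the fact that the open hulls now come with us through the Fubini computation. Properties (I) and (II) of \autoref{def:sigmaideal} are trivial: the empty set is openly Haar null (with any Borel probability measure and the open sets $U=\emptyset$), and any subset of an openly Haar null set is openly Haar null via the same witness measure and the same choice of open hull. Translation invariance is almost as trivial: if $\mu$ witnesses that $A$ is openly Haar null and $g_0,h_0\in G$, then for each $\varepsilon>0$ an open $U\supseteq A$ with $\mu(gUh)<\varepsilon$ for all $g,h$ gives the open set $g_0 U h_0\supseteq g_0 A h_0$, and $\mu(g\cdot g_0Uh_0\cdot h)=\mu((gg_0)U(h_0h))<\varepsilon$, so $\mu$ also witnesses that $g_0Ah_0$ is openly Haar null.

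For (III), let $(A_n)_{n\in\omega}$ be openly Haar null with witness measures $\mu_n$, and let $d$ be a complete compatible metric on $G$. Proceeding by recursion as in the proof of \autoref{thm:haarnullsigmaideal}, I first apply \autoref{lem:cptsupwitnessopen} with $V=G$ to obtain $\tilde\mu_0$ with compact support $C_0$ witnessing that $A_0$ is openly Haar null; then, having built $\tilde\mu_0,\ldots,\tilde\mu_{n-1}$ with compact supports $C_0,\ldots,C_{n-1}$, I use \autoref{correctionlemma} to choose a neighborhood $U_n$ of $1_G$ so that $d(k\cdot u,k)<2^{-n}$ for every $k\in C_0C_1\cdots C_{n-1}$ and $u\in U_n$, and apply \autoref{lem:cptsupwitnessopen} to get $\tilde\mu_n$ witnessing that $A_n$ is openly Haar null with compact support $C_n\subseteq U_n$. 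As before, the map $\varphi\colon C^\Pi:=\prod_{n}C_n\to G$, $(c_0,c_1,\dots)\mapsto c_0c_1\cdots$, is a uniform limit of continuous functions and is therefore continuous; let $\mu:=\varphi_*\bigl(\prod_n\tilde\mu_n\bigr)$ be the push-forward, which is a Borel probability measure on $G$.

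It remains to verify that $\mu$ witnesses that $A:=\bigcup_n A_n$ is openly Haar null. Fix $\varepsilon>0$. Since each $\tilde\mu_n$ witnesses open Haar nullness of $A_n$, there is an open set $V_n\supseteq A_n$ with $\tilde\mu_n(gV_nh)<\varepsilon\cdot 2^{-(n+2)}$ for every $g,h\in G$. Set $U:=\bigcup_n V_n$, which is open and contains $A$. Fix $g,h\in G$ and $n\in\omega$; exactly the Fubini computation from the proof of \autoref{thm:haarnullsigmaideal}, carried out in the product space $\bigl(\prod_{j\ne n}C_j\bigr)\times C_n$ and applied to $V_n$ in place of $B_n$, shows that
\[
\mu(gV_nh)\le\int\tilde\mu_n\bigl((c_0\cdots c_{n-1})^{-1}gV_nh(c_{n+1}c_{n+2}\cdots)^{-1}\bigr)\,d\!\!\prod_{j\ne n}\tilde\mu_j\le\varepsilon\cdot 2^{-(n+2)}.
\]
Summing over $n$ gives $\mu(gUh)\le\sum_{n}\varepsilon\cdot 2^{-(n+2)}=\varepsilon/2<\varepsilon$, as required.

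The only nontrivial ingredient beyond the original $\sigma$-ideal proof is therefore \autoref{lem:cptsupwitnessopen}, which lets the recursive size-shrinking construction work for the stronger \emph{open} witnessing property; once that is in hand, the potential obstacle---controlling the measure of an open hull of the union rather than the Borel hull itself---dissolves because the open sets $V_n$ are Borel and the Fubini inequality bounds their $\mu$-measure summably.
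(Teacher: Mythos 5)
Your proof is correct and follows essentially the same route as the paper: recursively shrinking the witness measures to compact supports via \autoref{lem:cptsupwitnessopen} and \autoref{correctionlemma}, pushing the product measure forward along the infinite-product map, and then bounding $\mu(gU_nh)$ for open hulls $U_n\supseteq A_n$ with the same Fubini computation and a summable $\varepsilon\cdot 2^{-(n+2)}$ bound. The explicit verification of translation invariance is a welcome (if trivial) addition that the paper leaves unstated.
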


\begin{proof}
It is trivial that the system of openly Haar null sets satisfy (I) and (II) in \autoref{def:sigmaideal}. The following proof of (III) is described in the appendix of \cite{CK} and is similar to the proof of \autoref{thm:haarnullsigmaideal}.

Let $A_n$ be openly Haar null for all $n\in \omega$, we prove that $A=\bigcup_{n\in\omega} A_n$ is also openly Haar null. For every set $A_n$ fix a measure $\mu_n$ which witnesses that $A_n$ is openly Haar null. Let $d$ be a complete metric on $G$ that is compatible with the topology of $G$.

We construct for all $n \in \omega$ a compact set $C_n \subseteq G$ and a Borel probability measure $\tilde\mu_n$ such that the support of $\tilde\mu_n$ is $C_n$, $\tilde\mu_n(gU_n h)<\varepsilon$ for every $g, h\in G$ and the \enquote{size} of the sets $C_n$ decreases \enquote{quickly}.

The construction will be recursive. For the initial step use \autoref{lem:cptsupwitnessopen} to find a measure $\tilde\mu_0$ witnessing that $A_0$ is openly Haar null and has compact support $C_0 \subseteq G$. Assume that $\tilde{\mu}_{n'}$ and $C_{n'}$ are already defined for all $n'<n$. By \autoref{correctionlemma} there exists a neighborhood $V_n$ of $1_G$ such that if $v\in V_n$, then $d(k\cdot v, k)<2^{-n}$ for every $k$ in the compact set $C_0C_1C_2\cdots C_{n-1}$. Applying \autoref{lem:cptsupwitnessopen} again we can find a measure $\tilde{\mu}_n$ with compact support $C_n\subseteq V_n$ which is witnessing that $A_n$ is openly Haar null.

If $c_n \in C_n$ for all $n\in \omega$, then it is clear that the sequence $(c_0c_1c_2\cdots c_n)_{n\in\omega}$ is a Cauchy sequence. As $(G, d)$ is complete, this Cauchy sequence is convergent; we write its limit as the infinite product $c_0c_1c_2\cdots$. The map $\varphi : \prod_{n\in\omega} C_n \to G$, $\varphi((c_0, c_1, c_2, \ldots)) = c_0c_1c_2\cdots$ is the pointwise limit of continuous functions, hence it is Borel. 

Let $\mu^\Pi$ be the product of the measures $\tilde{\mu}_n$ on the product space $C^\Pi := \prod_{n\in\omega} C_n$. Let $\mu = \varphi_*(\mu^\Pi)$ be the push-forward of $\mu^\Pi$ along $\varphi$ onto $G$, i.e. 
\[\mu(X) = \mu^\Pi(\varphi^{-1}(X))= \mu^\Pi\left(\left\{(c_0, c_1, c_2, \ldots) \in C^\Pi : c_0c_1c_2\cdots \in X\right\}\right).\]

We claim that $\mu$ witnesses that $A=\bigcup_{n\in\omega} A_n$ is openly Haar null. Fix an arbitrary $\varepsilon>0$, we will show that there is an open set $U\supseteq A$ such that $\mu(gUh)<\varepsilon$ for every $g, h\in G$. It is enough to find open sets $U_n\supseteq A_n$ such that $\mu(g U_n h)<\varepsilon\cdot 2^{-(n+2)}$ for every $g, h\in G$ and $n\in\omega$, because then $U=\bigcup_{n\in\omega} U_n$ satisfies that for every $g, h\in G$ 
\[\mu(gUh) \le \sum_{n\in\omega}\mu(g U_n h) \le \frac{\varepsilon}{2}<\varepsilon.\]

Fix $g, h \in G$ and $n \in \omega$. Choose an $U_n\supseteq A_n$ open set satisfying that $\tilde{\mu}_n(gU_nh)<\varepsilon\cdot 2^{-(n+2)}$ for every $g, h\in G$. Notice that if $c_j\in C_j$ for every $j\neq n$, $j\in\omega$, then
\begin{align*}
&\tilde{\mu}_n\left(\{c_n \in C_n: c_0c_1c_2\cdots c_n\cdots \in g U_n h\}\right)=\\
&\quad=\tilde{\mu}_n\left(\left(c_0c_1\cdots c_{n-1}\right)^{-1}\cdot g U_n h\cdot \left(c_{n+1}c_{n+2}\cdots\right)^{-1}\right) <\varepsilon\cdot 2^{-(n+2)}
\end{align*}
because $\tilde\mu_n(g'U_n h')<\varepsilon\cdot 2^{-(n+2)}$ for all $g', h'\in G$. Applying Fubini's theorem in the product space $\left(\prod_{j\neq n} C_j\right)\times C_n$ to the product measure  $\left(\prod_{j\neq n} \tilde{\mu}_j\right)\times \tilde{\mu}_n$ yields that 
\[\varepsilon\cdot 2^{-(n+2)}>\mu^\Pi\left(\left\{(c_0, c_1, \ldots, c_n, \ldots) \in C^\Pi : c_0c_1\cdots c_n\cdots \in gU_n h \right\}\right).\]
By the definition of $\mu$ this means that $\mu(gU_n h)<\varepsilon\cdot 2^{-(n+2)}$.
\end{proof}

Unfortunately, there are groups where this ideal contains only the empty set. The following results about this \enquote{collapse} are proved as \cite[Propositons 5 and 3]{CK}:

\begin{proposition}[Cohen-Kallman]\label{prop:ohntriv}
Assume that $G$ is a Polish group. If for every compact subset $C\subseteq G$ and every nonempty open subset $U\subseteq G$ there are $g, h\in G$ with $gCh\subseteq U$, then the empty set is the only openly Haar null subset of $G$.
\end{proposition}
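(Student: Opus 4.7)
The plan is to proceed by contradiction: assume that $A \subseteq G$ is a nonempty openly Haar null set and derive a contradiction directly from the hypothesis of the proposition.

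First I would fix a Borel probability measure $\mu$ witnessing that $A$ is openly Haar null. The key input from measure theory is that every Borel probability measure on a Polish space is inner regular (this is essentially \cite[Theorem 17.11]{Ke}, already used in \autoref{lem:posmeascpt}), so there is a compact set $C \subseteq G$ with $\mu(C) > 0$; for concreteness I would take $\mu(C) \ge \tfrac{1}{2}$. Set $\varepsilon := \mu(C)/2 > 0$. By the definition of openly Haar null, there exists an open set $U \supseteq A$ with $\mu(gUh) < \varepsilon$ for every $g, h \in G$. Since $A$ is assumed nonempty, $U$ is a nonempty open set.

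Now I would invoke the hypothesis of the proposition for this nonempty open $U$ and the compact set $C$: there exist $g, h \in G$ such that $gCh \subseteq U$, equivalently $C \subseteq g^{-1} U h^{-1}$. Monotonicity of $\mu$ gives
\[
\mu(C) \le \mu\bigl(g^{-1} U h^{-1}\bigr) < \varepsilon = \mu(C)/2,
\]
which is the desired contradiction. Hence no nonempty set is openly Haar null.

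There is no real obstacle here; the whole argument is a one-line application of the hypothesis once $C$ and $\varepsilon$ are chosen. The only conceptual point worth flagging is the direction of the inclusion: the hypothesis produces $g, h$ placing $C$ \emph{inside} (a two-sided translate of) $U$, which is exactly what is needed to compare $\mu(C)$ with a translate of $\mu$ evaluated on $U$, and the definition of openly Haar null bounds $\mu$ on \emph{every} two-sided translate of $U$, so the two sides fit together.
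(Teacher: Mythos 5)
Your argument is correct and complete: tightness of Borel probability measures on Polish spaces gives the compact set $C$ of positive $\mu$-measure, and applying the definition of openly Haar null with $\varepsilon=\mu(C)/2$ together with the hypothesis $gCh\subseteq U$ yields $\mu(C)\le\mu(g^{-1}Uh^{-1})<\mu(C)/2$, a contradiction. The paper does not reproduce a proof of this proposition (it defers to \cite{CK}), but your one-line application of the hypothesis is exactly the intended argument.
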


\begin{proposition}[Cohen-Kallman]\label{prop:ohntrivH01}
In particular $G=\mathcal{H}[0,1]$, the group of order-preserving self-homeomorphisms of $[0,1]$ (endowed with the compact-open topology) has this property, hence in $\mathcal{H}[0,1]$ only the empty set is openly Haar null.
\end{proposition}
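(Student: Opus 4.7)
The plan is to verify the hypothesis of \autoref{prop:ohntriv} directly for $G = \mathcal{H}[0,1]$: given any compact $C \subseteq \mathcal{H}[0,1]$ and any nonempty open $U \subseteq \mathcal{H}[0,1]$, I would construct $g, h \in \mathcal{H}[0,1]$ with $gCh \subseteq U$. The first step is to replace $U$ with a simpler basic neighborhood. Because every element of $\mathcal{H}[0,1]$ is a monotone function of $[0,1]$, a Dini-type argument (if $|f(p_j) - f_0(p_j)| < \varepsilon/3$ at finitely many $p_j$ spaced closer than the $\varepsilon/3$-modulus of oscillation of $f_0$, then $d(f,f_0) < \varepsilon$) shows that the uniform topology on $\mathcal{H}[0,1]$ coincides with the topology of pointwise convergence. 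Consequently, after fixing $f_0 \in U$ one can find a basic neighborhood
\[U' = \{f \in \mathcal{H}[0,1] : f(p_j) \in (s_j - \eta, s_j + \eta),\ j = 1, \ldots, m\} \subseteq U,\]
with $0 < p_1 < \cdots < p_m < 1$, $s_j = f_0(p_j)$, and $\eta > 0$ chosen small enough that the intervals $(s_j - \eta, s_j + \eta)$ are pairwise disjoint, in increasing order, and contained in $(0,1)$.

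Next I would define the envelope functions $a, b : [0,1] \to [0,1]$ by $a(r) = \min_{f \in C} f(r)$ and $b(r) = \max_{f \in C} f(r)$. Compactness of $C$ makes these well-defined and continuous, and obviously $a(0)=b(0)=0$, $a(1)=b(1)=1$. Using that $b(r) < 1$ whenever $r < 1$ together with $a(1) = 1$ and the continuity of $a$, an iterative application of the intermediate value theorem produces $0 < r_1 < r_2 < \cdots < r_m < 1$ with $b(r_j) < a(r_{j+1})$ for each $j$. The compact intervals $J_j := [a(r_j), b(r_j)] \subseteq (0,1)$ are then pairwise disjoint and in the same increasing order as the target intervals $I_j := (s_j - \eta, s_j + \eta)$.

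At this point the construction finishes itself. Let $h \in \mathcal{H}[0,1]$ be any piecewise-linear increasing homeomorphism with $h(p_j) = r_j$, and let $g \in \mathcal{H}[0,1]$ be any piecewise-linear increasing homeomorphism with $g(J_j) \subseteq I_j$ for every $j$; such a $g$ exists because the $J_j$'s and $I_j$'s lie in the same order inside $(0,1)$ and the $I_j$'s are disjoint and nonempty (one may, for instance, send the endpoints of $J_j$ to $s_j \pm \eta/2$ and interpolate linearly in the gaps). For every $f \in C$ and every $j$, we then have $(gfh)(p_j) = g(f(r_j)) \in g(J_j) \subseteq I_j$, so $gfh \in U' \subseteq U$, proving $gCh \subseteq U$.

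The main obstacle is the first step: reducing a general open set in the compact-open topology on $\mathcal{H}[0,1]$ to a finite-point-constraint neighborhood. This reduction is what makes the argument work specifically for $\mathcal{H}[0,1]$ (and not for, say, arbitrary $C([0,1],[0,1])$), since it relies essentially on the monotonicity of all members of the group. Once this topological reduction is in hand, compactness of $C$ enters only through the continuity of the envelopes $a$ and $b$, and everything else is a bookkeeping exercise with piecewise-linear homeomorphisms.
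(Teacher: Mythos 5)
Your argument is correct. Note that the survey does not actually reproduce a proof of this statement (it is quoted from \cite[Proposition 3]{CK} without proof), so what you give is a self-contained verification of the hypothesis of \autoref{prop:ohntriv}, which is exactly what is needed, and all three ingredients check out: the Dini-type estimate does show that constraints at finitely many interior points form a neighbourhood basis of the uniform (equivalently compact-open) topology on the set of increasing surjections of $[0,1]$ fixing the endpoints; compactness of $C$ gives (via Arzel\`a--Ascoli equicontinuity) continuity of the envelopes $a,b$, and since $0<a(r)\le b(r)<1$ on $(0,1)$ and $a(1)=1$, one can indeed pick $r_1<\cdots<r_m$ with $b(r_j)<a(r_{j+1})$; finally $h(p_j)=r_j$ and $g(J_j)\subseteq I_j$ give $(gfh)(p_j)=g(f(r_j))\in I_j$ for every $f\in C$, so $gCh\subseteq U'\subseteq U$. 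Two trivial points deserve a sentence each: the case $C=\emptyset$ should be dispatched separately (the envelopes are undefined there, but $gCh\subseteq U$ is vacuous), and your explicit recipe for $g$ --- sending the endpoints of $J_j$ to $s_j\pm\eta/2$ --- breaks down when $J_j$ degenerates to a single point (i.e.\ all $f\in C$ agree at $r_j$); either map that point to $s_j$, or first fatten each $J_j$ to a nondegenerate closed interval while keeping the $J_j$ disjoint, in order, and inside $(0,1)$, and then interpolate. Also, since both translating elements are at your disposal, the composition-order convention for $gfh$ is immaterial: under the opposite convention the same construction works with the roles of $g$ and $h$ exchanged.
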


On the other hand, \cite[Proposition 2]{CK} shows several groups and classes of groups where the ideal of openly Haar null sets is nontrivial:

\begin{proposition}[Cohen-Kallman]\label{prop:ohnnontriv}
In the Polish group $G$ there is a nonempty openly Haar null subset if at least one of the following conditions holds:
\begin{multistmt}
\item $G$ is uncountable and admits a two-sided invariant metric,
\item $G=S_\infty$ is the group of permutations of $\mathbb{N}$ with the topology of pointwise convergence,
\item $G=\mathrm{Aut}(\mathbb{Q},\le)$ is the group of order-preserving self-bijections of the rationals with the topology of pointwise convergence on $\mathbb{Q}$ viewed as discrete (i.e.\ a sequence $(f_n)_{n\in\omega}\in G^\omega$ is said to be convergent if for every $q\in \mathbb{Q}$ there is a $n_0\in\omega$ such that the sequence $(f_n(q))_{n\ge n_0}$ is constant),
\item $G=\mathcal{U}(\ell^2)$ is the unitary group on the separable infinite-dimensional complex Hilbert space with the strong operator topology,
\item $G$ admits a continuous surjective homomorphism onto a group which has a nonempty openly Haar null subset.
\end{multistmt}
\end{proposition}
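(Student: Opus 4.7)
My strategy is to treat the five conditions separately, but with a common template: in each case produce a compact set $K\subseteq G$, a Borel probability measure $\mu$ supported on $K$, and a neighborhood base $(U_n)_{n\in\omega}$ of some $a\in G$ such that $\sup_{g,h\in G}\mu(gU_nh)\to 0$ as $n\to\infty$; this exhibits $\{a\}$ as a nonempty openly Haar null set witnessed by $\mu$.

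Case (5) is essentially formal. Let $\pi\colon G\to H$ be continuous surjective, and let $A_0\subseteq H$ be nonempty openly Haar null with witness $\mu_0$. By the open mapping theorem for Polish groups together with the Kuratowski--Ryll-Nardzewski selection theorem, $\pi$ admits a Borel section $s\colon H\to G$; set $\nu=s_\ast\mu_0$. Then $\pi^{-1}(A_0)$ is a nonempty Borel subset of $G$, and for each open $V\supseteq A_0$ in $H$ the set $\pi^{-1}(V)$ is open in $G$ and contains $\pi^{-1}(A_0)$. The inclusion $g\pi^{-1}(V)h\subseteq\pi^{-1}(\pi(g)V\pi(h))$ gives $\nu(g\pi^{-1}(V)h)\le\mu_0(\pi(g)V\pi(h))$, and the right-hand side can be made uniformly smaller than any prescribed $\varepsilon>0$ by choice of $V$. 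Hence $\nu$ witnesses that $\pi^{-1}(A_0)$ is openly Haar null.

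Case (1) reduces, via the two-sided invariance, to a single uniformity condition. With a TSI metric $d$ one has $gB(1_G,r)h=B(gh,r)$, so it suffices to produce a compactly supported Borel probability measure $\mu$ with $\sup_{y\in G}\mu(\overline{B}(y,r))\to 0$ as $r\to 0$; then for any $\varepsilon>0$ the neighborhood $U=B(1_G,r)$ with $r$ small satisfies $\mu(gUh)<\varepsilon$ uniformly, so $\{1_G\}$ is openly Haar null. Since $G$ is uncountable Polish it contains a compact perfect subset, and a standard tree construction — recursively splitting a chosen compact set into pairwise disjoint pieces of geometrically shrinking $d$-diameter and distributing $\mu$-mass uniformly among them — yields such a $\mu$.

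Cases (2), (3), (4) require bespoke constructions tailored to the natural neighborhood bases at the identity: pointwise stabilizers of finite initial segments of $\mathbb{N}$ in $S_\infty$, of finite subsets of $\mathbb{Q}$ in $\mathrm{Aut}(\mathbb{Q},\le)$, and of finite-dimensional subspaces in $\mathcal{U}(\ell^2)$. In each case the plan is to place $\mu$ on a compact set of elements that are ``uniformly generic'' on every finite piece of structure — for example, for $S_\infty$, on a compact collection of permutations whose behavior on each block $\{0,\ldots,N-1\}$ is sampled from a rapidly growing family of options with $\mu$ uniform; analogously for $\mathrm{Aut}(\mathbb{Q},\le)$ with finite pieces of $\mathbb{Q}$, and for $\mathcal{U}(\ell^2)$ using normalized Haar measures on finite-dimensional unitary groups to model finite-rank perturbations of the identity. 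The main obstacle is precisely here: in a non-TSI group the left and right translation act on $\mu$ asymmetrically, so controlling the double translate $\sup_{g,h}\mu(gU_nh)$ forces one to design the measure and the neighborhood chain jointly — a delicate combinatorial/operator-theoretic balancing that is genuinely case-specific, in contrast to the clean TSI reduction in (1) and the formal lift in (5).
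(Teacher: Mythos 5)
First, note that the paper itself does not prove this proposition: it is stated as a quotation of \cite[Proposition 2]{CK} (and is explicitly referred to later as having been "stated without proof"), so your attempt can only be judged on its own merits, against what the cited result actually requires.

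Your cases (1) and (5) are essentially correct. In (5) the identity $s^{-1}(g\pi^{-1}(V)h)=\pi(g)V\pi(h)$ (using $\pi\circ s=\mathrm{id}$ and the fact that $g\pi^{-1}(V)h=\pi^{-1}(\pi(g)V\pi(h))$) makes the push-forward along a Borel section a legitimate witness, and the Borel section does exist because a continuous surjective homomorphism of Polish groups is open. In (1) the TSI reduction $gB(1_G,r)h=B(gh,r)$ is right, and in fact \emph{any} atomless compactly supported Borel probability measure (e.g.\ one living on a Cantor set, which exists since $G$ is uncountable) has $\sup_{y}\mu(B(y,r))\to 0$ as $r\to 0$, by a compactness argument; your literal tree construction needs a little care (a small ball may meet many of the "pieces" unless they are built uniformly separated), but the conclusion stands.

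The genuine gap is cases (2), (3) and (4), which you do not prove: you only describe the shape of a hoped-for construction and yourself concede that the required "balancing" of the measure against two-sided translates of a shrinking neighborhood chain is unresolved. This is precisely where the content of the Cohen--Kallman result lies, since none of $S_\infty$, $\mathrm{Aut}(\mathbb{Q},\le)$, $\mathcal{U}(\ell^2)$ (with the indicated topologies) is TSI, so (1) gives nothing for them. To see what is actually needed, take $S_\infty$: basic open neighborhoods of a point $a$ are sets $U=\{p: p|_F=a|_F\}$ with $F\subseteq\mathbb{N}$ finite, and a double translate $gUh$ is exactly the cylinder $\{p: p(h(x))=g(a(x))\ \text{for } x\in F\}$, i.e.\ an \emph{arbitrary} cylinder prescribed by a partial injection with domain of size $\lvert F\rvert$. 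So exhibiting $\{a\}$ as openly Haar null amounts to producing a Borel probability measure $\mu$ on $S_\infty$ with $\sup\{\mu(\{p: p\supseteq\tau\}) : \tau \text{ a partial injection},\ \lvert\operatorname{dom}\tau\rvert=n\}\to 0$ as $n\to\infty$; a "uniform sampling on blocks" heuristic does not yield this uniform bound (single constraints $p(x)=y$ with small $x$ cannot have uniformly small mass, and conditioning effects from already-determined values must be controlled), which is why the actual construction in \cite{CK} is a nontrivial, group-specific argument, with analogous but different work needed for $\mathrm{Aut}(\mathbb{Q},\le)$ and for $\mathcal{U}(\ell^2)$ (where neighborhoods are determined by finitely many vectors rather than finite sets). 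As it stands, your proposal establishes only items (1) and (5) of the proposition.
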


In \autoref{ssec:decomposition} we will mention \autoref{thm:ohndecon}, which is an interesting application of this notion.

\subsection{Generically Haar null sets}\label{ssec:gnchn}

\begin{definition}
Assume that $A\subseteq G$ is universally measurable. Let 
\[T(A) = \{\mu\in P(G) : \mu(gAh)=0 \text{ for all } g, h \in G\}\]
be the set of its witness measures. We say that $A$ is \emph{generically Haar null} if $T(A)$ is comeager in $P(G)$ (the Polish space of Borel probability measures).
\end{definition}

Dodos introduced this notion in the paper \cite{DodosDi} (under the name \enquote{strongly Haar null sets} and only considering abelian groups), and later generalized them for arbitrary Polish groups in \cite{DodosSt} (where they are called \enquote{generically Haar null sets}).

The \enquote{trick} of this definition is that the system of generically Haar null sets is obviously closed under countable unions because $T\left(\bigcup_{n\in\omega} A_n\right) \supseteq \bigcap_{n\in \omega} T(A_n)$ and the countable intersection of comeager sets is still comeager. The following proposition states this fact and the other useful properties which are evident from the definition:

\begin{proposition}
The system of generically Haar null sets is translation-invariant and forms a $\sigma$-ideal in the $\sigma$-algebra of universally measurable sets. Every generically Haar null set is a generalized Haar null set.
\end{proposition}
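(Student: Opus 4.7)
The plan is to verify the three assertions one after the other, using the description of $T(A)$ and the fact (already cited earlier in the proof of Matou\v skov\' a's theorem) that $P(G)$ is a Polish space, hence a nonempty Baire space on which the notions of comeagerness behave as expected.

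First I would handle translation invariance. For universally measurable $A\subseteq G$ and arbitrary $g,h\in G$, the set $gAh$ is universally measurable (continuity of left and right translations pulls back universally measurable sets to universally measurable sets, since the preimage of a Borel set is Borel and the preimage of a null set under a homeomorphism is null for the pushed-forward measure). A direct substitution $g'\mapsto g'g$, $h'\mapsto hh'$ in the defining condition $\mu(g'(gAh)h')=0$ shows $T(gAh)=T(A)$; hence $T(gAh)$ is comeager iff $T(A)$ is.

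Next I would verify the $\sigma$-ideal axioms inside the $\sigma$-algebra of universally measurable sets. Property (I) is trivial since $T(\emptyset)=P(G)$ is comeager. For property (II), if $A\subseteq B$ with both universally measurable and $B$ generically Haar null, then any $\mu\in T(B)$ satisfies $\mu(gAh)\le \mu(gBh)=0$, so $T(A)\supseteq T(B)$ is comeager. For property (III), let $A_n$ ($n\in\omega$) be generically Haar null; their union is universally measurable since universally measurable sets form a $\sigma$-algebra. For any $\mu\in\bigcap_{n\in\omega} T(A_n)$ and any $g,h\in G$, $\sigma$-additivity gives
\[\mu\!\left(g\bigcup_{n\in\omega} A_n\, h\right)\le \sum_{n\in\omega}\mu(gA_nh)=0,\]
so $T\!\left(\bigcup_{n\in\omega} A_n\right)\supseteq \bigcap_{n\in\omega} T(A_n)$. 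Since $P(G)$ is Polish and each $T(A_n)$ is comeager, the right-hand side is comeager by the Baire category theorem, and so is the left-hand side.

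Finally, to see that every generically Haar null set $A$ is generalized Haar null: $A$ is universally measurable by definition, and comeagerness of $T(A)$ in the nonempty Baire space $P(G)$ forces $T(A)\neq\emptyset$; any $\mu\in T(A)$ is then a Borel probability witness measure in the sense of \autoref{def:genhaarnull}. There is no real obstacle in any of the three parts; the only point requiring a moment of care is checking that $gAh$ is universally measurable, which is why I flagged it at the start rather than leaving it implicit.
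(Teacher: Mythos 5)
Your proof is correct and is exactly the argument the paper has in mind: the paper leaves the proposition unproved as \enquote{evident from the definition}, having already pointed out the key containment $T\left(\bigcup_{n\in\omega}A_n\right)\supseteq\bigcap_{n\in\omega}T(A_n)$ together with the fact that countable intersections of comeager sets in the Polish space $P(G)$ are comeager. Your write-up simply fills in these routine verifications (including the worthwhile check that $gAh$ stays universally measurable and that comeagerness of $T(A)$ forces $T(A)\neq\emptyset$), so there is nothing to add.
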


Notice that we only defined generically Haar null sets among the universally measurable sets. 

In the special case when $G$ is abelian, this notion coincides with the \enquote{generic} version of \autoref{thm:haarnullwitnessfunc} part (3):
\begin{theorem}[Banakh-G\l\k{a}b-Jab\l o\'nska-Swaczyna]\label{thm:genhaarnullwitnessfunc}
Assume that $G$ is an abelian Polish group and $B\subseteq G$ is a Borel set. Then the following are equivalent:
\begin{multistmt}
\item in the Polish space $\mathcal{C}(2^\omega, G)$ of continuous functions from $2^\omega$ to $G$ (endowed with the compact-open topology) the set
\[\{f\in \mathcal{C}(2^\omega, G) : f^{-1}(gB)\in\mathcal{N}(2^\omega)\}\quad\text{is comeager},\]
\item in the Polish space $\mathcal{P}(G)$ of Borel probability measures on $G$ the set
\[\{\mu\in \mathcal{P}(G) : \text{$\mu$ witnesses that $B$ is Haar null}\}\quad\text{is comeager}.\]
\end{multistmt}
\end{theorem}
As in \autoref{thm:haarnullwitnessfunc}, $\mathcal{N}(2^\omega)$ denotes the $\sigma$-ideal of sets of Haar measure zero on the Cantor cube $2^\omega$. The proof of this result can be found as part (2) of \cite[Theorem 13.8]{BGJS}.

It is also possible to define an analogous one-sided notion:

\begin{definition}
Assume that $A\subseteq G$ is universally measurable. Let 
\[T_l(A) = \{\mu\in P(G) : \mu(gA)=0 \text{ for all } g \in G\}\]
be the set of its left witness measures. We say that $A$ is \emph{generically left Haar null} if $T_l(A)$ is comeager in $P(G)$.
\end{definition}

It is clear that generically Haar null sets are always generically left Haar null and the two notions coincide in abelian groups.

\begin{proposition}
The system of generically left Haar null sets is translation-invariant and forms a $\sigma$-ideal in the $\sigma$-algebra of universally measurable sets. Every generically left Haar null set is a generalized left Haar null set.
\end{proposition}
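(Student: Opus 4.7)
The plan is to mirror the proof sketch of the preceding proposition (for generically Haar null sets), replacing the two-sided invariance condition by the one-sided one throughout. All three properties follow by almost purely formal manipulation of the set $T_l(A)$.

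First I would verify the $\sigma$-ideal properties. The empty set satisfies $T_l(\emptyset) = P(G)$, which is trivially comeager. For monotonicity, if $B \subseteq A$ are universally measurable then $gB \subseteq gA$ gives $\mu(gB) \le \mu(gA)$ for every $g \in G$ and every $\mu \in P(G)$, so $T_l(A) \subseteq T_l(B)$; hence $B$ is generically left Haar null whenever $A$ is. For countable unions of universally measurable sets $(A_n)_{n\in\omega}$, $\sigma$-subadditivity gives
\[
\mu\!\left(g\bigcup_{n\in\omega} A_n\right) \;\le\; \sum_{n\in\omega}\mu(gA_n),
\]
so $\bigcap_{n\in\omega} T_l(A_n) \subseteq T_l\!\left(\bigcup_{n\in\omega} A_n\right)$. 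Since $P(G)$ is Polish (see \cite[Theorem 17.23]{Ke}) and therefore a Baire space, a countable intersection of comeager subsets of $P(G)$ is comeager, so $\bigcup_{n\in\omega} A_n$ is generically left Haar null; it is universally measurable because universal measurability is preserved under countable unions.

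Next I would handle translation invariance. Left invariance is immediate: the substitution $g' = gg_0$ shows that $\mu(g \cdot g_0 A) = 0$ for all $g \in G$ is equivalent to $\mu(g'A)=0$ for all $g'\in G$, so $T_l(g_0 A) = T_l(A)$. For right invariance, consider the map $\rho_{h_0} : P(G) \to P(G)$ defined by $\rho_{h_0}(\nu)(X) = \nu(X h_0^{-1})$; this is the push-forward of $\nu$ under the homeomorphism $x \mapsto x h_0$ of $G$, which by the functoriality of push-forward on $P(G)$ is itself a homeomorphism of $P(G)$ (with inverse $\rho_{h_0^{-1}}$). A direct computation yields
\[
\rho_{h_0}(\nu)(g A h_0) \;=\; \nu(g A h_0 h_0^{-1}) \;=\; \nu(gA),
\]
so $\rho_{h_0}(\nu) \in T_l(A h_0)$ iff $\nu \in T_l(A)$, giving $T_l(A h_0) = \rho_{h_0}(T_l(A))$. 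Since homeomorphisms preserve comeagerness, $T_l(A h_0)$ is comeager iff $T_l(A)$ is.

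Finally, for the last assertion: if $A$ is generically left Haar null, then $T_l(A)$ is comeager in the Polish (hence Baire) space $P(G)$, so in particular nonempty. Any $\mu \in T_l(A)$ witnesses that $A$ is generalized left Haar null in the sense of \autoref{def:lefthaarnull} (with the universally measurable set taken to be $A$ itself). There is no real obstacle in this proof; the only point requiring even a moment of care is verifying that $\rho_{h_0}$ is a homeomorphism of $P(G)$, which is a standard consequence of the functoriality of push-forward on spaces of Borel probability measures.
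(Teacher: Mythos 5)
Your proof is correct and follows essentially the same route the paper intends: the paper treats these properties as evident from the definition via the observation that $T_l\left(\bigcup_n A_n\right)\supseteq\bigcap_n T_l(A_n)$ together with the fact that comeager subsets of the Polish space $P(G)$ are closed under countable intersection, and you simply write out these routine details (including the push-forward argument for right translates and the Baire-space nonemptiness of $T_l(A)$). No gaps.
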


Notice that generically left Haar null sets are closed under countable unions even in those groups where left Haar null sets (or generalized left Haar null sets) do not have this property (see \autoref{thm:freesgnotsigmaid}). 

The following trichotomy gives additional motivation for this notion:
\begin{theorem}[Dodos]\label{thm:dodostri}
Each analytic set $A\subseteq G$ satisfies one of the following:
\begin{multistmt}
\item $T(A)=\emptyset$ (i.e.\ $A$ is not generalized Haar null),
\item $T(A)$ is meager and dense in $P(G)$,
\item $T(A)$ is comeager in $P(G)$ (i.e.\ $A$ is generically Haar null).
\end{multistmt}
\end{theorem}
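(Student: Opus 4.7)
The plan is to derive the trichotomy from three ingredients: (i) $T(A)$ has the Baire property, (ii) if $T(A)$ is non-empty then $T(A)$ is dense in $P(G)$, and (iii) a 0-1 law --- if $T(A)$ is non-meager then $T(A)$ is comeager. Granted these, case (1) is $T(A)=\emptyset$, while if $T(A)\neq\emptyset$ it is dense and either meager (case (2)) or comeager (case (3)).

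For (i), I would repeat the ``coanalytic-on-analytic'' computation used in the proof of $(5)\Rightarrow(1)$ of \autoref{thm:haarnulleqv}. The set $\tilde{A}=\{(\mu,g,h,\gamma)\in P(G)\times G\times G\times G:\gamma\in gAh\}$ is analytic, being the continuous preimage of $A$ under $(\mu,g,h,\gamma)\mapsto g^{-1}\gamma h^{-1}$. Applying the Kondô-Tugué theorem (\autoref{thm:kondotugue}) to $\tilde{A}$, viewed as a subset of $(P(G)\times G\times G)\times G$, then specializing the measure parameter to $\mu$ and taking the section at $r=0$, yields that $\{(\mu,g,h):\mu(gAh)>0\}$ is analytic. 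Projecting off $G\times G$ shows $\{\mu:\exists g,h\ \mu(gAh)>0\}$ is analytic, hence its complement $T(A)$ is coanalytic and in particular has the Baire property.

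For (ii), given arbitrary $\nu\in P(G)$ and any $\mu_0\in T(A)$, the idea is to approximate $\nu$ by measures of the form $\nu*\mu_V$, where $\mu_V\in T(A)$ is supported in a small neighborhood $V$ of $1_G$. Such $\mu_V$ exist by \autoref{cptsupwitness} applied to (a Borel hull of) $\mu_0$. As $V$ shrinks to $\{1_G\}$, $\mu_V$ converges weakly to $\delta_{1_G}$, so by continuity of convolution $\nu*\mu_V\to\nu*\delta_{1_G}=\nu$. A direct computation $(\nu*\mu_V)(gAh)=\int\mu_V(x^{-1}gAh)\,d\nu(x)=0$ confirms $\nu*\mu_V\in T(A)$, proving density.

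Statement (iii) is the main obstacle. The structural input to exploit is that $T(A)$ is a \emph{two-sided ideal} in the convolution semigroup $(P(G),*)$: for any $\mu_0\in T(A)$ and $\nu\in P(G)$, both $\mu_0*\nu$ and $\nu*\mu_0$ lie in $T(A)$ by a one-line calculation analogous to the one in (ii). Additionally, $T(A)$ is invariant under the homeomorphisms $\mu\mapsto\delta_g*\mu*\delta_h$ of $P(G)$. The envisaged argument is: by the Baire property pick a non-empty open $U\subseteq P(G)$ in which $T(A)$ is comeager, then use the continuous self-maps $\nu\mapsto\nu*\mu_V$ (whose images land in $T(A)$) for small-support $\mu_V\in T(A)$, together with the translation homeomorphisms above, to transfer the local comeagerness at $U$ to any prescribed non-empty open subset of $P(G)$. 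Making this transfer rigorous --- in particular, controlling the category-theoretic behavior of $\nu\mapsto\nu*\mu_V$, which need not be a homeomorphism or even surjective --- is where essentially all of the technical work lies and is the point at which I expect the proof to be most delicate.
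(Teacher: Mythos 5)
Your steps (i) and (ii) are fine: the Baire-property computation via the Kond\^o--Tugu\'e theorem is exactly the paper's \autoref{cla:TAbaire}, and your density argument (convolving an arbitrary $\nu$ with small-support witnesses $\mu_V$ obtained from \autoref{cptsupwitness} and letting $V$ shrink) is a correct variant of the paper's argument, which instead approximates the finitely supported measures $\sum_i\alpha_i\delta_{x_i}$ by convex combinations $\sum_i\alpha_i\mu_i$ of witnesses with $\supp\mu_i$ near $x_i$; both routes work.

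The genuine gap is step (iii), and it is not a minor technicality you can expect to patch along the lines you sketch. The missing idea in the paper's proof is that $T(A)$ is a \emph{face} of $P(G)$, i.e.\ it is convex and, crucially, \emph{extremal}: if $0<t<1$ and $t\mu+(1-t)\nu\in T(A)$ then both $\mu,\nu\in T(A)$. The dichotomy ``non-meager $\Rightarrow$ comeager'' is then exactly Dodos's zero-one law for faces of $P(X)$ with the Baire property (\cite{DodosDi}, Theorem B), which the paper quotes; the convex/extremal structure is what drives that proof, not the convolution structure. Your proposed substitute does not close this: the maps $\nu\mapsto\nu*\mu_V$ send \emph{all} of $P(G)$ into $T(A)$ (since $T(A)$ is a right ideal), so they yield no lower bound on the category of $T(A)$ -- continuous images of comeager sets carry no comeagerness information unless the map is category-preserving, and these maps are neither open nor surjective. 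The homeomorphisms $\mu\mapsto\delta_g*\mu*\delta_h$ do preserve $T(A)$, but they are far from acting transitively on open subsets of $P(G)$: they preserve, e.g., the atomic structure of a measure, so no such homeomorphism carries a neighborhood of a Dirac measure onto a neighborhood of a non-atomic measure. Hence local comeagerness of $T(A)$ on one open set $U\subseteq P(G)$ cannot be transported to an arbitrary open set by these tools, and the ``transfer'' you describe breaks down precisely at the step you flag as delicate. To complete the proof you either need to invoke the face zero-one law as the paper does, or reprove it, which requires working with the convex combination map $(t,\mu,\nu)\mapsto t\mu+(1-t)\nu$ and extremality rather than with the convolution ideal structure.
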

\begin{proof}
First we prove that if $A$ is generalized Haar null, then $T(A)$ is dense in $P(G)$ (this part of the proof is from \cite[Proposition 5 (i)]{DodosRe}). Let $\delta_x$ denote the Dirac measure concentrated at $x\in G$ (that is, $\delta_x(X)= 1$ if $x\in X$ and $\delta_x(X)=0$ otherwise). The system
\[\mathcal{F}=\{\sum_{i=1}^n \alpha_i \delta_{x_i} : n\in \omega, 0\le \alpha_i\le 1, \sum_{i=1}^n\alpha_i = 1, x_i \in G\}\]
is a dense subset of $P(G)$ (see \cite[Theorem 17.19]{Ke}). It is enough to prove that if $\varphi=\sum_{i=1}^n\alpha_i \delta_{x_i}\in \mathcal{F}$ is arbitrary, then any of its neighborhoods contains an element of $T(A)$. If $U\subset P(G)$ is a neighbourhood of $\varphi$, then there are open sets $U_i \subset G$ ($1\le i\le n$) such that $U_i$ is a neighbourhood of $x_i$ and $U$ contains all measures of the form $\sum_{i=1}^n\alpha_i \mu_i$ where $\mu_i\in P(G)$ and $\supp \mu_i \subset U_i$. \autoref{cptsupwitness} states that there are witness measures $\mu_i\in T(A)$ with $\supp \mu_i\subset U_i$; the convex combination $\sum_{i=1}^n\alpha_i \mu_i$ is also a witness measure, concluding this proof.

Now it is enough to prove that $T(A)$ is always either meager in $P(G)$ or comeager in $P(G)$ (this part of the proof is from \cite[Theorem A]{DodosDi}). This proof relies on the following fact about the geometrical structure of the space of probability measures:
\begin{definition}
Let $S$ be a set equipped with a function $c: [0,1]\times S\times S\to S$ that defines the convex combinations of elements of $S$ (for example $S=P(X)$ with $c(t,\mu,\nu) = t\mu + (1-t)\nu$). A subset $F\subseteq S$ is called a \emph{face} of $S$ if it is convex (that is, $0\le t\le 1, p, q\in F \Rightarrow c(t,p,q)\in F$) and extremal (that is, $0<t<1, c(t,p,q)\in F \Rightarrow p \in F \& q \in F$).
\end{definition}
\begin{theorem}[Dodos]
Let $X$ be a Polish space and $F$ be a face of $P(X)$ with the Baire property. Then $F$ is either a meager or a comeager subset of $P(X)$.
\end{theorem}
The proof of this result can be found as \cite[Theorem B]{DodosDi}.

It is easy to see that if $A$ is a universally measurable generalized Haar null set, then $T(A)$ is a face of $P(G)$.

To finish the proof of the theorem it is enough to prove the following claim:
\begin{claim}\label{cla:TAbaire}
For an analytic set $A\subseteq G$, $T(A)$ has the Baire property.
\end{claim}
\begin{proof}
It is clear that the set
\[S = \{((g, h), gah) : g, h\in G, a\in A\}\subseteq (G\times G)\times G\]
is analytic. If we apply \autoref{thm:kondotugue} for the set $S$, $X=G\times G$ and $Y=G$, then we can see that
\[\{(\mu, (g, h), r) \in P(G)\times (G\times G)\times \mathbb{R}: \mu(gAh) >r\}\]
is analytic. If we take the section of this set at $r=0$ and project this section on o$P(G)$, then we can see that
\[\{\mu \in P(G) : \exists g, h\in G : \mu(gAh)>0\} = P(G)\setminus T(A)\]
is analytic, $T(A)$ is coanalytic. It follows from \cite[Corollary 29.14]{Ke} that analytic and coanalytic sets have the Baire property, concluding the proof.
\end{proof}
\end{proof}
                                                     
It is clear that the following one-sided variant of this trichotomy can be proved analogously:
\begin{theorem}[Dodos]
Each analytic set $A\subseteq G$ satisfies one of the following:
\begin{multistmt}
\item $T_l(A)=\emptyset$ (i.e.\ $A$ is not generalized left Haar null),
\item $T_l(A)$ is meager and dense in $P(G)$,
\item $T_l(A)$ is comeager in $P(G)$ (i.e.\ $A$ is generically left Haar null).
\end{multistmt}
\end{theorem}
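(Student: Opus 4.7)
The plan is to mimic the proof of \autoref{thm:dodostri} essentially verbatim, replacing two-sided translates $gAh$ by left translates $gA$ throughout. The argument has two parts, both of which go through in the one-sided setting because the relevant tools (the one-sided analog of \autoref{cptsupwitness}, the Kond\^o--Tugu\'e theorem, and Dodos' theorem on faces) are either already available or require only cosmetic modification.

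First I would prove that if $T_l(A)\neq\emptyset$ then $T_l(A)$ is dense in $P(G)$. The idea is the same as in \autoref{thm:dodostri}: approximate by an element $\varphi=\sum_{i=1}^n\alpha_i\delta_{x_i}$ of the dense set $\mathcal{F}$, pick neighbourhoods $U_i$ of $x_i$ corresponding to a given neighbourhood $U$ of $\varphi$ in $P(G)$, and produce witnesses $\mu_i\in T_l(A)$ with $\supp\mu_i\subseteq U_i$. The existence of these $\mu_i$ follows from the one-sided version of \autoref{cptsupwitness}, whose proof is identical: the measure $\mu'(X)=\mu(tX\cap C)/\mu(C)$ satisfies $\mu'(gA)\le\mu(tgA)/\mu(C)=0$ whenever $\mu$ is a left witness. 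The convex combination $\sum\alpha_i\mu_i$ then lies in $T_l(A)\cap U$, since $(\sum\alpha_i\mu_i)(gA)=\sum\alpha_i\mu_i(gA)=0$ for all $g\in G$.

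Next I would show that $T_l(A)$ is always either meager or comeager. As in the two-sided case, this reduces to (i) verifying that $T_l(A)$ is a face of $P(G)$ and (ii) checking that it has the Baire property, after which Dodos' theorem on faces applies directly. For (i), convexity is immediate from linearity of $\mu\mapsto\mu(gA)$, and extremality follows because if $t\mu+(1-t)\nu\in T_l(A)$ with $0<t<1$, then $t\mu(gA)+(1-t)\nu(gA)=0$ forces $\mu(gA)=\nu(gA)=0$ for all $g$. For (ii), consider the analytic set
\[
S=\{(g,ga):g\in G,\,a\in A\}\subseteq G\times G,
\]
whose $g$-section is exactly $gA$. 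Applying \autoref{thm:kondotugue} with $X=Y=G$ yields that $\{(\mu,g,r)\in P(G)\times G\times\mathbb{R}:\mu(S_g)>r\}$ is analytic; sectioning at $r=0$ and projecting onto $P(G)$ shows that $P(G)\setminus T_l(A)=\{\mu:\exists g\in G,\,\mu(gA)>0\}$ is analytic, so $T_l(A)$ is coanalytic and therefore has the Baire property by \cite[Corollary 29.14]{Ke}.

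There is no real obstacle here: the analog of every lemma used in \autoref{thm:dodostri} holds in the one-sided setting with the same proof (in particular, the one-sided analog of \autoref{cptsupwitness} and the one-sided reformulation of \autoref{cla:TAbaire} both follow by dropping the variable $h$ everywhere). The only place where some care is warranted is the face property, where one must remember that $\mu(gA)\ge 0$ for each $g$ separately in order to deduce extremality from a single equation involving the sum.
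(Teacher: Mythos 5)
Your proposal is correct and is exactly the route the paper takes: the paper proves the one-sided trichotomy by remarking that the proof of \autoref{thm:dodostri} goes through verbatim with left translates, which is precisely what you carry out (one-sided \autoref{cptsupwitness} for density, the face property plus a one-sided version of \autoref{cla:TAbaire} via \autoref{thm:kondotugue} for the meager/comeager dichotomy). No gaps.
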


Following \cite{DodosDi} and \cite{DodosRe}, we stated the trichotomy \autoref{thm:dodostri} for analytic sets. This is formally stronger than stating it for Borel sets, but it turns out that the analytic generically Haar null sets are just the analytic subsets of the Borel generically Haar null sets:

\begin{theorem}[Dodos]\label{thm:dodosborelhull}
Let $A \subseteq G$ be an analytic generalized Haar null set. Then there exists a Borel Haar null set $B \supseteq A$ such that $T(A) \setminus T(B)$ is meager in $P(G)$.
\end{theorem}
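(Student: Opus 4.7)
My plan is to combine the reflection-principle strategy used for $(5)\Rightarrow(1)$ in \autoref{thm:haarnulleqv} with the trichotomy \autoref{thm:dodostri}, applied to the right family of analytic sets depending on which branch of the trichotomy holds.

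First I would apply \autoref{thm:dodostri} to the analytic generalized Haar null set $A$. Since $T(A)\neq\emptyset$, either $T(A)$ is meager in $P(G)$, or $T(A)$ is comeager. In the meager case, pick any $\mu\in T(A)$ and run the very argument of $(5)\Rightarrow(1)$ in \autoref{thm:haarnulleqv}: the family $\Phi_\mu=\{X\text{ analytic}:\mu(gXh)=0\ \forall g,h\}$ is coanalytic on analytic, and the First Reflection Theorem produces a Borel $B\supseteq A$ with $B\in\Phi_\mu$, i.e.\ a Borel Haar null hull. Then $T(A)\setminus T(B)\subseteq T(A)$ is a subset of a meager set, so it is meager, and we are done.

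The real content is the comeager case, where I would enlarge the target family to the analytic generically Haar null sets, $\Phi=\{X\text{ analytic}:T(X)\text{ is comeager in }P(G)\}$. To invoke the First Reflection Theorem I have to verify that $\Phi$ is coanalytic on analytic. Given a Polish $Y$ and analytic $P\subseteq Y\times G$, define
\[\tilde P=\{(g,h,y,x)\in G\times G\times Y\times G : (y,g^{-1}xh^{-1})\in P\},\]
which is analytic as a continuous preimage of $P$, and satisfies $\tilde P_{(g,h,y)}=gP_yh$. By \autoref{thm:kondotugue} applied to $\tilde P$ (viewed as a subset of the parameter space $G\times G\times Y$ times $G$),
\[\{(\nu,g,h,y,r)\in P(G)\times G\times G\times Y\times\mathbb R: \nu(\tilde P_{(g,h,y)})>r\}\]
is analytic; sectioning at $r=0$ and projecting out $(g,h)$ yields that
\[Q=\{(y,\mu)\in Y\times P(G) : \exists g,h\in G,\ \mu(gP_yh)>0\}\]
is analytic. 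Since $Q_y=P(G)\setminus T(P_y)$, Novikov's theorem (used as in the proof of $(3)\Rightarrow(1)$ of \autoref{thm:haarmeagereqv}) implies that $\{y:T(P_y)\text{ is comeager in }P(G)\}=\{y:Q_y\text{ is meager}\}$ is coanalytic, proving that $\Phi$ is coanalytic on analytic. Applying the First Reflection Theorem to $A\in\Phi$ produces a Borel $B\supseteq A$ with $B\in\Phi$: such a $B$ is a Borel generically Haar null set, in particular generalized Haar null, and hence Haar null because it is Borel. Finally, since $T(B)$ is comeager, $T(A)\setminus T(B)\subseteq P(G)\setminus T(B)$ is meager, as required.

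The main obstacle is the verification that the family of analytic generically Haar null sets is coanalytic on analytic; the sleight of hand is to handle the section parameter $y$ and the measure parameter $\mu$ simultaneously inside Kond\^o--Tugu\'e, and only afterwards to convert the $\mu$-coordinate into a Baire-category statement via Novikov. Everything else is a direct transcription of the reflection argument already deployed for \autoref{thm:haarnulleqv}.
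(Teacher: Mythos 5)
Your proof is correct, but it takes a somewhat different route than the paper's. The paper never case-splits: it uses only \autoref{cla:TAbaire} (that $T(A)$ has the Baire property) to write $T(A)=R\cup M$ with $R$ a nonempty Borel subset of $P(G)$ and $M$ meager, and then reflects the single family $\{X \text{ analytic} : \mu(gXh)=0 \text{ for all } g,h\in G \text{ and all } \mu\in R\}$; its coanalyticity on analytic is obtained from \autoref{thm:kondotugue} by intersecting with the Borel set $R\times G\times G\times Y$ and projecting, so no category quantifier is needed and the meager/comeager dichotomy for $T(A)$ is never invoked. You instead appeal to the trichotomy \autoref{thm:dodostri} and split: in the meager case the single-witness family from $(5)\Rightarrow(1)$ of \autoref{thm:haarnulleqv} suffices and $T(A)\setminus T(B)\subseteq T(A)$ is trivially meager; in the comeager case you reflect the family of analytic generically Haar null sets, checking it is coanalytic on analytic via Kond\^o--Tugu\'e followed by Novikov's category quantifier over the Polish space $P(G)$, exactly in the style of \autoref{thm:haarmeagereqv}. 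Both verifications are sound, and the reflection step is applied just as in the paper. What your route buys is the explicit observation that the family of analytic generically Haar null sets is itself coanalytic on analytic, so in the comeager case you directly produce a Borel \emph{generically} Haar null hull, i.e.\ the corollary stated after the theorem drops out of the construction rather than being deduced afterwards. What it costs is a heavier prerequisite: you need the full trichotomy, which rests on Dodos' face dichotomy quoted in the survey without proof, whereas the paper's uniform argument needs only the Baire property of $T(A)$ (note also that the paper's decomposition covers your meager case simply by letting $R$ be a singleton $\{\mu\}$ with $\mu\in T(A)$).
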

\begin{proof}
This result originally appears as \cite[Corollary 12]{DodosRe}, in a paper which only considers the question in the abelian case. We use a different method to prove this result (in all Polish groups $G$); this method is from \cite{Sol} where it was used to prove the result which we stated as the equivalence $(1)\Leftrightarrow (5)$ in \autoref{thm:haarnulleqv}. 

In the proof of \autoref{thm:dodostri} we proved \autoref{cla:TAbaire} which states that $T(A)$ has the Baire property. This implies that $T(A)$ can be written as $T(A) = R\cup M$ where $R$ is a nonempty Borel subset of $P(G)$ and $M$ is meager in $P(G)$.

\begin{claim} The family of sets
\[\Phi = \{X \subseteq G : X\text{{\upshape\ is analytic and $\mu(gXh)=0$ for every $g,h \in G$ and $\mu\in R$}}\}\]
is \emph{coanalytic on analytic}, that is, for every Polish space $Y$ and $P \in \mathbf{\Sigma}^1_1(Y\times G)$, the set $\{y \in Y : P_y\in \Phi \}$ is $\mathbf{\Pi}^1_1$.
\end{claim}

\begin{proof}
Let $Y$ be Polish space and $P \in \mathbf{\Sigma}^1_1(Y\times G)$ and let 
\[\tilde{P} =\{(g, h, y, \gamma) \in G\times G\times Y\times G : \gamma \in gP_yh\}.\]
Then $\tilde{P}$ is analytic, as it is the preimage of $P$ under $(g, h, y,\gamma)\mapsto (y, g^{-1}\gamma h^{-1})$. Applying \autoref{thm:kondotugue} yields that
\[\{(\mu, g, h, y, r) \in P(G) \times G\times G\times Y \times \mathbb{R}: \mu(\tilde{P}_{(g, h, y)})> r\}\]
is analytic, therefore its section
\[\{(\mu, g, h, y) \in P(G)\times G \times G\times Y : \mu(\tilde{P}_{(g, h, y)})> 0\}\]
is also analytic. If we intersect this with the Borel set $R \times G\times G\times Y$, we get the analytic set
\[\{(\mu, g, h, y) \in R\times G \times G\times Y : \mu(\tilde{P}_{(g, h, y)})> 0\}.\]
Projecting this on $Y$ yields that 
\[\{y : \mu(\tilde{P}_{(g, h, y)})> 0\text{ for some $g, h \in G$ and $\mu\in R$}\}\]
is analytic. Using this and the fact that $\tilde{P}_{(g,h, y)} = g P_h y$ (by definition) yields that
\[\{y\in Y : \mu(\tilde{P}_{(g, h, y)})=0 \text{ for all $g, h\in G$ and $\mu \in R$}\}=\{y\in Y : P_y\in \Phi\}\]
is coanalytic.
\end{proof}
Now, since $A\in \Phi$, by the dual form of the First Reflection Theorem (see \cite[Theorem 35.10]{Ke} and the remarks following it) there exists a Borel set $B$ with $B\supseteq A$ and $B\in \Phi$. $B\in \Phi$ means that $R\subset T(B)$, in particular, $B$ is Haar null because $R$ is nonempty. It is clear from the definition that $T(B)\subset T(A)$, therefore
\[T(A)\setminus T(B)\subseteq T(A)\setminus R \subseteq M,\]
which shows that $T(A)\setminus T(B)$ is indeed meager.
\end{proof}

The following result is an immediately corollary of this theorem:
\begin{corollary}
An analytic set $A\subseteq G$ is generically Haar null if and only if there is a Borel generically Haar null set $B \subseteq G$ such that $A\subseteq B$.
\end{corollary}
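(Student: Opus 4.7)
The plan is to derive the corollary as a direct consequence of \autoref{thm:dodosborelhull}, with no new machinery needed. Both directions rest on the basic monotonicity observation that if $A \subseteq B$, then every witness measure for $B$ is a witness measure for $A$ (since $gAh \subseteq gBh$ implies $\mu(gAh) \le \mu(gBh)$), and therefore $T(B) \subseteq T(A)$.

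For the easy direction, suppose $A \subseteq B$ for some Borel generically Haar null $B$. By the monotonicity above, $T(B) \subseteq T(A)$; since $T(B)$ is comeager in $P(G)$, so is $T(A)$, and $A$ is generically Haar null. (Note that $A$ being analytic makes it universally measurable by Lusin's theorem, so $T(A)$ is defined.)

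For the interesting direction, assume $A$ is analytic and generically Haar null. Then $T(A)$ is comeager in $P(G)$, which is in particular nonempty, so $A$ is a generalized Haar null set. Applying \autoref{thm:dodosborelhull} yields a Borel Haar null set $B \supseteq A$ with the property that $T(A) \setminus T(B)$ is meager in $P(G)$. By the monotonicity observation, $T(B) \subseteq T(A)$, hence
\[
T(B) \;=\; T(A) \setminus \bigl( T(A) \setminus T(B) \bigr).
\]
The right-hand side is the difference of a comeager set and a meager set, hence comeager in $P(G)$. Therefore $T(B)$ is comeager, which is exactly the statement that $B$ is generically Haar null, giving the required Borel hull of $A$.

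Since the whole argument is essentially bookkeeping around \autoref{thm:dodosborelhull}, there is no genuine obstacle; one only has to be slightly careful to note that the ``Borel Haar null'' set produced by that theorem is automatically Borel \emph{generically} Haar null, which is precisely what the meagerness of $T(A) \setminus T(B)$ upgrades it to.
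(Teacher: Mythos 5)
Your proof is correct and is exactly the derivation the paper intends: the paper states this as an immediate corollary of \autoref{thm:dodosborelhull}, and your argument (monotonicity $T(B)\subseteq T(A)$ for the easy direction, and $T(B)=T(A)\setminus(T(A)\setminus T(B))$ comeager for the hard one) is precisely that bookkeeping. Nothing is missing.
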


It is relatively easy to prove that a Borel generically Haar null set is not only Haar null, but also meager. (This idea appears as part (ii) of \cite[Proposition 5]{DodosRe}.) In fact, this is also true for generically left Haar null sets:
\begin{proposition}[Dodos]
If $B$ is a Borel generically left Haar null set, then $B$ is meager.
\end{proposition}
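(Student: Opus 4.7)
The plan is to prove the contrapositive: assuming $B$ is Borel and \emph{not} meager, I will show that $T_l(B)$ is not comeager in $P(G)$. Since every Borel set has the Baire property, we may write $B = V \triangle M$ for some open set $V \subseteq G$ and some meager set $M$. Because $G$ is a Baire space, $V$ must be nonempty (otherwise $B = M$ would itself be meager). Consequently, $V \setminus B \subseteq M$ is a meager Borel set.

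The key auxiliary fact I will need is: for every meager Borel set $N \subseteq G$, the collection $\{\mu \in P(G) : \mu(N) = 0\}$ is comeager in $P(G)$. Writing $N \subseteq \bigcup_{n \in \omega} F_n$ with each $F_n$ closed and nowhere dense, it suffices to prove that each $\{\mu : \mu(F_n) = 0\}$ is a dense $G_\delta$ subset of $P(G)$. The $G_\delta$ part follows from the upper semicontinuity of $\mu \mapsto \mu(F_n)$ for the closed set $F_n$, via $\{\mu : \mu(F_n) = 0\} = \bigcap_{k \ge 1} \{\mu : \mu(F_n) < 1/k\}$. Density uses the fact that the finitely supported measures are dense in $P(G)$ (see \cite[Theorem 17.19]{Ke}): given such a measure $\sum_i \alpha_i \delta_{x_i}$, the continuity of $x \mapsto \delta_x$ together with the nowhere density of $F_n$ allows us to perturb each $x_i$ to some nearby $x_i' \notin F_n$, obtaining an approximating measure that assigns zero mass to $F_n$.

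Applying this auxiliary fact to $N = V \setminus B$ yields that $C_1 := \{\mu \in P(G) : \mu(V \setminus B) = 0\}$ is comeager. By hypothesis $T_l(B)$ is comeager, and taking $g = 1_G$ shows $T_l(B) \subseteq C_2 := \{\mu : \mu(B) = 0\}$, hence $C_2$ is comeager as well. Therefore $C_1 \cap C_2$ is comeager, yet every $\mu \in C_1 \cap C_2$ satisfies
$$\mu(V) = \mu(V \cap B) + \mu(V \setminus B) \le \mu(B) + \mu(V \setminus B) = 0.$$
On the other hand, $\{\mu \in P(G) : \mu(V) = 0\}$ is closed (since $\mu \mapsto \mu(V)$ is lower semicontinuous for the open set $V$) and has empty interior, because any $\mu$ can be moved into $\{\mu : \mu(V) > 0\}$ by replacing it with $(1 - \varepsilon)\mu + \varepsilon \delta_x$ for any $x \in V$. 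This contradicts the Baire category theorem in the Polish space $P(G)$.

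The main obstacle is verifying the auxiliary fact about $\{\mu : \mu(N) = 0\}$; fortunately it reduces to upper semicontinuity together with a perturbation argument using the density of finitely supported measures, so once that is in hand the rest of the proof is a clean category computation. A pleasant feature of this approach is that the argument only uses the $g = 1_G$ instance of the defining condition, so the same proof in fact shows the formally stronger statement that every Borel set $B$ for which $\{\mu : \mu(B) = 0\}$ is comeager must be meager.
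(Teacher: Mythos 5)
Your proof is correct, but it takes a genuinely different route from the paper. The paper argues by contradiction: if $B$ is not meager it is comeager in some nonempty open set, so $DB$ is comeager for a countable dense $D\subseteq G$; it then invokes \cite[Theorem 10]{CrK} (the generic $\mu\in P(G)$ assigns measure $1$ to a comeager set) to get $\mu(DB)=1$ generically, while $\mu(DB)\le\sum_{d\in D}\mu(dB)=0$ for generic $\mu$ since $T_l(B)$ is comeager — so the paper genuinely uses countably many left translates of $B$. You instead run a self-contained category computation inside $P(G)$: you prove directly that $\{\mu:\mu(N)=0\}$ is comeager for each meager Borel $N$ (which is essentially a Borel version of the fact the paper outsources to \cite{CrK}), combine it via the Baire-property decomposition $B=V\triangle M$ with the observation that $\{\mu:\mu(V)=0\}$ is closed and nowhere dense for nonempty open $V$, and reach a contradiction with the Baire category theorem in $P(G)$; all the topological ingredients (upper/lower semicontinuity of $\mu\mapsto\mu(F)$, $\mu\mapsto\mu(V)$, density of finitely supported measures \cite[Theorem 17.19]{Ke}, and the perturbations $x_i\mapsto x_i'$ and $\mu\mapsto(1-\varepsilon)\mu+\varepsilon\delta_x$) check out. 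What each approach buys: the paper's proof is shorter because it cites \cite{CrK}, whereas yours avoids that reference and, as you note, uses only the $g=1_G$ instance of the hypothesis, so it actually proves the formally stronger statement that any non-meager Borel set fails to have a comeager family of annihilating measures; like the paper's proof (see its closing remark), your argument also extends verbatim to universally measurable sets with the Baire property. One tiny cosmetic point: the appeal to $G$ being a Baire space when arguing $V\neq\emptyset$ is unnecessary — $V=\emptyset$ already forces $B=M$ to be meager, contradicting the assumption.
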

\begin{proof}
Assume for the contrary that $B$ is not meager. This implies that $B$ is comeager in some open subset of $G$ (see e.g.\ \cite[8.26]{Ke}). If $D$ is a countable dense subset of $G$, then the product $DB=\{db: d\in D, b\in B\}$ is comeager in $G$. It follows from \cite[Theorem 10]{CrK} that $\mu(DB)=1$ for the generic measure $\mu\in P(G)$. On the other hand, the generic measure $\mu\in P(G)$ satisfies that $\mu(gB)=0$ for all $g\in G$ (because $B$ is generically left Haar null), thus $\mu(DB) \le \sum_{g\in D} \mu(gB) = 0$, a contradiction.

Notice that this proof works even if we replace \enquote{Borel} with \enquote{universally measurable and with the Baire property} in the proposition.
\end{proof}

The following proposition implies that this $\sigma$-ideal is nontrivial (contains nonempty sets) in abelian groups:
\begin{proposition}[Dodos]\label{pro:cptisgnchn}
Assume that $G$ is an abelian Polish group. If $A\subset G$ is a $\sigma$-compact Haar null set, then $A$ is generically Haar null.
\end{proposition}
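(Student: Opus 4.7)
Write $A=\bigcup_{m\in\omega}K_m$ with each $K_m$ compact; each $K_m$ is Haar null as a subset of the Haar null set $A$. Since $G$ is abelian, $gCh=ghC$ for every $C\subseteq G$, so $T(C)=\{\mu\in P(G):\mu(tC)=0\text{ for every }t\in G\}$. The inclusion $T(A)\subseteq T(K_m)$ is immediate from monotonicity, and conversely $\sigma$-subadditivity yields $\mu(tA)\le \sum_m\mu(tK_m)$, so $\bigcap_m T(K_m)\subseteq T(A)$. Hence $T(A)=\bigcap_m T(K_m)$, and since a countable intersection of comeager sets is comeager, it suffices to prove the proposition under the extra hypothesis that $A=K$ is compact. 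For such $K$ one has
\[ T(K)=\bigcap_{n\ge 1} U_n,\qquad U_n=\{\mu\in P(G):\mu(tK)<1/n\text{ for every }t\in G\}, \]
so it is enough to show that each $U_n$ is open and dense in the Polish space $P(G)$.

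\emph{Openness of $U_n$.} The key auxiliary step is the upper semicontinuity of the map $(\mu,t)\mapsto \mu(tK)$ on $P(G)\times G$. Given $(\mu_0,t_0)$ and $\varepsilon>0$, I would use outer regularity of $\mu_0$ together with the compactness of $t_0K$ to pick a closed neighborhood $F$ of $t_0K$ with $\mu_0(F)<\mu_0(t_0K)+\varepsilon$; compactness of $K$ also yields $tK\subseteq F^\circ$ for every $t$ in a small neighborhood of $t_0$, and the Portmanteau inequality $\limsup_k\mu_k(F)\le \mu_0(F)$ for $\mu_k\to\mu_0$ weakly completes the argument. To show that $\Phi_n:=P(G)\setminus U_n$ is closed, take $\mu_k\to \mu$ in $\Phi_n$ with witnesses $t_k\in G$ satisfying $\mu_k(t_kK)\ge 1/n$. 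By Prokhorov's theorem the family $\{\mu_k\}\cup\{\mu\}$ is tight, so there exists a compact $C\subseteq G$ with $\mu_k(C)>1-1/(2n)$ for every $k$; this forces $t_kK\cap C\ne\emptyset$, so $t_k$ lies in the compact set $CK^{-1}$. Passing to a convergent subsequence $t_k\to t$, joint upper semicontinuity gives $\mu(tK)\ge 1/n$, i.e., $\mu\in \Phi_n$.

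\emph{Density of $U_n$.} Given $\mu_0\in P(G)$ and an open neighborhood $W\subseteq P(G)$ of $\mu_0$, I would produce an element of $T(K)\cap W\subseteq U_n\cap W$. Continuity of the convolution $*:P(G)\times P(G)\to P(G)$ (recalled inside the proof of \autoref{thm:matresult}) together with $\mu_0*\delta_{1_G}=\mu_0$ provides a neighborhood $V$ of $1_G$ such that $\mu_0*\nu\in W$ whenever $\supp\nu\subseteq V$. Applying \autoref{cptsupwitness} to the Haar null set $K$ with this open set $V$ yields a witness measure $\nu$ for $K$ with compact support inside $V$. Because $G$ is abelian, the convolution $\mu_0*\nu$ is again a witness for $K$: for every $t\in G$,
\[ (\mu_0*\nu)(tK)=\int_G \nu(x^{-1}tK)\,d\mu_0(x)=0, \]
since $\nu$ annihilates every translate of $K$. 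Hence $\mu_0*\nu\in T(K)\cap W$, proving density.

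Therefore $T(K)$ is a dense $G_\delta$ in $P(G)$ and consequently comeager, so $K$, and hence $A$, is generically Haar null. The main obstacle is the closedness of $\Phi_n$ in the openness step: because the translation parameter $t$ ranges over the (potentially non-locally-compact) group $G$, one must first invoke tightness via Prokhorov to confine the $t_k$'s to a compact subset before the joint upper semicontinuity can be applied. The density step, by contrast, falls out cleanly from the convolution trick and the previously established \autoref{cptsupwitness}.
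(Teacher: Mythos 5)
Your argument is correct. Note that for this proposition the survey does not reproduce a proof at all -- it only cites part (iii) of \cite[Proposition 5]{DodosRe} -- so there is nothing in the text to compare against line by line; what you have written is essentially a self-contained reconstruction of Dodos's argument, namely that for a compact Haar null set $K$ the set $T(K)$ of witness measures is a dense $G_\delta$ in $P(G)$. The reduction $T(A)=\bigcap_m T(K_m)$ is exactly right, and both halves of the Baire-category step check out: for openness of $U_n$, the Prokhorov/tightness trick is indeed the essential point, since it confines the translating elements $t_k$ to the compact set $CK^{-1}$ before the joint upper semicontinuity of $(\mu,t)\mapsto\mu(tK)$ (which uses compactness of $K$ twice more, for the closed neighborhood $F$ with $t'K\subseteq F^{\circ}$ for $t'$ near $t$, and via the Portmanteau inequality) can be invoked; for density, the combination of \autoref{cptsupwitness} with the convolution $\mu_0*\nu$ is the same mechanism the paper uses in the proof of \autoref{thm:matresult}, including the standard fact (used there implicitly) that measures supported in a small neighborhood of $1_G$ are weakly close to $\delta_{1_G}$, so that $\supp\nu\subseteq V$ forces $\mu_0*\nu\in W$. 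The only places worth spelling out in a final write-up are these two routine facts -- outer regularity of Borel probability measures on metric spaces and the neighborhood-of-$\delta_{1_G}$ observation -- neither of which is a gap.
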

The proof of this result can be found as part (iii) of \cite[Proposition 5]{DodosRe}.

Notice that this proposition implies that if (the abelian Polish group) $G$ is locally compact, then the closed Haar null sets are generically Haar null. The converse of this is also true:
\begin{theorem}[Dodos]
Assume that $G$ is an abelian Polish group. Then $G$ is locally compact if and only if every closed Haar null set is generically Haar null.
\end{theorem}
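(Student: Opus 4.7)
For the forward direction $(\Rightarrow)$, I would observe that any locally compact Polish space is $\sigma$-compact, because it is second countable and hence has a countable base of open sets with compact closures whose union is all of $G$. Consequently every closed subset of a locally compact Polish $G$ is $\sigma$-compact. If $A\subseteq G$ is closed and Haar null, then $A$ is a $\sigma$-compact Haar null set, and \autoref{pro:cptisgnchn} immediately yields that $A$ is generically Haar null.

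For the converse $(\Leftarrow)$, suppose $G$ is a non-locally-compact abelian Polish group; I want to produce a closed Haar null $A\subseteq G$ that is not generically Haar null. By the trichotomy \autoref{thm:dodostri} applied to the (closed, hence analytic) Haar null set $A$, it suffices to arrange that $T(A)$ is meager in $P(G)$ rather than comeager. The principal tool should be Solecki's theorem \autoref{thm:solpuretopol}, which applies because abelian Polish groups are TSI: there exist a closed $F\subseteq G$ and a continuous map $\varphi:F\to 2^\omega$ whose every fiber $\varphi^{-1}(\{x\})$ absorbs every compact set under translation.

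Starting from this I would try to build $A$ combining two features. First, $A$ should be closed and Haar null; to exhibit a witness measure I would invoke Matou\v{s}kov\'a's characterization \autoref{thm:matresult}, which produces witnesses with arbitrarily small support near $1_G$, or alternatively a direct product-measure construction on a Cantor-like transverse subset of $G$ in the style of the $\mathbb{Z}^\omega$ picture. Second, $A$ should be arranged so that for a comeager set of $\mu\in P(G)$ there exists $g\in G$ with $\mu(g+A)>0$; this should follow from the absorption property of $\varphi$ combined with the fact that in a non-locally-compact $G$ a generic $\mu\in P(G)$ has non-$\sigma$-compact support and therefore cannot escape the rich fiber structure under translation.

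The main obstacle, and precisely the reason the non-local-compactness hypothesis is needed, is that one cannot simply take $A$ to be a single fiber $\varphi^{-1}(\{x_0\})$: such a fiber contains a translate of every compact set, so by \autoref{cptsupwitness} it cannot be Haar null (no compactly supported witness can exist, yet Haar nullness forces one). The construction must therefore be more delicate, for example a thin transverse intersection with a suitable complement, or a careful modification of the fibers, arranged so that enough of the absorption survives to spoil the generic witness property while enough \enquote{gaps} are introduced to admit a single honest witness measure. Reconciling these two opposing demands is exactly what fails in the locally compact case, where $\sigma$-compactness blocks it via \autoref{pro:cptisgnchn}, and what the non-locally-compact setting makes possible through the greater richness of $P(G)$.
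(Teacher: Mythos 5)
Your forward direction is correct and is exactly the argument the paper has in mind: a locally compact Polish group is $\sigma$-compact, so every closed subset is $\sigma$-compact, and a closed Haar null set is then a $\sigma$-compact Haar null set, to which \autoref{pro:cptisgnchn} applies. (Note that the survey itself does not reproduce a proof of the converse at all; it only cites \cite[Corollary 9]{DodosDi}, so the nontrivial content of the theorem is precisely the direction you did not complete.)

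The converse direction in your proposal is a genuine gap: you never construct the closed Haar null set $A$ that fails to be generically Haar null, nor do you verify either of the two properties it must have. What you give is a plan ("a thin transverse intersection with a suitable complement, or a careful modification of the fibers") together with the correct observation that a single fiber $\varphi^{-1}(\{x_0\})$ of Solecki's map cannot work because it absorbs translates of all compact sets and hence is not even generalized Haar null. But the two demands you identify -- enough absorption to make $T(A)$ non-comeager, enough gaps to admit one witness measure -- are exactly where the difficulty of Dodos's Corollary 9 lies, and your sketch gives no mechanism for reconciling them: no candidate set is defined, no witness measure is produced (invoking \autoref{thm:matresult} is only a name for the obligation, not a discharge of it), and the heuristic that a generic $\mu\in P(G)$ has non-$\sigma$-compact support does not yield, for any specific $A$, a non-meager set of measures $\mu$ with $\mu(g+A)>0$ for some $g$. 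Appealing to the trichotomy \autoref{thm:dodostri} correctly reduces the task to showing $T(A)$ is not comeager, but that reduction is the easy part; as it stands, the "only if" half of the theorem remains unproved in your write-up.
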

The proof of this result can be found as \cite[Corollary 9]{DodosDi}.

In \autoref{ssec:steinhaus} we will state a variant of the Steinhaus theorem (\autoref{thm:steinhausgnchn}) which uses the generically left Haar null sets as small sets.

For more information about related notions in the special case of abelian Polish groups, see \cite{BGJS}.

\subsection{Strongly Haar meager sets}\label{ssec:stronghm}

Strongly Haar meager sets are the variant of Haar meager sets where we require the witness function to be the identity function of $G$ restricted to a compact subset $C\subseteq G$. (Despite the similar name, this notion is unrelated to the generically Haar null sets, which are also called strongly Haar null sets in \cite{DodosDi}.) This is motivated by the fact that when we prove that some set is Haar meager, we frequently use witness functions of this kind.

\begin{definition}\label{def:stronglyhaarmeager}
A set $A \subseteq G$ is said to be \emph{strongly Haar meager} if there are a Borel set $B \supseteq A$ and a (nonempty) compact set $C\subseteq G$ such that $gBh\cap C$ is meager in $C$ for every $g, h \in G$.
\end{definition}

The following basic question is \cite[Problem 2]{Da}:

\begin{question}[Darji]\label{que:HMeqSHM}
Is every Haar meager set strongly Haar meager?
\end{question}

(Note that the paper \cite{Da} only considered the case of abelian groups.)

The result \cite[Theorem 5.13]{BGJS} shows that in a certain class of abelian Polish groups the Haar meager sets and the strongly Haar meager sets coincide:

\begin{definition}
A topological group $G$ is called \emph{hull-compact} if each compact subset of $G$ is contained in a compact subgroup of $G$.
\end{definition}

\begin{theorem}[Banakh-G\l\k{a}b-Jab\l o\'nska-Swaczyna]
If the abelian Polish group $G$ is hull-compact, then every Haar meager subset of $G$ is strongly Haar meager.
\end{theorem}

\begin{example}\label{bgjsexample}
It is not very hard to verify that the abelian Polish group $(\mathbb{Q}/\mathbb{Z})^\omega$ is hull-compact (where we endow $\mathbb{Q}$ with the discrete topology).
\end{example}

However, the result \cite[Theorem 1.8]{ENPV} answers this \autoref{que:HMeqSHM} negatively:

\begin{theorem}[Elekes-Nagy-Po\'or-Vidny\'anszky]\label{HMneqSHM}
In the abelian Polish group $\mathbb{Z}^\omega$, there exists a $G_\delta$ set that is Haar meager but not strongly Haar meager.
\end{theorem}

The other basic question of Darji related to strongly Haar meager sets is \cite[Problem 3]{Da}:

\begin{question}[Darji]\label{que:shmsigmaideal}
Is the system of strongly Haar meager sets a $\sigma$-ideal?
\end{question}

(Note that while the paper \cite{Da} only considered the case of abelian groups, this question seems to be interesting in other Polish groups as well.)

Even the following variant of this question seems to be open:

\begin{question}\label{que:shmideal}
Is the system of strongly Haar meager sets an ideal?
\end{question}

We conclude this subsection by stating part (2) of \cite[Theorem 13.8]{BGJS} without proof:

\begin{theorem}[Banakh-G\l\k{a}b-Jab\l o\'nska-Swaczyna]
Assume that $G$ is an abelian Polish group and $B\subseteq G$ is a Borel set. Then the following are equivalent:
\begin{multistmt}
\item in the Polish space $\mathcal{C}(2^\omega, G)$ of continuous functions from $2^\omega$ to $G$ (endowed with the compact-open topology) the set
\[\{f\in \mathcal{C}(2^\omega, G) : \text{$f$ witnesses that $B$ is Haar meager}\}\quad\text{is comeager},\]
\item in the Polish space $\mathcal{K}(G)$ of nonempty, compact subsets of $G$
\[\{K\in \mathcal{K}(G) : \text{$K$ witnesses that $B$ is strongly Haar meager}\}\quad\text{is comeager}.\]
\end{multistmt}
\end{theorem}

Sets that satisfy these equivalent conditions are called \emph{generically Haar meager sets}; it is easy to see that the system of these sets is closed under countable unions.

\section{Analogs of the results from the locally compact case}\label{sec:analogs}

In this section we discuss generalizations and analogs of a few theorems that are well-known for locally compact groups. Unfortunately, although these are true in locally compact groups for the sets of Haar measure zero and the meager sets, neither of them remains completely valid for Haar null sets and Haar meager sets. However, in some cases weakened versions remain true, and these often prove to be useful.

\subsection{Fubini's theorem and the Kuratowski-Ulam theorem}\label{ssec:fubinikurulam}

Fubini's theorem, and its topological analog, the Kuratowski-Ulam theorem (see e.g.\ \cite[8.41]{Ke}) describes small sets in product spaces. They basically state that a set (which is measurable in the appropriate sense) in the product of two spaces is small if and only if co-small many sections of it are small. Notice that because the product of left (or right) Haar measures of two locally compact groups of is trivially a left (or right) Haar measure of the product group, (a special case of) Fubini's theorem connects the sets of Haar measure zero in the two groups and the sets of Haar measure zero in the product group.

Unfortunately, analogs of these theorems are proved only in very special cases, and there are counterexamples known in otherwise \enquote{nice} groups. We provide a simple counterexample (which can be found as \cite[Example 20]{Do}) that works in both the Haar null and Haar meager case.

\begin{example}[folklore] There exists a closed set $A\subseteq \mathbb{Z}^\omega\times \mathbb{Z}^\omega$ that is neither Haar null nor Haar meager, but in one direction all its sections are Haar null and Haar meager. (In the other direction, non-Haar-null and non-Haar-meager many sections are non-Haar-null and non-Haar-meager.)
\end{example}

\begin{proof}
The group operation of $\mathbb{Z}^\omega\times\mathbb{Z}^\omega$ is denoted by $+$.

The set with these properties will be 
\[A=\{(s, t)\in\mathbb{Z}^\omega\times \mathbb{Z}^\omega : t_n\ge s_n\ge 0\text{ for every }n\in\omega\}.\]
It is clear from the definition that $A$ is closed.

Note that for $t\in \mathbb{Z}^\omega$, the section $A^t = \{s\in\mathbb{Z}^\omega: t_n \ge s_n\ge 0\text{ for every }n\in\omega\}$ is compact (as it is the product of finite sets with the discrete topology), and it follows from \autoref{thm:tsicpthaarnull} and \autoref{thm:tsicpthaarmeager} that all compact sets are Haar null and Haar meager in $\mathbb{Z}^\omega$.

To show that $A$ is not Haar null and not Haar meager, we will use the technique described in \autoref{ssec:cpttranslates} and show that for every compact set $C\subseteq \mathbb{Z}^\omega\times\mathbb{Z}^\omega$ the set $A$ contains a translate of $C$. As $C=\emptyset$ satisfies this, we may assume that $C\neq\emptyset$. Let $\pi^1_n(s,t)=s_n$ and $\pi^2_n(s,t)=t_n$, then $\pi^1_n, \pi^2_n : \mathbb{Z}^\omega\times\mathbb{Z}^\omega\to\mathbb{Z}$ are continuous functions. Let $a$ and $b$ be the sequences satisfying $a_n=-\min \pi^1_n(C)$ and $b_n=-\min \pi^2_n(C)+\max \pi^1_n(C)-\min\pi^1_n(C)$. It is straightforward to check that this choice guarantees that if $(s,t)\in C+(a,b)$, then $t_n\ge s_n\ge 0$ for every $n\in\omega$.

Finally, if $s\in \mathbb{Z}^\omega$ satisfies $s_n\ge 0$ for every $n\in\omega$, then similar, but simpler arguments show that the section $A_s =\{t\in \mathbb{Z}^\omega : t_n\ge s_n\text{ for every }n\in\omega\}$ contains a translate of every compact set $C\subseteq \mathbb{Z}^\omega$, hence it is neither Haar null nor Haar meager. Similarly, the set $\{s\in \mathbb{Z}^\omega : s_n\ge 0\text{ for every }n\in\omega\}$ is also neither Haar null nor Haar meager, so we proved the statement about the sections in the other direction.
\end{proof}

The following counterexample appears as \cite[Theorem 6]{Chr}: 
\begin{example}[Christensen]
Let $H$ be a separable infinite dimensional Hilbert space (with addition as the group operation) and let $\mathbb{S}^1$ be the unit circle in the complex plane (with complex multiplication as the group operation). There exists in the product group $H \times \mathbb{S}^1$ a Borel set $A$ such that
\begin{propertylist}
\item For every $h\in H$, the section $A_h$ has Haar measure one in $\mathbb{S}^1$.
\item For every $s\in \mathbb{S}^1$, the section $A^s$ is Haar null in $H$.
\item The complement of $A$ is Haar null in the product group $H\times \mathbb{S}^1$.
\end{propertylist}
\end{example}
Note that here $\mathbb{S}^1$ is a compact group.

The connection between (I) and (III) in this example is not accidental:
\begin{theorem}[Christensen, Borwein-Moors]
If $(G_1, \cdot)$ is an abelian Polish group and $(G_2, \cdot)$ is a locally compact abelian Polish group and $A\subset G_1\times G_2$ is universally measurable, then the following are equivalent:
\begin{multistmt}
\item $A$ is generalized Haar null in $G_1\times G_2$,
\item there is a generalized Haar null set $N \subset G_1$ such that for all $g_1\in G_1\setminus N$, the section $A_{g_1}$ is Haar null in $G_2$ (that is, the Haar measure on $G_2$ assigns measure zero to it).
\end{multistmt}
\end{theorem}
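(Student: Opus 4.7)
The plan is to prove both implications by a Fubini/disintegration argument that exploits the existence of a Haar measure on $G_2$. Throughout, let $\lambda$ denote a Haar measure on $G_2$ and $\lambda_K$ the normalized restriction of $\lambda$ to a compact set $K\subseteq G_2$ with $0<\lambda(K)<\infty$, so that $\lambda_K$ is a Borel probability measure on $G_2$. Recall from \autoref{thm:loccptequivnotions} that in the locally compact abelian group $G_2$, the notions of Haar null, generalized Haar null and having Haar measure zero all coincide. We write both group operations multiplicatively.

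For $(2) \Rightarrow (1)$: let $\nu$ be a Borel probability measure on $G_1$ witnessing that $N$ is generalized Haar null, and set $\mu = \nu \otimes \lambda_K$, a Borel probability measure on $G_1 \times G_2$. Since $A$ is universally measurable and $\mu$ is a $\sigma$-finite Borel measure, $A$ is $\mu$-measurable, and Fubini gives
\[\mu((g_1, g_2) A) = \int_{G_1} \lambda_K\bigl(g_2 \cdot A_{g_1^{-1}x_1}\bigr) \, d\nu(x_1).\]
Outside the $\nu$-null set $g_1 N$ we have $g_1^{-1}x_1 \notin N$, so $\lambda(A_{g_1^{-1}x_1}) = 0$ by hypothesis, and by translation invariance of $\lambda$ the integrand vanishes. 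Hence $\mu$ witnesses that $A$ is generalized Haar null.

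For $(1) \Rightarrow (2)$: the idea is to replace the given witness of $A$ by one whose $G_2$-conditional measures are \emph{equivalent} to $\lambda$, so that vanishing on sections coincides with having Haar measure zero. Choose a strictly positive continuous density $f\colon G_2 \to (0,\infty)$ with $\int f\, d\lambda = 1$; such $f$ exists because the locally compact Polish group $G_2$ is $\sigma$-compact, so one may construct $f$ as a convergent positive series of compactly supported bumps on an exhausting sequence. Let $\rho$ be the Borel probability measure with density $f$ with respect to $\lambda$. If $\mu$ is a witness for $A$, then $\tilde\mu := \mu * (\delta_{1_{G_1}} \otimes \rho)$ is also a witness, since $\tilde\mu((g_1, g_2) A) = \int \mu((g_1, g_2 y^{-1}) A) \, d\rho(y) = 0$. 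Disintegrating $\mu = \int \mu_{x_1}\, d\mu_1(x_1)$ over $G_1$ yields $\tilde\mu = \int (\mu_{x_1} * \rho) \, d\mu_1(x_1)$, and the $\lambda$-density of $\tilde\mu_{x_1} := \mu_{x_1} * \rho$ is the strictly positive function $w \mapsto \int f(x_2^{-1} w) \, d\mu_{x_1}(x_2)$. Hence $\tilde\mu_{x_1}$ and $\lambda$ share the same null sets on $G_2$. Applying this disintegration to $\tilde\mu((g_1, 1_{G_2}) A) = 0$ yields
\[\int \tilde\mu_{x_1}\bigl(A_{g_1^{-1}x_1}\bigr) \, d\mu_1(x_1) = 0,\]
so $\lambda(A_{g_1^{-1}x_1}) = 0$ for $\mu_1$-almost every $x_1$. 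Setting $N := \{y \in G_1 : \lambda(A_y) > 0\}$ therefore gives $\mu_1(g_1 N) = 0$ for every $g_1 \in G_1$. The set $N$ is universally measurable because $A$ is measurable with respect to every product $\eta \otimes \lambda$ with $\eta$ a Borel probability measure on $G_1$, so Fubini for this $\sigma$-finite Borel measure makes $y \mapsto \lambda(A_y)$ $\eta$-measurable for every such $\eta$. Thus $\mu_1$ witnesses that $N$ is generalized Haar null, and by \autoref{thm:loccptequivnotions} the sections $A_y$ for $y \notin N$ are Haar null in $G_2$.

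The main obstacle is the construction in the preceding paragraph: producing a witness measure whose $G_2$-conditional measures are simultaneously absolutely continuous with respect to $\lambda$ \emph{and} supported everywhere on $G_2$, which is exactly what allows the witness condition to be transported into the statement that almost all sections have Haar measure zero. The strictly positive density $f$ and the convolution $\mu \mapsto \mu * (\delta_{1_{G_1}} \otimes \rho)$ provide this gadget; the remaining steps -- compatibility of the disintegration with convolution, the Fubini calculation, and the universal measurability of $N$ -- are routine once the auxiliary measure has been set up.
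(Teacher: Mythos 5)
Your proof is correct. A remark on context first: the survey does not prove this statement itself (it cites Borwein--Moors for it) and instead proves the Borel/Haar-null generalization \autoref{thm:weakfubini}, so that proof is the natural point of comparison. Your direction $(2)\Rightarrow(1)$ is essentially the paper's: taking the product of a witness for $N$ with the normalized restriction of Haar measure to a compact set and applying Fubini collapses the paper's chain $(3)\Rightarrow(1)\Rightarrow(2)$ into one step. For the hard direction both arguments rest on the same underlying idea --- average the witness condition over Haar-distributed translations in the locally compact factor and swap the order of integration --- but the implementations genuinely differ. The paper applies Fubini to $\mu\times(\lambda\times\delta_{1_H})$ on the set $S$ of pairs multiplying into a translate of $B$, uses invariance of $\lambda$ to reduce the integrand to $\lambda\bigl(B^{h_1^{-1}u_Hh_2^{-1}}\bigr)$, and gets Borelness of the exceptional set from \cite[Theorem 17.25]{Ke}; this works with two-sided translates in an arbitrary Polish $H$, but needs $B$ Borel. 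You instead smooth the witness itself, convolving with $\delta_{1_{G_1}}\otimes\rho$ for a probability measure $\rho$ with strictly positive density with respect to $\lambda$, so that the fiber conditionals of the new witness become equivalent to $\lambda$ and the conclusion drops out of a disintegration; universal measurability of $N$ is then recovered by Fubini against $\eta\otimes\lambda$. This buys exactly the statement at hand (universally measurable $A$, generalized Haar null, abelian groups), which the survey only cites, at the price of invoking disintegration, the construction of the positive density, and commutativity (your identity $(g_1,g_2)A(1,y^{-1})=(g_1,g_2y^{-1})A$), whereas the paper's route avoids disintegration and yields the stronger non-abelian Borel version. The points you leave implicit --- extending the disintegration identity from Borel to universally measurable sets by sandwiching between Borel hulls of equal measure, and the universal measurability of the multiplication preimage used in the convolution step --- are routine, and the latter is handled in exactly the same way in the paper's proof of \autoref{thm:loccptequivnotions}.
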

This connection is mentioned in \cite{Chr} and proved in \cite[Theorem 2.3]{BM}.

We prove the following generalized version of this result:
\begin{theorem}\label{thm:weakfubini}
Suppose that $G$ and $H$ are Polish groups and $B\subseteq G\times H$ is a Borel subset. Then if $G$ is locally compact, then the following conditions are all equivalent. Moreover, the implication $(1) \Rightarrow (2)$ remains valid even if $G$ is not necessarily locally compact.
\begin{multistmt}
\item there exists a Haar null set $E\subseteq H$ and a Borel probability measure $\mu$ on $G$ such that every $h\in H\setminus E$ and $g_1, g_2\in G$ satisfies $\mu(g_1 B^h g_2)=0$ (i.e.\ these sections are Haar null and $\mu$ is witness measure for each of them),
\item $B$ is Haar null in $G\times H$,
\item there exists a Haar null set $E\subseteq H$ such that for every $h\in H\setminus E$ the section $B^h$ has Haar measure zero in $G$.
\end{multistmt}
\end{theorem}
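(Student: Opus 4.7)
The plan is to prove a cyclic chain $(1) \Rightarrow (2) \Rightarrow (3) \Rightarrow (1)$; of these, only $(1) \Rightarrow (2)$ avoids the local compactness assumption on $G$.

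For $(1) \Rightarrow (2)$, pick a witness $\nu$ for $E$ being Haar null in $H$ (so $\nu(h_1 E h_2) = 0$ for all $h_1, h_2 \in H$) and form the Borel probability measure $\pi := \mu \times \nu$ on $G \times H$. For any $(g_1, h_1), (g_2, h_2) \in G \times H$, the $y$-section of the translate $(g_1, h_1) B (g_2, h_2)$ equals $g_1 B^{h_1^{-1} y h_2^{-1}} g_2$, so Fubini gives
\[
\pi\bigl((g_1, h_1) B (g_2, h_2)\bigr) = \int_H \mu\bigl(g_1 B^{h_1^{-1} y h_2^{-1}} g_2\bigr) \, d\nu(y) = 0,
\]
since the integrand vanishes outside the $\nu$-null set $h_1 E h_2$.

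For $(2) \Rightarrow (3)$, fix a witness $\pi$ for $B$ being Haar null in $G \times H$, and let $\lambda'$ be a right Haar measure on $G$ (available because $G$ is locally compact Polish, and automatically $\sigma$-finite). The key device is the continuous map $N : G \times (G \times H) \to G \times H$, $N(g, (g', h)) := (gg', h)$. Applying Fubini to the Borel set $N^{-1}(B)$ against $\lambda' \times \pi$, the $g$-section of $N^{-1}(B)$ is the left translate $(g^{-1}, 1_H) B$, which has $\pi$-measure zero, so $(\lambda' \times \pi)(N^{-1}(B)) = 0$. Computed in the other order, the $(g', h)$-section is $B^h (g')^{-1}$, whose $\lambda'$-measure equals $\lambda'(B^h)$ by right invariance, hence $\int_H \lambda'(B^h) \, d\pi_H(h) = 0$, where $\pi_H$ is the marginal of $\pi$ on $H$. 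By \autoref{thm:samezeroset} this is the same as $\lambda(B^h) = 0$ for $\pi_H$-almost every $h$. The main obstacle is that, \emph{a priori}, the Borel set $E_0 := \{h : \lambda(B^h) > 0\}$ is only $\pi_H$-null, not Haar null in $H$. To promote it I would rerun the argument with $(1_G, h_1) B (1_G, h_2)$ in place of $B$ for arbitrary $h_1, h_2 \in H$: this translate is still witnessed by $\pi$, and its $y$-section is $B^{h_1^{-1} y h_2^{-1}}$, so a substitution yields $\pi_H(h_1 E_0 h_2) = 0$ for every $h_1, h_2$. Thus $\pi_H$ itself witnesses that $E_0$ is Haar null in $H$.

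For $(3) \Rightarrow (1)$, let $\lambda$ be a left Haar measure on $G$, fix a compact $K \subseteq G$ with $0 < \lambda(K) < \infty$, and set $\mu(X) := \lambda(X \cap K)/\lambda(K)$. For $h \notin E$, $\lambda(B^h) = 0$, so by \autoref{thm:loccptequivnotions} the section $B^h$ is Haar null in $G$; since the Haar null ideal on $G$ is translation invariant, $\lambda(g_1 B^h g_2) = 0$ and therefore $\mu(g_1 B^h g_2) = 0$ for all $g_1, g_2 \in G$, as required.
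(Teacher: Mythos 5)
Your proof is correct and takes essentially the same route as the paper's: the product measure $\mu\times\nu$ plus Fubini for $(1)\Rightarrow(2)$, a double Fubini against a Haar measure showing that the $H$-marginal of the witness for $B$ witnesses that $E_0$ is Haar null for $(2)\Rightarrow(3)$ (your map $N$ and the "rerun with the translates $(1_G,h_1)B(1_G,h_2)$" step are just a repackaging of the paper's computation with the set $S$ and $\tilde\lambda=\lambda\times\delta$), and the normalized restriction of Haar measure to a compact set for $(3)\Rightarrow(1)$. The only point you assert without justification is that $E_0=\{h:\lambda(B^h)>0\}$ is Borel, which the paper proves via \cite[Theorem 17.25]{Ke} applied to a probability measure equivalent to $\lambda$ — a standard but non-vacuous step worth including.
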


\begin{proof}
$(1)\Rightarrow (2)$:\\
As the set $E$ is Haar null, there exists a Borel set $E'\supseteq E$ and a Borel probability measure $\nu$ on $H$ such that $\nu(h_1 E' h_2)=0$ for every $h_1, h_2\in H$. We show that the measure $\mu\times\nu$ witnesses that $B$ is Haar null in $G\times H$. Fix arbitrary $g_1,g_2\in G$ and $h_1, h_2\in H$, we have to prove that $(\mu\times\nu)((g_1, h_1)\cdot B\cdot (g_2, h_2))=0$.

Notice that
\[(g_1,h_1)\cdot B\cdot (g_2, h_2)\subseteq \left(G\times (h_1\cdot E'\cdot h_2)\right)\cup\left((g_1, h_1)\cdot (B\setminus (G\times E'))\cdot (g_2, h_2)\right)\]
and in this union the $(\mu\times\nu)$-measure of the first term is zero (by the choice of $\mu$ and $E'$). For every $h\in H$, the $h$-section of the Borel set $(g_1, h_1)\cdot (B\setminus (G\times E'))\cdot (g_2, h_2)$ is $g_1\cdot (B\setminus (G\times E'))^{h_1^{-1} h h_2^{-1}} \cdot g_2$, and this is either the empty set (if $h_1^{-1} h h_2^{-1}\in E'$) or a set of $\mu$-measure zero (if $h_1^{-1} h h_2^{-1}\in H\setminus E'$). Applying Fubini's theorem in the product space $G\times H$ to the product measure $\mu\times\nu$ yields that $(\mu\times\nu)((g_1, h_1)\cdot B\cdot (g_2, h_2))=0$, and this is what we had to prove.

$(2)\Rightarrow (3)$ (when $G$ is locally compact):\\
Let $B\subseteq G\times H$ be Haar null and suppose that $\mu$ is a witness measure, i.e. $\mu((g_1, h_1)\cdot B\cdot (g_2, h_2))=0$ for every $g_1, g_2\in G$ and $h_1, h_2\in H$. Fix a left Haar measure $\lambda$ on $G$, let $\delta$ denote the Dirac measure at ${1_H}$ (i.e.\ for $X\subseteq H$, $\delta(X)=1$ if $1_H\in X$ and $0$ otherwise) and let $\tilde{\lambda}= \lambda\times \delta$. Let $E=\{h\in H : \lambda(B^h)\neq 0\}$, it is clearly enough to prove that $E\subseteq H$ is Haar null. First we use a standard argument to show that $E$ is a Borel set.

If $(X, \mathcal{S})$ is a measurable space, $Y$ is a separable metrizable space, $P(Y)$ is the space of Borel probability measures on $Y$ and $A\subseteq X\times Y$ is measurable (i.e. $A\in \mathcal{S}\times \mathcal{B}(Y)$), then \cite[Theorem 17.25]{Ke} states that the map $X\times P(Y)\to \mathbb{R}_+, (x,\varrho)\mapsto \varrho(A_x)$ is measurable (for $\mathcal{S}\times \mathcal{B}(P(Y))$). Applying this for $(X, \mathcal{S}):= (H, \mathcal{B}(H))$, $Y:=G$, and $A:=B$ yields that $\tilde{E}=\{(h, \varrho)\in H\times P(G) : \varrho(B^h)\neq 0\}$ is Borel (the Borel preimage of an open set in $\mathbb{R}_+$). If $\varrho$ is a Borel probability measure on $G$ that is equivalent to $\lambda$ in the sense that they have the same zero sets, then $E=\tilde{E}^\varrho$ is Borel, because it is the section of a Borel set.

We will show that the measure $\nu(X)=\mu(G\times X)$ witnesses that $E$ is Haar null. Fix arbitrary $h_1, h_2\in H$, we have to prove that $0=\nu(h_1 E h_2)=\mu(G\times (h_1 E h_2))$.

Consider the set
\begin{align*}
&S=\{((u_G,u_H),(v_G, v_H))\in((G\times H)\times (G\times H)) : 
\\&\hspace{3cm}(u_G, u_H)\cdot(v_G, v_H)\in (1_G, h_1)\cdot B\cdot(1_G, h_2)\},
\end{align*}
it is easy to see that this is a Borel set. Applying Fubini's theorem in the product space $(G\times H)\times (G\times H)$ to the product measure $\mu\times \tilde\lambda$ yields that 
\[(\mu\times\tilde\lambda)(S)=\int_{G\times H} \mu((1_G, h_1)\cdot B\cdot (v_G^{-1}, h_2 v_H^{-1}))\intby \tilde\lambda((v_G, v_H))=0\]
because $\mu$ witnesses that $B$ is Haar null. Applying Fubini's theorem again for the other direction yields that
\begin{align*}
&0=(\mu\times\tilde\lambda)(S)=\int_{G\times H}\tilde\lambda\left((u_G^{-1}, u_H^{-1}h_1)\cdot B\cdot (1_G, h_2)\right)\intby \mu((u_G, u_H))=
\\&\quad=\int_{G\times H} \lambda\left(\{g \in G : (g, 1_H)\in (u_G^{-1}, u_H^{-1}h_1)\cdot B\cdot (1_G, h_2)\}\right)\intby \mu((u_G, u_H))=
\\&\quad=\int_{G\times H} \lambda\left(\{g\in G : (u_G g, h_1^{-1} u_Hh_2^{-1})\in B\}\right)\intby \mu((u_G, u_H))=
\\&\quad=\int_{G\times H} \lambda\left(u_G^{-1} \cdot B^{h_1^{-1} u_H h_2^{-1}}\right)\intby \mu((u_G, u_H))=
\\&\quad=\int_{G\times H} \lambda\left(B^{h_1^{-1} u_H h_2^{-1}}\right)\intby\mu((u_G, u_H)).
\end{align*}
It is clear from the definition of $E$ that the function \[\varphi : G\times H\to \mathbb{R}_+,\qquad(u_G, u_H)\mapsto \lambda\left(B^{h_1^{-1} u_H h_2^{-1}}\right)\] takes strictly positive values on the set $G\times (h_1 E h_2)$. However, our calculations showed that $\int \varphi \intby \mu$ is zero, hence the $\mu$-measure of the set $G\times (h_1 E h_2)$ must be zero.

$(3)\Rightarrow(1)$ (when $G$ is locally compact):\\
Fix a left Haar measure $\lambda$ on $G$. $(3)$ states that $\lambda(B^h)=0$ for every $h\in H\setminus E$. Using \autoref{thm:samezeroset} it is easy to see that $\lambda(g_1 B^h g_2)=0$ for every $g_1, g_2\in G$. Let $C\subseteq G$ be a Borel set such that $0<\lambda(C)<\infty$ (the regularity of the Haar measure guarantees a compact $C$ satisfying this) and let $\mu$ be the Borel probability measure $\mu(X)=\frac{\lambda(X\cap C)}{\lambda(C)}$. Then $\mu\ll\lambda$ guarantees that $\mu$ satisfies condition $(1)$.
\end{proof}

We also prove the analog of this for Haar meager sets:

\begin{theorem}\label{thm:weakkurulam}
Suppose that $G$ and $H$ are Polish groups and $B\subseteq G\times H$ is a Borel subset. 
Then if $G$ is locally compact, then the following conditions are all equivalent. Moreover, the implication $(1) \Rightarrow (2)$ remains valid even if $G$ is not necessarily locally compact.
\begin{multistmt}
\item there exists a Haar meager set $E\subseteq H$, a (nonempty) compact metric space $K$ and a continuous function $f: K\to G$ such that every $h\in H\setminus E$ and $g_1, g_2\in G$ satisfies that $f^{-1}(g_1 B^h g_2)$ is meager in $K$ (i.e.\ these sections are Haar meager and $f$ is a witness function for each of them),
\item $B$ is Haar meager in $G\times H$,
\item there exists a Haar meager set $E\subseteq H$ such that for every $h\in H\setminus E$ the section $B^h$ is meager in the locally compact group $G$.
\end{multistmt}
\end{theorem}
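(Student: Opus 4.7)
The plan is to parallel the proof of \autoref{thm:weakfubini}, systematically replacing Fubini's theorem by the Kuratowski--Ulam theorem (see e.g.\ \cite[Theorem 8.41]{Ke}) and replacing the Haar measure $\lambda$ on $G$ by the identity map restricted to a suitable compact subset of $G$. Throughout, I will use that $\mathcal{M}(G)=\mathcal{HM}(G)$ in the locally compact Polish group $G$ (\autoref{thm:haarmeagerismeagerlc}), which plays the role analogous to \autoref{thm:loccptequivnotions} in the proof of \autoref{thm:weakfubini}.

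For $(1)\Rightarrow(2)$, which does not require local compactness, I would let $\psi:L\to H$ be a witness function for $E$ with Borel hull $E'\supseteq E$, and verify that $f\times\psi:K\times L\to G\times H$ witnesses that $B$ is Haar meager. The preimage
\[
(f\times\psi)^{-1}\bigl((g_1,h_1)B(g_2,h_2)\bigr)=\bigl\{(k,\ell): f(k)\in g_1 B^{h_1^{-1}\psi(\ell)h_2^{-1}}g_2\bigr\}
\]
is Borel, and for every $\ell\in L$ with $\psi(\ell)\notin h_1 E' h_2$ its $\ell$-section equals $f^{-1}(g_1 B^{h_1^{-1}\psi(\ell)h_2^{-1}}g_2)$, meager in $K$ by hypothesis. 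Since $\psi^{-1}(h_1 E' h_2)$ is meager in $L$, comeager many $\ell$-sections are meager, so Kuratowski--Ulam yields that the whole preimage is meager in $K\times L$.

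For $(3)\Rightarrow(1)$ I would fix a nonempty open relatively compact $U\subseteq G$, set $K=\overline{U}$, and let $f:K\to G$ be the inclusion. A short topological argument shows that every set meager in $G$ intersects $\overline{U}$ in a meager subset of $\overline{U}$: given a closed nowhere dense $F\subseteq G$, any relatively open $W\subseteq F\cap\overline{U}$ must meet $U$ (since $U$ is dense in $\overline{U}$), forcing $W\cap U\subseteq F\cap U$ to be empty by the nowhere density of $F$, hence $W=\emptyset$. Applied to the meager sets $g_1 B^h g_2$ for $h\notin E$, this gives the required meagerness of $f^{-1}(g_1 B^h g_2)$ in $K$.

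The main obstacle is the implication $(2)\Rightarrow(3)$, where the plan is to imitate the double application of Fubini in \autoref{thm:weakfubini} by a double application of Kuratowski--Ulam. Let $f:K\to G\times H$ witness that $B$ is Haar meager, write $f=(f_G,f_H)$, fix $U\subseteq G$ nonempty open relatively compact, set $C=\overline{U}$, and define the continuous map
\[
F:K\times C\to G\times H, \qquad F(k,c)=f(k)\cdot(c,1_H)=\bigl(f_G(k)c,\,f_H(k)\bigr).
\]
For any $(g_1,h_1),(g_2,h_2)\in G\times H$ the Borel preimage $F^{-1}((g_1,h_1)B(g_2,h_2))$ has its $c$-section equal to $f^{-1}((g_1,h_1)B(g_2 c^{-1},h_2))$, meager in $K$ by the witness property of $f$; Kuratowski--Ulam then forces the preimage to be meager in $K\times C$, so the $k$-section $f_G(k)^{-1}g_1 B^{h_1^{-1}f_H(k)h_2^{-1}}g_2\cap C$ is meager in $C$ for comeager $k\in K$. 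The crucial step---and the main obstacle---is to upgrade this \enquote{meager in a single compact set} statement to \enquote{meager in all of $G$}: I would fix a countable dense subset $\{g_i\}_{i\in\omega}\subseteq G$, apply the above with $(g_1,g_2)=(g_i,1_G)$ for every $i$, intersect the resulting comeager subsets of $K$, and exploit that $\{g_i^{-1}f_G(k)U:i\in\omega\}$ is an open cover of $G$ (because $\{g_i^{-1}f_G(k):i\in\omega\}$ is dense). Since meagerness is local, this would show that $B^{h_1^{-1}f_H(k)h_2^{-1}}$ is meager in $G$ for comeager $k\in K$. Setting $E=\{h\in H:B^h\text{ is not meager in }G\}$---which is analytic, since by Novikov's theorem (see e.g.\ \cite[Theorem 29.22]{Ke}) each $E_n=\{h:B^h\cap\overline{U_n}\text{ is not meager in }\overline{U_n}\}$ is analytic for a countable basis $\{U_n\}$ of relatively compact open sets in $G$---gives $f_H^{-1}(h_1 E h_2)$ meager in $K$ for every $h_1,h_2\in H$, and the analytic-hull characterization of Haar meagerness (\autoref{thm:haarmeagereqv}) then shows that $E$ is Haar meager.
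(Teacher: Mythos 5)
Your proposal is correct, and the overall skeleton coincides with the paper's: the same direct Kuratowski--Ulam argument for $(1)\Rightarrow(2)$, the same ``identity on $\overline{U}$ is a simultaneous witness for all meager sets'' argument for $(3)\Rightarrow(1)$, and for $(2)\Rightarrow(3)$ the same double application of Kuratowski--Ulam followed by defining $E=\{h: B^h \text{ is not meager}\}$ and checking that $f_H$ witnesses its Haar meagerness. The one genuine divergence is in $(2)\Rightarrow(3)$: you take the second factor of the product to be a \emph{compact} set $C=\overline{U}$ (mimicking the compactly supported reference measure from the Fubini analogue), which only yields meagerness of $g_1 B^h g_2\cap C$ in $C$ and forces you to add the extra localization step with a countable dense set $\{g_i\}$ and the Banach category theorem. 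The paper instead applies Kuratowski--Ulam on $K\times G$ with the \emph{whole group} $G$ as the second factor, via the map $\psi(g)=(g,1_H)$; since Kuratowski--Ulam, unlike Fubini with a finite measure, imposes no compactness or finiteness restriction on the factors, this produces meagerness of the sections in all of $G$ in one shot and the patching argument disappears. (Your localization does work as written, so this costs correctness nothing, only length.) A second minor difference: you identify $E$ as analytic via Novikov's theorem and then invoke the analytic-hull characterization of Haar meagerness, whereas the paper gets $E$ Borel directly from the category-quantifier theorem \cite[Theorem 16.1]{Ke}; both routes are valid.
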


\begin{proof}
$(1)\Rightarrow (2)$:\\
As the set $E$ is Haar meager, there exist a Borel set $E'\supseteq E$, a (nonempty) compact metric space $K'$ and a continuous function $\varphi : K'\to H$ such that $\varphi^{-1}(h_1 E' h_2)$ is meager in $K'$ for every $h_1, h_2\in H$.

Let $f\times \varphi : K\times K'\to G\times H$ be the function $(f\times\varphi)(k, k')=(f(k), \varphi(k'))$. We show that the function $f\times\varphi$ witnesses that $B$ is Haar meager in $G\times H$. Fix arbitrary $g_1,g_2\in G$ and $h_1, h_2\in H$, we have to prove that $(f\times\varphi)^{-1}((g_1, h_1)\cdot B\cdot (g_2, h_2))$ is meager in $K\times K'$.

Notice that
\[(g_1,h_1)\cdot B\cdot (g_2, h_2)\subseteq \left(G\times (h_1\cdot E'\cdot h_2)\right)\cup\left((g_1, h_1)\cdot (B\setminus (G\times E'))\cdot (g_2, h_2)\right)\]
and in this union
\[(f\times\varphi)^{-1}\left(G\times (h_1\cdot E'\cdot h_2)\right)=K\times \varphi^{-1}(h_1\cdot E'\cdot h_2)\] and using the choice of $\varphi$ and the Kuratowski-Ulam theorem it is clear that this is a meager subset of $K\times K'$. For every $h\in H$, the $h$-section of the Borel set $(g_1, h_1)\cdot (B\setminus (G\times E'))\cdot (g_2, h_2)$ is $g_1\cdot (B\setminus (G\times E'))^{h_1^{-1} h h_2^{-1}} \cdot g_2$, and this is either the empty set (if $h_1^{-1} h h_2^{-1}\in E'$) or a set whose preimage under $f$ is meager in $K$ (if $h_1^{-1} h h_2^{-1}\in H\setminus E'$). This means that for every $k'\in K'$, the $k'$-section of the set \[(f\times\varphi)^{-1}\left((g_1, h_1)\cdot (B\setminus (G\times E'))\cdot (g_2, h_2)\right)\subseteq K\times K'\] is meager in $K$. Applying the Kuratowski-Ulam theorem in the product space $K\times K'$ yields that the set $(f\times\varphi)^{-1}((g_1, h_1)\cdot B\cdot (g_2, h_2))$ is indeed meager in $K\times K'$, and this is what we had to prove.

$(2)\Rightarrow (3)$ (when $G$ is locally compact):\\
Let $B\subseteq G\times H$ be Haar meager and suppose that $f : K\to G\times H$ is a witness function (where $K$ is a nonempty compact metric space), i.e. $f^{-1}((g_1, h_1)\cdot B\cdot (g_2, h_2))$ is meager in $K$ for every $g_1, g_2\in G$ and $h_1, h_2\in H$. Let $f(k)=(f_G(k), f_H(k))$ for every $k\in K$, then $f_G : K\to G$ and $f_H : K\to H$ are continuous functions. Let $E=\{h\in H : B^h\text{ is not meager in }G\}$, then \cite[Theorem 16.1]{Ke} states that $E$ is Borel. It is enough to prove that $E\subseteq H$ is Haar meager. We show that this is witnessed by the function $f_H : K \to H$. Fix arbitrary $h_1, h_2\in H$, we have to prove that $f_H^{-1}(h_1 E h_2)$ is meager in $K$.

As in the case of measure, define the Borel set
\begin{align*}
&S=\{((u_G,u_H),(v_G, v_H))\in((G\times H)\times (G\times H)) : 
\\&\hspace{3cm}(u_G, u_H)\cdot(v_G, v_H)\in (1_G, h_1)\cdot B\cdot(1_G, h_2)\},
\end{align*}
and the function $\psi : G\to G\times H$, $\psi(g)=(g, 1_H)$ ($\psi$ is the analog of $\tilde{\lambda}$ from the case of measure). Let $f\times \psi : K\times G \to (G\times H)\times (G\times H)$ be the function $(f\times\psi)(k, g)=(f(k), \psi(g))$. First notice that for every $g\in G$, the $g$-section of the set $(f\times \psi)^{-1}(S)\subseteq K\times G$ is
\begin{align*}
&((f\times\psi)^{-1}(S))^g= \{k\in K : (f\times \psi)((k, g))\in S\}=
\\&\quad= \{k\in K : (f(k), \psi(g))\in S\}= \{k\in K : (f(k), (g, 1_H))\in S\}=
\\&\quad=\{k\in K : f(k)\in (1_G, h_1)\cdot B\cdot (1_G, h_2)\cdot(g, 1_H)^{-1}\}=
\\&\quad=f^{-1}((1_G, h_1)\cdot B\cdot (g^{-1}, h_2))
\end{align*}
and this is meager in $K$ (as $f$ witnesses that $B$ is Haar meager), hence the Kuratowski-Ulam theorem yields that $(f\times\psi)^{-1}(S)$ is meager in $K\times G$.

Applying the Kuratowski-Ulam theorem in the other direction yields that the set
\[\{k \in K : ((f\times\psi)^{-1}(S))_k \text{ is not meager in $G$}\}\]
is meager in $K$. Here if we let $u_G=f_G(k)$ and $u_H=f_H(k)$, then 
\begin{align*}
&((f\times\psi)^{-1}(S))_k= \{g\in G : (f\times \psi)((k, g))\in S\}=
\\&\quad= \{g\in G : (f(k), \psi(g))\in S\}= \{g\in G : ((u_G, u_H), (g, 1_H))\in S\}=
\\&\quad=\{g\in G : (g, 1_H)\in (u_G, u_H)^{-1}\cdot (1_G, h_1)\cdot B\cdot (1_G, h_2)\}=
\\&\quad=\{g\in G : (g, 1_H)\in (u_G^{-1}, u_H^{-1} h_1)\cdot B\cdot (1_G, h_2)\}=
\\&\quad=\{g\in G : (u_G g, h_1^{-1} u_H h_2^{-1})\in B\}=
\\&\quad=u_G^{-1} \cdot B^{h_1^{-1} u_H h_2^{-1}} = (f_G(k))^{-1} \cdot B^{h_1^{-1} f_H(k) h_2^{-1}}.
\end{align*}
Thus we know that 
\[\{k \in K : (f_G(k))^{-1} \cdot B^{h_1^{-1} f_H(k) h_2^{-1}} \text{ is not meager in $G$}\} \text{ is meager in $K$},\]
and because meagerness is translation invariant
\[\{k \in K : B^{h_1^{-1} f_H(k) h_2^{-1}} \text{ is not meager in $G$}\} \text{ is meager in $K$}.\]
But notice that 
\begin{align*}
&f_H^{-1}(h_1 E h_2)= \{k\in K : f_H(k)\in h_1 E h_2\}=\{k\in K : h_1^{-1} f_H(k)h_2^{-1}\in E\}=
\\&\quad=\{k\in K : B^{h_1^{-1} f_H(k)h_2^{-1}}\text{ is not meager in }G\}
\end{align*}
so we proved that $f_H$ is indeed a witness measure for $E$.

$(3)\Rightarrow(1)$ (when $G$ is locally compact):\\
Notice that the proof of \autoref{thm:haarmeagerismeagerlc} shows that in a locally compact Polish group every meager set is Haar meager and there is a function which is a witness function for each of them. The implication that we have to prove is clearly a special case of this observation.
\end{proof}

Finally, we state another special case when the analog of the Kuratowski-Ulam theorem is valid. The proof of this result can be found in \cite[Theorem 18]{Do}.
\begin{theorem}[Dole\v zal-Rmoutil-Vejnar-Vlas\' ak]
Suppose that $G$ and $H$ are Polish groups and $A\subseteq G$ and $B\subseteq H$ are analytic sets. Then $A\times B$ is Haar meager in $G\times H$ if and only if at least one of $A$ and $B$ is Haar meager in the respective group.
\end{theorem}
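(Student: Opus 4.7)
The plan is to prove the two directions separately, with the forward direction being the easier one. Suppose that $A$ is Haar meager (the case where $B$ is Haar meager is symmetric). Since $A$ is analytic, the equivalence $(1)\Leftrightarrow(3)$ of \autoref{thm:haarmeagereqv} provides a nonempty compact metric space $K$ and a continuous $f_A\colon K\to G$ such that $f_A^{-1}(gAg')$ is meager in $K$ for every $g,g'\in G$. I would define $\tilde f\colon K\to G\times H$ by $\tilde f(k)=(f_A(k),1_H)$. For arbitrary $(g,h),(g',h')\in G\times H$, the preimage $\tilde f^{-1}((g,h)(A\times B)(g',h'))$ equals $f_A^{-1}(gAg')$ when $1_H\in hBh'$ and is empty otherwise, so it is meager in $K$ in either case. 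Since $A\times B$ is analytic, applying the analytic direction of \autoref{thm:haarmeagereqv} again yields that $A\times B$ is Haar meager in $G\times H$.

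For the converse, I plan to argue by contradiction: assume $A\times B$ is Haar meager while neither $A$ nor $B$ is. Since $A\times B$ is analytic, \autoref{thm:haarmeagereqv} gives a nonempty compact metric space $K$ and a continuous $f\colon K\to G\times H$ such that $f^{-1}((g,h)(A\times B)(g',h'))$ is meager in $K$ for every $g,g'\in G$ and $h,h'\in H$. Writing $f(k)=(f_G(k),f_H(k))$, both coordinate maps are continuous. Since $A$ is analytic and not Haar meager, $f_G$ cannot satisfy the witness condition, so there exist $g_0,g_0'\in G$ for which $E_1:=f_G^{-1}(g_0Ag_0')$ is non-meager in $K$. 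Being analytic, $E_1$ has the Baire property (see \cite[Corollary 29.14]{Ke}), hence it is comeager in some nonempty open $U\subseteq K$.

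The crucial step is to replace $K$ by the compact metric space $\overline{U}$ in order to apply non-Haar-meagerness of $B$ locally. Restricting $f_H$ to $\overline{U}$, the assumption that the analytic set $B$ is not Haar meager provides $h_0,h_0'\in H$ such that $(f_H|_{\overline U})^{-1}(h_0Bh_0')=f_H^{-1}(h_0Bh_0')\cap\overline U$ is non-meager in $\overline U$, hence comeager in some nonempty open $V\subseteq\overline U$. Now $V\cap U$ is nonempty (since $U$ is dense in $\overline U$) and open in $K$, and a routine check using that meagerness passes to open subspaces shows that both $E_1$ and $E_2:=f_H^{-1}(h_0Bh_0')$ are comeager in $V\cap U$. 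Their intersection equals $f^{-1}((g_0,h_0)(A\times B)(g_0',h_0'))$, so this preimage is non-meager in $V\cap U$ and therefore in $K$, contradicting our choice of $f$.

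The main obstacle I expect is exactly the simultaneous localization described above: to exploit the assumption that $A\times B$ is Haar meager via a single witness $f$, one must produce a common open subset of $K$ on which preimages of suitable translates of $A$ and of $B$ are both comeager. The trick of passing to $\overline U$ to apply non-Haar-meagerness of $B$ there, and then cutting back to $V\cap U$ using density of $U$ in $\overline U$, is what makes the two large preimages overlap and yields the contradiction.
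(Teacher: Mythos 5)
Your argument is correct. Note that the survey does not reproduce a proof of this statement at all (it defers to \cite[Theorem 18]{Do}), so there is nothing in the paper to compare against; your proof stands on its own. The forward direction is fine (though you could avoid invoking analyticity of $B$ there by simply covering $A\times B$ with the Borel set $A'\times H$, where $A'$ is a Borel Haar meager hull of $A$). The converse is the substantive part, and your localization scheme works: $E_1=f_G^{-1}(g_0Ag_0')$ is analytic, hence has the Baire property and is comeager in some nonempty open $U\subseteq K$; restricting the candidate witness $f_H$ to the compact space $\overline{U}$ and using that $B$ is analytic and not Haar meager forces $f_H^{-1}(h_0Bh_0')\cap\overline{U}$ to be non-meager in $\overline{U}$, hence comeager in some relatively open $V\subseteq\overline{U}$; density of $U$ in $\overline{U}$ makes $V\cap U$ a nonempty open subset of $K$ on which both preimages are comeager (comeagerness relativizes to open subsets and is intrinsic to the subspace), so their intersection $f^{-1}\bigl((g_0,h_0)(A\times B)(g_0',h_0')\bigr)$ is non-meager in the Baire space $V\cap U$ and therefore in $K$, contradicting the choice of $f$. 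The key insight -- that you must restrict to $\overline{U}$ \emph{before} applying non-Haar-meagerness of $B$, rather than localizing the two preimages independently in $K$ -- is exactly what makes the two large sets overlap, and you have identified and executed it correctly.
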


\subsection{The Steinhaus theorem}\label{ssec:steinhaus}

The Steinhaus theorem and its generalizations state that if a set $A\subseteq G$ is not small (and satisfies some measurability condition, e.g.\ it is in the $\sigma$-ideal generated by the Borel sets and the small sets), then $AA^{-1}=\{xy^{-1} : x,y\in A\}$ contains a neighborhood of $1_G$.

The original result of Steinhaus stated this in the group $(\mathbb{R}, +)$ and used the sets of Lebesgue measure zero as the small sets. Weil extended this for an arbitrary locally compact group, using the sets of Haar measure zero as the small sets.

\begin{restate}{cor:SteinhausHaarGen}[Steinhaus, Weil]
If $G$ is a locally compact Polish group, $\lambda$ is a left Haar measure on $G$ and $A \subseteq G$ is $\lambda$-measurable with $\lambda(A)>0$, then $1_G \in \Int(A A^{-1})$.
\end{restate}

We already proved this result and used it to prove that there are no (left or right) Haar measures in non-locally-compact groups.

\begin{remark}
The proof of this result also works for non-locally-compact groups, but in those groups the result is vacuously true.
\end{remark}

It is natural to ask the following question (a question of this type appears as e.g.\ \cite[Problem 2.7]{Ros}):
\begin{question}\label{que:steinhausgen}
Let $A$ be a Borel (or universally measurable) subset in the Polish group $G$ that is not Haar null (or not generalized Haar null, or not Haar meager). What can we say about the groups $G$ where $1_G \in \Int(AA^{-1})$ is neccessarily satisfied?
\end{question}

Unfortunately, there are groups where the answer is negative. This is demonstrated by \cite[Theorem 6.1]{SolLR}, which we already stated as \autoref{thm:soleckiLRresult} in the section about left and right Haar null sets. However, there are weakened versions of the Steinhaus theorem which turn out to be true and useful.

One of these is stated as \cite[Theorem 2.8]{Ros} and is a slight variation of a result in \cite{ChrBS}. This result does not claim that $AA^{-1}$ will be a neighborhood of $1_G$, only that finitely many conjugates of it will cover a neighborhood of $1_G$. Also, it uses generalized right Haar null sets as small sets (see \autoref{ssec:leftrighthaarnull} for the definition), which is a weaker notion than generalized Haar null sets (hence this result does \emph{not} imply the variant where \enquote{right} is omitted from the text).

\begin{theorem}[Christensen, Rosendal]\label{thm:SteinhausRHN}
Suppose that $A\subseteq G$ is a universally measurable subset which is not generalized right Haar null. Then for any neighborhood $W$ of $1_G$ there are $n\in\omega$ and $h_0, h_1, h_2, \ldots, h_{n-1}\in W$ such that
\[h_0 AA^{-1} h_0^{-1}\cup h_1 AA^{-1} h_1^{-1}\cup \ldots \cup h_{n-1} AA^{-1} h_{n-1}^{-1}\]
is a neighborhood of $1_G$.
\end{theorem}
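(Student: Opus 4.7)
The strategy is a Steinhaus-type argument: extract a compact subset $C\subseteq A$ of positive measure under a well-chosen Borel probability $\mu$ with support inside $W$, and then use the freedom of taking finitely many conjugates to compensate for the lack of translation invariance of $\mu$.

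First I would use the hypothesis on $A$ to produce a compact ``thick'' set inside a translate of $A$. Fix a symmetric open neighborhood $V$ of $1_G$ with $V\subseteq W$, and by \autoref{lem:posmeascpt} choose a Borel probability measure $\mu$ on $G$ with compact support $K\subseteq V$. Since $A$ is not generalized right Haar null, $\mu$ is not a witness measure in the right-sided sense, so there is some $g_0\in G$ with $\mu(Ag_0)>0$. Using inner regularity of $\mu$ on the universally measurable set $Ag_0$ (which is valid since $Ag_0$ is universally measurable and $\mu$ is a Borel probability on a Polish space), pick a compact set $C\subseteq Ag_0\cap K$ with $\mu(C)>0$. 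Then $C\subseteq V\subseteq W$ and $CC^{-1}\subseteq(Ag_0)(Ag_0)^{-1}=AA^{-1}$, so the task reduces to exhibiting finitely many $h_0,\ldots,h_{n-1}\in W$ for which $\bigcup_{i<n}h_iCC^{-1}h_i^{-1}$ is a neighborhood of $1_G$.

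Next, exploit the identity $hCC^{-1}h^{-1}=(hC)(hC)^{-1}$, which rephrases the goal in terms of left translates of $C$. Form the normalized restriction $\tilde\mu(X)=\mu(X\cap C)/\mu(C)$ and the convolution $\tilde\mu\ast\check{\tilde\mu}$ (where $\check{\tilde\mu}(X)=\tilde\mu(X^{-1})$), a Borel probability measure whose support equals $\overline{CC^{-1}}$ and in particular contains $1_G$. A Fubini argument with $\mu\times\mu$ combined with continuity of the conjugation map $K\times G\to G$, $(h,g)\mapsto h^{-1}gh$, shows that the family of measures $\{h\cdot(\tilde\mu\ast\check{\tilde\mu})\cdot h^{-1}\}_{h\in K}$ concentrates mass near $1_G$ in a way that, together with compactness of $K$, yields a finite subfamily $h_0,\ldots,h_{n-1}\in K\subseteq W$ whose union of supports covers an open neighborhood of $1_G$. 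Since the support of $h_i(\tilde\mu\ast\check{\tilde\mu})h_i^{-1}$ is contained in $h_iCC^{-1}h_i^{-1}$, this gives the desired cover.

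The main obstacle is the second step: translating the measure-theoretic bound $\mu(C)>0$ into the topological statement that finitely many conjugates of $CC^{-1}$ form a neighborhood of $1_G$. Classical Steinhaus (as in \autoref{thm:SteinhausHaar}) relies on the translation invariance of Haar measure, which is unavailable here; the finite collection of conjugates in $W$ substitutes for that invariance by ``averaging'' the non-invariant $\mu$ into an object behaving like a Haar-type measure in a neighborhood of $1_G$. Making this averaging precise requires careful bookkeeping of the supports of convolutions under conjugation and a compactness-based extraction of a finite subcover from a family produced by Fubini's theorem, and this is where the subtlety of the Christensen--Rosendal argument lies relative to the locally compact case.
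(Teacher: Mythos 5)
There is a fatal gap at the very first reduction. You fix one Borel probability measure $\mu$ with compact support $K\subseteq W$, use the hypothesis to find $g_0$ with $\mu(Ag_0)>0$, extract a compact $C\subseteq Ag_0\cap K$ of positive $\mu$-measure, and then claim the task reduces to covering a neighborhood of $1_G$ by finitely many conjugates $h_iCC^{-1}h_i^{-1}$. But $\bigcup_{i<n}h_iCC^{-1}h_i^{-1}$ is a finite union of continuous images of the compact set $C\times C$, hence compact, and in a non-locally-compact Polish group no compact set is a neighborhood of any point. So the reduced task is literally impossible in the only interesting case. (The paper in fact uses \autoref{thm:SteinhausRHN} to prove exactly this: \autoref{pro:cptsetsmallfromsteinhaus} concludes that every compact set is right Haar null in a non-locally-compact group by observing that finitely many conjugates of $CC^{-1}$ can never be a neighborhood of $1_G$.) The underlying mistake is that having positive measure under one arbitrarily chosen $\mu$ is a far weaker property than not being generalized right Haar null, which quantifies over \emph{all} Borel probability measures; you discard that quantifier at the outset and cannot recover it. The second step's ``averaging of conjugated convolutions'' cannot repair this, since all the supports involved remain inside a single compact set.

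The actual Christensen--Rosendal argument runs by contradiction and chooses the test measure \emph{after} assuming the conclusion fails: one inductively builds a sequence $(g_j)$ converging to $1_G$ such that all infinite subproducts converge and each $g_{j_r}$ avoids $(g_{j_0}\cdots g_{j_{r-1}})^{-1}AA^{-1}(g_{j_0}\cdots g_{j_{r-1}})$, defines the continuous map $\varphi:2^\omega\to G$, $\varphi(\alpha)=g_0^{\alpha(0)}g_1^{\alpha(1)}\cdots$, and pushes forward the Haar measure $\lambda$ of the compact group $2^\omega$. The hypothesis that $A$ is not generalized right Haar null is then applied to \emph{this} measure to get $\lambda(\varphi^{-1}(Ag))>0$ for some $g$, and the classical Steinhaus theorem (\autoref{cor:SteinhausHaarGen}) is invoked upstairs in $2^\omega$ -- where local compactness is available -- to find two sequences in $\varphi^{-1}(Ag)$ differing in exactly one coordinate, which contradicts the defining property of the $g_j$. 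Your instinct that one must ``substitute for the missing invariance'' is right, but the substitution happens by transporting the problem to a compact group via a map tailored to the assumed failure, not by averaging a fixed measure inside $G$.
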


\begin{proof}
Suppose that the conclusion fails for $A$ and $W$, that is, for every $n\in\omega$ and $h_0, h_1, \ldots h_{n-1}\in W$ and any neighborhood $V\owns 1_G$, there is some 
\[g\in V\setminus \left(h_0 AA^{-1} h_0^{-1}\cup h_1 AA^{-1} h_1^{-1}\cup \ldots \cup h_{n-1} AA^{-1} h_{n-1}^{-1}\right).\]
Then we can inductively choose a sequence $(g_j)_{j\in\omega}$ such that $g_j\to 1_G$ and for every $j_0< j_1 < j_2 < \ldots$ index sequence and $r\in\omega$
\begin{propertylist}
\item the infinite product $g_{j_0} g_{j_1} g_{j_2}\cdots$ converges (this can be achieved e.g.\ by requiring $d(g_{j_0}\cdots g_{j_{r-1}}, g_{j_0}\cdots g_{j_{r-1}}\cdot g_{j_r})<2^{-j_r}$ where $d$ is a fixed complete metric on $G$).
\item $g_{j_r}\notin (g_{j_0} \cdots g_{j_{r-1}})^{-1} AA^{-1} (g_{j_0} \cdots g_{j_{r-1}})$
\end{propertylist}
Using (I) we can define a continuous map $\varphi : 2^\omega \to G$ by
\[\varphi(\alpha)=g_0^{\alpha(0)}g_1^{\alpha(1)}g_2^{\alpha(2)}\cdots,\]
where $g^0=1_G$ and $g^1=g$.
Let $\lambda$ be the Haar measure on the Cantor group $(\mathbb{Z}_2)^\omega=2^\omega$ and notice that as $A$ is not generalized right Haar null, there is some $g\in G$ such that
\[\lambda(\varphi^{-1}(Ag))=\varphi_*(\lambda)(Ag)>0\]

So by \autoref{cor:SteinhausHaarGen},
\[\varphi^{-1}(Ag)(\varphi^{-1}(Ag))^{-1}\]
contains a neighborhood of the identity $(0,0,\ldots)$ in $2^\omega$. This neighborhood must contain an element with exactly one \enquote{1} coordinate, so there are $\alpha, \beta\in \varphi^{-1}(Ag)$ and $m\in\omega$ such that $\alpha(m)=1$, $\beta(m)=0$ and $\alpha(j)=\beta(j)$ for every $j\in\omega, j\neq m$. This means that there are $h=g_{j_0}g_{j_1}\cdots g_{j_{r-1}}$, $j_0<j_1<\ldots<j_{r-1}<m$ and $k\in G$ such that $\varphi(\alpha)=hg_m k$ and $\varphi(\beta)=hk$. It follows that \[hg_m h^{-1}= hg_m k \cdot k^{-1}h^{-1} \in Agg^{-1}A= AA^{-1}\]
and so $g_m \in h^{-1}AA^{-1}h=(g_{j_0} \cdots g_{j_{r-1}})^{-1} AA^{-1} (g_{j_0} \cdots g_{j_{r-1}})$, contradicting the choice of $g_m$.
\end{proof}

The following special case is often useful:
\begin{corollary}\label{cor:SteinhausAbel}
Suppose that $A\subseteq G$ is a universally measurable subset which is \emph{conjugacy invariant} (that is, $gAg^{-1}=A$ for every $g\in G$; in abelian groups every subset has this property) and not generalized Haar null. Then $AA^{-1}$ is a neighborhood of the identity.
\end{corollary}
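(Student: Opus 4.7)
The plan is to apply \autoref{thm:SteinhausRHN} and exploit the fact that, for a conjugacy invariant set $A$, every conjugate of $AA^{-1}$ coincides with $AA^{-1}$ itself. First I would record the elementary observation that $hAh^{-1}=A$ implies $hA^{-1}h^{-1}=A^{-1}$, whence
\[h\,(AA^{-1})\,h^{-1}=(hAh^{-1})(hA^{-1}h^{-1})=AA^{-1}\qquad(h\in G).\]
Consequently, for any finite sequence $h_0,\dots,h_{n-1}\in G$ the union $\bigcup_{i<n}h_i AA^{-1}h_i^{-1}$ collapses to $AA^{-1}$.

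Next I would argue by contraposition. Suppose that $AA^{-1}$ is not a neighborhood of $1_G$. By the displayed identity, no finite union of conjugates of $AA^{-1}$ is a neighborhood of $1_G$ either, so the conclusion of \autoref{thm:SteinhausRHN} fails for every neighborhood $W$ of $1_G$. That theorem therefore forces $A$ to be generalized right Haar null; since $A$ is itself universally measurable, it serves as its own hull in the definition, yielding a Borel probability measure $\mu$ on $G$ with $\mu(Ag)=0$ for every $g\in G$.

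To close the argument, I would invoke conjugacy invariance once more in the stronger form $gA=Ag$ (immediate from $gAg^{-1}=A$), which gives $gAh=A\cdot gh$ for all $g,h\in G$. Therefore $\mu(gAh)=\mu(A\cdot gh)=0$ for all $g,h\in G$, so $\mu$ together with $B:=A$ witnesses that $A$ is Haar null, contradicting the hypothesis.

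The only real idea is the very first step: noticing that conjugacy invariance trivialises the \enquote{covering by conjugates} conclusion of \autoref{thm:SteinhausRHN}. Everything after that is bookkeeping: one has to verify that the one-sided witness produced by the theorem automatically becomes a two-sided witness once one commutes $A$ past group elements via $gA=Ag$. A minor point worth watching is that $A$ plays the role of its own universally measurable hull on both the right-Haar-null and the Haar-null sides simultaneously, which is precisely what lets the one-sided hypothesis in \autoref{thm:SteinhausRHN} produce the two-sided conclusion needed for the contradiction.
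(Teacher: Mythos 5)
Your argument is essentially the paper's proof of \autoref{cor:SteinhausAbel} run in contrapositive form: the two observations that do all the work --- that conjugacy invariance of $A$ forces $h(AA^{-1})h^{-1}=AA^{-1}$, so the finite union of conjugates produced by \autoref{thm:SteinhausRHN} collapses to $AA^{-1}$ itself, and that $gAh=Agh$, so a one-sided witness measure is automatically a two-sided one --- are exactly the two observations the paper uses.

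One technical point in your closing step deserves a flag. Taking $B:=A$ as the hull only witnesses that $A$ is \emph{generalized} Haar null in the sense of \autoref{def:genhaarnull}, not Haar null in the sense of \autoref{def:haarnull}, because the latter requires a \emph{Borel} hull and $A$ is only assumed universally measurable. Since generalized Haar null does not imply Haar null in general (\autoref{thm:haarnullcoanal}), the contradiction you actually reach is with \enquote{$A$ is not generalized Haar null}, which is a stronger hypothesis than the stated \enquote{$A$ is not Haar null}. This is not a defect of your route relative to the paper's: the published proof passes from \enquote{not Haar null} to \enquote{not generalized right Haar null} through the very same silent identification. But as written, both arguments literally establish the corollary with \enquote{not generalized Haar null} in the hypothesis; to get the version as stated one must read \enquote{Haar null} in Christensen's universally-measurable sense, which is how the source results are phrased.
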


\begin{proof}
If $A$ is conjugacy invariant, then as we noted, $gAh=gAg^{-1}\cdot gh=Agh$ and thus $A$ is generalized right Haar null if and only if $A$ is generalized Haar null. Moreover, if $A$ is conjugacy invariant, then $gAA^{-1}g^{-1}=(gAg^{-1})\cdot(gAg^{-1})^{-1}=AA^{-1}$, hence $AA^{-1}$ is also conjugacy invariant and thus in this case \autoref{thm:SteinhausRHN} states that $AA^{-1}$ is a neighborhood of identity. 
\end{proof}

Variants of the Steinhaus theorem can be used to prove results about automatic continuity (results stating that all homomorphisms $\pi : G\to H$ which satisfy certain properties are continuous). For example the classical result \autoref{cor:SteinhausHaarGen} yields that any universally measurable homomorphism from a locally compact Polish group into another Polish group is continuous (see e.g.\ \cite[Corollary 2.4]{Ros}; a map is said to be \emph{universally measurable} if the preimages of open sets are universally measurable). We prove the following automatic continuity result as a corollary of \autoref{thm:SteinhausRHN}.

\begin{corollary}[Christensen]\label{cor:autocont}
If $G$ and $H$ are Polish groups, $H$ admits a two-sided invariant metric and $\pi : G\to H$ is a universally measurable homomorphism, then $\pi$ is continuous.
\end{corollary}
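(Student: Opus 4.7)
The plan is to prove that $\pi$ is continuous at $1_G$, which by the homomorphism property gives continuity everywhere. Given a neighborhood $W$ of $1_H$, pick $\varepsilon > 0$ with $B_d(1_H, 4\varepsilon) \subseteq W$ and set $V = B_d(1_H, \varepsilon)$. The two-sided invariance of $d$ gives two features that I will use crucially: $V$ is conjugacy-invariant in $H$ (since $d(hvh^{-1}, 1_H) = d(v, 1_H)$), and $V = V^{-1}$ with $V^4 \subseteq B_d(1_H, 4\varepsilon) \subseteq W$ by the triangle inequality combined with translation invariance.

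Next I would exploit separability of $H$ to pick $(h_n)_{n\in\omega} \subseteq H$ with $H = \bigcup_n h_n V$, yielding a cover $G = \bigcup_n \pi^{-1}(h_n V)$ by universally measurable sets (since $\pi$ is universally measurable and $h_n V$ is open). The countable case is trivial (a countable Polish group is discrete by Baire category), so assume $G$ is uncountable; then $G \notin \mathcal{GHN}(G)$ by \autoref{lem:ctblsmallopenlarge}, and because $\mathcal{GHN}$ is a $\sigma$-ideal (\autoref{thm:haarnullsigmaideal}) some $A := \pi^{-1}(h_n V)$ fails to be generalized Haar null. Fix $g_0 \in A$, so $\pi(g_0) \in h_n V$ and hence $g_0^{-1} A \subseteq \pi^{-1}(V^{-1} V)$.

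Let $U := \pi^{-1}(V^{-1} V)$. Then $U$ is universally measurable, is conjugacy-invariant in $G$ (since $V^{-1} V$ is conjugacy-invariant in $H$ and $\pi$ is a homomorphism), and is not generalized Haar null because it contains the translate $g_0^{-1} A$ of a non-generalized-Haar-null set. Now \autoref{cor:SteinhausAbel} applies and gives that $U U^{-1}$ is a neighborhood of $1_G$; and since $V = V^{-1}$,
\[ U U^{-1} \subseteq \pi^{-1}\!\left((V^{-1}V)(V^{-1}V)^{-1}\right) = \pi^{-1}(V^4) \subseteq \pi^{-1}(W), \]
so $\pi^{-1}(W)$ itself contains a neighborhood of $1_G$, as needed.

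The principal difficulty is ensuring that the set we feed into the Steinhaus-type theorem is conjugacy-invariant, which lets us invoke the clean \autoref{cor:SteinhausAbel} in place of the raw \autoref{thm:SteinhausRHN} (whose conclusion is only that a finite union of conjugates of $UU^{-1}$ is a neighborhood). This is precisely the role of the two-sided invariant metric on $H$: without it, we would have no systematic way to cover $H$ by translates of a conjugacy-invariant open neighborhood of $1_H$, and the whole strategy would collapse.
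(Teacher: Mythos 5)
Your proof is correct and follows essentially the same route as the paper's: reduce to continuity at $1_G$, cover $G$ by countably many universally measurable preimages so that one of them fails to be (generalized/right) Haar null, and then apply a Steinhaus-type theorem, with the two-sided invariant metric on $H$ supplying the conjugacy-invariant neighborhoods that make the conclusion usable. The only difference is bookkeeping: the paper applies \autoref{thm:SteinhausRHN} to $\pi^{-1}(U)$ directly and absorbs the resulting conjugates afterwards via conjugacy invariance of $U$ in $H$, whereas you translate first so that the conjugacy-invariant set $\pi^{-1}(V^{-1}V)$ falls under \autoref{cor:SteinhausAbel}, which is itself derived from that same theorem.
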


\begin{proof}
It is enough to prove that $\pi$ is continuous at $1_G$ (a homomorphism is continuous if and only if it is continuous at the identity element). Let $V\subseteq H$ be an arbitrary neighborhood of $1_H$.

Using the continuity of the map $(x, y)\mapsto xy^{-1}$ at $(1_H, 1_H)\in H\times H$, there is an open set $U$ with $1_H\in U$ satisfying that $UU^{-1} \subseteq V$. If $d$ is a two-sided invariant metric on $H$, then the open balls $B(1_H, r)$ are conjugacy invariant sets and form a neighborhood base of $1_H$, hence we may also assume that $U$ is conjugacy invariant.

$G$ can be covered by countably many right translates of $\pi^{-1}(U)$, because there is a countable set $S\subseteq \pi(G)$ that is dense in $\pi(G)$ and thus the system $\{U \cdot s : s\in S\}$ covers $\pi(G)$. This means that the universally measurable set $\pi^{-1}(U)$ is not right Haar null, hence we may apply \autoref{thm:SteinhausRHN} to see that for some $n\in\omega$ and $h_0, h_1, \ldots, h_{n-1}\in G$ the set
\[W=\bigcup_{0\le j<n}h_j\pi^{-1}(U)(\pi^{-1}(U))^{-1}h_j^{-1}\]
is a neighborhood of $1_G$. For every $g\in W$ there is a $0\le j<n$ such that $g\in h_j\pi^{-1}(U)(\pi^{-1}(U))^{-1} h_j^{-1}$, but then \[\pi(g)\in\pi(h_j) UU^{-1} \pi(h_j)^{-1}= \pi(h_j) U\pi(h_j)^{-1}\cdot( \pi(h_j) U\pi(h_j)^{-1})^{-1}=UU^{-1},\]
thus $\pi(W)\subseteq UU^{-1}\subseteq V$ and this shows that $\pi$ is continuous at $1_G$.
\end{proof}

In \cite{SolAmen} Solecki proved that if $G$ is amenable at 1 (see \autoref{def:amenone}), then a simpler variant of the Steinhaus theorem is true, but if $G$ has a free subgroup at 1 (see \autoref{def:freesgone}) and satisfies some technical condition, then this variant is false. The following theorems state these results.

\begin{theorem}[Solecki]
If $G$ is amenable at 1 and $A\subseteq G$ is universally measurable and not generalized left Haar null, then $A^{-1}A$ contains a neighborhood of $1_G$.
\end{theorem}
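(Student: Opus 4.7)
Argue by contrapositive. Assume $A^{-1}A$ does not contain a neighborhood of $1_G$, and pick a sequence $g_n \to 1_G$ with $g_n \notin A^{-1}A$. By the elementary equivalence $g \in A^{-1}A \iff Ag \cap A \neq \emptyset$, this translates into the disjointness $A \cap Ag_n = \emptyset$ for every $n \in \omega$. The goal is to manufacture a Borel probability measure $\nu$ on $G$ with $\nu(hA) = 0$ for every $h \in G$; this would contradict the hypothesis that $A$ is not generalized left Haar null.

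Apply amenability at $1$ to a sequence $(\mu_n)$ of Borel probability measures with $1_G \in \supp \mu_n$ whose supports record the bad elements $g_n$ (for instance, a two-point measure on $\{1_G, g_n\}$, suitably refined to avoid degeneracy). This yields Borel probability measures $\nu_n \ll \mu_n$ and a measure $\nu$ with $(\nu * \nu_n)(K) \to \nu(K)$ for every compact $K \subseteq G$. Unpacking the structure of $\nu_n$ as $a_n\delta_{1_G} + b_n\delta_{g_n}$ (with $a_n+b_n=1$), one computes
\[
(\nu * \nu_n)(X) \;=\; a_n\,\nu(X) + b_n\,\nu(Xg_n^{-1}).
\]

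The measure $\nu$ is the candidate witness. Fix $h \in G$; suppose for contradiction that $\nu(hA) > 0$. Since $\nu$ is a Borel probability measure on the Polish space $G$, it is tight (\autoref{lem:posmeascpt}), and since $hA$ is universally measurable it contains a compact subset $K$ with $\nu(K) > 0$. Left-translating $A \cap Ag_n = \emptyset$ by $h$ yields $K \cap Kg_n = \emptyset$. Because $g_n \to 1_G$ and $\nu$ is tight, we may fix a single compact $K' \supseteq K$ that absorbs $Kg_n^{-1}$ for all sufficiently large $n$, and test the convergence $(\nu*\nu_n)(K') \to \nu(K')$ on this $K'$. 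The pairwise disjointness of $K$ and $Kg_n$, combined with the identity above and the normalization $a_n+b_n=1$, forces the contributions $a_n\nu(K)$ and $b_n\nu(Kg_n^{-1})$ to sum to an amount incompatible with $\nu(K')$ unless $\nu(K)=0$, a contradiction.

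\textbf{Main obstacle.} The chief technical difficulty is engineering the sequence $(\mu_n)$ so that the amenability-at-$1$ conclusion carries genuine content. A naive choice such as $\mu_n = \tfrac{1}{2}(\delta_{1_G} + \delta_{g_n})$ admits the absolutely continuous sub-measure $\nu_n = \delta_{1_G}$, which makes the convergence $(\nu*\nu_n) \to \nu$ vacuously true and yields no constraint on $\nu$. One must therefore either choose $\mu_n$ so that any admissible $\nu_n \ll \mu_n$ is forced to carry non-negligible mass away from $1_G$ (e.g.\ by using measures supported on longer $g_n$-words, or by exploiting absence of atoms at $1_G$), or invoke an equivalent formulation of amenability at $1$ that rules out the degenerate case automatically. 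The secondary difficulty, bridging the \emph{pointwise} disjointness $K \cap Kg_n = \emptyset$ to the \emph{fixed-compact} amenability convergence, is handled by $g_n \to 1_G$ together with inner regularity of $\nu$.
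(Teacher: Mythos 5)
The survey states this theorem without proof (it only cites \cite{SolAmen}), so there is no in-paper argument to compare against; judging your sketch on its own terms, it has two genuine gaps. The first is the one you flag yourself, and it is fatal rather than cosmetic: with $\mu_n=\tfrac12(\delta_{1_G}+\delta_{g_n})$ — or any $\mu_n$ having an atom at $1_G$ — the choice $\nu_n=\delta_{1_G}$ satisfies $\nu_n\ll\mu_n$ and makes condition (II) of \autoref{def:amenone} hold for an \emph{arbitrary} $\nu$, so amenability at $1$ imposes no constraint at all on the measure you extract. Your proposed repairs do not obviously go through: to force every $\nu_n\ll\mu_n$ to carry mass away from $1_G$ you need $1_G\in\supp\mu_n$ without $1_G$ being an atom, i.e.\ a diffuse measure concentrated near $1_G$ on the complement of $A^{-1}A$; but the hypothesis $1_G\notin\Int(A^{-1}A)$ only guarantees that this complement clusters at $1_G$, not that it contains a perfect set there — it could be a single convergent sequence — so such a $\mu_n$ need not exist. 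This is exactly why Solecki's actual argument works with distributions of infinite words $g_{i_1}^{\epsilon_1}g_{i_2}^{\epsilon_2}\cdots$ built from the $g_n$ and a combinatorial analysis of which words land in translates of $A$, in the spirit of the proof of \autoref{thm:SteinhausRHN}.

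The second gap is the concluding "incompatibility" step, which fails even if the first gap were repaired. Say $\nu_n=a_n\delta_{1_G}+b_n\delta_{g_n}$ with $b_n\ge\delta>0$. The disjointness $K\cap Kg_n=\emptyset$ is equivalent to $K\cap Kg_n^{-1}=\emptyset$, hence gives only $\nu(K)+\nu(Kg_n^{-1})\le 1$, so
\[
(\nu*\nu_n)(K)=a_n\nu(K)+b_n\nu(Kg_n^{-1})\le \nu(K)+b_n\bigl(1-2\nu(K)\bigr),
\]
which contradicts $(\nu*\nu_n)(K)\to\nu(K)$ only when $\nu(K)>\tfrac12$; for $\nu(K)\le\tfrac12$ nothing is violated, and passing to a larger compact $K'$ does not help. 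To drive $\nu(K)$ below $1/m$ for every $m$ one needs $m$ pairwise disjoint translates of $K$, whereas the hypothesis only makes $K$ disjoint from each $Kg_n$ separately — the sets $Kg_n$ and $Kg_m$ may well intersect. Manufacturing many pairwise disjoint translates out of the single relation $Ag_n\cap A=\emptyset$ is precisely the combinatorial content supplied by the word construction in \autoref{thm:SteinhausRHN}, and it is absent from your outline.
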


\begin{definition}
A Polish group $G$ is called \emph{strongly non-locally-compact} if for any neighborhood $U$ of $1_G$ there exists a neighborhood $V$ of $1_G$ such that $U$ cannot be covered by finitely many sets of the form $gVh$ with $g,h\in G$.
\end{definition}
Note that the examples we mentioned after \autoref{def:freesgone} are all strongly non-locally-compact.
\begin{theorem}[Solecki]
If $G$ has a free subgroup at 1 and is strongly non-locally-compact, then there is a Borel set $A\subseteq G$ which is not left Haar null and satisfies $1_G\notin \Int(A^{-1}A)$.
\end{theorem}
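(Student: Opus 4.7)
The plan is to construct $A$ directly by a tree-style construction whose anchors are reduced words in the free subgroup at $1$ and whose fibers are thin compact sets placed inside the neighborhoods supplied by strong non-local-compactness. The two conclusions pull in opposite directions---not being left Haar null wants $A$ to be broad, while $1_G\notin\Int(A^{-1}A)$ wants $A$ to be sparse near differences---so the construction must exploit both hypotheses simultaneously in a balanced way.

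First I would extract the necessary data. Using the free subgroup at $1$, pick a countable free basis $h_0,h_1,\ldots$ of a non-discrete free subgroup with $h_n\to 1_G$. (Such a basis exists: the subgroup contains non-trivial elements in every neighborhood of $1_G$, and one can peel off a free basis inductively, exploiting that every finitely generated subgroup is discrete.) For each $k$, the discreteness of $\langle h_0,\ldots,h_{k-1}\rangle$ gives a neighborhood $W_k$ of $1_G$ with $W_k\cap\langle h_0,\ldots,h_{k-1}\rangle=\{1_G\}$, and strong non-local-compactness supplies $V_k\subseteq W_k$ such that $W_k$ cannot be covered by finitely many two-sided translates of $V_k$.

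Next I would define $A=\bigcup_s a_s F_s$, where $s$ ranges over reduced words $h_{n_1}^{\varepsilon_1}\cdots h_{n_\ell}^{\varepsilon_\ell}$, $a_s\in G$ is the corresponding element (so $a_\emptyset=1_G$), and $F_s\subseteq V_{|s|+1}$ is a carefully chosen compact fiber; to keep $A$ Borel one would realize it as a continuous image of a compact metric space in the spirit of constructions earlier in the paper. The freeness forces $a_s^{-1}a_t$ to be the reduced word $s^{-1}t$, which for $s\ne t$ lies outside $W_k$ for an appropriate $k$ depending on the lengths involved. By taking fibers small enough that $F_s^{-1}F_t\subseteq W_{|s|+|t|+1}$, one can arrange that $A^{-1}A$ misses some prescribed element of $V_k$ inside any preassigned neighborhood of $1_G$, yielding $1_G\notin\Int(A^{-1}A)$. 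To see that $A$ is not left Haar null, suppose for contradiction that some Borel probability $\mu$ has $\mu(gA)=0$ for all $g$; by \autoref{lem:posmeascpt} most of $\mu$'s mass concentrates on a compact set $C$ inside some preselected neighborhood, and the triviality of every translate of $A$ forces the translates of the fibers to cover $C$ in a way that contradicts the non-coverability of $V_k$ by finitely many two-sided translates.

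The main obstacle is compatibility: the fibers must be small enough for the first conclusion and yet, aggregated across all anchors, spread widely enough for the second. The key is that the free subgroup supplies infinitely many anchors at each length, while strong non-local-compactness supplies thin neighborhoods $V_k$ that nevertheless refuse to be covered by finitely many two-sided translates; careful bookkeeping (reminiscent of the recursive choices in the proof of \autoref{thm:haarnullsigmaideal}, but inverted in spirit, as in \autoref{thm:freesgnotsigmaid}) should reconcile the two requirements. The hardest step, I expect, will be handling the universal quantifier over all Borel probability measures $\mu$ in the non-Haar-null verification---rather than a fixed witness measure---as this is where both hypotheses must interact most delicately.
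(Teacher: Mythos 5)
The paper states this theorem without proof (it is imported from \cite{SolAmen}), so there is no in-paper argument to compare with; judged on its own terms, your proposal is an outline whose decisive step is missing, and the one mechanism you offer for that step does not work as described. To show that $A$ is not left Haar null you must, for \emph{every} Borel probability measure $\mu$, produce some $g$ with $\mu(gA)>0$. Your plan is to reduce via \autoref{lem:posmeascpt} to a measure whose mass concentrates on a compact set $C$ and then derive a contradiction between \enquote{the translates of the fibers cover $C$} and \enquote{$W_k$ cannot be covered by finitely many sets $gV_kh$}. But strong non-local-compactness constrains \emph{neighborhoods}, not compact sets: any compact $C$ \emph{can} be covered by finitely many translates of any neighborhood of $1_G$, simply by compactness, so no contradiction is available along these lines. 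This is exactly the step you flag as \enquote{the hardest}, and it carries the entire content of the theorem. Note also that the cheap route to non-smallness (\autoref{lem:translateall}: containing a translate of every compact set) is structurally unavailable here: if $A$ contained a left translate of every two-point set $\{1_G,x\}$, then every $x$ would lie in $A^{-1}A$, so $A^{-1}A=G$, contradicting the second conclusion. Hence a genuinely measure-theoretic argument is forced --- for instance an averaging argument against an auxiliary measure built from the separated points that strong non-local-compactness supplies, in the spirit of the counting in \autoref{thm:tsicpthaarnull} but run in the opposite direction --- and your sketch does not supply one.

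On the other hand, you are over-engineering the second conclusion. Your $A=\bigcup_s a_sF_s$ is $\sigma$-compact, so $A^{-1}A=\bigcup_{s,t}F_s^{-1}a_s^{-1}a_tF_t$ is again a countable union of compact sets; in a non-locally-compact Polish group every compact set is closed with empty interior, hence $A^{-1}A$ is meager and $\Int(A^{-1}A)=\emptyset$ by the Baire category theorem, with no bookkeeping about reduced words, the discreteness neighborhoods $W_k$, or the sizes of the fibers needed at all. So there is no genuine tension to \enquote{balance}: for any $\sigma$-compact candidate the condition $1_G\notin\Int(A^{-1}A)$ is automatic, and the whole difficulty of the theorem is concentrated in the non-left-Haar-null verification that your proposal leaves open.
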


Using the notion of generically left Haar null sets (which we describe in \autoref{ssec:gnchn}), it is possible to prove another weakened version of \autoref{que:steinhausgen}:
\begin{theorem}[Dodos]\label{thm:steinhausgnchn}
Suppose that $A\subseteq G$ is an analytic set which is not generically left Haar null. Then $A^{-1}A$ is not meager in $G$.
\end{theorem}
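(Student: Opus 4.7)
The plan is to prove the contrapositive: assuming $A^{-1}A$ is meager, I will deduce that $A$ is generically left Haar null. Since $A$ is analytic and analytic sets have the Baire property, I would first fix a Borel meager set $M \supseteq A^{-1}A$. The crucial algebraic observation is the identity $(gA)^{-1}(gA) = A^{-1}A$, valid for every $g \in G$; this is what explains why the conclusion concerns $A^{-1}A$ (which is invariant under left translation of $A$) rather than $AA^{-1}$.

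Next I would introduce the continuous open map $\phi \colon G \times G \to G$, $\phi(x,y) = x^{-1} y$, and note that $\phi^{-1}(M)$ is a meager Borel subset of $G \times G$. For any $\mu \in P(G)$ and any $g \in G$, the rectangle $(gA) \times (gA)$ sits inside $\phi^{-1}(A^{-1}A) \subseteq \phi^{-1}(M)$, so Fubini gives the estimate
\[
\mu(gA)^2 \;\leq\; (\mu \times \mu)\bigl(\phi^{-1}(M)\bigr).
\]
Thus to show that comeager-many $\mu \in P(G)$ lie in $T_l(A)$ it is enough to show that the right-hand side vanishes for comeager $\mu$.

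The heart of the plan is this last step, and it is where I expect the main difficulty. The strategy I would use is the same double application of the Kuratowski-Ulam theorem that was used in the preceding proof that Borel generically left Haar null sets are meager. By Kuratowski-Ulam the meager Borel set $\phi^{-1}(M)$ has meager $x$-section for comeager $x \in G$, and for each individual meager Borel section the Crouch-Kallman result from \cite{CrK} gives that the generic $\mu \in P(G)$ assigns it measure zero. Iterating Kuratowski-Ulam in the product space $P(G) \times G$ applied to the set $\{(\mu,x) : \mu(\phi^{-1}(M)_x) > 0\}$, and combining this with the Crouch-Kallman property that comeager subsets of $G$ carry generic $\mu$-mass $1$, I aim to conclude that $(\mu\times\mu)(\phi^{-1}(M)) = \int \mu(\phi^{-1}(M)_x)\,d\mu(x) = 0$ for comeager $\mu$.

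Putting the two pieces together, comeager $\mu \in P(G)$ satisfy $\mu(gA) = 0$ for every $g \in G$, so $T_l(A)$ is comeager and $A$ is generically left Haar null, contradicting the hypothesis. The delicate part is precisely the Kuratowski-Ulam iteration in $P(G) \times G$: the generic $\mu$ is not a priori known to respect category on arbitrary meager Borel sets, only on the specific ones produced by the argument, so one must execute the category interchange carefully rather than appeal to a general "category-respecting" property of generic measures.
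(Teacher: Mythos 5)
Your reduction is genuinely on target: the identity $(gA)^{-1}(gA)=A^{-1}A$ and the resulting bound $\mu(gA)^2\le(\mu\times\mu)\bigl(\phi^{-1}(M)\bigr)$, uniform in $g$, correctly reduce the theorem to the claim that the generic $\mu\in P(G)$ satisfies $(\mu\times\mu)\bigl(\phi^{-1}(M)\bigr)=0$ for a fixed Borel meager $M\supseteq A^{-1}A$. (The survey does not reproduce Dodos's proof, so I judge your argument on its own terms.) The gap is in your proof of that claim, and it is precisely the self-reference you flag at the end without resolving. Writing $S=\phi^{-1}(M)$, note that $S_x=xM$, so every section is meager; the double Kuratowski--Ulam argument applied to $T=\{(\mu,x):\mu(xM)>0\}$ therefore yields only that for comeager many $\mu$ the set $T_\mu=\{x:\mu(xM)>0\}$ is meager in $G$. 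To conclude $(\mu\times\mu)(S)=\int\mu(xM)\intby\mu(x)=0$ you then need $\mu(T_\mu)=0$, i.e.\ the generic $\mu$ must annihilate a meager set depending on $\mu$ itself. The result of \cite{CrK} gives only that for each \emph{fixed} meager Borel set $N$ the set $\{\mu:\mu(N)=0\}$ is comeager; there are continuum many candidates for $T_\mu$, and intersecting the corresponding comeager sets destroys comeagerness. No rearrangement of the Kuratowski--Ulam interchange puts the quantifiers in the order you need, so the argument does not close as written -- and this step is essentially the whole content of the theorem.

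The intermediate claim is nonetheless true and can be proved directly in $P(G)$, which repairs your proof and bypasses \cite{CrK} entirely. Write $M\subseteq\bigcup_n F_n$ with each $F_n$ closed and nowhere dense. Since $\mu\mapsto\mu\times\mu$ is continuous and $\nu\mapsto\nu(C)$ is upper semicontinuous for closed $C$, each set $\mathcal{D}_{n,m}=\{\mu:(\mu\times\mu)(\phi^{-1}(F_n))<1/m\}$ is open. It is also dense: a basic neighborhood of a finitely supported measure $\sum_i\alpha_i\delta_{x_i}$ contains every measure $\sum_i\alpha_i\rho_i$ with $\supp\rho_i$ inside prescribed neighborhoods $V_i$ of $x_i$, and one can choose $N$ points $y_{i,1},\dots,y_{i,N}\in V_i$ for each $i$, inductively avoiding the finitely many closed nowhere dense sets $zF_n$ and $zF_n^{-1}$ for the previously chosen points $z$, so that $y^{-1}y'\notin F_n$ whenever $y\neq y'$ are chosen points; taking $\rho_i$ uniform on $\{y_{i,1},\dots,y_{i,N}\}$ gives $(\mu\times\mu)(\phi^{-1}(F_n))\le 1/N$, since only the diagonal pairs can contribute. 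Hence $\bigcap_{n,m}\mathcal{D}_{n,m}$ is a comeager set of measures with $(\mu\times\mu)(\phi^{-1}(M))=0$, and your first paragraph then shows $T_l(A)$ is comeager, completing the contrapositive.
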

The relatively long proof of this result can be found as \cite[Theorem A]{DodosSt}. 

Combining this and Pettis' theorem yields the following corollary:
\begin{corollary}[Dodos]\label{cor:steinhausgnchnII}
If $A\subseteq G$ is analytic and not generically left Haar null, then $1_G\in \Int(A^{-1}AA^{-1}A)$.
\end{corollary}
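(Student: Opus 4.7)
The plan is to reduce the statement to a direct application of Pettis' theorem, using Theorem \ref{thm:steinhausgnchn} to supply the hypothesis of non-meagerness. Recall that Pettis' theorem states that if $B\subseteq G$ has the Baire property and is non-meager, then $B^{-1}B$ (equivalently $BB^{-1}$) is a neighborhood of $1_G$; see for example \cite[Theorem 9.9]{Ke}.

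First, I would set $B := A^{-1}A$ and check that $B$ is amenable to Pettis' theorem. The set $A\times A$ is analytic in $G\times G$ (as a product of analytic sets), and $B$ is the image of $A\times A$ under the continuous map $(x,y)\mapsto x^{-1}y$, hence $B$ is analytic in $G$. Since analytic sets have the Baire property (see e.g.\ \cite[Theorem 21.6]{Ke}), the set $B$ has the Baire property.

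Next, I would invoke Theorem \ref{thm:steinhausgnchn} directly: since $A$ is analytic and not generically left Haar null, $A^{-1}A = B$ is not meager in $G$. Applying Pettis' theorem to $B$, we conclude that $B^{-1}B$ contains a neighborhood of $1_G$. Finally, a straightforward calculation gives $B^{-1} = (A^{-1}A)^{-1} = A^{-1}A = B$, so
\[
B^{-1}B \;=\; B\cdot B \;=\; A^{-1}A\,A^{-1}A,
\]
and therefore $1_G \in \Int(A^{-1}AA^{-1}A)$, as desired.

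There is no real obstacle here; the statement is essentially a packaging of Theorem \ref{thm:steinhausgnchn} with Pettis' theorem, and the only point requiring a small verification is that analyticity of $A$ transfers to $A^{-1}A$ (to secure the Baire property) and that $(A^{-1}A)^{-1}=A^{-1}A$ (to identify $B^{-1}B$ with the four-fold product in the statement).
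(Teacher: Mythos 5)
Your proof is correct and follows exactly the route the paper intends: Theorem~\ref{thm:steinhausgnchn} gives non-meagerness of $A^{-1}A$, analyticity (hence the Baire property) of $A^{-1}A$ is preserved under the continuous map $(x,y)\mapsto x^{-1}y$, and Pettis' theorem applied to the symmetric set $B=A^{-1}A$ yields $1_G\in\Int(A^{-1}AA^{-1}A)$. The paper states this corollary without written details, so your verification of the two small points (analyticity of $A^{-1}A$ and $(A^{-1}A)^{-1}=A^{-1}A$) is exactly what was left to the reader.
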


The above mentioned Pettis' theorem (which is also known as Piccard's theorem and appears as e.g.\ \cite[Theorem 2.9.6]{Kuc} and \cite[Theorem 9.9]{Ke}) is the analog of the Steinhaus theorem which uses the meager sets as small sets.

As \enquote{Haar meager} is a stonger notion than \enquote{meager}, the following result strengthens this classical result in the abelian case:
\begin{theorem}[Jab\l o\' nska]\label{thm:SteinhausJab}
Let $G$ be an abelian Polish group. If $A\subseteq G$ is a Borel set that is not Haar meager, then $AA^{-1}$ is a neighborhood of $1_G$.
\end{theorem}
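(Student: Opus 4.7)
The plan is to mimic the classical Pettis--Piccard argument, but inside the Cantor group $2^\omega$, using the freedom granted by \autoref{thm:witnessdomcantor} to restrict attention to witness functions with domain $2^\omega$. Writing $G$ additively since it is abelian, I suppose for contradiction that $0_G \notin \Int(A-A)$. Fixing a compatible complete metric $d$ on $G$, I pick a sequence $(g_n)_{n\in\omega}$ in $G \setminus (A-A)$ converging to $0_G$ fast enough (say, after passing to a subsequence, $d(0_G,\, g_n+\cdots+g_{n+k}) \le 2^{-n}$ for every $k$) that the formula
\[\varphi(\alpha) \;:=\; \sum_{n:\,\alpha(n)=1} g_n\]
defines a continuous map $\varphi : 2^\omega \to G$; commutativity makes the sum unambiguous.

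Next, since $A$ is Borel and not Haar meager, $\varphi$ cannot be a witness function, so there exist $c_1, c_2 \in G$ with $\varphi^{-1}(c_1 + A + c_2)$ non-meager in $2^\omega$; by abelianness this equals $\varphi^{-1}(A+c)$ for $c := c_1+c_2$, hence is a non-meager Borel subset $B$ of the Polish group $2^\omega$. The classical Pettis theorem applied in $2^\omega$ then shows that $B - B$ contains a neighborhood of $0$; since $2^\omega$ has exponent $2$, $B - B = B + B$, and any neighborhood of $0$ in $2^\omega$ contains $e_m$ (the sequence that is $1$ at coordinate $m$ and $0$ elsewhere) for all sufficiently large $m$. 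Fix such an $m$, write $e_m = \alpha + \beta$ with $\alpha,\beta \in B$, and note that $\alpha$ and $\beta$ agree off coordinate $m$ and differ there. Then
\[\varphi(\alpha) - \varphi(\beta) \;=\; \pm g_m \;\in\; (A+c)-(A+c) \;=\; A-A,\]
contradicting $g_m \notin A - A$ (using also that $A-A$ is symmetric in an abelian group).

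The main technical task is just the initial construction of $\varphi$, i.e., arranging convergence of the series so that $\varphi$ is well defined and continuous; this is routine and essentially identical to the infinite-product construction in the proof of \autoref{thm:haarmeagersigmaideal}. The conceptual core of the argument is instead the observation that Pettis' theorem applied inside $2^\omega$ produces differences of elements of $B$ which, under $\varphi$, collapse to a single $g_m$; this step, together with the identification $\varphi^{-1}(c_1+A+c_2) = \varphi^{-1}(A+c)$, relies essentially on commutativity of $G$, which explains why the strategy does not transfer verbatim to the non-abelian setting.
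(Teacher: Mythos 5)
Your argument is correct and is exactly the approach the paper attributes to Jab\l o\'nska: it is the category analogue of the proof of \autoref{thm:SteinhausRHN}, replacing the Haar measure on the Cantor group $(\mathbb{Z}_2)^\omega$ and \autoref{cor:SteinhausHaarGen} by Baire category and Pettis' theorem. The only point to tidy is the convergence of $\varphi$: since $\{n:\alpha(n)=1\}$ is an arbitrary subset of $\omega$, you should work with an invariant complete metric (which every abelian Polish group admits) and require $d(0_G,g_n)\le 2^{-n}$, so that sums over arbitrary coordinate sets, not just consecutive blocks, are Cauchy.
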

The proof of this theorem can be found in \cite{Jab}; the method of the proof is similar to that of \autoref{thm:SteinhausRHN}.

For more information about this question in the special case of abelian Polish groups, see the paper \cite{BGJS}.

\subsection{The countable chain condition}

The countable chain condition (often abbreviated as ccc) is a well-known property of partially ordered sets or structures with associated partially ordered sets. A partially ordered set $(P, \le)$ is said to satisfy the countable chain condition if every strong antichain in $P$ is countable. (A set $A\subseteq P$ is a strong antichain if $\forall x, y\in A : (x\neq y\Rightarrow \nexists z\in P : (z\le x\text{ and }z\le y))$.) This is called countable \enquote{chain} condition because in some particular cases this condition happens to be equivalent to a condition about lengths of certain chains.

We will apply the countable chain condition to notions of smallness in the following sense:
\begin{definition}
Suppose that $X$ is a set and $\mathcal{S}\subseteq \mathcal{A} \subseteq \mathcal{P}(X)$. We say that \emph{$\mathcal{S}$ has the countable chain condition in $\mathcal{A}$} if there is no uncountable system $\mathcal{U}\subseteq \mathcal{A}\setminus\mathcal{S}$ such that $U\cap V\in \mathcal{S}$ for any two distinct $U, V\in \mathcal{U}$.
\end{definition}
In our cases $\mathcal{A}$ will be a $\sigma$-algebra and $\mathcal{S}$ will be the $\sigma$-ideal of \enquote{small} sets. Notice that $\mathcal{S}$ has the countable chain condition in $\mathcal{A}$ if and only if the partially ordered set $(\mathcal{A}\setminus \mathcal{S}, \subseteq)$ satisfies the countable chain condition. Also notice that if $\tilde{\mathcal{S}}\subseteq \mathcal{S}$ and $\tilde{\mathcal{A}}\supseteq \mathcal{A}$, then \enquote{$\tilde{\mathcal{S}}$ has the countable chain condition in $\tilde{\mathcal{A}}$} is a stronger statement than \enquote{$\mathcal{S}$ has the countable chain condition in $\mathcal{A}$}.

We will generalize the following two classical results (these are stated as \cite[Exercise 17.2]{Ke} and \cite[Exercise 8.31]{Ke}).
\begin{proposition}\label{pro:nullccc}
If $\mu$ is a $\sigma$-finite measure, the $\sigma$-ideal of sets with $\mu$-measure zero has the countable chain condition in the $\sigma$-algebra of $\mu$-measurable sets.
\end{proposition}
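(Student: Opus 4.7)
The plan is to exploit $\sigma$-finiteness to reduce to the finite measure case and then use a pigeonhole-style argument. Suppose for contradiction that $\mathcal{U}$ is an uncountable family of $\mu$-measurable sets, each of positive $\mu$-measure, with $\mu(U\cap V)=0$ for any two distinct $U,V\in\mathcal{U}$. Since $\mu$ is $\sigma$-finite, fix a decomposition $X=\bigcup_{n\in\omega} X_n$ with each $X_n$ $\mu$-measurable and $\mu(X_n)<\infty$.

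First I would note that for every $U\in\mathcal{U}$ we have $\mu(U)>0$, so by countable additivity there is some $n\in\omega$ with $\mu(U\cap X_n)>0$, and hence some $k\in\omega$ with $\mu(U\cap X_n)>\frac{1}{k+1}$. Define
\[\mathcal{U}_{n,k}=\left\{U\in\mathcal{U}:\mu(U\cap X_n)>\tfrac{1}{k+1}\right\}.\]
Then $\mathcal{U}=\bigcup_{n,k\in\omega}\mathcal{U}_{n,k}$, and since a countable union of countable sets is countable, it suffices to show each $\mathcal{U}_{n,k}$ is (in fact) finite.

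Now I would fix $n,k$ and suppose $U_1,\ldots,U_m$ are distinct elements of $\mathcal{U}_{n,k}$. Because the pairwise intersections $U_i\cap U_j$ (for $i\neq j$) have $\mu$-measure zero, finite additivity together with the inclusion-exclusion-type estimate gives
\[\mu(X_n)\ge\mu\left(\bigcup_{i=1}^m (U_i\cap X_n)\right)=\sum_{i=1}^m \mu(U_i\cap X_n)>\frac{m}{k+1}.\]
Hence $m<(k+1)\mu(X_n)$, a finite bound independent of the choice of $U_1,\ldots,U_m$, so $\mathcal{U}_{n,k}$ is finite. Therefore $\mathcal{U}$ is countable, contradicting our assumption.

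There is no real obstacle here; the only subtlety is to disjointify correctly, since the $U_i$ are only almost-disjoint rather than truly disjoint, but this is immediate from the fact that $\mu(A\cup B)=\mu(A)+\mu(B)-\mu(A\cap B)$ for measurable sets and that the pairwise intersections are $\mu$-null. The $\sigma$-finiteness hypothesis is essential and is used precisely at the step where the finite upper bound $\mu(X_n)$ forces finiteness of $\mathcal{U}_{n,k}$.
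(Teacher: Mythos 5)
Your proof is correct; the paper does not actually prove this proposition but cites it as a classical exercise (\cite[Exercise 17.2]{Ke}), and your argument is precisely the standard one: split by the $\sigma$-finite decomposition and by a lower bound $\frac{1}{k+1}$ on the measure, then use finiteness of $\mu(X_n)$ together with additivity on an almost-disjoint family to bound each piece. The one point needing care, that $\mu\bigl(\bigcup_{i=1}^m (U_i\cap X_n)\bigr)=\sum_{i=1}^m\mu(U_i\cap X_n)$ when the pairwise intersections are $\mu$-null, you address correctly (disjointify by $W_i=(U_i\cap X_n)\setminus\bigcup_{j<i}U_j$, which changes no measures).
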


\begin{proposition}\label{pro:meagerccc}
In a second countable Baire space, the $\sigma$-ideal of meager sets has the countable chain condition in the $\sigma$-algebra of sets with the Baire property.
\end{proposition}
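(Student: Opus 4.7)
The plan is a contradiction argument built on the standard decomposition of Baire property sets into an open set and a meager error term, together with the second countability hypothesis.

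Suppose for contradiction that $\mathcal{M}(X)$ does not have the countable chain condition in the $\sigma$-algebra of sets with the Baire property. Then there exists an uncountable family $\{A_\alpha : \alpha < \omega_1\}$ of non-meager sets with the Baire property such that $A_\alpha \cap A_\beta$ is meager for all $\alpha \neq \beta$. The first step is to use the characterisation of Baire property sets (see e.g.\ \cite[Proposition 8.23]{Ke}): for every $\alpha$, there is an open set $U_\alpha$ and a meager set $M_\alpha$ with $A_\alpha = U_\alpha \triangle M_\alpha$. Since $A_\alpha$ is non-meager while $M_\alpha$ is meager, the open set $U_\alpha$ must be nonempty; moreover, since $X$ is a Baire space and $U_\alpha$ is nonempty and open, $U_\alpha$ is non-meager.

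Next I would compare the open representatives pairwise. For $\alpha \neq \beta$, write
\[
U_\alpha \cap U_\beta \subseteq (A_\alpha \cap A_\beta) \cup M_\alpha \cup M_\beta,
\]
so $U_\alpha \cap U_\beta$ is meager. But $U_\alpha \cap U_\beta$ is open, and in a Baire space the only meager open set is $\emptyset$; hence the family $\{U_\alpha : \alpha < \omega_1\}$ consists of \emph{pairwise disjoint} nonempty open sets.

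The final step invokes second countability: fix a countable base $\mathcal{B}$ of the topology of $X$. For each $\alpha$ pick some $B_\alpha \in \mathcal{B}$ with $B_\alpha \subseteq U_\alpha$; this is possible because $U_\alpha$ is nonempty and open. Since the $U_\alpha$ are pairwise disjoint, the assignment $\alpha \mapsto B_\alpha$ is injective, contradicting the countability of $\mathcal{B}$.

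The argument has no real obstacle; the only delicate point is the use of the Baire space hypothesis to conclude that a meager open set is empty, and the observation that non-meagerness of $A_\alpha$ forces $U_\alpha$ to be nonempty (not merely non-meager), so that second countability can be applied to extract a basic open subset.
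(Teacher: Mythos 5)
Your argument is correct. The paper does not actually prove this proposition---it only cites it as \cite[Exercise 8.31]{Ke}---and your proof is the standard one for that exercise: decompose each $A_\alpha$ as $U_\alpha \triangle M_\alpha$, observe that the inclusion $U_\alpha \cap U_\beta \subseteq (A_\alpha \cap A_\beta)\cup M_\alpha \cup M_\beta$ together with the Baire property of the space forces the (nonempty) open representatives to be pairwise disjoint, and then contradict second countability. The only point worth making explicit is that the $B_\alpha$ you select from the countable base should be chosen nonempty (which is possible since each $U_\alpha$ is nonempty open), as otherwise injectivity of $\alpha \mapsto B_\alpha$ would not follow from disjointness of the $U_\alpha$.
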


The theorems in \autoref{ssec:loccptcase} state that if $G$ is locally compact, then $\mathcal{N}=\mathcal{HN}=\mathcal{GHN}$ (i.e.\ of Haar measure zero $\Leftrightarrow$ Haar null $\Leftrightarrow$ generalized Haar null) and $\mathcal{M}=\mathcal{HM}$ (i.e.\ meager $\Leftrightarrow$ Haar meager). The special case of \autoref{pro:nullccc} where $\mu=\lambda$ for a left Haar measure $\lambda$ on $G$ means that the $\sigma$-ideal $\mathcal{HN}=\mathcal{GNH}$ has the countable chain condition in the $\sigma$-algebra of $\lambda$-measurable sets. As every universally measurable set is $\lambda$-measurable, this clearly implies that $\mathcal{HN}=\mathcal{GHN}$ has the countable chain condition in the $\sigma$-algebra of universally\ measurable sets. Analogously, \autoref{pro:meagerccc} means that in a locally compact Polish group $G$ the $\sigma$-ideal $\mathcal{HM}$ has the countable chain condition in the $\sigma$-algebra of sets with the Baire property. 

For the case of measure Christensen asked in \cite[Problem 2]{Chr} whether is this true in the non-locally-compact case (the paper \cite{Chr} considers only abelian groups, but the problem is interesting in general).

The following simple example shows that the answer for this is negative in the group $\mathbb{Z}^\omega$ (a variant of this is stated in \cite[Proposition 1]{Doug}). It also answers the analogous question in the case of category.

\begin{example}
For $A\subseteq \omega$ let 
\[S(A)=\{s\in\mathbb{Z}^\omega : s_n\ge 0\text{{\upshape{} if $n\in A$ and }}s_n<0\text{{\upshape{} if $n\notin A$}}\}.\]
Then the system $\{S(A) : A\in \mathcal{P}(\omega)\}$ consists of continuum many pairwise disjoint Borel (in fact, closed) subsets of $\mathbb{Z}^\omega$ which are neither generalized Haar null nor Haar meager.
\end{example}

\begin{proof}
It is clear that $S(A)$ is closed for every $A\subseteq \omega$. If $A$ and $B$ are two different subsets of $\omega$, then some $n\in \omega$ satisfies for example $n\in A\setminus B$ and thus $\forall s\in S(A) : s_n\ge 0$, but $\forall s\in S(B) : s_n<0$.

Finally, for every $A\subseteq \omega$ the set $S(A)$ contains a translate of every compact subset $C\subseteq \mathbb{Z}^\omega$, because if we define $t^{(C)}\in \mathbb{Z}^\omega$ by \[t^{(C)}_n=\begin{cases}\min\{c_n: c\in C\}&\text{ if $n\in A$,}\\-1-\max\{c_n:c\in C\}&\text{ if $n\notin A$,}\end{cases}\] then clearly $C+t^{(C)}\subseteq A$. Applying \autoref{lem:translateall} concludes our proof.
\end{proof}

In \cite{Sol} Solecki showed that the situation is the same in all non-locally-compact groups that admit a two-sided invariant metric. This is a corollary of \autoref{thm:solpuretopol}, which we already stated without proof. As we will use \autoref{lem:translateall} again, this also answers the question in the case of category.

\begin{example}[Solecki]
Suppose that $G$ is non-locally-compact and admits a two-sided invariant metric. Then none of $\mathcal{HN}$, $\mathcal{GHN}$ and $\mathcal{HM}$ has the countable chain condition in $\mathcal{B}(G)$.
\end{example}

\begin{proof}
By \autoref{thm:solpuretopol} there exists a closed set $F\subseteq G$ and a continuous function $\varphi: F\to 2^\omega$ such that for any $x\in 2^\omega$ and any compact set $C\subseteq G$ there is a $g\in G$ with $gC\subseteq\varphi^{-1}(\{x\})$. Then the system $\{\varphi^{-1}(\{x\}) : x\in 2^\omega\}$ consists of continuum many pairwise disjoint closed sets and they all satisfy the requirements of \autoref{lem:translateall}, hence they are neither generalized Haar null nor Haar meager.
\end{proof}

\subsection{Decomposition into a Haar null and a meager set}\label{ssec:decomposition}
In a locally compact group $G$ the regularity of the Haar measures implies that the group can be written as $G=N\cup M$ where $N$ is of Haar measure zero and $M$ is meager. 

In \cite{DaOP} Darji asks if this holds for non-locally-compact groups:
\begin{question}[Darji]\label{que:darji}
Can every uncountable Polish group be written as the union of two sets, one meager and the other Haar null?
\end{question}

\begin{remark}
More precisely, \cite{DaOP} asks whether every uncountable Polish group can be written as the union of a meager and a \textit{generalized} Haar null set (in the terminology of this paper). However, it is easy to see that this distinction is inconsequential, as every meager set is contained in an $F_\sigma$ (in particular, Borel) meager set.
\end{remark}

Although this question is open, there are known results which answer it affirmatively in various groups or classes of groups.

For example, the following surprisingly short calculation from \cite{CK} shows that the existence of a nonempty openly Haar null set in the group implies the existence of a decomposition. (Openly Haar null sets are introduced in \autoref{ssec:openlyhn}.)

\begin{theorem}[Cohen-Kallman]\label{thm:ohndecon}
If there is a nonempty openly Haar null set in $G$, then every countable subset $C\subseteq G$ is contained in a comeager Haar null set. In particular, $G$ may be written as the union $G=A\cup B$ where $A$ is a Haar null set and $B$ is meager in $G$.
\end{theorem}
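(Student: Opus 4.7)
The plan is to construct, for any countable $C \subseteq G$, a single Borel set $A \supseteq C$ that is simultaneously comeager and Haar null; the decomposition $G = A \cup B$ then follows immediately by taking $C = \emptyset$ and setting $B := G \setminus A$. Let $A_0$ be a nonempty openly Haar null set with witness measure $\mu$, and fix some $a_0 \in A_0$. The definition of openly Haar null lets me choose, for every $n \in \omega$, an open set $U_n \supseteq A_0$ with $\mu(gU_n h) < 2^{-(n+1)}$ for all $g, h \in G$; in particular each $U_n$ is a nonempty open neighbourhood of $a_0$.

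The key idea is to enrich $C$ by a countable dense subset of $G$ so that the approximating open sets I build are forced to be dense. Concretely, I would pick a countable dense set $D \subseteq G$ (available because $G$ is Polish, hence separable), enumerate $C \cup D = \{e_i : i \in \omega\}$, and for each $n \in \omega$ define
\[V_n := \bigcup_{i \in \omega} e_i a_0^{-1} U_{n+i+1}.\]
Each $V_n$ is open, and since $e_i = e_i a_0^{-1} a_0 \in e_i a_0^{-1} U_{n+i+1}$, it contains $C \cup D$ and is therefore dense in $G$. Using countable subadditivity of $\mu$ together with the estimate $\mu(g \cdot e_i a_0^{-1} U_{n+i+1} \cdot h) < 2^{-(n+i+2)}$ (obtained by regrouping $g' := g e_i a_0^{-1}$ and applying the choice of $U_{n+i+1}$), I get
\[\mu(gV_n h) \le \sum_{i\in\omega} \mu\bigl(g e_i a_0^{-1} U_{n+i+1} h\bigr) < \sum_{i\in\omega} 2^{-(n+i+2)} = 2^{-(n+1)}\]
for every $g, h \in G$.

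Finally I would set $A := \bigcap_{n\in\omega} V_n$. This is a dense $G_\delta$ subset of the Polish (hence Baire) space $G$, so comeager by the Baire category theorem; it is Borel and contains $C$ by construction; and the bound $\mu(gAh) \le \mu(gV_n h) < 2^{-(n+1)}$, valid for every $n$, forces $\mu(gAh) = 0$ for all $g, h \in G$, so $\mu$ witnesses that $A$ is Haar null. I do not foresee any genuine obstacle; the only point requiring some care is the simultaneous enrichment of $C$ by the dense set $D$ together with the geometric weighting $U_{n+i+1}$, which is what makes the $V_n$ both dense in $G$ and summable in measure at once.
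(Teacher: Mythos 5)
Your proof is correct and follows essentially the same route as the paper: enlarge $C$ by a countable dense set, observe that the result is openly Haar null, and intersect the witnessing open sets to obtain a dense $G_\delta$ Haar null hull, whose complement is then meager. The only difference is presentational — you inline the two facts the paper cites (that countable sets are openly Haar null, via translation invariance and the $\sigma$-ideal property with geometrically decaying bounds, and that every openly Haar null set has a $G_\delta$ Haar null hull) into one explicit construction.
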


\begin{proof}
If there is a nonempty openly Haar null set in $G$, then applying \autoref{lem:ctblsmallopenlarge} yields that every countable set in $G$ is openly Haar null. In particular a dense countable set $C'\supseteq C$ is openly Haar null. Then \autoref{pro:openlyHNinGdHN} yields that $C'\subseteq A$ for a $G_\delta$ Haar null set $A$. The dense $G_\delta$ set $A$ is a countable intersection of dense open sets, hence $B:= G\setminus A$ is a countable union of nowhere dense sets, i.e. $B$ is ($F_\sigma$ and) meager.
\end{proof}

Although there are Polish groups where only the empty set is openly Haar null, the combination of this fact and \cite[Proposition 2]{CK} (which we already stated without proof as \autoref{prop:ohnnontriv}) answers the question of Darji affirmatively in several well-known groups:

\begin{theorem}[Cohen-Kallman]\label{thm:decompohn}
The following uncountable Polish groups can be written as the union of a meager and a Haar null set:
\begin{multistmt}
\item uncountable groups that admit a two-sided invariant metric,
\item $S_\infty$, the group of permutations of $\mathbb{N}$,
\item $\mathrm{Aut}(\mathbb{Q},\le)$, the group of order-preserving self-bijections of the rationals,
\item $\mathcal{U}(\ell^2)$, the unitary group on the separable infinite-dimensional complex Hilbert space.
\end{multistmt}
\end{theorem}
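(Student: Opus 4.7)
The plan is to observe that this theorem is an immediate corollary of the two results stated just above it, namely \autoref{thm:ohndecon} and \autoref{prop:ohnnontriv}. No new argument is required beyond combining them.

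Concretely, I would proceed as follows. For each of the four classes of groups listed, \autoref{prop:ohnnontriv} guarantees that $G$ contains a nonempty openly Haar null subset. (Items (1)--(4) here correspond exactly to items (1)--(4) of \autoref{prop:ohnnontriv}; the uncountability assumption in item (1) is precisely the hypothesis appearing in that proposition, while for $S_\infty$, $\mathrm{Aut}(\mathbb{Q},\le)$, and $\mathcal{U}(\ell^2)$ no further verification is needed since they are named explicitly.) Once we know $G$ admits a nonempty openly Haar null set, \autoref{thm:ohndecon} applies directly and yields a Haar null set $A \subseteq G$ which is comeager; writing $B = G \setminus A$ then expresses $G = A \cup B$ as the union of a Haar null set and a meager set, which is exactly the desired decomposition.

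The main (and essentially only) obstacle is \autoref{prop:ohnnontriv} itself, whose proof is not reproduced in this excerpt; it requires, for each listed class, an explicit construction of a Borel probability measure $\mu$ on $G$ together with a nonempty set $A$ admitting, for every $\varepsilon > 0$, an open hull $U \supseteq A$ with $\mu(gUh) < \varepsilon$ for all $g, h \in G$. Since we are entitled to assume \autoref{prop:ohnnontriv}, the proof of the theorem collapses to a one-line citation of these two prior results.
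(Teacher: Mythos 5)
Your proposal is correct and is exactly the paper's argument: the text introduces \autoref{thm:decompohn} precisely as the combination of \autoref{thm:ohndecon} with \autoref{prop:ohnnontriv}, whose items (1)--(4) match the four listed groups. Nothing further is needed.
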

Before these, (2) had been already proved in \cite{DM} (using a different approach which is briefly described in \autoref{ssec:random}).

The result \cite[Corollary 5.2]{DEKKVACS} also gives an affirmative answer in two well-known automorphism groups in addition to the already mentioned $\mathrm{Aut}(\mathbb{Q},\le)$:
\begin{theorem}[Darji-Elekes-Kalina-Kiss-Vidny\'anszky] \label{thm:DEKKVAresults}
The following uncountable Polish groups can be written as the union of a meager and a Haar null set:
\begin{multistmt}
\item $\mathrm{Aut}(\mathcal{R})$, where $\mathcal{R}$ is \emph{the} countably infinite random graph (also called the Rad\'{o} graph and the Erd\H{o}s-R\'{e}nyi graph),
\item $\mathrm{Aut}(\mathcal{B}_\infty)$, where $\mathcal{B}_\infty$ is the countable atomless Boolean algebra.
\end{multistmt}
\end{theorem}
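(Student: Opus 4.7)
The natural approach is to reduce \autoref{thm:DEKKVAresults} to \autoref{thm:ohndecon}: if either of $\mathrm{Aut}(\mathcal{R})$ or $\mathrm{Aut}(\mathcal{B}_\infty)$ admits a nonempty openly Haar null set, that theorem immediately produces the desired decomposition into a Haar null set and a meager set. So the plan is to exhibit, in each of these groups $G$, a Borel probability measure $\mu$ together with an element $g_0 \in G$ such that for every $\varepsilon > 0$ there is an open $U \ni g_0$ with $\mu(gUh) < \varepsilon$ for all $g, h \in G$; that is, $\{g_0\}$ is openly Haar null.

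Both groups sit naturally as closed subgroups of $S_\infty$ (after enumerating the underlying Fra\"iss\'e limit as $\mathbb{N}$), and basic open neighborhoods of the identity are determined by the requirement that an automorphism fix a prescribed finite set of points. This mirrors the topological situation for $S_\infty$ and $\mathrm{Aut}(\mathbb{Q},\le)$ treated in \autoref{prop:ohnnontriv}, and my plan is to adapt the measure constructions used there. Concretely, I would use the Fra\"iss\'e homogeneity of $\mathcal{R}$ and of $\mathcal{B}_\infty$ to build $\mu$ as an \emph{infinite random extension} measure: decide the images of $1,2,3,\dots$ one at a time, at each stage choosing the next image according to a sufficiently spread-out distribution on the set of admissible extensions of the partial isomorphism built so far. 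Fixing the first finitely many such choices to agree with a prescribed finite partial map concentrates $\mu$ near any desired $g_0$, while the random choices afterwards spread the measure of any later coordinate over infinitely many possibilities.

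The main obstacle, and the technical heart of the construction, is to verify the \emph{two-sided} uniform bound $\mu(gUh)<\varepsilon$ rather than merely $\mu(U)<\varepsilon$. Unlike in $S_\infty$, where arbitrary partial bijections extend freely, here every extension must preserve the edge relation of $\mathcal{R}$ or the operations of $\mathcal{B}_\infty$, so the admissible sets of one-point extensions depend delicately on the partial data already committed. What one needs is a quantitative form of the Fra\"iss\'e extension property: for each finite partial isomorphism, the admissible extensions of the next coordinate form a set whose cardinality grows so fast that the resulting product measure cannot be concentrated on any basic open neighborhood after arbitrary conjugation from either side. This is exactly where the special nature of $\mathcal{R}$ (every finite graph embeds in unboundedly many ways) and of $\mathcal{B}_\infty$ (every nonzero element splits arbitrarily finely) is needed, and I expect this quantitative non-concentration estimate to be the only genuinely nontrivial step. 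Once it is established, \autoref{lem:ctblsmallopenlarge} upgrades the resulting nonempty openly Haar null set to a dense one and \autoref{thm:ohndecon} completes the proof in both cases.
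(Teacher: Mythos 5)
Your strategy --- produce a nonempty openly Haar null set in each group and invoke \autoref{thm:ohndecon} --- is a legitimate and genuinely different route from the one the paper takes: the paper obtains both cases as instances of the general criterion \autoref{thm:DEKKVAgeneral} (the finite algebraic closure property together with density of the automorphisms with infinitely many fixed points), which is proved in \cite{DEKKVA} by analysing the measure- and category-theoretic size of conjugacy classes, and whose proof the survey omits entirely. Your route stays inside the Cohen--Kallman circle of ideas already used for $S_\infty$ and $\mathrm{Aut}(\mathbb{Q},\le)$ in \autoref{prop:ohnnontriv}, and it is plausible that it goes through for $\mathrm{Aut}(\mathcal{R})$ and $\mathrm{Aut}(\mathcal{B}_\infty)$, since in both structures every finite partial isomorphism admits infinitely many one-point extensions over any new element (by the extension property of the random graph, respectively by atomlessness). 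If completed, it would give an alternative, self-contained proof and would in addition show these groups have nontrivial openly Haar null ideals, which is more than the decomposition itself.

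As written, however, the proposal has a genuine gap: the entire mathematical content is the ``quantitative non-concentration estimate,'' and you do not prove it --- you only state that you expect it to be the nontrivial step. Concretely, for a basic neighbourhood $U=\{\sigma : \sigma(a_i)=b_i,\ i<n\}$ one has $gUh=\{\tau : \tau(h^{-1}(a_i))=g(b_i),\ i<n\}$, so you must bound, uniformly in $g$ and $h$, the probability that your random automorphism sends $n$ prescribed and adversarially chosen points to $n$ prescribed points. This requires at least the following, none of which is addressed: (i) the construction must be run as a back-and-forth so that the result is almost surely a surjective automorphism rather than an embedding; (ii) the constraints whose left-hand points $h^{-1}(a_i)$ fall into the finitely many initially fixed coordinates, or into coordinates decided ``backwards'' as preimages rather than at their own forward stage, must be handled separately; (iii) one must check that, conditionally on all earlier choices, each remaining constraint is satisfied with probability at most $1/2$ (say), so that a product bound of the form $2^{-(n-m)}$ holds with $m$ independent of $g,h$. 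Until this estimate is actually established the argument is a plan rather than a proof; with it, the reduction to \autoref{thm:ohndecon} would indeed finish both cases.
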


These concrete results are special cases of \cite[Corollary 5.1]{DEKKVACS}, which states the following:
\begin{theorem}[Darji-Elekes-Kalina-Kiss-Vidny\'anszky]\label{thm:DEKKVAgeneral}
Let $G$ be a closed subgroup of $S_\infty$. Assume that $G$ satisfies the \emph{FACP} (finite algebraic closure property), that is, for every finite $S\subset \mathbb{N}$ the set $\{b: \lvert G_{(S)}(b)\rvert <\infty\}$ is finite, where $G_{(S)}$ is the pointwise stabilizer of $S$ under the action of $G$. Moreover, suppose that the set $F = \{g\in G : \mathrm{Fix}(g)\text{ is inifinte}\}$ is dense in $G$. Then $G$ can be written as the union of a meager and a Haar null set.
\end{theorem}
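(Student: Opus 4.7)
The plan is as follows. First, I reduce the problem to showing that $F$ itself is Haar null, by taking $G\setminus F$ as the meager piece of the decomposition. For each $n\in\omega$, the set $F_n := \{g\in G : g(n)=n\}$ is clopen in $G$, and $F = \bigcap_{N\in\omega} \bigcup_{n\ge N} F_n$, so $F$ is a $G_\delta$ subset of $G$. By the hypothesis that $F$ is dense, this $G_\delta$ set is comeager, hence $G\setminus F$ is meager.

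Next, I reduce the Haar nullness of $F$ to a Borel-Cantelli estimate. Since $\mathrm{Fix}(hgh^{-1})=h(\mathrm{Fix}(g))$, $F$ is conjugacy invariant, so $gFh=(gh)F$ for all $g,h\in G$, and it suffices to exhibit a Borel probability measure $\mu$ on $G$ with $\mu(kF)=0$ for every $k\in G$. Writing $kF=\limsup_n \{g\in G : g(n)=k(n)\}$, the (first) Borel-Cantelli lemma reduces the task to arranging $\sum_n \mu(\{g : g(n)=k(n)\}) < \infty$ for each $k$; this will follow if the quantities $c_n := \sup_{m\in\mathbb{N}} \mu(\{g\in G : g(n)=m\})$ form a summable series.

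To produce such a $\mu$, I would enumerate $\mathbb{N}=\{n_0,n_1,\dots\}$ and build, by recursion on $k\ge 1$, a nested sequence of finite sets $\emptyset = T_0\subseteq T_1\subseteq\cdots$ covering $\mathbb{N}$, together with finite families $E_k\subseteq G_{(T_{k-1})}$ of size at least $2^k$ such that the elements of $E_k$ send a chosen point $b_k\in T_k\setminus T_{k-1}$ to pairwise distinct targets. The point $b_k$ can be chosen outside $\mathrm{acl}(T_{k-1}):=\{b:|G_{(T_{k-1})}(b)|<\infty\}$, which is finite by FACP; then $G_{(T_{k-1})}(b_k)$ is infinite, so $E_k$ of any prescribed size exists. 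The density of $F$ allows the representatives in $E_k$ to be chosen so that each $e\in E_k$ additionally fixes a large finite subset of $\mathbb{N}\setminus T_{k-1}$, which we absorb into $T_k$; diagonalising, we arrange that every $n\in\mathbb{N}$ eventually appears either in some $T_k$ or as some $b_k$. Let $\mu$ be the pushforward of the product of uniform probability measures on the $E_k$ under the map $(e_k)_{k\ge 1}\mapsto e_1 e_2 e_3\cdots$; coordinate-wise convergence of the infinite product holds because $e_k$ fixes $T_{k-1}$ pointwise and the $T_k$ exhaust $\mathbb{N}$, so this map is continuous and $\mu$ is a well-defined Borel probability measure with compact support. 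By construction $c_{b_k}\le 2^{-k}$, and every $n$ is some $b_k$, so $c_n$ is summable as required.

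The main obstacle will be the bookkeeping in the recursive construction: at each stage one must simultaneously guarantee that $b_k$ lies outside $\mathrm{acl}(T_{k-1})$ (so FACP delivers the needed spread in $E_k$), that the representatives in $E_k$ fix enough additional points for the infinite product to converge in $G$ and for the pushforward to remain supported on $G$ (where density of $F$ is crucial, since an FACP-produced element need not itself have large fixed set), and that the diagonalisation exhausts $\mathbb{N}$ so that every $n$ is controlled. Balancing these three constraints with a fast enough growth of $|E_k|$ is the technical core of the argument; the qualitative steps—Borel-Cantelli, the reduction via conjugacy invariance, and the $G_\delta$-density observation—are essentially routine.
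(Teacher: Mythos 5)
The survey does not reproduce a proof of this theorem (it only cites \cite{DEKKVA} and remarks that the argument rests on analysing the sizes of conjugacy classes), so your proposal has to be judged on its own merits. Your two reductions are correct and are certainly the right way to start: $F=\bigcap_N\bigcup_{n\ge N}\{g:g(n)=n\}$ is a $G_\delta$ set, hence comeager once it is dense, so $G\setminus F$ is meager; $F$ is conjugation invariant, so $gFh=ghF$ and it suffices to produce one Borel probability measure $\mu$ with $\mu(kF)=0$ for all $k\in G$; and Borel--Cantelli reduces this to $\sum_n \sup_m \mu(\{g:g(n)=m\})<\infty$. (Note, though, that the density of $F$ is needed \emph{only} for the meager half; it plays no genuine role in building $\mu$, so that part of your plan is a red herring.)

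The gaps are in the construction of $\mu$. A repairable one first: coordinatewise convergence of $e_1e_2\cdots e_j$ does not give convergence in $S_\infty$ --- the pointwise limit of permutations can be a non-surjective injection. You must also control the inverses, e.g.\ by forcing $e_j$ to fix $(e_1\cdots e_{j-1})^{-1}(\{0,\dots,N_j\})$ for every partial product, which is exactly what \autoref{correctionlemma} with a \emph{complete} metric does in the proof of \autoref{thm:haarnullsigmaideal}. The fatal gap is the final claim that ``every $n$ is some $b_k$, so $c_n$ is summable.'' This cannot be arranged: at stage $k$ you are forced to freeze into $T_k$ not only $b_k$ but also the $2^k$ images $e(b_k)$, the convergence-control points just mentioned, and the enumeration points; moreover a point lying in $\mathrm{acl}(T_{j-1})\setminus T_{j-1}$ is (by monotonicity of $\mathrm{acl}$) permanently ineligible to be any later $b_{j'}$. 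So the points whose image distribution you actually spread have density zero among the points you freeze, and for all the others the construction yields no bound on $c_n$ at all --- nothing prevents every element of every $E_j$ from fixing such an $n$, and if this happens for infinitely many $n$ then $\mu$ is concentrated \emph{inside} $F$. Closing this is the real content of the theorem: one needs the stronger consequence of the FACP that \emph{all} non-algebraic points being frozen at stage $k$ can simultaneously be sent to many distinct targets, with each coordinate spread out, together with a separate treatment of the finitely many new algebraic points appearing at each stage; this is why \cite{DEKKVA} runs a Dougherty--Mycielski-style random process with conditional-probability estimates (cf.\ \autoref{ssec:random}) rather than a bare infinite product of finitely supported measures.
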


These results from \cite{DEKKVACS} are corollaries of the examination of the \enquote{size} of the conjugacy classes in these groups; we omit their relatively long proofs.

It is well known that $\mathrm{Aut}(\mathcal{B}_\infty)$ (the group considered in part (2) of \autoref{thm:DEKKVAresults}) is isomorphic to the homeomorphism group of the Cantor set. The paper \cite{DEKKVH} answers \autoref{que:darji} in two other important homeomorphism groups:
\begin{theorem}[Darji-Elekes-Kalina-Kiss-Vidny\'anszky]\label{thm:DEKKVHresults}
The following uncountable Polish groups can be written as the union of a meager and a Haar null set:
\begin{multistmt}
\item $\mathcal{H}([0,1])$, the group of order-preserving self-homeomorphisms of the unit interval,
\item $\mathcal{H}(\mathbb{S}^1)$, the group of order-preserving self-homeomorphisms of the circle.
\end{multistmt}
Both groups are endowed with the compact-open topology, which coincides with the topology of uniform convergence in these cases.
\end{theorem}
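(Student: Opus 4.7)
The plan is, for each of the two groups $G$, to construct a decomposition $G=A\cup B$ with $A$ meager and $B$ Haar null; equivalently, to exhibit a comeager Haar null subset of $G$. By \autoref{prop:ohntrivH01} the empty set is the only openly Haar null subset of $\mathcal{H}([0,1])$, so the route through \autoref{thm:ohndecon} is unavailable and a direct construction is needed. The groups in question are also not covered by \autoref{thm:DEKKVAgeneral}, since they are not closed subgroups of $S_\infty$.

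For $\mathcal{H}([0,1])$ I would look for a property $P(f)$ that is both Baire-generic and ``randomness-exceptional'', so that $B:=\{f : P(f)\}$ is comeager and Haar null. Candidates come from classical descriptions of the generic homeomorphism of $[0,1]$: typical $f\in\mathcal{H}([0,1])$ is singular, nowhere differentiable, and has a nowhere-dense (often Cantor-like) set of fixed points. A concrete first attempt is to fix a dense sequence $(x_n)$ in $(0,1)$ together with sets $S_n\subseteq (0,1)$ chosen to be meager but ``big'' in a measure-theoretic sense, and set $B:=\{f : f(x_n)\in S_n\text{ for all }n\}$; provided each $S_n$ is arranged to be comeager in a suitable small interval containing the generic value of $f(x_n)$, the intersection $B$ is comeager by the Baire category theorem. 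For Haar nullness one then exhibits a witness measure $\mu$, the natural choice being the distribution of a random order-preserving homeomorphism built from independent random refinements of a dyadic subdivision of $[0,1]$; by careful construction the conditions $f(x_n)\in S_n$ become essentially independent events of $\mu$-probability strictly less than $1$, and the construction should remain robust under left and right translation by an arbitrary pair $(g,h)$.

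For $\mathcal{H}(\mathbb{S}^1)$ I would reduce to the interval case using the coset decomposition $\mathcal{H}(\mathbb{S}^1)=\mathbb{T}\cdot \mathcal{H}_p(\mathbb{S}^1)$, where $\mathbb{T}$ is the compact subgroup of rigid rotations and $\mathcal{H}_p(\mathbb{S}^1)\cong \mathcal{H}([0,1])$ consists of homeomorphisms fixing a chosen basepoint $p$. The decomposition obtained for $\mathcal{H}([0,1])$ lifts through this product structure to yield meager and Haar null subsets of $\mathcal{H}(\mathbb{S}^1)$; the normalized Haar measure on $\mathbb{T}$ can be convolved into the witness measure to handle the rotational direction, while meagerness is translation-invariant.

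The main obstacle is the construction of the witness measure in $\mathcal{H}([0,1])$. The rigidity behind \autoref{prop:ohntrivH01} --- that any compact subset of $\mathcal{H}([0,1])$ can be two-sidedly translated into any nonempty open subset --- forces any candidate witness measure to be very ``spread out'' under two-sided translation, so the random homeomorphism underlying $\mu$ must be designed with enough independence across scales that $\mu(gBh)=0$ holds \emph{simultaneously} for every pair $(g,h)\in \mathcal{H}([0,1])^2$. Making this uniformity precise, and verifying that a single measure $\mu$ annihilates every two-sided translate of the comeager set $B$, is the technical heart of the argument.
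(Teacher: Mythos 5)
Your proposal is a research plan rather than a proof: the step you yourself identify as ``the technical heart'' --- producing a single Borel probability measure $\mu$ on $\mathcal{H}([0,1])$ that annihilates \emph{every} two-sided translate of a comeager Borel set --- is exactly what is missing, and the concrete candidate you offer cannot be made to work as stated. The set $B=\{f: f(x_n)\in S_n \text{ for all } n\}$ requires the $S_n$ to be simultaneously ``meager'' and ``comeager in a suitable small interval'', which is contradictory (a set comeager in a nonempty open interval is non-meager); and there is no ``generic value of $f(x_n)$'', since the evaluation $f\mapsto f(x_n)$ is an open surjection onto $(0,1)$. If you repair this by taking each $S_n$ comeager in $(0,1)$ (so that $B$ is comeager), then $gBh=\{\phi: \phi(h^{-1}(x_n))\in g(S_n)\}$, and $g(S_n)$ is again comeager, hence can have full Lebesgue measure; a natural random homeomorphism will then satisfy each condition $\phi(h^{-1}(x_n))\in g(S_n)$ with probability close to $1$, giving $\mu(gBh)>0$ rather than $0$. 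Countably many point-evaluation constraints are simply the wrong kind of condition here. The circle reduction is also not formal: $\mathcal{H}_+(\mathbb{S}^1)\cong\mathbb{T}\times\mathcal{H}_p(\mathbb{S}^1)$ only as a topological space (the stabilizer $\mathcal{H}_p$ is not normal), and Haar nullness does not pass through such coset decompositions automatically --- see the failure of Fubini-type statements in \autoref{ssec:fubinikurulam}.

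The actual argument of Darji--Elekes--Kalina--Kiss--Vidny\'anszky (which this survey only cites, as Corollaries 6.1 and 6.2 of the referenced paper) obtains the decomposition as a by-product of classifying which conjugacy classes are non-Haar-null. The comeager Haar null set is \emph{conjugacy invariant}: for $\mathcal{H}([0,1])$ it is (essentially) the set of homeomorphisms with infinitely many fixed points, which is comeager because the generic homeomorphism has a Cantor set of fixed points, and Haar null because a random homeomorphism built from independent refinements of dyadic subdivisions almost surely has only finitely many fixed points. Conjugacy invariance is the key structural idea your proposal lacks: for such a set $gAh=ghA$, so one only needs $\mu(kA)=0$ for all $k$, and the ``spread out under two-sided translation'' obstruction you worry about largely disappears. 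The circle case is then handled by a separate but parallel analysis of rotation numbers and periodic-point structure, not by lifting the interval decomposition through the rotation coset structure. You are right that \autoref{prop:ohntrivH01} blocks the openly-Haar-null route and that \autoref{thm:DEKKVAgeneral} does not apply, but the fix is the conjugacy-class analysis, not the point-evaluation construction.
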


These results are also corollaries of the examination of the size of the conjugacy classes, for their proofs see \cite[Corollary 6.1]{DEKKVH} and \cite[Corollary 6.2]{DEKKVH}. Notice that the example of $\mathcal{H}([0,1])$ shows that decomposition into a meager and a Haar null set may be possible even if there are no nonempty openly Haar null sets in the group (\autoref{prop:ohntrivH01}).

The result \cite[Proposition 6.3]{DEKKVH} allows us to pull back decompositions from factor groups:
\begin{proposition}[Darji-Elekes-Kalina-Kiss-Vidny\'anszky]\label{prop:decompullback}
Let $G$ and $H$ be Polish groups and suppose that there exists a continuous, surjective homomorphism $\varphi : G \to H$. If $H$ can be written as the union of a meager and a Haar null set then $G$ can be decomposed in such a way as well.
\end{proposition}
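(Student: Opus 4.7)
The plan is to lift a decomposition $H = M \cup N$ (with $M$ meager and $N$ Haar null) through the homomorphism $\varphi$ to obtain the decomposition $G = \varphi^{-1}(M) \cup \varphi^{-1}(N)$ of $G$. The two things I need to verify are: (i) $\varphi^{-1}(M)$ is meager in $G$, and (ii) $\varphi^{-1}(N)$ is Haar null in $G$. The central tool that makes both verifications work is the \emph{open mapping theorem for Polish groups}: a continuous surjective homomorphism between Polish groups is automatically open. (This follows from Pettis' theorem via the usual argument: countably many translates of any neighborhood of $1_G$ cover $G$, hence countably many translates of its $\varphi$-image cover $H$, so that image is non-meager and analytic, hence has the Baire property, so $\varphi(U)\varphi(U)^{-1}$ contains a neighborhood of $1_H$.)

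Step (i) is a short topological argument once openness is in hand. If $F\subseteq H$ is closed and nowhere dense, then $\varphi^{-1}(F)$ is closed; and if $U\subseteq \varphi^{-1}(F)$ were a nonempty open subset of $G$, then $\varphi(U)$ would be a nonempty open subset of $H$ contained in $F$, a contradiction. Hence preimages of closed nowhere dense sets are nowhere dense, and taking countable unions shows that $\varphi^{-1}(M)$ is meager (after first replacing $M$, without loss of generality, by an $F_\sigma$ meager hull).

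For step (ii), fix a Borel set $B\supseteq N$ and a Borel probability measure $\mu$ on $H$ with $\mu(h_1 B h_2)=0$ for all $h_1,h_2\in H$. Since $\varphi$ is continuous, $\varphi^{-1}(B)$ is a Borel set containing $\varphi^{-1}(N)$, so it suffices to produce a Borel probability measure $\nu$ on $G$ witnessing that $\varphi^{-1}(B)$ is Haar null. I will construct $\nu$ as the push-forward of $\mu$ by a Borel section of $\varphi$. The existence of such a section $s\colon H\to G$ (i.e.\ $\varphi\circ s=\mathrm{id}_H$) follows from the Kuratowski--Ryll-Nardzewski selection theorem applied to the multifunction $h\mapsto \varphi^{-1}(\{h\})$: the values are closed and nonempty, and for any open $U\subseteq G$ the set $\{h:\varphi^{-1}(\{h\})\cap U\neq\emptyset\}=\varphi(U)$ is open, again by openness of $\varphi$. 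Setting $\nu=s_*\mu$, we compute for arbitrary $g_1,g_2\in G$:
\[
\nu\bigl(g_1\varphi^{-1}(B)g_2\bigr)=\mu\bigl(s^{-1}(g_1\varphi^{-1}(B)g_2)\bigr).
\]
If $s(h)\in g_1\varphi^{-1}(B)g_2$, then applying $\varphi$ yields $h=\varphi(s(h))\in \varphi(g_1)B\varphi(g_2)$, so $s^{-1}(g_1\varphi^{-1}(B)g_2)\subseteq \varphi(g_1)B\varphi(g_2)$. Since the right-hand side has $\mu$-measure zero, $\nu(g_1\varphi^{-1}(B)g_2)=0$, and $\varphi^{-1}(B)$ is indeed Haar null in $G$.

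The only non-cosmetic difficulty is securing the Borel section, which is what forces the appeal to the open mapping theorem; everything else is direct pull-back. If one preferred to avoid Kuratowski--Ryll-Nardzewski, the same conclusion can be reached via Jankov--von Neumann (producing a universally measurable section and then regularizing the push-forward), but the above route seems the cleanest.
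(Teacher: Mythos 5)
Your proposal is correct and follows essentially the same route as the paper's proof: establish that $\varphi$ is an open map, pull the decomposition $H=M\cup N$ back to $G=\varphi^{-1}(M)\cup\varphi^{-1}(N)$, and handle the meager part exactly as the paper does (preimages of closed nowhere dense sets are closed and, by openness of $\varphi$, nowhere dense). The only divergence is that where the paper simply cites \cite[Proposition 8]{Doug} for the fact that preimages of Haar null sets under continuous surjective homomorphisms are Haar null, you prove that step in-line via a Borel section $s$ obtained from the Kuratowski--Ryll-Nardzewski selection theorem (using openness of $\varphi$ for the measurability of $h\mapsto\varphi^{-1}(\{h\})$) and the push-forward witness $\nu=s_*\mu$; this argument is correct and is essentially the content of the cited lemma, so your write-up is a self-contained version of the paper's proof.
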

\begin{proof}
If $\varphi$ satisfies these conditions, then it follows from \cite[Theorem 2.3.3]{Gao} that $\varphi$ is necessarily an open mapping.

Let $H= N\cup M$ where $N$ is Haar null and $M$ is meager. We will show that $G=\varphi^{-1}(N)\cup \varphi^{-1}(M)$ is a good decomposition of $G$. 

By \cite[Proposition 8]{Doug}, the inverse image of a Haar null set under a continuous, surjective homomorphism is also Haar null, hence $\varphi^{-1}(N)$ is Haar null in $G$.

As $M$ is meager, there are closed, nowhere dense sets $(S_n)_{n\in\omega}$ such that $M\subseteq\bigcup_{n\in \omega} S_n$. For each $n$, the set $\varphi^{-1}(S_n)$ is closed (because $\varphi$ is continuous) and nowhere dense (because $\varphi$ is open); these imply that $\varphi^{-1}(M)$ is meager.
\end{proof}

Applying this, it is easy to prove the following:
\begin{corollary}
The following uncountable Polish groups can be written as the union of a meager and a Haar null set:
\begin{multistmt}
\item $(\mathrm{Diff}^k_{+}[0,1], \circ)$, the group of $k$-times differentiable order-preserving self-homeomorphisms of $[0,1]$ (where $k\ge1$ is an integer),
\item $\mathcal{H}(\mathbb{D}^2)$, the group of orientation-preserving self-homeomorphisms of the closed disc $\mathbb{D}^2$.
\end{multistmt}
\end{corollary}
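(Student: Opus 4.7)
The plan is to apply \autoref{prop:decompullback} in both cases by exhibiting continuous surjective homomorphisms from the two groups onto Polish groups for which the decomposition is already known.

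For (1), I would take $\varphi : (\mathrm{Diff}^k_{+}[0,1], \circ) \to ((0,\infty), \cdot)$ defined by $\varphi(f) = f'(0)$. The chain rule $(f\circ g)'(0) = f'(g(0))\cdot g'(0) = f'(0)\cdot g'(0)$ (using $g(0)=0$) shows $\varphi$ is a homomorphism, continuity is immediate in the $C^k$ topology for any $k\ge 1$, and surjectivity is a routine explicit construction: for each $c>0$ one writes down a $C^k$ diffeomorphism of $[0,1]$ with derivative $c$ at $0$, for instance by smoothly interpolating between the linear map of slope $c$ near $0$ and a suitable diffeomorphism fixing $1$. The target group $((0,\infty), \cdot)$ is abelian and uncountable, hence admits a two-sided invariant metric, so \autoref{thm:decompohn}(1) provides a decomposition of it into a meager and a Haar null set. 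Applying \autoref{prop:decompullback} would then yield the desired decomposition of $\mathrm{Diff}^k_{+}[0,1]$.

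For (2), I would use the restriction-to-boundary map $\varphi : \mathcal{H}(\mathbb{D}^2) \to \mathcal{H}(\mathbb{S}^1)$, $\varphi(h) = h|_{\mathbb{S}^1}$. Every orientation-preserving self-homeomorphism of $\mathbb{D}^2$ preserves $\partial \mathbb{D}^2 = \mathbb{S}^1$ and restricts there to an orientation-preserving homeomorphism, so $\varphi$ is a well-defined group homomorphism, and its continuity in the compact-open topology is immediate. Surjectivity is the content of the Alexander coning trick: given $h\in\mathcal{H}(\mathbb{S}^1)$, setting $\tilde{h}(0)=0$ and $\tilde{h}(r\xi)=r\cdot h(\xi)$ for $r\in(0,1]$, $\xi\in\mathbb{S}^1$ defines an orientation-preserving self-homeomorphism of $\mathbb{D}^2$ with $\varphi(\tilde{h})=h$. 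By \autoref{thm:DEKKVHresults}(2), $\mathcal{H}(\mathbb{S}^1)$ can be written as the union of a meager and a Haar null set, so \autoref{prop:decompullback} transfers the decomposition to $\mathcal{H}(\mathbb{D}^2)$.

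Once \autoref{prop:decompullback} is in hand, the entire argument is essentially bookkeeping; I do not anticipate any serious obstacle. The only non-trivial technical points are the surjectivity verifications, which are both standard: for (1) it is elementary explicit construction of a diffeomorphism with prescribed derivative at an endpoint, and for (2) it is precisely the Alexander trick, which in fact produces a continuous section of the restriction map.
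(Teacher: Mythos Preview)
Your proposal is correct and follows essentially the same route as the paper: both parts apply \autoref{prop:decompullback} via exactly the maps you describe, $f\mapsto f'(0)$ and $h\mapsto h|_{\mathbb{S}^1}$, with the same surjectivity arguments (the paper phrases the Alexander trick as ``linear on the radiuses''). The only cosmetic difference is that for (1) the paper invokes the locally compact case directly for $(\mathbb{R}^+,\cdot)$ rather than \autoref{thm:decompohn}(1), but either justification works.
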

\begin{proof}
(1) The map $\varphi: (\mathrm{Diff}^k_{+}[0,1], \circ) \to (\mathbb{R}^+, \cdot)$, $f\mapsto f'(0)$ is a continuous, surjective homomorphism. As $\mathbb{R}^+$ is locally compact, the existence of a decomposition is well-known.

(2) The map $\psi: \mathcal{H}(\mathbb{D}^2)\to \mathcal{H}(\mathbb{S}^1)$, $h\mapsto h|_{\mathbb{S}^1}$ is a continuous homomorphism. It is easy to see that if we restrict $\psi$ to the set 
\[\{h: h\text{ preserves the center and is linear on the radiuses}\}\subset \mathcal{H}(\mathbb{D}^2),\]
then we get a bijective map. This shows that $\psi$ is surjective. The other condition of \autoref{prop:decompullback} follows from \autoref{thm:DEKKVHresults} (2), which states that $\mathcal{H}(\mathbb{S}^1)$ can be written as the union of a meager and a Haar null set.

We note that part (1) was originally proved in \cite{CK} using part (e) of \cite[Proposition 2]{CK} (which we already stated without proof as part (5) of \autoref{prop:ohnnontriv}); while part (2) is \cite[Corollary 6.5]{DEKKVH}.
\end{proof}

The questions \cite[Question 6.6]{DEKKVH} and \cite[Question 6.7]{DEKKVH}  highlight the following special cases of \autoref{que:darji}:
\begin{question}[Darji-Elekes-Kalina-Kiss-Vidny\'anszky]\label{que:darjidekkvspeccases}
Is it possible to write the following uncountable Polish groups as the union of a meager and a Haar null set:
\begin{multistmt}
\item $\mathcal{H}(\mathbb{D}^n)$, the group of orientation-preserving self-homeomorphisms of the closed $n$-ball $\mathbb{D}^n$ (where $n\ge 3$ is an integer),
\item $\mathcal{H}(\mathbb{S}^n)$, the group of orientation-preserving self-homeomorphisms of the $n$-sphere $\mathbb{S}^n$ (where $n\ge 2$ is an integer),
\item the homeomorphism group of the Hilbert cube $[0,1]^\omega$.
\end{multistmt}
\end{question}

After these particular cases, we would like to mention an equivalent form of \autoref{que:darji}, which connects it to a different family of problems. The following, seemingly unrelated question appears as \cite[Question 5.5]{EVGdhull} and later in \cite{BanakhII}:
\begin{question}[Elekes-Vidny\'anszky, Banakh]\label{que:banakh}
Is each countable subset of an uncountable Polish group $G$ contained in a $G_\delta$ Haar null subset of $G$?
\end{question}

However, it turns out that this is equivalent to \autoref{que:darji}:

\begin{theorem}\label{thm:darjiqueeqv}
In an uncountable Polish group $G$ the following are equivalent:
\begin{multistmt}
\item Each countable subset of $G$ is contained in a $G_\delta$ Haar null subset of $G$.
\item There exists a countable subset of $G$ that is dense and contained in a $G_\delta$ Haar null subset of $G$.
\item $G$ can be written as the union of a meager and a Haar null set.
\end{multistmt}
\end{theorem}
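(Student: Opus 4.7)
The implication $(1) \Rightarrow (2)$ is immediate: as $G$ is Polish hence separable, it has a countable dense subset $D$, which by hypothesis sits inside some $G_\delta$ Haar null set. For $(2) \Rightarrow (3)$, suppose $D \subseteq A$ where $D$ is countable dense and $A$ is $G_\delta$ Haar null. Then $A$ is dense, and writing $A = \bigcap_n U_n$ with each $U_n$ open, the density of $A$ forces each $U_n$ to be dense. Hence $G \setminus A = \bigcup_n (G \setminus U_n)$ is $F_\sigma$ and meager, and $G = A \cup (G \setminus A)$ is the desired decomposition.

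The substantive direction is $(3) \Rightarrow (1)$. Assume $G = N \cup M$ with $N$ Haar null and $M$ meager. Replacing $N$ by a Borel Haar null hull and $M$ by an $F_\sigma$ meager hull preserves the covering, so we may assume that $N$ is Borel Haar null and $M$ is $F_\sigma$ meager. Set $H := G \setminus M$. Then $H$ is $G_\delta$, contained in $N$, hence Haar null (since $\mathcal{HN}$ is a $\sigma$-ideal), and comeager, hence dense.

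Now let $C \subseteq G$ be an arbitrary countable set. The plan is to produce a single left translate $gH$ containing $C$; since $H$ is $G_\delta$ and Haar null, so is $gH$, by translation invariance of both classes. Observe that $gH \supseteq C$ is equivalent to $gM \cap C = \emptyset$, and in turn to $g \notin CM^{-1} = \bigcup_{c \in C} c \cdot M^{-1}$. Each set $c \cdot M^{-1}$ is meager $F_\sigma$, because $M$ is meager $F_\sigma$ and both inversion and left translation by $c$ are homeomorphisms; consequently $CM^{-1}$ is a countable union of meager sets, hence meager. Since $G$ is a Baire space, $G \setminus CM^{-1}$ is nonempty (in fact comeager), so we may pick $g$ from this complement and conclude $C \subseteq gH$.

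The main obstacle I anticipated was preserving the $G_\delta$ hypothesis. The naive attempt $\bigcup_{c \in C} cH$ is Haar null but only $F_{\sigma\delta}$ in general, and in view of the remark following \autoref{pro:openlyHNinGdHN} one cannot always enlarge a Haar null set to a $G_\delta$ Haar null set. The translation-plus-Baire-category argument above sidesteps this by producing a single translate of $H$ that already covers $C$.
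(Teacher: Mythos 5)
Your proposal is correct and follows essentially the same route as the paper: the easy implications are handled identically, and for $(3)\Rightarrow(1)$ both arguments enlarge $M$ to an $F_\sigma$ meager set, take the $G_\delta$ Haar null complement, and use a Baire category argument to find a single translate of it covering the given countable set (the set of bad translates being a countable union of meager sets). Your bookkeeping (e.g.\ $C\subseteq gH \Leftrightarrow g\notin CM^{-1}$) is accurate, so no changes are needed.
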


\begin{proof}

$(1) \Rightarrow (2)$: As $G$ is Polish, there exists a countable dense subset $D$ in $G$.

$(2) \Rightarrow (3)$
Let $D\subset G$ be countable dense set that is contained in a $G_\delta$ Haar null set $N\subset G$. $N$ is dense, because it contains the dense set $D$ as a subset. It is well-known that the complement of a dense $G_\delta$ set is meager; therefore $G=(G\setminus N) \cup N$ is a suitable decomposition.

$(3) \Rightarrow (1)$: Assume that $G=M_0\cup N_0$ where $M_0$ is meager and $N_0$ is Haar null. As $M_0$ is meager, there exists an $F_\sigma$ meager set $M$ such that $M_0 \subseteq M \subset G$ (this follows from the fact that the closure of a nowhere dense set is still nowhere dense). Then the $G_\delta$ set $N = G\setminus M$ is Haar null, because it is a subset of $N_0$.

Let $C\subset G$ be an arbitrary countable set. We will show that there exists a $g\in G$ such that $C\subset gN =\{g\cdot x: x\in N\}$ (this is sufficient, because the translates of a $G_\delta$ Haar null set are also $G_\delta$ Haar null sets). The set of \enquote{bad} translations is
\[\{g\in G: C\nsubseteq gN \} = \bigcup_{c\in C} \{g\in G: c\notin gN \} =\bigcup_{c\in C} \{g\in G: c\cdot g^{-1}\in M\} = \bigcup_{c\in C} Mg\]
which is a meager set (because it is a countable union of meager sets); this clearly shows that there exists a $g\in G$ such that $C\subset gN$. 
\end{proof}

As every Haar meager set is meager (but the converse is not true in general -- see \autoref{ssec:loccptcase}), it is natural to ask the following stronger version of \autoref{que:darji}, which appeared first as \cite[Question 4]{Jab}:
\begin{question}[Jab\l o\'nska]\label{que:jablonska}
Can every uncountable abelian Polish group be written as the union of two sets, one Haar meager and the other Haar null?
\end{question}

We state the following partial answer, its proof can be found as \cite[Theorem 25]{Do}:
\begin{theorem}[Dole\v zal-Rmoutil-Vejnar-Vlas\' ak]\label{thm:hnhmdecon}
Let $G$ be an abelian Polish group such that its identity element has
a local basis consisting of open subgroups. Then $G$ is the union of a Haar null set and a Haar meager set.
\end{theorem}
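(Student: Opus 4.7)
The plan is to dispose of the locally compact case classically and to treat the non-locally-compact case via a delicate simultaneous construction exploiting the subgroup basis.

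If $G$ is locally compact, then by \autoref{thm:loccptequivnotions} we have $\mathcal{N}(G)=\mathcal{HN}(G)$ and by \autoref{thm:haarmeagerismeagerlc} we have $\mathcal{M}(G)=\mathcal{HM}(G)$, so it suffices to exhibit $G$ as a union of a Haar-measure-zero set and a meager set. This is classical: using inner regularity of the left Haar measure $\lambda$, write $G$ as the union of a $K_\sigma$ set of full $\lambda$-measure and its complement, a $G_\delta$ of $\lambda$-measure zero; the former is meager whenever $G$ is non-discrete (an exhaustion of $G$ by compact sets of finite measure built from translates of a compact neighborhood of $1_G$ gives a meager $K_\sigma$).

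Henceforth assume $G$ is not locally compact. Choose a decreasing neighborhood basis $V_0 \supseteq V_1 \supseteq \cdots$ of open subgroups of $G$ with $\bigcap_n V_n = \{0\}$; since $G$ is non-locally-compact, no $V_n$ is compact, and by passing to a subsequence we may assume $[V_n : V_{n+1}]=\aleph_0$ for every $n$. For each $n$, pick $v_n \in V_n \setminus V_{n+1}$, let $F_n = \{0, v_n\}$, and let $\nu_n$ be the uniform probability on $F_n$. The infinite convolution $\mu = \nu_0 * \nu_1 * \nu_2 * \cdots$ is a well-defined Borel probability on $G$ (since the partial sums are Cauchy, the supports $F_n$ lying in $V_n$), with compact support $K=F_0+F_1+\cdots$ homeomorphic to $\{0,1\}^\omega$. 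The standard calculation (exploiting that $F_0,\dots,F_{m-1}$ represent distinct $V_m$-cosets) gives $\mu(d + V_m) \leq 2^{-m}$ for every $d \in G$ and every $m \in \omega$.

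Fix a countable dense subgroup $D \subseteq G$. Using the countability of $D \times \omega$ and of the basic open cylinders of $K$, construct an enumeration $(e_j, \phi(j))_{j\in\omega}$ inside $D \times \omega$ such that both of the following hold simultaneously:
\begin{enumerate}[label={\upshape(\arabic*)}]
\item $\sum_j 2^{-\phi(j)} < \infty$;
\item for every $g \in G$, every nonempty basic open cylinder $W \subseteq K$, and every $k \in \omega$, there is some $j \geq k$ with $\phi(j) \leq |W|$ (the prefix length of $W$) and $e_j \equiv a_W - g \pmod{V_{\phi(j)}}$, where $a_W$ is the coset representative associated with $W$.
\end{enumerate}
Set $N := \bigcap_k \bigcup_{j \geq k} (e_j + V_{\phi(j)})$. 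By (1), $\mu(N+g) \leq \sum_{j\geq k} 2^{-\phi(j)} \to 0$, so $N$ is Haar null. By (2) together with the observation that in this case $(e_j + g + V_{\phi(j)}) \supseteq W$, the open sets $\bigcup_{j \geq k}(e_j + g + V_{\phi(j)}) \cap K$ are dense in $K$ for every $g, k$, whence $(N+g) \cap K$ is a dense $G_\delta$, hence comeager, in $K$. Taking the inclusion $\iota:K\hookrightarrow G$ as witness, $\iota^{-1}((G\setminus N)+g)$ is meager in $K$ for every $g$, so $G\setminus N$ is Haar meager, and $G = N \cup (G\setminus N)$ is the required decomposition.

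The main obstacle is the simultaneous realization of (1) and (2): summability favors $\phi(j) \to \infty$ rapidly, while the density requirement wants small values of $\phi(j)$ to occur often. The resolution leverages the fact that only countably many test cylinders $W \subseteq K$ need be checked, and for each fixed cylinder the condition in (2) depends on $g$ only through its projection to the countable quotient $G/V_{\phi(j)}$; this enables a bookkeeping argument that assigns to each pair (cylinder, target coset) infinitely many indices $j$ while keeping the multiset of values $\phi(j)$ summable on the exponential scale, using that the number of cylinders of prefix-length exactly $m$ is bounded by $2^m$ and the number of $V_m$-cosets of interest can be rationed across levels.
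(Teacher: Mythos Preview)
The paper does not actually prove this theorem; it only cites \cite[Theorem 25]{Do}. So there is no proof in the paper to compare against, and the question is simply whether your argument is correct.

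It is not: conditions (1) and (2) are genuinely incompatible, and your final paragraph does not rescue them. Take any cylinder $W\subseteq K$ of prefix length $1$. Condition (2) demands, for every $g\in G$ and every $k$, an index $j\ge k$ with $\phi(j)\le 1$ and $e_j\equiv a_W-g\pmod{V_{\phi(j)}}$. If $\phi(j)=0$ occurs infinitely often, then $\sum_j 2^{-\phi(j)}=\infty$, killing (1). If $\phi(j)=0$ occurs only finitely often, then for all large $k$ you must find $j\ge k$ with $\phi(j)=1$ and $e_j$ in the $V_1$-coset $a_W-g+V_1$. Since $[V_0:V_1]=\aleph_0$, as $g$ ranges over $G$ this coset ranges over infinitely many distinct $V_1$-cosets, each of which must be hit by some $e_j$ with $\phi(j)=1$; hence $\phi^{-1}(1)$ is infinite and again $\sum_j 2^{-\phi(j)}=\infty$. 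The rationing you sketch (``the number of $V_m$-cosets of interest can be rationed across levels'') cannot help: moving to a deeper level $m'>|W|$ forces you to control $g$ modulo the finer subgroup $V_{m'}$, which has even more cosets, so the bookkeeping burden only grows.

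The underlying difficulty is that you are trying to use the same compact set $K$ both as the support of the witness measure and as the domain of the witness function, and to build a single $G_\delta$ set $N$ whose every translate is simultaneously $\mu$-null and comeager in $K$. This is the right target, but achieving it requires exploiting the structure of $K$ more carefully than a straight enumeration of cosets allows; the argument in \cite{Do} proceeds differently.
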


Recall that the condition of this result has several equivalent formulations:
\begin{theorem}
Let $G$ be a Polish group. Then the following are equivalent:
\begin{multistmt}
\item $G$ is isomorphic to a closed subgroup of $S_\infty$;
\item $G$ admits a countable neighborhood basis at the identity consisting of open subgroups;
\item $G$ admits a countable basis closed under left multiplication;
\item $G$ admits a compatible left-invariant ultrametric.
\end{multistmt}
\end{theorem}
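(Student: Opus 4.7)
The plan is to route all four equivalences through condition (2), since it is the most operational characterisation; specifically, I would establish $(1) \Rightarrow (2)$, $(2) \Leftrightarrow (4)$, $(2) \Leftrightarrow (3)$, and $(2) \Rightarrow (1)$. The implication $(1) \Rightarrow (2)$ is essentially immediate, because in $S_\infty$ the pointwise stabilizers of finite subsets of $\mathbb{N}$ form a countable neighbourhood basis at the identity consisting of clopen subgroups, and intersecting each such subgroup with a closed subgroup of $S_\infty$ preserves this structure.

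For $(2) \Leftrightarrow (4)$ I would first refine the given open-subgroup basis to a decreasing chain $V_0 \supseteq V_1 \supseteq \cdots$ with $\bigcap_n V_n = \{1_G\}$ (by replacing $V_n$ with $V_0 \cap \cdots \cap V_n$) and define $d(x, y) = 2^{-n}$ where $n$ is the largest integer with $x^{-1} y \in V_n$ (and $d(x, y) = 0$ if $x^{-1} y$ lies in every $V_n$). Because $d$ depends only on $x^{-1} y$ it is automatically left-invariant, the ultrametric inequality reduces at once to $V_n \cdot V_n = V_n$, and compatibility is clear because the $d$-balls about $1_G$ coincide with the $V_n$. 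Conversely, given a compatible left-invariant ultrametric, each closed ball $\overline{B}(1_G, 2^{-n})$ is open (closed balls are clopen in an ultrametric), and left-invariance combined with the ultrametric inequality immediately makes every such ball a subgroup, yielding the basis demanded by (2).

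For $(2) \Rightarrow (3)$ I would take the basis to consist of the left cosets $\{g V_n : g \in G,\, n \in \omega\}$: these form a basis because the $V_n$ do, they are closed under left multiplication tautologically, and for each fixed $n$ the cosets of $V_n$ are pairwise disjoint nonempty open sets in the separable space $G$, hence only countably many in number. The converse $(3) \Rightarrow (2)$ is the hard part of the theorem. Given a countable basis $\mathcal{B}$ closed under left multiplication, I would fix $W \in \mathcal{B}$ with $1_G \in W$ and set $H_W := \{g \in G : gW = W\}$; this is a subgroup contained in $W$, and the whole task is to prove it open. The plan is a category-plus-descriptive-complexity argument: the distinct translates $gW$ biject with the left cosets of $H_W$ and all lie in the countable set $\mathcal{B}$, so $G$ decomposes into countably many cosets of $H_W$, and hence $H_W$ is non-meager by Baire category. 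A short direct calculation writes $\{g : gW \subseteq W\}$ as the complement of the projection onto the first factor of the Borel set $(G \times W) \cap m^{-1}(G \setminus W)$ (where $m$ is the multiplication map), showing it is coanalytic; combining with the analogous set $\{g : g^{-1} W \subseteq W\}$ one sees $H_W$ is coanalytic, hence has the Baire property by \cite[Corollary 29.14]{Ke}. Pettis' theorem then forces $H_W = H_W H_W^{-1}$ to contain a neighbourhood of $1_G$, and a subgroup with nonempty interior is open.

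Finally, for $(2) \Rightarrow (1)$ I would let $G$ act by left multiplication on the countable set $X = \bigsqcup_{n \in \omega} G / V_n$, producing a homomorphism $\Phi : G \to \operatorname{Sym}(X) \cong S_\infty$. Injectivity follows from Hausdorffness ($g \ne 1_G$ forces $g \notin V_n$ for some $n$, so $g V_n \ne V_n$), and the observation that the pointwise stabilizer of the coset $V_n \in X$ pulls back under $\Phi$ to exactly $V_n \subseteq G$ gives continuity of $\Phi$ and of its inverse on the image, so $\Phi$ is a topological group isomorphism onto $\Phi(G)$; thus $\Phi(G)$ is Polish, and a Polish subgroup of a Polish group is automatically $G_\delta$ and hence closed by a standard Baire-category argument. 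The main obstacle of the whole proof is the openness of $H_W$ in $(3) \Rightarrow (2)$, which is the only step going beyond elementary manipulations.
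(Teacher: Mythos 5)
The paper does not actually prove this theorem --- it only cites \cite[Theorem 1.5.1]{BK} --- so there is no in-paper argument to compare against; your proof is the standard Becker--Kechris argument and it is correct. All four implications check out: the clopen pointwise stabilizers of finite sets give $(1)\Rightarrow(2)$; left cosets give $(2)\Rightarrow(3)$; the left-multiplication action on the countable set of all cosets $gV_n$ gives $(2)\Rightarrow(1)$ (together with the standard fact that a subgroup of a Polish group which is Polish in the subspace topology is closed); and the dictionary $(2)\Leftrightarrow(4)$ is routine once one notes that symmetry of $d$ and the ultrametric inequality both come from each $V_n$ being a subgroup, and that closed balls about $1_G$ for a left-invariant ultrametric are open subgroups. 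Two small remarks. First, in defining $d(x,y)=2^{-n}$ you should either prepend $V_0=G$ to the chain or assign a value to $d(x,y)$ when $x^{-1}y\notin V_0$, since otherwise ``the largest $n$ with $x^{-1}y\in V_n$'' need not exist; this is a trivial fix. Second, your descriptive-set-theoretic detour in $(3)\Rightarrow(2)$ is heavier machinery than needed: one has $\{g: gW\subseteq W\}=G\setminus\bigl((G\setminus W)\cdot W^{-1}\bigr)$, and $(G\setminus W)\cdot W^{-1}=\bigcup_{x\in G\setminus W}xW^{-1}$ is open because $W^{-1}$ is, so $\{g: gW\subseteq W\}$ and hence $H_W$ are in fact closed, and the Baire property is immediate without invoking projections of Borel sets and \cite[Corollary 29.14]{Ke}. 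The Baire-category count of cosets and the application of Pettis' theorem then go through exactly as you describe. Neither point affects correctness.
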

The proof of this classical result can be found as \cite[Theorem 1.5.1]{BK}.

The following result shows that this kind of decomposition can be lifted from a certain kind of subgroup onto the whole group:
\begin{theorem}[Dole\v zal-Rmoutil-Vejnar-Vlas\' ak]
Let $G$ be a Polish group and $H \le G$ be an uncountable closed
subgroup of the center of $G$. Assume that $H$ can be written as the union of a Haar null set in $H$ and a Haar meager set in $H$. Then $G$ is also the union of a Haar null set in $G$ and a Haar meager set in $G$.

In particular, this holds for $G = \mathbb{R}^\omega$ or $G = X$ where $X$ is a Banach space.
\end{theorem}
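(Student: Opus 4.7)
The plan is to lift the decomposition $H = N \cup M$ to a decomposition of $G$ by spreading $N$ and $M$ across the cosets of $H$ via a Borel transversal for $G/H$. Centrality of $H$ will be the essential ingredient: it lets me rewrite two-sided translates $gXh$ with $X \subseteq H$ as $(gh)X$-like expressions, which is exactly what matches the fact that ``Haar null in $H$'' and ``Haar meager in $H$'' are notions controlled only by translations by elements of $H$.

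First I would secure a Borel transversal $T \subseteq G$ meeting every coset of $H$ in exactly one point; since $H$ is a closed subgroup of the Polish group $G$, the coset space $G/H$ is Polish and the projection $G \to G/H$ admits a Borel section, so such a $T$ exists. After replacing $N$ and $M$ with Borel supersets inside $H$ (via the definitions of Haar null and Haar meager), I would set $N' := TN$ and $M' := TM$. These are Borel subsets of $G$ (by Lusin-Souslin applied to the Borel bijection $T \times H \to G$, $(t,h) \mapsto th$) and together cover $G$.

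To verify that $N'$ is Haar null in $G$, let $\mu$ be a witness measure for $N$ in $H$, extended to $G$ by concentrating it on $H$. For $g, h \in G$, centrality gives $g(TN)h = (gTh)\cdot N$, and the map $x \mapsto gxh$ descends to a bijection on $G/H$, so $gTh$ is itself a transversal for $G/H$ and hence meets $H$ in a single point $y_0$. Since $N \subseteq H$ and $\mu$ lives on $H$, the computation collapses to $\mu(gN'h) = \mu(y_0 N) = 0$, the last equality holding because $\mu$ witnesses $N$ Haar null in $H$ and $y_0 \in H$. Haar meagerness of $M'$ is obtained by the same formal trick with a witness function $f : K \to H$ for $M$, yielding $f^{-1}(gM'h) = f^{-1}(y_0 M)$, which is meager in $K$ by the choice of $f$.

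The main obstacle I anticipate is just the existence of the Borel transversal; everything else is a short algebraic manipulation that converts arbitrary two-sided translations in $G$ into translations inside $H$, which is precisely where centrality is used. Fortunately, a Borel transversal for $G/H$ with $H$ a closed subgroup of a Polish group is a classical fact from the descriptive set theory of Polish group actions, so this step should require only a citation rather than new work.
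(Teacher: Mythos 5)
Your argument is correct and is essentially the route taken in the cited source \cite[Theorem 22]{Do} (the survey itself only states the result and refers there for the proof): a Borel transversal $T$ for $G/H$, the identity $g(TN)h=(gTh)N$ coming from centrality of $N\subseteq H$, the fact that $gTh$ is again a transversal and so meets $H$ in a single point $y_0$, and Lusin--Souslin to see that $TN$, $TM$ are Borel. The only piece you leave untouched is the ``in particular'' clause, which follows by taking $H$ to be a closed copy of $\mathbb{R}$ inside $\mathbb{R}^\omega$ or the Banach space: being locally compact, such an $H$ decomposes into a Haar null and a meager (hence Haar meager) set.
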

The proof of this result can be found as \cite[Theorem 22]{Do}.

\section{Common techniques}\label{sec:techniques}
In this section we introduce six techniques, which are frequently useful in practice. The first five of these can be used to show that a set is small, and the last one can be used to show that a set is not small. Note that some of the results from the earlier sections (for example, the basic properties in \autoref{ssec:smallness} or the equivalent definitions in \autoref{ssec:equivvar}) are also very useful in practice.

\subsection{Probes}

Probes are a very basic technique for constructing witness measures. The core of this idea is fairly straightforward and the only surprising thing about probes is the fact that despite their simplicity they are often useful.

Probes were introduced with the following definition in \cite{HSY} (this paper examines the Haar null sets in completely metrizable linear spaces).
\begin{definition}
Suppose that $V$ is an (infinite-dimensional) completely metrizable linear space. A finite dimensional subspace $P\subseteq V$ is called a \emph{probe for} a set $A\subseteq V$ if the Lebesgue measure on $P$ witnesses that $A$ is Haar null.
\end{definition}

\begin{remark}
Strictly speaking, these Lebesgue measures are not probability measures, therefore they cannot be witness measures in the sense of \autoref{def:haarnull}. However, \autoref{thm:witnessmeasequiv} implies that if $B$ is Borel and the Lebesgue measure $\lambda_P$ on a subspace $P$ satisfies that $\lambda_P(B+v)=0$ for every $v\in V$ then, then $B$ is Haar null.
\end{remark}

There is nothing \enquote{magical} about this definition, but it is easy to handle these simple witness measures in the calculations and if there is a probe for a set $A\supseteq V$, then by definition $A$ is Haar null. In arbitrary Polish groups it is easy to generalize this idea and consider a witness measure which is the \enquote{natural} measure supported on a small and well-understood subgroup or subset. If the considered set and the candidate for the probe are not too contrived, then it is often easy to see that it is indeed a probe.

For Haar meager sets the analogue of this is basically proving that the set is strongly Haar meager (see \autoref{ssec:stronghm}) and this is witnessed by a \enquote{naturally chosen} set.

The proof of the following example illustrates the usage of probes.

\def\Mon{M}\begin{example}\label{exa:probeex}
In the Polish group $(\mathcal{C}[0,1],+)$ of continuous real-valued functions on $[0,1]$, the set $\Mon=\{f \in \mathcal{C}[0,1] : f \text{{\upshape{}  is monotone on some interval}}\}$ is Haar null.
\end{example}

\begin{proof}
For a proper interval $I\subseteq [0,1]$ let \[\Mon(I)=\{f\in \mathcal{C}[0,1] : f\text{ is monotone on }I\}.\] As the Haar null sets form a $\sigma$-ideal and \[\Mon=\bigcup\{\Mon([q,r]) : 0\le q <r\le 1\text{ and } q,r \in \mathbb{Q}\},\] it is enough to show that $\Mon(I)$ is Haar null for every proper interval $I\subseteq [0,1]$. It is straightforward to check that $\Mon(I)$ is Borel (in fact, closed).

Fix a function $\varphi \in \mathcal{C}[0,1]$ such that its restriction to $I$ is not of bounded variation. We show that the one-dimensional subspace $\mathbb{R}\varphi=\{c\cdot \varphi : c\in \mathbb{R}\}$ is a probe for $\Mon(I)$, that is, the measure $\mu$ on $\mathcal{C}[0,1]$ that is defined by \[\mu(X) = \lambda(\{c\in \mathbb{R} : c\cdot \varphi\in X\})\]
(where $\lambda$ is the Lebesgue measure on $\mathbb{R}$) is a witness measure for $\Mon(I)$.

We have to prove that $\mu(\Mon(I)+f)=0$ for every $f\in \mathcal{C}[0,1]$. By definition
\[\mu(\Mon(I)+f)=\lambda(\{c\in \mathbb{R} : c\cdot \varphi \in \Mon(I)+f\})\]
and here the set $S_f=\{c\in \mathbb{R} : c\cdot \varphi \in \Mon(I)+f\}$ has at most one element, because if $c_1, c_2\in S_f$, then $c_1\cdot \varphi= m_1 +f$ and $c_2\cdot \varphi= m_2 +f$ for some functions $m_1, m_2\in \Mon(I)$ that are monotone on $I$ and hence $(c_1-c_2)\cdot\varphi= m_1-m_2$ is of bounded variation when restricted to $I$, but this is only possible if $c_1=c_2$. Thus $\lambda(S_f)=\mu(\Mon(I)+f)=0$ and this shows that $\Mon(I)$ is Haar null.
\end{proof}

For additional examples which demonstrate the usage of probes, see the paper \cite{HSY} (which introduces probes and gives several simple examples) or the paper \cite{Hunt} which proves \autoref{exa:wienerex} using a two-dimensional probe. The latter paper is of particular interest because it also proves (at the beginning of section 2) that \autoref{exa:wienerex} cannot be proved using a one-dimensional probe.

\subsection{Application of the Steinhaus theorem}

Sometimes the application of one of the results in \autoref{ssec:steinhaus} can yield very short proofs for the Haar nullness and Haar meagerness of certain sets. Unfortunately, this technique is restricted in the sense that \enquote{good} analogs of the Steinhaus theorem are known only in special classes of groups.

We illustrate this technique by proving \autoref{exa:probeex} again.

\begin{example}\label{exa:probeexagain}
In the Polish group $(\mathcal{C}[0,1],+)$ of continuous real-valued functions on $[0,1]$, the set $\Mon=\{f \in \mathcal{C}[0,1] : f \text{{\upshape{}  is monotone on some interval}}\}$ is Haar null and Haar meager.
\end{example}

\begin{proof}
As we noted in the proof using probes, if $I\subseteq[0,1]$ is a proper interval, then the set
\[\Mon(I)=\{f\in \mathcal{C}[0,1] : f\text{ is monotone on }I\}\]
is Borel and it is enough to see that this set is Haar null and Haar meager for every proper interval $I\subseteq[0,1]$ (we use the fact that the Haar meager sets also form a $\sigma$-ideal).

Assume for contradiction that there exists a proper interval $I\subseteq [0,1]$ such that $\Mon(I)$ is either not Haar null or not Haar meager. If $\Mon(I)$ is not Haar null, then \autoref{cor:SteinhausAbel} implies that $\Mon(I)-\Mon(I)$ is a neighborhood of the constant 0 function. Similarly, if $\Mon(I)$ is not Haar meager, then \autoref{thm:SteinhausJab} implies that $\Mon(I)-\Mon(I)$ is a neighborhood of the constant 0 function.

It is well-known and easy to prove that the difference of two monotone functions is a function of bounded variation; this implies that every $f\in\Mon(I)-\Mon(I)$ satisfies that the restriction $f|_I$ is of bounded variation. However is easy to construct a continuous function $f$ such that $\|f\|_\infty$ is small and $f|_I$ is not of bounded variation; this clearly contradicts the fact that $\Mon(I)-\Mon(I)$ is a neighborhood of the constant 0 function.
\end{proof}

The proof of \autoref{pro:cptsetsmallfromsteinhaus} is another example of this technique.

\subsection{The Wiener measure as witness}

The Wiener measure (which we will denote by $\mu$ in this section) is a well-studied Borel probability measure on the Polish group $(\mathcal{C}[0,1],+)$. There are lots of results which show that something is true for \enquote{most} continuous functions by proving that $\mu(E)=0$ where $E$ is the set of \enquote{exceptional} functions.

Although $\mu(E)=0$ does not necessarily imply that $E$ is Haar null, it is often possible to use the same methods to show that there is a Borel set $B$ such that $E\subseteq B \subseteq \mathcal{C}[0,1]$ and $\mu(B+g)=0$ for every $g\in \mathcal{C}[0,1]$. 

We illustrate this technique on the main result of \cite{Hunt}. This original proof by Hunt relied on constructing a two-dimensional probe. Later, Holick\'y and Zaj\'\i\v{c}ek gave an alternative proof in \cite{HZ} using the Wiener measure as a witness measure; here we reproduce this second proof.

\newcommand{\Deriv}{E}
\begin{example}[Hunt]\label{exa:wienerex}
In the Polish group $(\mathcal{C}[0,1],+)$ of continuous real-valued functions on $[0,1]$, the set \[\Deriv=\{f \in \mathcal{C}[0,1] : f \text{{\upshape{} has a derivative $f'(x)\in \mathbb{R}$ at some point $0\le x\le 1$}}\}\] is Haar null.
\end{example}

\begin{proof}[Proof (Holick\'y-Zaj\'\i\v{c}ek).]
We will prove that the set
\[\Deriv_R=\{f \in \mathcal{C}[0,1] : f \text{{\upshape{} has a finite right derivative at some point $0\le x< 1$}}\}\]
is Haar null; then by symmetry the set
\[\Deriv_L=\{f \in \mathcal{C}[0,1] : f \text{{\upshape{} has a finite left derivative at some point $0< x\le 1$}}\}\]
is also Haar null and this is enough because clearly $\Deriv\subseteq\Deriv_R\cup\Deriv_L$.

For a function $f\in \mathcal{C}[0,1]$, we say that $f$ is Lipschitz from the right at $x$ if
\[\limsup_{y\to {x+}} \left\lvert \frac{f(y)-f(x)}{y-x}\right\rvert <\infty.\]

It is clearly sufficient to prove the following claim:

\def\LipR{B}\begin{claim}\label{pro:wienerprop}
The set
\[\LipR=\{f \in \mathcal{C}[0,1] : f \text{{\upshape{} is Lipschitz from the right at some point $0\le x<1$}}\}\]
has the following properties:
\begin{multistmt}
\item $\Deriv_R \subseteq \LipR$,
\item $\LipR$ is Borel (in fact, $F_\sigma$),
\item $\mu(\LipR +g)=0$ for every $g\in \mathcal{C}[0,1]$ (where $\mu$ is the Wiener measure).
\end{multistmt}
\end{claim}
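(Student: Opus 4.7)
The plan is to dispatch the three parts in order, with essentially all of the work concentrated in part (3).

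Part (1) is immediate from the definitions: if $f$ has a finite right derivative $f'_+(x)$, then $|(f(y)-f(x))/(y-x)| \to |f'_+(x)|$ as $y \to x+$, so the $\limsup$ is finite and $f \in B$.

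For part (2), I would present $B$ as a countable union of closed sets. Set
\[
B_{n,k} = \left\{f \in \mathcal{C}[0,1] : \exists x \in [0, 1-\tfrac{1}{k}],\ |f(y) - f(x)| \leq n(y-x)\ \forall y \in [x, x+\tfrac{1}{k}]\right\}
\]
for $n, k \geq 1$. Clearly $B = \bigcup_{n,k} B_{n,k}$. Closedness of each $B_{n,k}$ is a standard compactness argument: given $f_j \to f$ uniformly with witnesses $x_j \in [0, 1-\tfrac{1}{k}]$, pass to a subsequence with $x_j \to x$, and verify by uniform convergence that $x$ witnesses $f \in B_{n,k}$.

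For part (3), fix $g \in \mathcal{C}[0,1]$; by $\sigma$-additivity it suffices to show $\mu(\{w : w - g \in B_{n,k}\}) = 0$ for each $n, k$. The plan is to discretize at scale $1/N$ for large $N \geq 4k$ and use the independent-increment structure of the Wiener measure. If $x$ witnesses $w - g \in B_{n,k}$ and $x \in [i/N, (i+1)/N]$, then the triangle inequality and the Lipschitz bound give $|(w-g)(\tfrac{i+j}{N}) - (w-g)(\tfrac{i+j-1}{N})| \leq (2j-1)n/N$ for $j = 2, 3, 4$. Letting $A_N$ be the Borel union over $0 \leq i \leq N-4$ of the event that these three inequalities hold yields the inclusion $\{w - g \in B_{n,k}\} \subseteq A_N$. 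Under $\mu$ the three relevant increments are independent $\mathcal{N}(0, 1/N)$ variables that must fall in intervals of lengths $6n/N$, $10n/N$, $14n/N$ centered at points determined by $g$. Using the uniform bound $\sqrt{N/(2\pi)}$ on the Gaussian density, each event has probability at most (length)\,$\cdot\sqrt{N/(2\pi)}$; multiplying and taking a union bound over $i$ gives $\mu(A_N) \leq C n^3 / \sqrt{N} \to 0$, hence $\mu(\{w - g \in B_{n,k}\}) = 0$.

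The only real obstacle is making the probability estimate in part (3) uniform in $g$. The crucial observation is that one must resist the temptation to try to control $g(\tfrac{i+j}{N}) - g(\tfrac{i+j-1}{N})$ via a modulus of continuity — that route fails, since the modulus of an arbitrary continuous $g$ need not be better than $o(1)$. Instead, one uses only the \emph{length} of the interval in which the Wiener increment must lie, not its location; this works because the Gaussian density is globally bounded, not just near zero. This is precisely the mechanism that lets the non-translation-invariant Wiener measure serve simultaneously as a witness measure for every translate of $B$, as required by \autoref{def:haarnull}.
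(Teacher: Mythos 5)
Your proposal is correct and follows essentially the same route as the paper's (Holick\'y--Zaj\'\i\v{c}ek) proof: reduce to a countable union of closed sets given by a uniform local Lipschitz condition, then for each translate bound the measure by discretizing on a grid of mesh $1/N$ and using three consecutive, independent Gaussian increments together with the global bound $\sqrt{N/(2\pi)}$ on the density, giving $O(N^{-3/2})$ per grid point and $O(N^{-1/2})$ after the union bound. The only cosmetic differences are your choice of the sets $B_{n,k}$ (local Lipschitz window of length $1/k$) in place of the paper's $E_n$, and the indices $j=2,3,4$ with increment bounds $(2j-1)n/N$ in place of the paper's uniform $4k/n$; your closing observation about using only the lengths, not the locations, of the target intervals is exactly the mechanism the paper relies on.
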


Here (1) is true because if $f$ has a finite right derivative $d$ at some point $0\le x<1$ then \[\limsup_{y\to x+} \left\lvert\frac{f(x)-f(y)}{x-y}\right\rvert = \lvert d\rvert <\infty.\]

(2) follows from the fact that $\LipR = \bigcup_n E_n$ where
\begin{align*}
E_n = \big\{f\in \mathcal{C}[0,1] :\ &\text{there is a $\textstyle0\le x<1-\frac1n$ such that}\\
&\text{for all $0< h<1-x$, $\lvert f(x+h) -f(x)\rvert \le nh$}\big\}.
\end{align*}
Elementary calculations (which can be found in \cite[Chapter 11]{O}) show that $E_n$ is closed.

(3) is the nontrivial part of this claim. To prove it, first notice that if $f$ is Lipschitz from the right at $x$, then there exists $k\in\mathbb{N}$ and $\delta>0$ such that if $x\le u\le v\le x+\delta$, then
\[\lvert f(v)-f(u)\rvert<k\cdot (v-x).\]
If $n\in\mathbb{N}$ is large enough ($n>\frac{1}{5\delta}$), then there exists an $i\in \{0,1, \ldots, n-1\}$ such that $\frac{i-1}{n}<x\le \frac{i}{n}<\frac{i+1}{n}<\frac{i+2}{n}<\frac{i+3}{n}<x+\delta$, and hence for all $j\in\{0,1,2\}$,
\[\left\lvert f\left(\frac{i+j+1}{n}\right)-f\left(\frac{i+j}{n}\right)\right\rvert <\frac{4k}{n}.\]

If we formalize this observation, then we get
\[\LipR\subseteq\bigcup_{k\in \mathbb{N}}\bigcup_{m\in\mathbb{N}}\bigcap_{n\ge m} \bigcup_{i=0}^{n-1}\bigcap_{j=0}^2\left\{f \in \mathcal{C}[0,1] : \left\lvert f\left(\frac{i+j+1}{n}\right)-f\left(\frac{i+j}{n}\right)\right\rvert <\frac{4k}{n}\right\}.\]

Now we fix an arbitrary $g\in \mathcal{C}[0,1]$ and conclude the proof of the claim by showing that $\mu(\LipR+g)=0$. As $\LipR+g = \{h \in \mathcal{C}[0,1] : h-g \in \LipR\}$,
\[\LipR+g\subseteq\bigcup_{k\in \mathbb{N}}\bigcup_{m\in\mathbb{N}}\bigcap_{n\ge m} \bigcup_{i=0}^{n-1} M_{n, k, i},\]
where
\[M_{n, k, i} = \bigcap_{j=0}^2\left\{h \in \mathcal{C}[0,1] : \left\lvert h\left(\frac{i+j+1}{n}\right)-h\left(\frac{i+j}{n}\right)+\Delta_{i, j,n}\right\rvert <\frac{4k}{n}\right\}\]
and $\Delta_{i, j, n}$ is the difference of two values of $g$ (we will not use its value).

If $h$ is a random function with the Wiener measure as its distribution, then it is well known that any difference of the form $h(t_2)-h(t_1)$ (where $t_2>t_1$) has normal distributions with variance $t_2-t_1$ and moreover, a collection of these differences is independent if they correspond to non-overlapping intervals.

If $X$ is a random variable with normal distribution and variance $\sigma^2$ in an arbitrary probability space $(\Omega, \mathcal{F}, P)$ and $\Delta\in \mathbb{R}$ and $r>0$ are arbitrary values, then clearly
\[P(\lvert X+\Delta\rvert<r) \le \frac{1}{\sqrt{2\pi\sigma^2}}\cdot 2r\]
because $\frac{1}{\sqrt{2\pi\sigma^2}}$ is the maximal value of the density function of $X$.

Using this for the differences of the values of $h$ yields
\[\mu(M_{n,k,i}) \le \left(\frac{1}{\sqrt{2\pi\frac1n}}\cdot 2\cdot \frac{4k}n\right)^3 = n^{-\frac32}\cdot \frac{128\sqrt{2}}{\pi^{\frac32}}\cdot k^3,\]
\[\mu\left(\bigcup_{i=0}^{n-1} M_{n, k, i}\right) \le n^{-\frac12}\cdot \frac{128\sqrt{2}}{\pi^{\frac32}}\cdot k^3 \xrightarrow{n\to \infty} 0\]
and therefore
\[\mu(\LipR+g)\le\mu\left(\bigcup_{k\in \mathbb{N}}\bigcup_{m\in\mathbb{N}}\bigcap_{n\ge m} \bigcup_{i=0}^{n-1} M_{n, k, i}\right)=0,\]
as we claimed.
\end{proof}

Note that \cite{ZajNwD} shows that the set
\[\{f \in \mathcal{C}[0,1] : f \text{{\upshape{} has a derivative $f'(x)\in \mathbb{R}\cup\{-\infty, \infty\}$ at some point $0\le x\le 1$}}\}\]
is not Haar null, but its complement is also not Haar null.

\subsection{Compact sets are small}\label{ssec:cptissmall}

This technique is based on the idea that in non-locally-compact groups the compact sets are \enquote{small} in the sense that they have empty interior. This naturally inspires the following question:

\begin{question}\label{que:cpthaarnull}
Is it true that the compact subsets are Haar null (or Haar meager) in every non-locally-compact Polish group?
\end{question}

While this question is still open, there are several partial results which give positive answers when some additional assumptions are satisfied. It is frequently possible to use these partial results as lemmas. As both the system of Haar null sets and the system of Haar meager sets are $\sigma$-ideals, they also imply that $K_\sigma$ sets (countable unions of compact sets) are Haar null and/or Haar meager in these groups.

The following theorem is \cite[Proposition 12]{Doug}, one of the earliest results in this topic.

\begin{theorem}\label{thm:tsicpthaarnull}
Let $G$ be a non-locally-compact Polish group admitting a two-sided invariant metric. Then every compact subset of $G$ is Haar null.
\end{theorem}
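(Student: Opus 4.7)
The plan is to exploit two features of the hypotheses simultaneously: non-local-compactness gives us "room" to find infinite $\varepsilon$-separated sets inside arbitrarily small neighborhoods of $1_G$, while two-sided invariance of the metric makes every translate $gKh$ isometric to $K$, so that a single covering number of $K$ controls all translates at once. Fixing a complete two-sided invariant metric $d$ on $G$, I would build a continuous "Cantor-like" map $\varphi$ from a product of finite probability spaces into $G$, and take the pushforward as witness measure.

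\textbf{Construction.} Since $G$ is not locally compact, for each $n\in\omega$ the closed ball $\overline{B}(1_G,2^{-n})$ is complete but not compact, hence not totally bounded, so there exist $\varepsilon_n>0$ and an infinite sequence $g_n^0,g_n^1,g_n^2,\ldots\in B(1_G,2^{-n})$ with pairwise $d$-distances $\ge\varepsilon_n$. Given the compact $K$, let $M_n$ be the minimal number of $d$-balls of radius $\varepsilon_n/2$ needed to cover $K$ (finite by compactness) and choose $k_n\ge n M_n$. Put the uniform probability measure $\lambda_n$ on $\{0,1,\ldots,k_n\}$, form the product probability space $(X,\lambda)=\prod_n(\{0,\ldots,k_n\},\lambda_n)$, and define
\[
\varphi:X\to G,\qquad \varphi(\alpha)=g_1^{\alpha_1}g_2^{\alpha_2}g_3^{\alpha_3}\cdots.
\]
By left-invariance, consecutive partial products satisfy $d(p_n,p_{n+1})=d(1_G,g_{n+1}^{\alpha_{n+1}})<2^{-(n+1)}$, so $(p_n)$ is Cauchy; completeness of $d$ ensures convergence, and $\varphi$ is continuous as the uniform limit of continuous maps. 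Let $\mu=\varphi_*\lambda$, a Borel probability measure on $G$.

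\textbf{Witness property.} To see that $\mu(gKh)=0$ for all $g,h\in G$, fix $g,h$ and $n$. For any choice of coordinates $\alpha_{\ne n}$, set $a=g_1^{\alpha_1}\cdots g_{n-1}^{\alpha_{n-1}}$ and $b=g_{n+1}^{\alpha_{n+1}}g_{n+2}^{\alpha_{n+2}}\cdots$, so that $\varphi(\alpha)\in gKh$ iff $g_n^{\alpha_n}\in a^{-1}gKhb^{-1}$. Two-sided invariance of $d$ makes the translate $a^{-1}gKhb^{-1}$ isometric to $K$, hence coverable by $M_n$ balls of radius $\varepsilon_n/2$. Since the points $g_n^0,\ldots,g_n^{k_n}$ are pairwise $\ge\varepsilon_n$-separated, any such ball can contain at most one of them, so at most $M_n$ indices $j$ satisfy $g_n^j\in a^{-1}gKhb^{-1}$. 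Thus the $\alpha_n$-section of $\varphi^{-1}(gKh)$ has $\lambda_n$-measure at most $M_n/(k_n+1)\le 1/n$. Fubini gives $\mu(gKh)=\lambda(\varphi^{-1}(gKh))\le 1/n$ for every $n$, hence $\mu(gKh)=0$, so $K$ is Haar null as witnessed by $\mu$.

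\textbf{Main obstacle.} The delicate point is the first step: extracting $\varepsilon_n$-separated sequences inside shrinking neighborhoods of $1_G$, while maintaining enough uniformity that the same covering number $M_n$ governs \emph{all} translates $gKh$ simultaneously. Non-local-compactness alone gives the separated sequences, but without two-sided invariance of $d$ the translates $a^{-1}gKhb^{-1}$ could fail to share a common covering bound and the Fubini argument would collapse -- this is precisely where the TSI hypothesis is indispensable, complementing its role in guaranteeing convergence of the infinite products.
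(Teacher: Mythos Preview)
Your proof is correct and rests on the same combinatorial core as the paper's: non-local-compactness yields arbitrarily large $\varepsilon$-separated sets inside any neighborhood of $1_G$, two-sided invariance makes every translate $gKh$ isometric to $K$ so a single covering number bounds them all, and the pigeonhole argument gives a uniform bound on the measure of translates.

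The organizational difference is that the paper invokes \autoref{thm:matresult} (Matou\v{s}kov\'a's criterion) as a black box: it only has to produce, for each $\delta>0$ and each neighborhood $U$ of $1_G$, a finitely supported uniform measure on an $\varepsilon$-separated subset of $U$ giving every translate measure $<\delta$; the infinite convolution that upgrades this family of approximate witnesses into a single genuine witness is hidden inside the proof of \autoref{thm:matresult}. You instead build the witness measure in one shot as the pushforward of a product measure under an infinite-product map $\varphi$, effectively inlining that convolution construction. The paper itself remarks that its presentation ``separates the ideas specific to compact sets \ldots and the construction of a limit measure'', and that the combined version is Dougherty's original argument---which is essentially what you have written. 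Your route is more self-contained; the paper's route isolates a reusable lemma.
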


\begin{proof}
We will use \autoref{thm:matresult} to prove this result; this proof is not essentially different from the proof in \cite{Doug}, but separates the ideas specific to compact sets (this proof) and the construction of a limit measure (the proof of \autoref{thm:matresult}).

Fix a two-sided invariant metric $d$ on $G$ and let $C\subseteq G$ be an arbitrary compact subset. We need to prove that for every $\delta>0$ and neighborhood $U$ of $1_G$ there exists a Borel probability measure $\mu$ on $G$ such that the support of $\mu$ is contained in $U$ and $\mu(gCh)<\delta$ for every $g, h\in G$.

Fix $\delta>0$ and a neighborhood $U$ of $1_G$. We may assume that $U$ is open. As $G$ is non-locally-compact, the open set $U$ is not totally bounded, hence there exists an $\varepsilon>0$ such that $U$ cannot be covered by finitely many open balls of radius $2\varepsilon$.

As $C$ is compact, hence totally bounded, there exists a $N\in\omega$ such that $C$ can be covered by $N$ open balls of radius $\varepsilon$. This means that if $X\subseteq C$ and every $x, x'\in X$ satisfies $x\neq x'\Rightarrow d(x,x')\ge 2\varepsilon$, then $\lvert X \rvert \le N$ (because each of the $N$ open balls of radius $\varepsilon$ covering $C$ may contain at most one element of $X$). Using the invariance of $d$ this yields that for every $g, h\in G$ if $X\subseteq gCh$ and every $x, x'\in X$ satisfies $x\neq x'\Rightarrow d(x,x')\ge 2\varepsilon$, then $\lvert X\rvert \le N$.

As $U$ cannot be covered by finitely many open balls of radius $2\varepsilon$, it is possible to choose a sequence $(u_n)_{n\in\omega}$ such that $u_n\in U$ and $u_n\notin \bigcup_{i=0}^{n-1} B(u_{i}, 2\varepsilon)$ for every $n\in\omega$. Choose an integer $M$ that is larger than $\frac{N}{\delta}$ and let $Y = \{u_n : 0\le n< M\}$. Let $\mu$ be the measure on $Y$ which assigns measure $\frac{1}{M}$ to every point in $Y$. If $g, h\in G$ are arbitrary, then $\mu(gCh)=\frac{\lvert gCh\cap Y\rvert}{M}$ and here every $y, y'\in gCh \cap Y$ satisfies $y\neq y'\Rightarrow d(y, y')\ge 2\varepsilon$, and hence $\lvert gCh\cap Y\rvert \le N$, and thus $\mu(gCh)\le \frac{N}{M}< \delta$.
\end{proof}

The following result works in all non-locally-compact Polish groups, but only proves that the compact sets are right Haar null (this is a weaker notion than Haar nullness, see \autoref{ssec:leftrighthaarnull} for the definition and properties).

\begin{proposition}\label{pro:cptsetsmallfromsteinhaus}
Let $G$ be a non-locally-compact Polish group. Then every compact subset of $G$ is right Haar null.
\end{proposition}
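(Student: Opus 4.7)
The plan is to prove the contrapositive via the Christensen--Rosendal form of the Steinhaus theorem (\autoref{thm:SteinhausRHN}), exploiting the fact that conjugates and products of compact sets remain compact. This will let us convert the hypothetical failure of right Haar nullness into the existence of a compact neighbourhood of the identity, contradicting the non-local-compactness of $G$.

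More concretely, I would proceed as follows. Let $C \subseteq G$ be compact. Since $C$ is Borel (hence universally measurable), being right Haar null and being generalized right Haar null coincide for $C$ (one can always take $B = C$ in the respective definitions). Suppose for contradiction that $C$ is not right Haar null, i.e.\ not generalized right Haar null. Apply \autoref{thm:SteinhausRHN} to $A = C$ with, say, $W = G$: there exist $n \in \omega$ and $h_0, h_1, \ldots, h_{n-1} \in G$ such that
\[
N := h_0 CC^{-1} h_0^{-1} \cup h_1 CC^{-1} h_1^{-1} \cup \cdots \cup h_{n-1} CC^{-1} h_{n-1}^{-1}
\]
is a neighbourhood of $1_G$.

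Now observe that $CC^{-1}$ is the image of the compact set $C \times C$ under the continuous map $(x,y) \mapsto xy^{-1}$, so it is compact; each conjugate $h_i CC^{-1} h_i^{-1}$ is the image of $CC^{-1}$ under the continuous map $x \mapsto h_i x h_i^{-1}$, so it is compact as well; and a finite union of compact sets is compact. Hence $N$ is a compact neighbourhood of $1_G$, which means $G$ is locally compact at $1_G$, and therefore (by homogeneity of the group) locally compact everywhere. This contradicts the assumption that $G$ is non-locally-compact, completing the proof.

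The main thing to check carefully is simply the applicability of \autoref{thm:SteinhausRHN}, namely that ``not right Haar null'' for the Borel set $C$ really gives ``not generalized right Haar null'' so the hypothesis of that theorem is met; apart from this bookkeeping the argument is essentially a one-line application of the generalized Steinhaus theorem together with the observation that the compactness class is closed under the operations appearing in its conclusion. There is no obstacle of substance; the conceptual content is just that Steinhaus-type results force non-small sets to \emph{inflate} into neighbourhoods, and compactness is preserved by the inflation used here.
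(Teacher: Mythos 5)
Your proof is correct and is essentially identical to the paper's: both apply \autoref{thm:SteinhausRHN} to a hypothetical non-right-Haar-null compact $C$ and observe that the resulting neighbourhood of $1_G$ is a finite union of conjugates of the compact set $CC^{-1}$, contradicting non-local-compactness. Your extra remark that "right Haar null" and "generalized right Haar null" coincide for the Borel set $C$ is a correct (and welcome) piece of bookkeeping that the paper leaves implicit.
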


\begin{proof}
Suppose that $C\subseteq G$ is compact but not right Haar null. Applying \autoref{thm:SteinhausRHN} yields that there exist a $n\in\omega$ and $h_0, h_1, \ldots, h_{n-1}\in G$ such that
\[h_0 CC^{-1} h_0^{-1}\cup h_1 CC^{-1} h_1^{-1}\cup \ldots \cup h_{n-1} CC^{-1} h_{n-1}^{-1}\]
is a neighborhood of $1_G$. But $CC^{-1}$ is compact (as it is the image of $C\times C$ under the continuous map $(x, y)\mapsto xy^{-1}$), thus its conjugates are also compact, and the union of finitely many compact sets is also compact, and this is a contradiction, because a neighborhood cannot be compact in $G$.
\end{proof}

The paper \cite{DV} investigates the question in the case of Haar meager sets, we state the main results without proofs. This article introduces the \emph{finite translation property} with the following definition:

\begin{definition}
A set $A\subseteq G$ is said to have the \emph{finite translation property} if for every open set $\emptyset\neq U \subseteq G$ there exists a finite set $M\subseteq U$ such that for every $g, h\in G$ we have $gMh\nsubseteq A$.
\end{definition}

The first part of the proof is the following result which allows using this property to prove that a set is strongly Haar meager (this is a stronger notion than Haar meagerness, see \autoref{ssec:stronghm} for the definition and properties). The role of this result is roughly similar to the role of \autoref{thm:matresult} in the case of measure; its proof involves a relatively complex recursive construction. 

\begin{theorem}
If an $F_\sigma$ set $A\subseteq G$ has the finite translation property, then $A$ is strongly Haar meager.
\end{theorem}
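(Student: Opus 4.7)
The plan is to construct a Cantor-style compact set $C \subseteq G$ such that for every $g, h \in G$ and every closed subset $F$ of $A$, $gFh \cap C$ is nowhere dense in $C$. Writing $A = \bigcup_{n\in\omega} F_n$ with each $F_n$ closed (possible since $A$ is $F_\sigma$), the set $gAh \cap C$ is then a countable union of closed nowhere dense subsets of $C$, hence meager, for every $g, h \in G$. Since $A$ is itself Borel, it will serve as its own Borel hull and $C$ will witness that $A$ is strongly Haar meager in the sense of \autoref{def:stronglyhaarmeager}.

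For the construction, fix a complete compatible metric $d$ on $G$ and build a scheme of nonempty open sets $U_t$ indexed by a finitely branching tree, with $\overline{U_{t'}} \subseteq U_t$ whenever $t'$ is a child of $t$, pairwise disjoint closures at each level, and $\diam(U_t) \le 2^{-|t|}$. Then $C = \bigcap_n \bigcup_{|t| = n} \overline{U_t}$ is homeomorphic to the (compact) space of branches of the tree and is therefore a nonempty compact subset of $G$. Recursively, at each node $s$ apply the finite translation property to $U_s$ to obtain a finite $M'_s \subseteq U_s$ with $g M'_s h \nsubseteq A$ for every $g, h \in G$. Let $M_s = M'_s \cup P_s$, where $P_s$ is a set of \enquote{inherited} points defined by the rule $P_\emptyset = \emptyset$ and $P_{s\frown i} = \{m^s_i\}$ whenever $i$ is the index of $m^s_i$ in some enumeration $M_s = \{m^s_0, \ldots, m^s_{k_s - 1}\}$. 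Give $s$ exactly $k_s$ children, and choose $U_{s \frown i}$ to be a small open neighborhood of $m^s_i$ inside $U_s$ satisfying the required diameter, closure and disjointness constraints. If we always list the unique inherited point first in the enumeration of $M_s$, then for every fixed node $t$ and every $i < k_t$ the point $m^t_i$ is inherited down the entire branch $t \frown i \frown 0 \frown 0 \frown \cdots$, so $m^t_i \in U_{t \frown i \frown 0^n}$ for every $n \in \omega$. Because diameters shrink to zero, this forces $m^t_i \in C$, and in particular $M_s \subseteq C \cap U_s$ for every node $s$.

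The verification is then short. Fix $g, h \in G$, $n \in \omega$, and a node $s$. By the finite translation property some $m^s_i \in M'_s \subseteq M_s$ satisfies $g m^s_i h \notin A$, hence $g m^s_i h \notin F_n$, i.e.\ $m^s_i \notin g^{-1} F_n h^{-1}$. Since $m^s_i \in C \cap U_s$, the basic open subset $C \cap U_s$ of $C$ is not contained in $g^{-1} F_n h^{-1}$. As the sets $\{C \cap U_s : s \text{ a node}\}$ form a basis for the topology of $C$, the closed set $g^{-1} F_n h^{-1} \cap C$ has empty interior in $C$ and is therefore nowhere dense. Summing over $n$ gives that $g^{-1}Ah^{-1} \cap C$ is meager in $C$; as $g, h$ were arbitrary, $gAh \cap C$ is meager in $C$ for all $g, h \in G$, as required.

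The main obstacle is the scheme bookkeeping. The finite translation property only promises that \emph{some} element of $M_s$ escapes $g^{-1} F_n h^{-1}$, and the good index depends on $g, h, n$, so there is no hope of shrinking $U_{s \frown i}$ to fit inside a $(g,h,n)$-dependent \enquote{safe} neighborhood chosen in advance. The only way around this is to place the finite translation witnesses themselves inside $C$, which in turn forces the inheritance trick that threads each $m^s_i$ down a designated branch as its limit point. Once this is set up correctly, closedness of the $F_n$'s together with the very definition of the finite translation property handles the rest mechanically.
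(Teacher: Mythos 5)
Your proof is correct. The paper itself does not reproduce an argument for this theorem (it only cites \cite{DV} and remarks that the proof \enquote{involves a relatively complex recursive construction}), and your Cantor-scheme construction is exactly the kind of recursion that remark alludes to. You correctly identify the one genuinely non-obvious point: since the finite translation property only guarantees that \emph{some} element of $M_s$ escapes $g^{-1}Ah^{-1}$, with the good element depending on $(g,h)$, the entire witness set $M_s$ must be forced into the compact set $C$, which your \enquote{inherited point listed first} device achieves by making each $m^s_i$ the limit of the branch $s\concat i\concat 0\concat 0\concat\cdots$. The remaining steps --- that the sets $C\cap U_s=C\cap\overline{U_s}$ are a clopen basis of the compact set $C$, that each $g^{-1}F_nh^{-1}\cap C$ is then closed with empty interior, and that $A$ serves as its own Borel hull --- all check out.
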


The second part is showing that the compact sets have the finite translation property when there is a two-sided invariant metric; the proof is relatively simple and very similar to the one used in \autoref{thm:tsicpthaarnull}.

\begin{theorem}
Let $G$ be a non-locally-compact Polish group admitting a two-sided invariant metric. Then every compact subset of $G$ has the finite translation property.
\end{theorem}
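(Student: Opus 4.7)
The plan is to mimic almost verbatim the proof of \autoref{thm:tsicpthaarnull}, exploiting the two-sided invariance of the metric to transport a separation estimate on $C$ to all of its two-sided translates.

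First I would fix a two-sided invariant metric $d$ on $G$, a compact set $C\subseteq G$, and a nonempty open set $U\subseteq G$. Since $G$ is not locally compact, $U$ fails to be totally bounded, so there exists $\varepsilon>0$ such that $U$ cannot be covered by finitely many $d$-balls of radius $2\varepsilon$. On the other hand, $C$ is totally bounded, so there exists $N\in\omega$ with $C\subseteq\bigcup_{i<N} B(c_i,\varepsilon)$ for some $c_0,\dots,c_{N-1}\in G$. The key observation is the pigeonhole bound: any subset $X\subseteq C$ whose points are pairwise at $d$-distance $\ge 2\varepsilon$ satisfies $\lvert X\rvert\le N$, since each ball $B(c_i,\varepsilon)$ has diameter $<2\varepsilon$ and hence meets $X$ in at most one point.

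Next I would upgrade this bound to translates using the two-sided invariance. For any $g,h\in G$ and any $y,y'\in gCh$ one has $d(y,y')=d(g^{-1}yh^{-1},g^{-1}y'h^{-1})$, so the pigeonhole bound transfers: every $(2\varepsilon)$-separated subset of $gCh$ has at most $N$ elements. Now, using that $U$ cannot be covered by finitely many $(2\varepsilon)$-balls, I inductively choose $u_0,u_1,\dots,u_N\in U$ with $u_n\notin\bigcup_{i<n}B(u_i,2\varepsilon)$ for every $n\le N$, so that $d(u_i,u_j)\ge 2\varepsilon$ whenever $i\ne j$. Set $M=\{u_0,\dots,u_N\}\subseteq U$; this is a finite subset of $U$ with $N+1$ elements.

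Finally, I would check that this $M$ witnesses the finite translation property for $C$. Suppose toward contradiction that $gMh\subseteq C$ for some $g,h\in G$. By invariance the $N+1$ points $gu_0h,\dots,gu_Nh$ are still pairwise at $d$-distance $\ge 2\varepsilon$, contradicting the bound $\lvert X\rvert\le N$ established above for $(2\varepsilon)$-separated subsets of $gCh\subseteq C$. Hence $gMh\nsubseteq C$ for all $g,h\in G$, so $C$ has the finite translation property.

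I do not expect any real obstacle: the entire argument is a direct transcription of the quantitative totally-bounded-vs-not-totally-bounded dichotomy used in \autoref{thm:tsicpthaarnull}, with the measure-theoretic counting ($\mu(gCh)\le N/M$) replaced by the elementary set-theoretic counting ($\lvert gMh\rvert=N+1>N$). The only place where two-sided invariance is genuinely used, as opposed to mere left invariance, is in asserting $d(gu_ih,gu_jh)=d(u_i,u_j)$; if one worked only with a left-invariant metric, the right translation by $h$ would not preserve distances and the argument would collapse.
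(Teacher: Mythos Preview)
Your proof is correct and follows exactly the approach the paper indicates: it explicitly says the argument is ``relatively simple and very similar to the one used in \autoref{thm:tsicpthaarnull},'' and your transcription of that proof (totally bounded $C$ versus non-totally-bounded $U$, pigeonhole on $\varepsilon$-balls, two-sided invariance to transport the separation) is precisely what is intended. The only cosmetic slip is the phrase ``$(2\varepsilon)$-separated subsets of $gCh\subseteq C$'' in your final paragraph; you simply mean $(2\varepsilon)$-separated subsets of $C$, since you have assumed $gMh\subseteq C$ directly.
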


These results yield the analogue of \autoref{thm:tsicpthaarnull}. Note that in the case when $G$ is abelian, it is also possible to prove this as a corollary of \autoref{thm:SteinhausJab}, using the method of the proof of \autoref{pro:cptsetsmallfromsteinhaus}.

\begin{corollary}\label{thm:tsicpthaarmeager}
Let $G$ be a non-locally-compact Polish group admitting a two-sided invariant metric. Then every compact subset of $G$ is (strongly) Haar meager.
\end{corollary}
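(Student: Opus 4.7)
The plan is to deduce this corollary directly from the two theorems immediately preceding it in the excerpt. Let $C \subseteq G$ be an arbitrary compact set. Since $G$ is Hausdorff (being a topological group) and $C$ is compact, $C$ is closed, and in particular $C$ is an $F_\sigma$ set. By the second of the two cited theorems, every compact subset of $G$ has the finite translation property, so $C$ has the finite translation property. By the first of the two cited theorems, any $F_\sigma$ subset of $G$ with the finite translation property is strongly Haar meager, so $C$ is strongly Haar meager.

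Finally, to reach the parenthetical stronger claim and the plain Haar meager conclusion simultaneously, I would note that strongly Haar meager sets are Haar meager. Indeed, if $B \supseteq C$ is a Borel witness to strong Haar meagerness via a compact set $C_0 \subseteq G$, then the inclusion map $f : C_0 \hookrightarrow G$ is continuous, and $f^{-1}(gBh) = gBh \cap C_0$ is meager in $C_0$ for every $g, h \in G$, which is exactly the definition of Haar meagerness from \autoref{def:haarmeager} applied with $K = C_0$ and this $f$.

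Since both ingredients of the argument are quoted as already-established theorems and the step from strongly Haar meager to Haar meager is essentially a tautology from the definitions, there is no real obstacle here; the corollary is purely a composition of prior results. If one were to write it all out without appeal to the finite translation property — for instance, in analogy with the proof of \autoref{thm:tsicpthaarnull} via \autoref{thm:matresult} — then the genuine work would be hidden inside the two cited theorems, namely the recursive construction of a witness function for a set with the finite translation property, and the use of the two-sided invariance of the metric together with non-local-compactness to build an infinite $2\varepsilon$-separated sequence inside any open set while a compact set admits only finitely many such points.
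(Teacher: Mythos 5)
Your proposal is correct and follows exactly the route the paper intends: the corollary is obtained by combining the two preceding theorems (compact sets are closed, hence $F_\sigma$, and have the finite translation property; $F_\sigma$ sets with the finite translation property are strongly Haar meager), together with the immediate observation that strong Haar meagerness implies Haar meagerness via the inclusion map of the witnessing compact set. Nothing is missing.
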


In addition to these, \cite{DV} also shows that compact sets have the finite translation property in $S_\infty$ (the group of all permutations of a countably infinite set).

We illustrate the usage of this technique with a simple example.
\begin{example}
In the non-locally-compact Polish group $(\mathbb{Z}^\omega, +)$ there are subsets $A, B \subseteq \mathbb{Z}^\omega$ such that they are neither Haar null nor Haar meager, but for every $x\in \mathbb{Z}^\omega$ the intersection $(A+x) \cap B$ is both Haar null and Haar meager.
\end{example}

\begin{proof}
It is well-known that in $\mathbb{Z}^\omega$ a closed set $C$ is compact if and only if
\[C\subseteq \prod_{n\in\omega} \{u_n, u_n+1, u_n+2, \ldots, v_n-1, v_n\}\text{ for some }u, v\in \mathbb{Z}^\omega,\]
as sets of this kind are closed subsets in a product of compact sets and in the other direction if $C\subseteq \mathbb{Z}^\omega$ is compact, then the projections map it into compact subsets of $\mathbb{Z}$.

Let $A=\{a \in \mathbb{Z}^\omega : a_n\le 0\text{ for every }n\in\omega\}$ and $B=\{b \in \mathbb{Z}^\omega : b_n\ge 0\text{ for every }n\in\omega\}$. It is easy to check that these sets satisfy the condition of \autoref{lem:translateall} (the result used in the last technique) and this implies that $A$ and $B$ are neither Haar null nor Haar meager.

On the other hand, the set $(A+x)\cap B=\{z\in \mathbb{Z}^\omega : 0\le z_n \le x_n\}$ is compact, hence \autoref{thm:tsicpthaarnull} and \autoref{thm:tsicpthaarmeager} shows that it is Haar null and Haar meager.
\end{proof}

Note that this phenomenon is impossible in the locally compact case, where non-small sets have density points and if we translate a density point of one set onto a density point of the other, then the intersection will be non-small. On the other hand, \cite[Theorem 4]{MZel} proves that if $(G, +)$ is abelian and non-locally-compact, then there are sets $A, B\subset G$ which are not Haar null, but satisfy that $(A+x) \cap B$ is Haar null for every $x\in G$.

\subsection{Random construction}\label{ssec:random}

This is a technique that is useful when one wants to prove that a not very small set is still small enough to be Haar null. (For example this technique may work for sets that are not Haar meager, because it does not prove Haar meagerness.) The main idea of this technique is that a witness measure for a Haar null set $A\subseteq G$ is a Borel \emph{probability} measure and one can use the language of probability theory (e.g.\ random variables, conditional probabilities, stochastic processes) to construct it and prove that it is indeed a witness measure.

For example, the paper \cite{DM} applies this to prove a result in $S_\infty$, the group of all permutations of the natural numbers (endowed with the topology of pointwise convergence). We illustrate this technique by reproducing the core ideas of this proof. (The proof also contains relatively long calculations which we omit.)

\begin{theorem}[Dougherty-Mycielski]
In the Polish group $S_\infty$ let $X$ be the set of permutations that have infinitely many infinite cycles and finitely many finite cycles. Then the complement of $X$ is Haar null.
\end{theorem}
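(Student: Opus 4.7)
My approach is the random construction technique: define a Borel probability measure $\mu$ on $S_\infty$ as the distribution of a carefully chosen random permutation $\pi$, and verify that $\mu$ witnesses that $X^c$ is Haar null. First I would check that $X^c$ is Borel. Writing $X^c = F \cup I$ with $F$ (respectively $I$) the set of permutations with infinitely many finite (respectively only finitely many infinite) cycles, each is easily expressible via countable quantifiers over integers and clopen conditions of the form $\pi^k(n) = m$, so $X^c$ is Borel.

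The main reduction I would use is that cycle structure is a conjugation invariant. For any $g, h \in S_\infty$, the element $g\pi h$ is conjugate to $\pi \cdot hg$ via $g^{-1}$ (since $g^{-1}(g\pi h)g = \pi hg$), so the two permutations share the same cycle type. Hence, letting $\sigma := hg$, showing $\mu(g X^c h) = 0$ reduces to showing that for every fixed $\sigma \in S_\infty$, the random permutation $\pi\sigma$ almost surely has infinitely many infinite cycles and only finitely many finite cycles.

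For the construction of $\mu$, I would let $\pi$ be generated by a sequence of independent random choices that is sufficiently \enquote{spread out} to prevent any fixed $\sigma$ from interacting with it in a structured way. A natural candidate is to partition $\mathbb{N}$ into disjoint blocks $B_k$ of rapidly growing sizes, let $\pi$ restricted to each block be an independent uniformly random permutation of $B_k$, and then independently add a random \enquote{linking mechanism} (for instance, transpositions between selected elements of different blocks) designed so that infinitely many blocks almost surely get merged into infinite orbits while preserving ample independence. This $\pi$ lies in $X$ almost surely by design. To show $\pi\sigma \in X$ almost surely, I would estimate, for each $n$ and $L$, the probability that $n$ lies in a finite cycle of $\pi\sigma$ of length at most $L$; a careful analysis should give bounds summable in $n$ uniformly in $\sigma$, letting Borel--Cantelli conclude that only finitely many $n$ lie in finite cycles. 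A complementary second-moment or direct independence argument on the linking mechanism then yields infinitely many infinite cycles.

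The hardest part will be establishing the probability bounds \emph{uniformly in the arbitrary $\sigma$}: because $\sigma$ ranges over all of $S_\infty$, no $\sigma$-specific feature can be exploited, and the random process defining $\pi$ must be symmetric and rich enough that no $\sigma$ can align with it to produce anomalously many finite cycles in $\pi\sigma$. Calibrating the block sizes and the joint distribution of the linking transpositions to achieve this uniform control is where I expect the technical heart of the argument; once that is in place, the Borel--Cantelli step and the Borel-measurability verification are essentially routine.
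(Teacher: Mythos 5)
Your overall strategy --- constructing a random permutation $\pi$ and using the conjugation invariance of cycle type to reduce $\mu(gX^ch)=0$ to the one-sided statement that $\pi\sigma\in X$ almost surely for each fixed $\sigma$ --- is exactly the strategy of the paper's proof. The gap is in the proposed measure itself. If $\pi$ is block-invariant (an independent uniformly random permutation of each block $B_k$) and the linking mechanism touches only a bounded number of elements per block, then already for $\sigma=\mathrm{id}$ the permutation $\pi\sigma=\pi$ almost surely has \emph{infinitely many} finite cycles: a uniformly random permutation of a set of size $N_k$ has, for each fixed $L$, an asymptotically Poisson$\left(\sum_{j\le L}1/j\right)$ number of cycles of length at most $L$; these counts are independent across blocks; and a sparse linking destroys only $O(1)$ cycles per block, so by the second Borel--Cantelli lemma infinitely many blocks retain, say, an untouched fixed point. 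Concretely, your plan to make $\sum_n \Pr\bigl(n\text{ lies in a cycle of }\pi\sigma\text{ of length}\le L\bigr)$ summable cannot succeed: for $\sigma=\mathrm{id}$ this probability is about $L/N_k$ for $n\in B_k$, so each block contributes about $L$ to the sum and the series diverges --- and the corresponding events genuinely occur infinitely often, so this is not an artifact of a weak union bound. No calibration of block sizes or of a sparse linking scheme repairs this; the defect is the locality of the randomness. As long as all but $O(1)$ elements of each block are mapped within their own block, the restriction of $\pi\sigma$ to the untouched part of a block is a permutation of a finite set, and its cycles are finite cycles of $\pi\sigma$.

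The paper's construction avoids this by making the randomness non-local: it fixes a rapidly increasing sequence $(k_n)$ and determines $p(0),p^{-1}(0),p(1),p^{-1}(1),\dots$ sequentially, choosing $p(n)$ uniformly among the still-available values in $[n,k_n)$ and $p^{-1}(n)$ among the available values in $(n,k_n]$. There are then no invariant finite sets, and for $gp$ to close up a short cycle each successive step must hit one particular value out of an ever-larger admissible range; this is what yields probability bounds uniform in $g$ and makes the Borel--Cantelli step go through. Your conjugacy reduction and the general framework are fine and can be kept, but the block-diagonal measure must be replaced by a process of this non-local kind.
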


\begin{proof}[Proof (sketch).]
We will find a Borel probability measure $\mu$ on $S_\infty$ such that $\mu(gXh)=1$ for every $g,h\in S_\infty$. Using that $X$ is conjugacy invariant $\mu(gXh)=\mu(gh(h^{-1}Xh))=\mu(ghX)$ and here $\{gh : g, h\in S_\infty\}=S_\infty=\{g^{-1} : g\in S_\infty\}$, thus it is enough to show that $\mu(g^{-1}X)=1$ for every $g\in S_\infty$.

We define the probability measure $\mu$ by describing a procedure which chooses a random permutation $p$ with distribution $\mu$. (This way we can describe a relatively complicated measure in a way that keeps the calculations manageable.) 

Fix a sequence $k_0< k_1 < k_2 < \ldots$ of natural numbers which are large enough (the actual growth rate is used by the omitted parts of the proof). The procedure will choose values for $p(0)$, $p^{-1}(0)$, $p(1)$, $p^{-1}(1)$, $p(2)$, $p^{-1}(2)$, \ldots\ in this order, skipping those which are already defined (e.g.\ if we choose $p(0)=1$ in the first step, then the step for $p^{-1}(1)$ is omitted, as we already know that $p^{-1}(1)=0$). When we have to choose a value for $p(n)$, we choose randomly a natural number $n'< k_n$ which is still available as an image (that is, $n'\ge n$ and $n'$ is not among the already determined values $p(0)$, $p(1)$, \ldots, $p(n-1)$); we assign equal probabilities to each of these choices. Similarly, when we have to choose a value for $p^{-1}(n)$, we choose randomly a natural number $n'\le k_n$ which is still available as a preimage (that is, $n'> n$ and $n'$ is not among the already determined values $p^{-1}(0)$, $p^{-1}(1)$, \ldots, $p^{-1}(n-1)$); we assign equal probabilities to each of these choices again. When we are finished with these steps, the resulting object $p$ is clearly a well-defined permutation, as every $n\in \omega$ has exactly one image and exactly one preimage assigned to it. We can assume that $k_n$ is large enough to satisfy $k_n> 2n+1$ and this guarantees that we never \enquote{run out} of choices.

We say that $p_0$ is a \emph{possible partial result}, if it can arise after finitely many steps of this process. We omit the relatively long combinatorial arguments which show that the following claim is true:
\begin{claim}
Assume that $p_0$ is a possible partial result, $g\in S_\infty$ is an arbitrary element and $M$ is a natural number. Then there is a natural number $N$ such that the conditional probability with respect to $\mu$, under the condition of extending $p_0$, of the event that the permutation $p$ chosen by our process will be such that $gp$ has no finite cycles including a number greater than $N$ and no two of the numbers $N + 1, \ldots , N + M$ are in the same cycle of $gp$ is at least $\frac{1}{2}$.
\end{claim}
Using this claim, it is possible to show the following claim by induction on $i$ (we also omit this part of the proof):
\begin{claim}
Assume that (as in the previous claim) $p_0$ is a possible partial result, $g\in S_\infty$ is an arbitrary element and $M$ is a natural number. Then for every $i\in\omega$ the conditional probability (with respect to $\mu$, under the condition of extending $p_0$) of the event that the permutation $p$ chosen by our process satisfies that $gp$ has only finitely many finite cycles and at least $M$ infinite cycles is at least $1-2^{-i}$.
\end{claim}
Applying this second claim for every $i\in\omega$ in the special case when $p_0$ is the empty partial permutation yields that the (unconditional) probability (with respect to $\mu$) of the event that the permutation $p$ chosen by our process satisfies that $gp$ has only finitely many finite cycles and at least $M$ infinite cycles is 1. Since this is true for every $M\in\omega$, the permutation $gp$ has infinitely many infinite cycles with $\mu$-probability 1. This shows that $\mu(g^{-1}X)=1$ for the arbitrary permutation $g$, so we are done.
\end{proof}

This set $X$ is the union of countably many conjugacy classes of permutations, one for each finite list of sizes for the finite cycles in the permutation; the paper \cite{DM} also shows that none of these conjugacy classes are Haar null.

\subsection{Sets containing translates of all compact sets}\label{ssec:cpttranslates}

Proving that a set is not Haar null from the definitions requires showing that \emph{all} Borel probability measures fail to witness that it is Haar null, which is frequently harder than just showing \emph{one} measure witnesses that the set is Haar null. The situation is similar for Haar meager sets, where even the choice of the domain of the witness function is not straightforward, although the equivalence $(1) \Leftrightarrow (2)$ in \autoref{thm:witnessdomcantor} can be used to eliminate this extra choice.

Fortunately, in many cases the following simple sufficient condition is enough to show that a set is not Haar null (in fact, not even generalized Haar null) and not Haar meager. This lemma is stated e.g.\ as \cite[Lemma 2.1]{ST}, but it is also common to use this reasoning without stating it separately as a lemma.

\begin{lemma}\label{lem:translateall}
Suppose that a set $A\subseteq G$ satisfies that for every compact set $C\subseteq G$ there are $g, h\in G$ such that $gCh\subseteq A$. Then $A$ is neither generalized Haar null nor Haar meager.
\end{lemma}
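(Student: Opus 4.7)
The plan is to handle the two statements separately by straightforward contradiction arguments, in each case exploiting a single well-chosen compact set.

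For the generalized Haar null case, suppose towards contradiction that $A$ is generalized Haar null: pick a universally measurable hull $B \supseteq A$ and a Borel probability measure $\mu$ witnessing this, so that $\mu(gBh)=0$ for all $g,h\in G$. Since $\mu$ is a Borel probability measure on a Polish space, by (the tightness-type result) \cite[Theorem 17.11]{Ke} used already in the proof of \autoref{lem:posmeascpt}, there exists a compact set $C\subseteq G$ with $\mu(C) > 0$ (e.g.\ $\mu(C) \ge \tfrac12$). Apply the hypothesis on $A$ to this compact $C$: there exist $g,h\in G$ with $gCh \subseteq A \subseteq B$, equivalently $C \subseteq g^{-1}Bh^{-1}$. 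Then $\mu(g^{-1}Bh^{-1}) \ge \mu(C) > 0$, contradicting the fact that $\mu$ is a witness measure for $B$.

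For the Haar meager case, suppose $A$ is Haar meager, with Borel hull $B \supseteq A$, nonempty compact metric space $K$, and continuous witness function $f\colon K\to G$ making $f^{-1}(gBh)$ meager in $K$ for every $g,h\in G$. Take the compact set $C := f(K) \subseteq G$ (which is compact as the continuous image of a compact space). By the hypothesis, there exist $g,h\in G$ with $gCh \subseteq A \subseteq B$; that is, $f(K)\subseteq g^{-1}Bh^{-1}$, whence $f^{-1}(g^{-1}Bh^{-1}) = K$. But $K$ is a nonempty compact metric (hence Baire) space, so $K$ is not meager in itself, contradicting the witnessing property of $f$.

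Neither step presents a real obstacle; the only subtlety is remembering that the definition of Haar meager allows the witness function to have arbitrary image, so the choice $C := f(K)$ (rather than something fabricated from outside) is what makes the hypothesis on $A$ directly applicable. In both arguments, the compact set to which the hypothesis is applied is not chosen in advance but dictated by the putative witness object (measure or function), which is exactly why the hypothesis is strong enough to exclude both notions of smallness simultaneously.
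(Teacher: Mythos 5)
Your proof is correct and follows essentially the same argument as the paper: apply the hypothesis to a compact set of positive measure dictated by the putative witness measure, respectively to $f(K)$ for the putative witness function. The only cosmetic difference is that in the Haar null case the paper first passes to a compactly supported witness measure via \autoref{thm:witnessmeasequiv} (so $\mu(C)=1$), whereas you invoke tightness directly to get $\mu(C)>0$; both yield the same contradiction.
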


\begin{proof}

If $A$ were generalized Haar null, then by \autoref{thm:witnessmeasequiv} there would be a universally measurable set $B\supseteq A$ and a Borel probability measure $\mu$ with compact support $C\subseteq G$ such that $\mu(g'Bh')=0$ for every $g', h'\in G$, but there are $g, h\in G$ such that $gCh\subseteq A\subseteq B$ and thus $\mu(g^{-1}Bh^{-1})\ge \mu(C)=1$, a contradiction.

Similarly, if $A$ would be Haar meager, then there would be a Borel set $B\supseteq A$, a (nonempty) compact metric space $K$ and a continuous function $f : K \to G$ such that $f^{-1}(g'Bh')$ is meager in $K$ for every $g', h'\in G$, but there are $g, h\in G$ such that $gf(K)h\subseteq A\subseteq B$ and thus $f^{-1}(g^{-1}Bh^{-1})\supseteq f^{-1}(f(K))=K$ is not meager in $K$, a contradiction.
\end{proof}

\section{A brief outlook}\label{sec:outlook}

It is clearly beyond the scope of this paper to collect the countless number of results and applications of Haar null sets in various fields of mathematics. However, we now give a highly incomplete list of works using Haar null sets, and encourage the reader to use these as starting points for further reading.

First we mention the paper \cite{OY} which also surveys applications of Haar null sets in addition to presenting their core properties. However, this paper was published in 2005 and therefore it does not contain several recent ideas.

The recent paper \cite{BGJS} introduces and systematically investigates (in the abelian case) the so called Haar-$\mathcal{I}$ sets, a common generalization of the notions of Haar null and Haar meager sets.

One of the original motivations of Christensen was to consider versions of automatic continuity as discussed e.g.\ in \autoref{cor:autocont}, in this topic see the excellent survey paper \cite{Ros}, and also \cite{RosUM} and \cite{FS}.

The Rademacher theorem states that a Lipschitz function between Euclidean spaces is differentiable almost everywhere. The second motivation of Christensen was to extend this result to Banach spaces, see the paper \cite{ChrMTZ} by Christensen, and also \cite{BMfirst,BM,Zaj,BBL,Borw}.

A closely related result is the Alexandrov theorem stating that convex functions on Euclidean spaces are twice differentiable almost everywhere. For extensions of this result to Banach spaces see e.g.\ \cite{MZaj,MatFSN}.

There are various other interesting directions of research in functional analysis involving Haar null sets, see e.g.\ \cite{EMM,Duda,Mat,MatConv}.

When Haar null sets were rediscovered (under the name \enquote{shy sets}) in \cite{HSY}, 
they were the first ones to apply Haar null sets in the theory of dynamical systems. Since then numerous other such applications have been found, see e.g.\ \cite{Avila,BZ,Boun,KH}.

Multifractal analysis is a large area within geometric measure theory. There are numerous interesting results concerning the multifractal analysis of co-Haar null functions and measures, see e.g.\ \cite{BH,BHext,ABD,Olsen}.

There are countless results in geometric measure theory calculating various fractal dimensions of images, graphs, and level sets of certain functions, e.g.\ the generic ones, co-Haar null ones, Brownian motion, etc. A recurring theme is that the generic set is as small as it can, be, while the co-Haar null one is a large as it can be. See e.g.\ \cite{BDEH,Balka,FH,FHnote,GJMNOP,McC,BFFH,BHcf,FF,Kahane}.

Let us also mention a few results that are measure theoretic duals of classical results of real analysis about generic continuous functions: \cite{HZ, Hunt, ZajNwD} concern nowhere differentiable functions, and \cite{BDEB} is the Bruckner-Garg theorem describing the topological structure of the level sets. Interestingly, in the case of \cite{ZajNwD} and \cite{BDEB} the generic behavior happens with \enquote{probability strictly between 0 and 1}, that is, the set of functions exhibiting the behaviour in question is neither Haar null nor co-Haar null.

A large part of modern descriptive set theory deals with homeomorphism groups of compact metric spaces, and automorphism groups of countable first-order structures, such as e.g.\ $\mathcal{H}[0,1]$, the group on increasing homeomorphisms of the unit interval, $S_\infty$, the permutation group of the natural numbers or $\mathrm{Aut}(\mathbb{Q},<)$, the group of increasing bijections of the rational numbers. There have been numerous papers describing the structure of the random element of these groups (that is, the properties which are true for all elements except for a Haar null set), see e.g.\ \cite{DM,CK,ST,DEKKVACS,DEKKVAQ,DEKKVARG,DEKKVH}.

There is also quite some literature dealing with the set theoretic aspects of Haar null sets. The so called cardinal invariants of (various versions of) Haar null sets are calculated in \cite{BanakhI} and \cite{EP}; other related results can be found in e.g.\ \cite{SolND} and \cite{ES}.

Finally, there are also interesting but somewhat sporadic results involving Haar null sets in the theory of differential equations: \cite{EHS,Joly}
and in the theory of functional equations: \cite{Brzd,Gajda,CF,HSY,Rebhuhn}.

\section{List of open questions}\label{sec:questions}

This section collects the open questions which are mentioned in this survey. As this survey is limited to studying fundamental properties and useful techniques, this collection is also limited to open questions from these topics, and we do not mention an enormous number of problems concerning other aspects of Haar null and Haar meager sets. Most of these questions outside the scope of our survey can be found in the references mentioned in \autoref{sec:outlook}.

\begin{restate}{que:ghngthn}[Elekes-Vidny\' anszky \nobracketcite{Question 5.4}{EVGdhull}] Is $\mathcal{GHN}(G) \supsetneqq \mathcal{HN}(G)$ in all non-locally-compact Polish groups?
\end{restate}

\begin{restate}{que:lhneqrhnamenone}[Solecki \nobracketcite{Question 5.1}{SolAmen}]
Let $G$ be an amenable at 1 Polish group. Do left Haar null subsets of $G$ coincide with right Haar null subsets of $G$?
\end{restate}

\begin{restate}{que:shmsigmaideal}[Darji \nobracketcite{Problem 3}{Da}]
Is the system of strongly Haar meager sets a $\sigma$-ideal?
\end{restate}

Even the following variant of the previous question seems to be open:

\begin{restate}{que:shmideal}
Is the system of strongly Haar meager sets an ideal?
\end{restate}

The following question is the main problem of \autoref{ssec:steinhaus}, where we describe lots of partial results (there are groups where the answer is negative, but some weaker variants can be answered positively):
\begin{restate}{que:steinhausgen}
Let $A$ be a Borel (or universally measurable) subset in the Polish group $G$ that is not Haar null (or not generalized Haar null, or not Haar meager). What can we say about the groups $G$ where $1_G \in \Int(AA^{-1})$ is neccessarily satisfied?
\end{restate}

The following question is the main problem of \autoref{ssec:decomposition}, where we describe lots of positive partial results:

\begin{restate}{que:darji}[Darji \cite{DaOP}]\label{re:que:darji}
Can every uncountable Polish group be written as the union of two sets, one meager and the other Haar null?
\end{restate}

In \autoref{thm:darjiqueeqv} we prove that the following question from \cite[Question 5.5]{EVGdhull} and \cite{BanakhII} is equivalent to the previously stated one:
\begin{restate}{que:banakh}[Elekes-Vindny\'anszky, Banakh]
Is each countable subset of an uncountable Polish group $G$ contained in a $G_\delta$ Haar null subset of $G$?
\end{restate}

The questions \cite[Question 6.6]{DEKKVH} and \cite[Question 6.7]{DEKKVH}  highlight the following special cases of \autorefre{que:darji}:
\begin{restate}{que:darjidekkvspeccases}[Darji-Elekes-Kalina-Kiss-Vidny\'anszky]
Is it possible to write the following uncountable Polish groups as the union of a meager and a Haar null set:
\begin{multistmt}
\item $\mathcal{H}(\mathbb{D}^n)$, the group of orientation-preserving self-homeomorphisms of the closed $n$-ball $\mathbb{D}^n$ (where $n\ge 3$ is an integer),
\item $\mathcal{H}(\mathbb{S}^n)$, the group of orientation-preserving self-homeomorphisms of the $n$-sphere $\mathbb{S}^n$ (where $n\ge 2$ is an integer),
\item the homeomorphism group of the Hilbert cube $[0,1]^\omega$?
\end{multistmt}
\end{restate}

As every Haar meager set is meager (but the converse is not true in general -- see \autoref{ssec:loccptcase}) the following question is a stronger variant of \autoref{que:darji}:

\begin{restate}{que:jablonska}[Jab\l o\'nska \nobracketcite{Question 4}{Jab}]
Can every uncountable abelian Polish group be written as the union of two sets, one Haar meager and the other Haar null?
\end{restate}

The paper \cite{Do} answers this question positively in some classes of groups. 

In non-locally-compact groups the compact sets are \enquote{small} in the sense that they have empty interior. This inspires the following question:

\begin{restate}{que:cpthaarnull}
Is it true that the compact subsets are Haar null (or Haar meager) in every non-locally-compact Polish group?
\end{restate}

Several papers contain partial answers to this question, these are summarized in \autoref{ssec:cptissmall}.

Let us conclude this list by asking the following vague question.

\numberwithin{question}{section}

\begin{question}
Can one prove some sort of uniqueness of the ideal of Haar null sets? Somewhat more precisely, can one prove that if $\mathcal{I}$ is a $\sigma$-ideal (say on $\mathbb{Z}^\omega$) that is translation-invariant, has a Borel basis, contains the compact sets, is orthogonal to $\mathcal{M}$, definable in some sense and possesses certain further \enquote{natural} properties then $\mathcal{I} = \mathcal{HN}$?
\end{question}

Interesting results of very similar nature can be found in \cite{FZ, Zak}, but these works build heavily on properties of Lebesgue-null sets not shared by Haar null sets in non-locally-compact groups, e.g. ccc-ness and Fubini properties.

\textsc{Acknowledgments.} We are greatly indebted to Professor J. Mycielski for numerous helpful suggestions. 

\bibliographystyle{sia}

\end{document}